\theoremstyle{definition}
\newtheorem{definition}{Definition}[section]
\newtheorem{corollary}[definition]{Corollary}
\newtheorem{example}[definition]{Example}
\newtheorem{lemma}[definition]{Lemma}
\newtheorem{proposition}[definition]{Proposition}
\newtheorem{remark}[definition]{Remark}
\newtheorem*{remark*}{Remark}
\newtheorem{theorem}[definition]{Theorem}
\newtheorem*{theorem*}{Theorem}
\renewcommand*\p@enumii{}                                                                          
\def\morchoice#1#2{\begingroup\setbox0=\hbox{$#1\xrightarrow{#2}$}%
        \setbox1=\hbox{$#1\longrightarrow$}%
        \ifdim\wd0<\wd1
        \stackrel{#2}\longrightarrow
        \else
        \xrightarrow{#2}\fi\endgroup}
\newcommand{\booktitle}[1]{\textsl{#1}}                                                            
\newcommand{\eigenname}[1]{\textsc{#1}}                                                            
\newcommand{\newnotion}[1]{\textit{#1}}                                                            
\newenvironment{smallpmatrix}{\left( \begin{smallmatrix}}{\end{smallmatrix} \right)}               
\newcommand{\nbd}{\nobreakdash-\hspace{0pt}}                                                       
\DeclareMathOperator{\Arr}{\mathrm{Arr}}                                                           
\DeclareMathOperator{\Cokernel}{\mathrm{Coker}}                                                    
\DeclareMathOperator{\Cone}{\mathrm{Cone}}                                                         
\DeclareMathOperator{\Denominators}{\mathrm{Den}}                                                  
\DeclareMathOperator{\DiscreteCategory}{\mathrm{Disc}}                                             
\DeclareMathOperator{\FractionCategory}{\mathrm{Frac}}                                             
\DeclareMathOperator{\HomotopyCategory}{\mathrm{Ho}}                                               
\DeclareMathOperator{\Kernel}{\mathrm{Ker}}                                                        
\DeclareMathOperator{\Map}{\mathrm{Map}}                                                           
\DeclareMathOperator{\Mor}{\mathrm{Mor}}                                                           
\DeclareMathOperator{\Ob}{\mathrm{Ob}}                                                             
\DeclareMathOperator{\SDenominators}{\mathrm{SDen}}                                                
\DeclareMathOperator{\Source}{\mathrm{Source}}                                                     
\DeclareMathOperator{\Target}{\mathrm{Target}}                                                     
\DeclareMathOperator{\TDenominators}{\mathrm{TDen}}                                                
\DeclareMathOperator{\threearrowgraph}{\mathrm{AG}}                                                
\newcommand{\Bif}{\mathbf{Bif}}                                                                    
\newcommand{\Cat}{\mathbf{Cat}}                                                                    
\newcommand{\CatD}{\mathbf{CatD}}                                                                  
\newcommand{\categoricalequivalent}{\simeq}                                                        
\newcommand{\Cof}{\mathbf{Cof}}                                                                    
\newcommand{\CohomologyGroup}[1][]{\mathrm{H}^{#1}}                                                
\newcommand{\Com}{\mathrm{C}}                                                                      
\newcommand{\comp}{\circ}                                                                          
\newcommand{\DerivedCategory}{\mathrm{D}}                                                          
\newcommand{\directsum}{\oplus}                                                                    
\newcommand{\doublefrac}[3]{{{#1} \backslash {#2} \slash {#3}}}                                    
\newcommand{\emb}{\mathrm{emb}}                                                                    
\newcommand{\fgMod}{\mathbf{mod}}                                                                  
\newcommand{\Fib}{\mathbf{Fib}}                                                                    
\newcommand{\fibreprod}[3][]{\mathbin{{_{#2}^{}}\times_{#3}^{#1}}}                                 
\newcommand{\fractionequal}{\equiv}                                                                
\newcommand{\homotopic}{\sim}                                                                      
\newcommand{\HomotopyCategoryOfComplexes}{\mathrm{K}}                                              
\newcommand{\id}{\mathrm{id}}                                                                      
\newcommand{\inc}{\mathrm{inc}}                                                                    
\newcommand{\ini}[1][]{\mathrm{ini}^{#1}}                                                          
\newcommand{\initialobject}[1][]{\text{\textexclamdown}^{#1}}                                      
\newcommand{\ins}{\mathrm{ins}}                                                                    
\newcommand{\Integers}{\mathbb{Z}}                                                                 
\newcommand{\intersection}{\cap}                                                                   
\newcommand{\isomorphic}{\cong}                                                                    
\newcommand{\Isomorphisms}{\mathrm{Iso}}                                                           
\newcommand{\kdelta}{\updelta}                                                                     
\newcommand{\LocalisationFunctor}[1][]{\mathrm{loc}^{#1}}                                          
\newcommand{\map}{\rightarrow}                                                                     
\newcommand{\morphism}[1][]{\mathpalette\morchoice{#1}}                                            
\newcommand{\Naturals}{\mathbb{N}}                                                                 
\newcommand{\numeratorshape}{\upnu}                                                                
\newcommand{\op}{\mathrm{op}}                                                                      
\newcommand{\pr}{\mathrm{pr}}                                                                      
\newcommand{\quo}{\mathrm{quo}}                                                                    
\newcommand{\smallcoprod}{\amalg}                                                                  
\newcommand{\smallprod}{\mathbin{\Pi}}                                                             
\newcommand{\sourcedenominatorshape}{\upsigma}                                                     
\newcommand{\targetdenominatorshape}{\uptau}                                                       
\newcommand{\ter}[1][]{\mathrm{ter}^{#1}}                                                          
\newcommand{\terminalobject}[1][]{{!}^{#1}}                                                        
\newcommand{\threearrow}[4]{{#1} \leftarrow {#2} \rightarrow {#3} \leftarrow {#4}}                 
\newcommand{\threearrowshape}{\mathbf{\Theta}}                                                     
\newcommand{\ShiftFunctor}{\mathrm{T}}                                                             
\newcommand{\UFr}{\mathbf{UFr}}                                                                    
\newcommand{\UFrCat}{\mathbf{UFrCat}}                                                              
\newcommand{\union}{\cup}                                                                          
\tikzset{cross line/.style={preaction={-, line width=6pt, draw=white}}}
\tikzset{equality/.style={-, double}}
\tikzset{exists/.style={dotted}}
\tikzset{diagram/.style={matrix of math nodes, row sep=#1, column sep=#1, text height=1.6ex, text depth=0.45ex, inner sep=0pt, nodes={inner sep=0.3333em}}, diagram/.default=2.5em}
\tikzset{diagram without objects/.style={matrix of math nodes, row sep=#1, column sep=#1, nodes in empty cells, inner sep=0pt, nodes={inner sep=0.3333em}}, diagram without objects/.default=2.5em}
\tikzset{diagram mixed/.style={matrix of math nodes, row sep=#1, column sep=#1, nodes in empty cells, text height=1.6ex, text depth=0.45ex, inner sep=0pt, nodes={inner sep=0.3333em}}, diagram mixed/.default=2.5em}
\tikzset{den/.style={decoration={markings, mark=at position #1 with {\node[fill=white, inner sep=0.5pt, transform shape]{\(\scriptstyle{\approx}\)};}}, postaction=decorate}, den/.default=0.5}
\tikzset{sden/.style={decoration={markings, mark=at position #1 with {\draw[solid, fill=white] circle(0.4ex);}}, postaction=decorate}, sden/.default=0.5}
\tikzset{tden/.style={decoration={markings, mark=at position #1 with {\draw[solid, fill=white] (-0.2ex, -0.4ex) rectangle (0.2ex, 0.4ex);}}, postaction=decorate}, tden/.default=0.5}
\title{On the $3$-arrow calculus for homotopy categories}
\author{Sebastian Thomas}
\date{March 30, 2011}
\begin{document}

\maketitle

\renewcommand{\thefootnote}{\fnsymbol{footnote}}
\footnotetext[0]{Mathematics Subject Classification 2010: 18E35, 18G55, 18E30, 55U35. \\ This article has been published in condensed form in Homology, Homotopy and Applications \textbf{13}(1) (2011), pp.~89--119.}
\renewcommand{\thefootnote}{\arabic{footnote}}

\begin{abstract}
We develop a localisation theory for certain categories, yielding a \(3\)-arrow calculus: Every morphism in the localisation is represented by a diagram of length \(3\), and two such diagrams represent the same morphism if and only if they can be embedded in a \(3\)-by-\(3\) diagram in an appropriate way. The method we use to construct this localisation is similar to the Ore localisation for a \(2\)-arrow calculus; in particular, we do not have to use zigzags of arbitrary length. Applications include the localisation of an arbitrary Quillen model category with respect to its weak equivalences as well as the localisation of its full subcategories of cofibrant, fibrant and bifibrant objects, giving the homotopy category in all four cases. In contrast to the approach of \eigenname{Dwyer}, \eigenname{Hirschhorn}, \eigenname{Kan} and \eigenname{Smith}, the Quillen model category under consideration does not need to admit functorial factorisations. Moreover, it follows that the derived category of any abelian (or idempotent splitting exact) category admits a \(3\)-arrow calculus if we localise the category of complexes instead of its homotopy category.
\end{abstract}

\section{Introduction} \label{sec:introduction}

Localisations of categories occur in homological and homotopical algebra. Prominent examples are the construction of the derived category of an abelian category~\cite[ch.~II, \S 1, not.~1.1]{verdier:1963:categories_derivees} (as localisation of the homotopy category of complexes) and -- more generally -- localisations of Verdier triangulated categories with respect to thick subcategories~\cite[ch.~I, \S 2, d{\'e}f.~3-3]{verdier:1963:categories_derivees}, localisations of abelian categories with respect to thick subcategories~\cite[ch.~I, sec.~2]{serre:1953:groupes_d_homotopie_et_classes_de_groupes_abeliens}~\cite[sec.~1.11]{grothendieck:1957:sur_quelques_points_d_algebre_homologique} and the definition of the homotopy category of a Quillen model category~\cite[ch.~I, sec.~1, def.~6]{quillen:1967:homotopical_algebra}. Usually, the construction of the first three examples is done by a procedure known under the name of Ore localisation, which can only be applied in the special case where the denominator set, that is, the subset of morphisms to be formally inverted, fulfills some additional properties. Let us call such a special denominator set a classical denominator set for the moment. The basic ideas of this method have their historical origin in ring theory, in particular in the works of \eigenname{Ore}~\cite[sec.~2]{ore:1931:linear_equations_in_non-commutative_fields} and \eigenname{Asano}~\cite[Satz~1]{asano:1949:ueber_die_quotientenbildung_von_schiefringen}, while the categorical version comes from the Grothendieck school, see \eigenname{Verdier}~\cite[ch.~I, \S 2, sec.~3.2]{verdier:1963:categories_derivees} and \eigenname{Grothendieck} and \eigenname{Hartshorne}~\cite[ch.~I, \S 3, prop.~3.1]{hartshorne:1966:residues_and_duality}, based on the work of \eigenname{Serre}~\cite[ch.~I, sec.~2]{serre:1953:groupes_d_homotopie_et_classes_de_groupes_abeliens}. In contrast, the construction of the homotopy category of a Quillen model category is usually done by a formal construction working for arbitrary denominator sets, which is commonly called Gabriel-Zisman localisation (\footnote{To the authors knowledge, it first explicitly appeared in the monograph of \eigenname{Gabriel} and \eigenname{Zisman}~\cite[sec.~1.1]{gabriel_zisman:1967:calculus_of_fractions_and_homotopy_theory}. One can find earlier mentions, for example in~\cite[ch.~I, \S 3, rem., p.~29]{hartshorne:1966:residues_and_duality} and in~\cite[ch.~I, \S 2, n.~3, p.~17]{verdier:1963:categories_derivees}. In the latter source, one finds moreover a citation ``[C.G.G.]'', which might be the unpublished manuscript \booktitle{Cat{\'e}gories et foncteurs} of \eigenname{Chevalley}, \eigenname{Gabriel} and \eigenname{Grothendieck} occurring in the bibliography of~\cite{schiffmann:1962:theorie_elementaire_des_categories}.}). Of course, if the denominator set under consideration is classical, then the Ore localisation and the Gabriel-Zisman localisation are isomorphic since localisation of categories is defined by a universal property.

The advantage of Ore localisation is in the manageability of morphisms in the localisation: We suppose given a category \(\mathcal{C}\) and a denominator set \(D \subseteq \Mor \mathcal{C}\). The morphisms in the Gabriel-Zisman localisation are represented by zigzags
\[\begin{tikzpicture}[baseline=(m-1-1.base)]
  \matrix (m) [diagram mixed]{
    & & & \dots & & \\};
  \path[->, font=\scriptsize]
    (m-1-1) edge (m-1-2)
    (m-1-3) edge (m-1-4)
            edge[den] (m-1-2)
    (m-1-5) edge (m-1-6)
            edge[den] (m-1-4);
\end{tikzpicture}\]
of finite but arbitrary length, where the ``backward'' arrows are in \(D\). In contrast, if \(D\) is a classical denominator set, then every morphism in the Ore localisation is represented by a diagram
\[\begin{tikzpicture}[baseline=(m-1-1.base)]
  \matrix (m) [diagram]{
    {} & {} & {} \\};
  \path[->, font=\scriptsize]
    (m-1-1) edge (m-1-2)
    (m-1-3) edge[den] (m-1-2);
\end{tikzpicture}.\]
Furthermore, in the Gabriel-Zisman localisation one has, in general, no convenient criterion to decide whether two zigzags represent the same morphism in the localisation, while already from the construction of the Ore localisation it follows that two of these diagrams represent the same morphism if and only if they can be embedded as the top and the bottom row in a commutative diagram of the following form.
\[\begin{tikzpicture}[baseline=(m-3-1.base)]
  \matrix (m) [diagram without objects]{
    & & \\
    & & \\
    & & \\};
  \path[->, font=\scriptsize]
    (m-1-1) edge (m-1-2)
            edge[equality] (m-2-1)
    (m-1-2) edge (m-2-2)
    (m-1-3) edge[equality] (m-2-3)
            edge[den] (m-1-2)
    (m-2-1) edge (m-2-2)
    (m-2-3) edge[den] (m-2-2)
    (m-3-1) edge (m-3-2)
            edge[equality] (m-2-1)
    (m-3-2) edge[den] (m-2-2)
    (m-3-3) edge[den] (m-3-2)
            edge[equality] (m-2-3);
\end{tikzpicture}\]

Unfortunately, the set of weak equivalences in a Quillen model category \(\mathcal{M}\) is not a classical denominator set in general, and the homotopy category \(\HomotopyCategory \mathcal{M}\), that is, the localisation of \(\mathcal{M}\) with respect to its set of weak equivalences, does in general not fulfill a \(2\)-arrow calculus in the above sense. Instead, \eigenname{Dwyer}, \eigenname{Hirschhorn}, \eigenname{Kan} and \eigenname{Smith} developed in~\cite[sec.~10, sec.~36]{dwyer_hirschhorn_kan_smith:2004:homotopy_limit_functors_on_model_categories_and_homotopical_categories} a \(3\)-arrow calculus for the homotopy category of \(\mathcal{M}\), provided \(\mathcal{M}\) admits functorial factorisations (cf.~\cite[sec.~9.1, ax.~MC5]{dwyer_hirschhorn_kan_smith:2004:homotopy_limit_functors_on_model_categories_and_homotopical_categories}). That is, they showed that each morphism in \(\HomotopyCategory \mathcal{M}\) is represented by a diagram
\[\begin{tikzpicture}[baseline=(m-1-1.base)]
  \matrix (m) [diagram]{
    {} & {} & {} & {} \\};
  \path[->, font=\scriptsize]
    (m-1-2) edge (m-1-3)
            edge[den] (m-1-1)
    (m-1-4) edge[den] (m-1-3);
\end{tikzpicture},\]
and, moreover, that two of these diagrams represent the same morphism if and only if they can be embedded as the top and the bottom row in a commutative diagram of the following form.
\[\begin{tikzpicture}[baseline=(m-4-1.base)]
  \matrix (m) [diagram without objects]{
    & & & \\
    & & & \\
    & & & \\
    & & & \\};
  \path[->, font=\scriptsize]
    (m-1-2) edge (m-1-3)
            edge[den] (m-1-1)
    (m-1-4) edge[den] (m-1-3)
    (m-2-1) edge[equality] (m-3-1)
            edge[equality] (m-1-1)
    (m-2-2) edge (m-2-3)
            edge[den] (m-3-2)
            edge[den] (m-2-1)
            edge[den] (m-1-2)
    (m-2-3) edge[den] (m-3-3)
            edge[den] (m-1-3)
    (m-2-4) edge[equality] (m-3-4)
            edge[den] (m-2-3)
            edge[equality] (m-1-4)
    (m-3-2) edge (m-3-3)
            edge[den] (m-3-1)
    (m-3-4) edge[den] (m-3-3)
    (m-4-1) edge[equality] (m-3-1)
    (m-4-2) edge (m-4-3)
            edge[den] (m-4-1)
            edge[den] (m-3-2)
    (m-4-3) edge[den] (m-3-3)
    (m-4-4) edge[den] (m-4-3)
            edge[equality] (m-3-4);
\end{tikzpicture}\]
To do this, they introduced the notion of a \emph{homotopical category admitting a \(3\)-arrow calculus}~\cite[sec.~33.1, 36.1]{dwyer_hirschhorn_kan_smith:2004:homotopy_limit_functors_on_model_categories_and_homotopical_categories} and developed a \(3\)-arrow calculus in this context~\cite[sec.~36.3]{dwyer_hirschhorn_kan_smith:2004:homotopy_limit_functors_on_model_categories_and_homotopical_categories}.

In this article, we introduce the concept of a uni-fractionable category, see definition~\ref{def:uni-fractionable_categories_and_their_morphisms}\ref{def:uni-fractionable_categories_and_their_morphisms:uni-fractionable_category}. Our main result is the construction of a localisation of a uni-fractionable category (with respect to its set of denominators) that satisfies a \(3\)-arrow calculus in the sense described above, see theorem~\ref{th:description_of_the_fraction_category}. In contrast to~\cite{dwyer_hirschhorn_kan_smith:2004:homotopy_limit_functors_on_model_categories_and_homotopical_categories}, we will not make use of the Gabriel-Zisman localisation. Instead, we will give an elementary ad hoc construction of a localisation of a uni-fractionable category, in the spirit of the Ore localisation for a \(2\)-arrow calculus. (\footnote{It is easy to show that every morphism in the Gabriel-Zisman localisation of a uni-fractionable category can be represented by a diagram of length \(3\) (cf.\ the definition of the composition in proposition~\ref{prop:welldefinedness_of_the_fraction_category}). However, the author does not know how to prove in that context that two of these diagrams represent the same morphism if and only if they can be embedded in a \(3\)-by-\(3\) diagram as above.})

Both in the approach of~\cite[sec.~36.1]{dwyer_hirschhorn_kan_smith:2004:homotopy_limit_functors_on_model_categories_and_homotopical_categories} and in our uni-fractionable categories, one has three distinguished kinds of morphisms, which, in our terminology, are called denominators, S-denominators and T-denominators. The denominators are the morphisms to be formally inverted, while the S- and T-denominators are particular denominators. The essential stipulations in~\cite[sec.~36.1]{dwyer_hirschhorn_kan_smith:2004:homotopy_limit_functors_on_model_categories_and_homotopical_categories} are that every denominator factors functorially into an S{\nbd}denominator followed by a T-denominator (\footnote{The S resp.\ the T should remind us of the fact that the S-denominator resp.\ the T-denominator in a factorisation has the same source resp.\ the same target as the factorised morphism.}) and that one has functorial Ore completions along S-denominators resp.\ T-denominators. For uni-fractionable categories, we omit the stipulations of functoriality; instead, we require the existence of weakly universal Ore completions along S-denominators resp.\ T-denominators.

The advantage of uni-fractionable categories is that functoriality of factorisations is not needed. On the one hand, this is convenient for applications. On the other hand, the theory developed here can be applied to arbitrary Quillen model categories. Moreover, it can also be applied to the full subcategories of the cofibrant, fibrant resp.\ bifibrant objects of a Quillen model category. As a consequence, all of them admit a \(3\)-arrow calculus.

Furthermore, a derivable category in the sense of \eigenname{Cisinski}~\cite[sec.~2.25]{cisinski:2010:categories_derivables} (\footnote{Also called an Anderson-Brown-Cisinski premodel category by \eigenname{R{\u{a}}dulescu-Banu}~\cite[def.~1.1.3]{radulescu-banu:2006:cofibrations_in_homotopy_theory}.}), which is a self-dual generalisation of a category of fibrant objects in the sense of \eigenname{K. Brown}~\cite[sec.~1]{brown:1974:abstract_homotopy_theory_and_generalized_sheaf_cohomology}, admits a \(3\)-arrow calculus, provided stronger variants of the factorisation axioms and the axioms which ensure stability of acyclic cofibrations under pushouts resp.\ of acyclic fibrations under pullbacks hold. For the relationship of \eigenname{Cisinski}'s approach with other axiom systems, see~\cite[sec.~2]{radulescu-banu:2006:cofibrations_in_homotopy_theory}.

A further example of a uni-fractionable category structure is provided by the category of complexes in an arbitrary abelian category, where the denominators are given by the quasi-isomorphisms, that is, by those morphisms inducing isomorphisms on the homology objects. To obtain the derived category, instead of localising the homotopy category of complexes, we may directly localise the category of complexes itself. The price to pay is that instead of a \(2\)-arrow calculus, we obtain a \(3\)-arrow calculus. Similarly for the derived category of an idempotent splitting exact category.

One feature of this \(3\)-arrow approach is its self-duality. This might be a reason why \(3\)-arrows occurred implicitly in \eigenname{Grothendieck}'s construction of a localisation of an abelian category with respect to a thick subcategory~\cite[sec.~1.11, p.~138]{grothendieck:1957:sur_quelques_points_d_algebre_homologique}, cf.\ example~\ref{ex:thick_subcategories_of_abelian_and_triangulated_categories_yield_uni-fractionable_categories}\ref{ex:thick_subcategories_of_abelian_and_triangulated_categories_yield_uni-fractionable_categories:abelian_category_with_factorised_structure}, although a \(2\)-arrow approach is of course sufficient.

\paragraph{Outline} \label{par:outline}

We recall in section~\ref{sec:preliminaries} some notions of localisation theory and indicate how quotients of (ordered) graphs with respect to so-called graph congruences can be constructed. In section~\ref{sec:uni-fractionable_categories}, uni-fractionable categories are introduced. Recall that the aim of this article is to construct a localisation of a uni-fractionable category with respect to its set of denominators. To this end, we proceed in two steps: In section~\ref{sec:the_3-arrow_graph}, we assign to a uni-fractionable category a certain graph, its \(3\)-arrow graph, and introduce a graph congruence on this graph. Then, in section~\ref{sec:the_fraction_category}, it turns out that the quotient graph has a canonically given category structure, and we will show that this category is a localisation of the uni-fractionable category we started with. Our main theorem~\ref{th:description_of_the_fraction_category} then gives a criterion on when two \(3\)-arrows represent the same morphism in the localisation. In section~\ref{sec:co_products_and_additive_uni-fractionable_categories}, we give a sufficient criterion for the localisation and the localisation functor being additive. Finally, in section~\ref{sec:applications}, we show how Quillen model categories, derivable categories (under additional conditions), complexes and some further classical examples fit into this framework. The example of complexes with entries in an idempotent splitting exact category is best understood when generalised; this requires a little theory of formal cones as provided in appendix~\ref{sec:formal_cones_in_exact_categories}.

\paragraph{Acknowledgements} \label{par:acknowledgements}

I thank \eigenname{Matthias K{\"u}nzer} for many useful discussions on this article, in particular for his ideas leading to example~\ref{ex:exact_categories_with_enough_formal_cones_with_respect_to_subcategories_closed_under_pure_short_exact_sequences}.

This article will be part of my forthcoming doctoral thesis. I thank the RWTH Aachen Graduiertenf{\"o}rderung for financial support.

\subsection*{Conventions and notations}

We use the following conventions and notations.

\begin{itemize}
\item The composite of morphisms \(f\colon X \map Y\) and \(g\colon Y \map Z\) is usually denoted by \(f g\colon X \map Z\). The composite of functors \(F\colon \mathcal{C} \map \mathcal{D}\) and \(G\colon \mathcal{D} \map \mathcal{E}\) is usually denoted by \(G \comp F\colon \mathcal{C} \map \mathcal{E}\).
\item Isomorphy of objects \(X\) and \(Y\) is denoted by \(X \isomorphic Y\). Equivalence between categories \(\mathcal{C}\) and \(\mathcal{D}\) is denoted by \(\mathcal{C} \categoricalequivalent \mathcal{D}\).
\item Given a category \(\mathcal{C}\) and objects \(X\) and \(Y\) in \(\mathcal{C}\), we write \({_{\mathcal{C}}}(X, Y)\) for the set of morphisms from \(X\) to \(Y\).
\item Given a subobject \(U\) of an object \(X\), we denote by \(\inc = \inc^U\colon U \map X\) the inclusion. Dually, given a quotient object \(Q\) of an object \(X\), we denote by \(\quo = \quo^Q\colon X \map Q\) the quotient morphism.
\item Given a coproduct \(C\) of \(X_1\) and \(X_2\), the embedding \(X_k \map C\) is denoted by \(\emb_k = \emb_k^{C}\) for \(k \in \{1, 2\}\). Given morphisms \(f_k\colon X_k \map Y\) for \(k \in \{1, 2\}\), the induced morphism \(C \map Y\) is denoted by \(\begin{smallpmatrix} f_1 \\ f_2 \end{smallpmatrix} = \begin{smallpmatrix} f_1 \\ f_2 \end{smallpmatrix}^{\!C}\). Dually for products: We write \(\pr_k = \pr_k^P\) for the projections of a product of \(X_1\) and \(X_2\) and \(\begin{smallpmatrix} f_1 & f_2 \end{smallpmatrix} = \begin{smallpmatrix} f_1 & f_2 \end{smallpmatrix}^P\) for the induced morphism \(Y \map P\) of morphisms \(f_k\colon Y \map X_k\), \(k \in \{1, 2\}\).
\item By a sum of objects \(X_1\) and \(X_2\), we understand an object \(S\) such that \(S\) carries the structure of a coproduct and a product of \(X_1\) and \(X_2\), and such that \(\emb_k \pr_l = \kdelta_{k, l}\) for \(k, l \in \{1, 2\}\), where \(\kdelta\) denotes the Kronecker delta.
\item Given an initial object \(I\), the unique morphism \(I \map X\) to an object \(X\) will be denoted by \(\ini = \ini_X = \ini[I]_X\). Dually, given a terminal object \(T\), the unique morphism \(X \map T\) from an object \(T\) will be denoted by \(\ter = \ter_X = \ter[T]_X\). Given a zero object \(N\), the unique morphism \(X \map Y\) that factors over \(N\) will be denoted by \(0\).
\item Given a category admitting finite coproducts and objects \(X_1\), \(X_2\), we denote by \(X_1 \smallcoprod X_2\) a chosen coproduct and by \(\initialobject\) a chosen initial object. Analogously, given morphisms \(f_k\colon X_k \map Y_k\) for \(k \in \{1, 2\}\), the coproduct of \(f_1\) and \(f_2\) is denoted by \(f_1 \smallcoprod f_2\). Analogously for finite products resp.\ finite sums, where we write \(X_1 \smallprod X_2\) and \(f_1 \smallprod f_2\) for a chosen product and \(\terminalobject\) for a chosen terminal object resp.\ \(X_1 \directsum X_2\) and \(f_1 \directsum f_2\) for a chosen sum and \(0\) for a chosen zero object.
\item Given a category admitting finite coproducts \(\mathcal{C}\) and a category \(\mathcal{D}\), we say that a functor \(F\colon \mathcal{C} \map \mathcal{D}\) preserves finite coproducts if \(F \initialobject\) is an initial object in \(\mathcal{D}\), and if, given \(X_1, X_2 \in \Ob \mathcal{C}\), the object \(F(X_1 \smallcoprod X_2)\) is a coproduct of \(F X_1\) and \(F X_2\), where the embeddings are given by \(\emb_1^{F(X_1 \smallcoprod X_2)} = F(\emb_1^{X_1 \smallcoprod X_2})\) and \(\emb_2^{F(X_1 \smallcoprod X_2)} = F(\emb_2^{X_1 \smallcoprod X_2})\). Dually for finite products and analogously for finite sums.
\item Given a category admitting kernels, we denote by \(\Kernel f\) a chosen kernel of a morphism \(f\). Given a category admitting cokernels, we denote by \(\Cokernel f\) a chosen cokernel of a morphism \(f\).
\item The opposite category of a category \(\mathcal{C}\) is denoted by \(\mathcal{C}^\op\).
\item By a weak pushout rectangle (resp.\ weak pullback rectangle) we understand a quadrangle having the universal property of a pushout rectangle (resp.\ pullback rectangle) except for the uniqueness of the induced morphism.
\item The category of complexes in an additive category \(\mathcal{A}\) is denoted by \(\Com(\mathcal{A})\), its homotopy category by \(\HomotopyCategoryOfComplexes(\mathcal{A})\). The derived category of an exact category \(\mathcal{E}\) is denoted by \(\DerivedCategory(\mathcal{E})\).
\item Arrows \(a\) and \(b\) in an (oriented) graph are called parallel if \(\Source a = \Source b\) and \(\Target a = \Target b\).
\item In an exact category \(\mathcal{E}\), the distinguished short exact sequences in \(\mathcal{E}\) will be called \newnotion{pure short exact sequences}. Likewise, the monomorphisms occurring in a pure short exact sequence are called \newnotion{pure monomorphisms}, and the epimorphisms occurring in a pure short exact sequence are called \newnotion{pure epimorphisms}.
\item We use the notations \(\Naturals = \{1, 2, 3, \dots\}\) and \(\Naturals_0 = \Naturals \cup \{0\}\).
\item Given integers \(a, b \in \Integers\), we write \([a, b] := \{z \in \Integers \mid a \leq z \leq b\}\) for the set of integers lying between \(a\) and \(b\).
\end{itemize}

\paragraph{A remark on Grothendieck universes} \label{par:a_remark_on_grothendieck_universes}

To avoid set-theoretical difficulties, we work with Grothendieck universes~\cite[exp.~I, sec.~0]{artin_grothendieck_verdier:1972:sga_4_1} in this article. In particular, every category has an object \emph{set} and a morphism \emph{set}. Given a Grothendieck universe \(\mathfrak{U}\), we say that a category \(\mathcal{C}\) is a \newnotion{\(\mathfrak{U}\)-category} if \(\Ob \mathcal{C}\) and \(\Mor \mathcal{C}\) are elements of~\(\mathfrak{U}\). The \newnotion{category of \(\mathfrak{U}\)-categories}, whose object set consists of all \(\mathfrak{U}\)-categories and whose morphism set consists of all functors between \(\mathfrak{U}\)-categories (and source, target, composition and identities given by ordinary source, target, composition of functors and the identity functors, respectively), will be denoted \(\Cat = \Cat_{(\mathfrak{U})}\).

\pagebreak 

\section{Preliminaries} \label{sec:preliminaries}

In this section, we give some preliminaries on localisations of categories and quotient graphs with respect to graph congruences.

\subsection*{Localisations of categories} \label{ssec:localisations_of_categories}

We suppose given a category \(\mathcal{C}\). A \newnotion{denominator set} in \(\mathcal{C}\) is a subset \(D \subseteq \Mor \mathcal{C}\). We will consider denominator sets with special properties later in this article, but at the moment, a denominator set \(D\) is just an arbitrary subset of \(\Mor \mathcal{C}\). Informally, it is a subset singled out with the ``intention of localising with respect to it'', in the following sense.

A \newnotion{localisation} of \(\mathcal{C}\) with respect to a denominator set \(D\) in \(\mathcal{C}\) consists of a category \(\mathcal{L}\) and a functor \(L\colon \mathcal{C} \map \mathcal{L}\) such that the following axioms hold.
\begin{itemize}
\item[(Inv)] \emph{Invertibility}. For all \(d \in D\), the morphism \(L d\) is invertible.
\item[(1-uni)] \emph{1-universality}. Given a category \(\mathcal{D}\) and a functor \(F\colon \mathcal{C} \map \mathcal{D}\) such that \(F d\) is invertible for all \(d \in D\), there exists a unique functor \(\hat F\colon \mathcal{L} \map \mathcal{D}\) with \(F = \hat F \comp L\).
\item[(2-uni)] \emph{2-universality}. We suppose given a category \(\mathcal{D}\) and functors \(F, G\colon \mathcal{C} \map \mathcal{D}\) such that \(F d\) and \(G d\) are invertible for all \(d \in D\), and we denote by \(\hat F\colon \mathcal{L} \map \mathcal{D}\) resp.\ \(\hat G\colon \mathcal{L} \map \mathcal{D}\) the unique functor with \(F = \hat F \comp L\) resp.\ \(G = \hat G \comp L\). Given a transformation \(\alpha\colon F \map G\), there exists a unique transformation \(\hat \alpha\colon \hat F \map \hat G\) such that \(\hat \alpha_{L X} = \alpha_X\) for all \(X \in \Ob \mathcal{C}\).
\end{itemize}
\[\begin{tikzpicture}[baseline=(m-2-1.base)]
  \matrix (m) [diagram=5.0em]{
    \mathcal{L} & \mathcal{D} \\
    \mathcal{C} & \\};
  \path[->, font=\scriptsize]
    (m-1-1) edge[exists, out=20, in=160] node[above] {\(\hat F\)} node (Fhat) {} (m-1-2)
            edge[exists, out=-20, in=-160] node[below, near start] {\(\hat G\)} node (Ghat) {} (m-1-2)
    (m-2-1) edge node[left] {\(L\)} (m-1-1)
            edge[out=60, in=-150] node[left] {\(F\)} node[sloped] (F) {} (m-1-2)
            edge[out=30, in=-120] node[right] {\(G\)} node[sloped] (G) {} (m-1-2);
  \path[->, font=\scriptsize]
    (Fhat) edge[exists] node[right] {\(\hat \alpha\)} (Ghat)
    (F) edge node[above right] {\(\alpha\)} (G);
\end{tikzpicture}\]
By abuse of notation, we refer to the localisation as well as to its underlying category just by \(\mathcal{L}\). The functor \(L\) is said to be the \newnotion{localisation functor} of the localisation \(\mathcal{L}\). Given a localisation \(\mathcal{L}\) of \(\mathcal{C}\) with respect to \(D\) with localisation functor \(L\colon \mathcal{C} \map \mathcal{L}\), we write \(\LocalisationFunctor = \LocalisationFunctor[\mathcal{L}] := L\).

\eigenname{Gabriel} and \eigenname{Zisman} have shown in~\cite[sec.~1.1]{gabriel_zisman:1967:calculus_of_fractions_and_homotopy_theory} that there exists a localisation of every category \(\mathcal{C}\) with respect to an arbitrary denominator set \(D\) in \(\mathcal{C}\). We will not make use of this result. Rather, given a uni-fractionable category, see definition~\ref{def:uni-fractionable_categories_and_their_morphisms}, we construct a localisation directly, cf.\ propositions~\ref{prop:welldefinedness_of_the_fraction_category} and~\ref{prop:universal_property_of_the_fraction_category}.

\subsection*{Saturatedness} \label{ssec:saturatedness}

We suppose given a category \(\mathcal{C}\), a denominator set \(D\) in \(\mathcal{C}\), and a localisation \(\mathcal{L}\) of \(\mathcal{C}\) with respect to \(D\). By definition of a localisation, \(\LocalisationFunctor(d)\) is invertible for every \(d \in D\). But in general, not every morphism \(f\) in \(\mathcal{C}\) for which \(\LocalisationFunctor(f)\) is invertible in \(\mathcal{L}\) has to be an element of \(D\). The denominator set \(D\) is said to be \newnotion{saturated} if \(f \in D\) for all \(f \in \Mor \mathcal{C}\) with \(\LocalisationFunctor(f)\) invertible in \(\mathcal{L}\). We use the following notions to indicate how far \(D\) is away from this property.

The denominator set \(D\) is said to be \newnotion{multiplicative} if it fulfills:
\begin{itemize}
\item[(Cat)] \emph{Multiplicativity}. For all \(d, e \in D\) with \(\Target d = \Source e\), their composite \(d e\) is in \(D\), and for every object \(X\) in \(\mathcal{C}\), the identity \(1_X\) is in \(D\).
\end{itemize}

The denominator set \(D\) is said to be \newnotion{semi-saturated} if it is multiplicative and fulfills:
\begin{itemize}
\item[(2\,of\,3)] \emph{2 out of 3 axiom}. We suppose given morphisms \(f\) and \(g\) in \(\mathcal{C}\) with \(\Target f = \Source g\). If two out of the morphisms \(f\), \(g\), \(f g\) are in \(D\), then so is the third.
\end{itemize}

Finally, the denominator set \(D\) is said to be \newnotion{weakly saturated} if it is multiplicative and fulfills:
\begin{itemize}
\item[(2\,of\,6)] \emph{2 out of 6 axiom}. We suppose given morphisms \(f\), \(g\), \(h\) in \(\mathcal{C}\) with \(\Target f = \Source g\) and \(\Target g = \Source h\). If \(f g, g h \in D\), then \(f, g, h, f g h \in D\).
\end{itemize}

Saturatedness implies weak saturatedness, weak saturatedness implies semi-saturatedness, and semi-saturated\-ness implies multiplicativity (the last impliciation holds by definition).

\subsection*{Categories with denominators} \label{ssec:categories_with_denominators}

A \newnotion{category with denominators} consists of a category \(\mathcal{C}\) together with a denominator set \(D\) in \(\mathcal{C}\). By abuse of notation, we refer to the category with denominators as well as to its underlying category just by \(\mathcal{C}\). The elements of \(D\) are called \newnotion{denominators} in \(\mathcal{C}\).

Given a category with denominators \(\mathcal{C}\) with set of denominators \(D\), we write \(\Denominators \mathcal{C} := D\). In diagrams, a denominator \(d\) in \(\mathcal{C}\) will usually be depicted as
\[\begin{tikzpicture}[baseline=(m-1-1.base)]
  \matrix (m) [diagram]{
    {} & {} \\};
  \path[->, font=\scriptsize]
    (m-1-1) edge[den] node[above] {\(d\)} (m-1-2);
\end{tikzpicture}.\]

We suppose given categories with denominators \(\mathcal{C}\) and \(\mathcal{D}\). A \newnotion{morphism of categories with denominators} from \(\mathcal{C}\) to \(\mathcal{D}\) is a functor \(F\colon \mathcal{C} \map \mathcal{D}\) that \newnotion{preserves denominators}, that is, such that \(F d\) is a denominator in \(\mathcal{D}\) for every denominator \(d\) in \(\mathcal{C}\).

We suppose given a Grothendieck universe \(\mathfrak{U}\). A category with denominators is said to be a \newnotion{\(\mathfrak{U}\)-category with denominators} if its underlying category is in \(\mathfrak{U}\). The category \(\CatD = \CatD_{(\mathfrak{U})}\) consisting of the set of \(\mathfrak{U}\)-categories with denominators as set of objects and the set of morphisms of categories with denominators between \(\mathfrak{U}\)-categories with denominators as set of morphisms (and categorical structure maps induced from \(\Cat_{(\mathfrak{U})}\)) is called the \newnotion{category of categories with denominators} (more precisely, the \newnotion{category of \(\mathfrak{U}\)-categories with denominators}).

Given a category with denominators \(\mathcal{C}\), a \newnotion{localisation} of \(\mathcal{C}\) is defined to be a localisation of (the underlying category of) \(\mathcal{C}\) with respect to its set of denominators \(\Denominators \mathcal{C}\).

A category with denominators \(\mathcal{C}\) is said to be \newnotion{multiplicative} resp.\ \newnotion{semi-saturated} resp.\ \newnotion{weakly saturated} resp.\ \newnotion{saturated} if its set of denominators \(\Denominators \mathcal{C}\) is multiplicative resp.\ semi-saturated resp.\ weakly saturated resp.\ saturated denominator set in the category \(\mathcal{C}\).

\subsection*{Graph congruences and quotient graphs} \label{ssec:graph_congruences_and_quotient_graphs}

We suppose given an (oriented) graph \(\mathcal{G}\). An equivalence relation \({\equiv}\) on \(\Arr \mathcal{G}\) is said to be a \newnotion{graph congruence} on \(\mathcal{G}\) if \(\Source a = \Source \tilde a\) and \(\Target a = \Target \tilde a\) for all \(a, \tilde a \in \Arr \mathcal{G}\) with \(a \equiv \tilde a\). Given a graph congruence \({\equiv}\) on \(\mathcal{G}\), the \newnotion{quotient graph} of \(\mathcal{G}\) with respect to \({\equiv}\) is the graph \(\mathcal{G} / {\equiv}\) with \(\Ob \mathcal{G} / {\equiv} := \Ob \mathcal{G}\), \(\Arr \mathcal{G} / {\equiv} := ({\Arr \mathcal{G}}) / {\equiv}\) and \(\Source{[a]_{\equiv}} := \Source a\), \(\Target{[a]_{\equiv}} := \Target a\) for \(a \in \Arr \mathcal{G}\). The graph morphism \(\quo = \quo^{\mathcal{G} / {\equiv}}\colon \mathcal{G} \map {\mathcal{G} / {\equiv}}\) given by \(\quo(X) := X\) and \(\quo(a) := [a]_{\equiv}\) is called the \newnotion{quotient graph morphism}.

The quotient graph of \(\mathcal{G}\) with respect to a graph congrunce \({\equiv}\) fulfills the following universal property. Given \(a, \tilde a \in \Arr \mathcal{G}\) with \(a \equiv \tilde a\), we have \(\quo(a) = \quo(\tilde a)\). For every graph \(\mathcal{H}\) and every graph morphism \(F\colon \mathcal{G} \map \mathcal{H}\) with \(F a = F \tilde a\) for \(a, \tilde a \in \Arr \mathcal{G}\) with \(a \equiv \tilde a\), there exists a unique graph morphism \(\overline{F}\colon \mathcal{G} / {\equiv} \map \mathcal{H}\) with \(F = \overline{F} \comp \quo\).
\[\begin{tikzpicture}[baseline=(m-2-1.base)]
  \matrix (m) [diagram]{
    \mathcal{G} & \mathcal{H} \\
    \mathcal{G} / {\equiv} & \\};
  \path[->, font=\scriptsize]
    (m-1-1) edge node[above] {\(F\)} (m-1-2)
            edge node[left] {\(\quo\)} (m-2-1)
    (m-2-1) edge[exists] node[right=1pt] {\(\overline{F}\)} (m-1-2);
\end{tikzpicture}\]

\section{Uni-fractionable categories} \label{sec:uni-fractionable_categories}

\begin{definition}[uni-fractionable categories, morphisms of uni-fractionable categories] \label{def:uni-fractionable_categories_and_their_morphisms} \
\begin{enumerate}
\item \label{def:uni-fractionable_categories_and_their_morphisms:uni-fractionable_category} A \newnotion{uni-fractionable category} (\footnote{There exists also the notion of a fractionable category, cf.\ the author's forthcoming doctoral thesis.}) consists of a semi-saturated category with denominators \(\mathcal{C}\) together with multiplicative subsets \(S, T \subseteq \Denominators \mathcal{C}\) such that the following axioms hold.
\begin{itemize}
\item[(WU)] \emph{Weakly universal Ore completions}. Given morphisms \(i\) and \(f\) in \(\mathcal{C}\) with \(i \in S\) and \(\Source i = \Source f\), there exists a weak pushout rectangle
\[\begin{tikzpicture}[baseline=(m-2-1.base)]
  \matrix (m) [diagram without objects]{
    & \\
    & \\};
  \path[->, font=\scriptsize]
    (m-1-1) edge node[above] {\(f'\)} (m-1-2)
    (m-2-1) edge node[above] {\(f\)} (m-2-2)
            edge node[left] {\(i\)} (m-1-1)
    (m-2-2) edge node[right] {\(i'\)} (m-1-2);
\end{tikzpicture}\]
in \(\mathcal{C}\) such that \(i' \in S\). Dually, given morphisms \(p\) and \(f\) in \(\mathcal{C}\) with \(p \in T\) and \(\Target p = \Target f\), there exists a weak pullback rectangle
\[\begin{tikzpicture}[baseline=(m-2-1.base)]
  \matrix (m) [diagram without objects]{
    & \\
    & \\};
  \path[->, font=\scriptsize]
    (m-1-1) edge node[above] {\(f'\)} (m-1-2)
            edge node[left] {\(p'\)} (m-2-1)
    (m-1-2) edge node[right] {\(p\)} (m-2-2)
    (m-2-1) edge node[above] {\(f\)} (m-2-2);
\end{tikzpicture}\]
in \(\mathcal{C}\) such that \(p' \in T\).
\item[(Fac)] \emph{Factorisations}. For every denominator \(d\) in \(\mathcal{C}\), there exist \(i \in S\) and \(p \in T\) with \(d = i p\).
\[\begin{tikzpicture}[baseline=(m-2-1.base)]
  \matrix (m) [diagram without objects=0.9em]{
    & & \\
    & & \\};
  \path[->, font=\scriptsize]
    (m-1-2) edge[exists] node[right=2pt] {\(p\)} (m-2-3)
    (m-2-1) edge node[below] {\(d\)} (m-2-3)
            edge[exists] node[left] {\(i\)} (m-1-2);
\end{tikzpicture}\]
\end{itemize}
By abuse of notation, we refer to the uni-fractionable category as well as to its underlying category with denominators just by~\(\mathcal{C}\). The elements of \(S\) are called \newnotion{S-denominators} in \(\mathcal{C}\), and the elements of \(T\) are called \newnotion{T-denominators} in \(\mathcal{C}\).

Given a uni-fractionable category \(\mathcal{C}\) with set of S-denominators \(S\) and set of T-denominators \(T\), we write \(\SDenominators \mathcal{C} := S\) and \(\TDenominators \mathcal{C} := T\). In diagrams, an S-denominator \(i\) resp.\ a T-denominator \(p\) in \(\mathcal{C}\) will usually be depicted as
\[\begin{tikzpicture}[baseline=(m-1-1.base)]
  \matrix (m) [diagram]{
    {} & {} \\};
  \path[->, font=\scriptsize]
    (m-1-1) edge[sden] node[above] {\(i\)} (m-1-2);
\end{tikzpicture}
\text{ resp.\ }
\begin{tikzpicture}[baseline=(m-1-1.base)]
  \matrix (m) [diagram]{
    {} & {} \\};
  \path[->, font=\scriptsize]
    (m-1-1) edge[tden] node[above] {\(p\)} (m-1-2);
\end{tikzpicture}.\]
\item \label{def:uni-fractionable_categories_and_their_morphisms:morphisms_of_uni-fractionable_categories} We suppose given uni-fractionable categories \(\mathcal{C}\) and \(\mathcal{D}\). A \newnotion{morphism of uni-fractionable categories} from \(\mathcal{C}\) to \(\mathcal{D}\) is a morphism of categories with denominators \(F\colon \mathcal{C} \map \mathcal{D}\) that \newnotion{preserves S-denominators} and \newnotion{T-denominators}, that is, such that \(F i\) is an S-denominator in \(\mathcal{D}\) for every S-denominator \(i\) in \(\mathcal{C}\) and such that \(F p\) is a T-denominator in \(\mathcal{D}\) for every T-denominator \(p\) in \(\mathcal{C}\).
\end{enumerate}
\end{definition}

Some examples of uni-fractionable categories can be found in section~\ref{sec:applications}.

Since the composite of composable morphisms of uni-fractionable categories is again a morphism of uni-fractionable categories and the identity functor on a uni-fractionable category is a morphism of uni-fractionable categories, we get a category of uni-fractionable categories:

\begin{definition}[uni-fractionable category in a Grothendieck universe] \label{def:uni-fractionable_category_in_a_grothendieck_universe}
We suppose given a Grothendieck universe \(\mathfrak{U}\). A uni-fractionable category \(\mathcal{C}\) is said to be a \newnotion{\(\mathfrak{U}\)-uni-fractionable category} if its underlying category with denominators is a category with denominators in \(\mathfrak{U}\).
\end{definition}

\begin{remark} \label{rem:uni-fractionable_categories_and_grothendieck_universes} \
\begin{enumerate}
\item \label{rem:uni-fractionable_categories_and_grothendieck_universes:being_element} We suppose given a Grothendieck universe \(\mathfrak{U}\). A uni-fractionable category \(\mathcal{C}\) is a \(\mathfrak{U}\)-uni-fractionable category if and only if it is an element of \(\mathfrak{U}\).
\item \label{rem:uni-fractionable_categories_and_grothendieck_universes:existence} For every uni-fractionable category \(\mathcal{C}\) there exists a Grothendieck universe \(\mathfrak{U}\) such that \(\mathcal{C}\) is in~\(\mathfrak{U}\).
\end{enumerate}
\end{remark}

\begin{definition}[category of uni-fractionable categories] \label{def:category_of_uni-fractionable_categories}
We suppose given a Grothendieck universe \(\mathfrak{U}\).
\begin{enumerate}
\item The category \(\UFrCat = \UFrCat_{(\mathfrak{U})}\) consisting of the set of \(\mathfrak{U}\)-uni-fractionable categories as set of objects and the set of morphisms of uni-fractionable categories between \(\mathfrak{U}\)-uni-fractionable categories as set of morphisms (and categorical structure maps induced from \(\CatD_{(\mathfrak{U})}\)) is called the \newnotion{category of uni-fractionable categories} (more precisely, the \newnotion{category of \(\mathfrak{U}\)-uni-fractionable categories}).
\item We denote by \(\UFr(\CatD_{(\mathfrak{U})})\) the full subcategory of \(\CatD_{(\mathfrak{U})}\) with
\begin{align*}
\Ob \UFr(\CatD_{(\mathfrak{U})}) & = \{\mathcal{C} \in \Ob \CatD_{(\mathfrak{U})} \mid \text{there exist \(S, T \subseteq \Denominators \mathcal{C}\) such that \(\mathcal{C}\) becomes a} \\
& \qquad \text{uni-fractionable category with \(\SDenominators \mathcal{C} = S\) and \(\TDenominators \mathcal{C} = T\)}\},
\end{align*}
the \newnotion{category of categories with denominators admitting the structure of a uni-fractionable category} (more precisely, the \newnotion{category of \(\mathfrak{U}\)-categories with denominators admitting the structure of a uni-fractionable category}).
\end{enumerate}
\end{definition}

\section{\texorpdfstring{The $3$-arrow graph}{The 3-arrow graph}} \label{sec:the_3-arrow_graph}

We want to construct a localisation \(\FractionCategory \mathcal{C}\) of a uni-fractionable category \(\mathcal{C}\) (with respect to its set of denominators \(\Denominators \mathcal{C}\)). To this end, we begin in this section by introducing its \(3\)-arrow graph \(\threearrowgraph{\mathcal{C}}\) and a graph congruence \({\fractionequal}\) on \(\threearrowgraph{\mathcal{C}}\).

In this section, we suppose given a uni-fractionable category \(\mathcal{C}\).

\begin{definition}[\(3\)-arrow shape] \label{def:3-arrow_shape}
The graph
\[\begin{tikzpicture}[baseline=(m-1-1.base)]
  \matrix (m) [diagram]{
    0 & 1 & 2 & 3 \\};
  \path[->, font=\scriptsize]
    (m-1-2) edge node[above] {\(\numeratorshape\)} (m-1-3)
            edge node[above] {\(\targetdenominatorshape\)} (m-1-1)
    (m-1-4) edge node[above] {\(\sourcedenominatorshape\)} (m-1-3);
\end{tikzpicture}\]
is said to be the \newnotion{\(3\)-arrow shape} and will be denoted by \(\threearrowshape\).
\end{definition}

Recall that a \newnotion{diagram} of \newnotion{shape} \(\threearrowshape\) in \(\mathcal{C}\) is just a graph morphism \(D\colon \threearrowshape \map \mathcal{C}\). Given a diagram \(D\) of shape \(\threearrowshape\) in \(\mathcal{C}\), we write \(D_i := D(i)\) for \(i \in \Ob \threearrowshape\) and \(D_a := D(a)\) for \(a \in \Arr \threearrowshape\). Given diagrams \(D\) and \(E\), a \newnotion{diagram morphism} from \(D\) to \(E\) is a family \(f = (f_i)_{i \in \Ob \threearrowshape}\) in \(\Mor \mathcal{C}\) with \(D_a f_j = f_i E_a\) for all arrows \(a\colon i \map j\) in \(\threearrowshape\). The category consisting of diagrams of shape \(\threearrowshape\) in \(\mathcal{C}\) as objects and diagram morphisms between those diagrams as morphisms will be denoted by \(\mathcal{C}^{\threearrowshape}\). (\footnote{By the adjunction ``free category on a graph -- underlying graph of a category'', diagrams of shape \(\threearrowshape\) in \(\mathcal{C}\) correspond in a unique way to functors from the free category on \(\threearrowshape\) to \(\mathcal{C}\), and diagram morphisms correspond to transformations.})

\begin{definition}[\(3\)-arrow graph] \label{def:3-arrow_graph}
The \newnotion{\(3\)-arrow graph} of \(\mathcal{C}\) is defined to be the graph \(\threearrowgraph{\mathcal{C}}\) with object set
\[\Ob \threearrowgraph{\mathcal{C}} := \Ob \mathcal{C}\]
and arrow set
\[\Arr \threearrowgraph{\mathcal{C}} := \{A \in \Ob \mathcal{C}^\threearrowshape \mid A_{\sourcedenominatorshape}, A_{\targetdenominatorshape} \in \Denominators \mathcal{C}\}.\]
The source resp.\ the target of \(A \in \Arr \threearrowgraph{\mathcal{C}}\) are defined by \(\Source A := A_0\) resp.\ \(\Target A := A_3\).

An arrow \(A\) in \(\threearrowgraph{\mathcal{C}}\) is called a \newnotion{\(3\)-arrow} in \(\mathcal{C}\). Given a denominator \(b\colon \tilde X \map X\), a morphism \(f\colon \tilde X \map \tilde Y\) and a denominator \(a\colon Y \map \tilde Y\) in \(\mathcal{C}\), we abuse notation and denote the unique \(3\)-arrow \(A\) with \(A_{\targetdenominatorshape} = b\), \(A_{\numeratorshape} = f\), \(A_{\sourcedenominatorshape} = a\) by \((b, f, a) := A\). Moreover, we use the notation \((b, f, a)\colon \threearrow{X}{\tilde X}{\tilde Y}{Y}\). 
\[\begin{tikzpicture}[baseline=(m-1-1.base)]
  \matrix (m) [diagram]{
    X & \tilde X & \tilde Y & Y \\};
  \path[->, font=\scriptsize]
    (m-1-2) edge node[above] {\(f\)} (m-1-3)
            edge[den] node[above] {\(b\)} (m-1-1)
    (m-1-4) edge[den] node[above] {\(a\)} (m-1-3);
\end{tikzpicture}\]
\end{definition}

\begin{remark} \label{rem:universe_of_the_3-arrow_graph}
We suppose given a Grothendieck universe \(\mathfrak{U}\) such that \(\threearrowshape\) is in \(\mathfrak{U}\). If \(\mathcal{C}\) is in \(\mathfrak{U}\), then its \(3\)-arrow graph \(\threearrowgraph{\mathcal{C}}\) is in \(\mathfrak{U}\).
\end{remark}
\begin{proof}
We suppose that \(\mathcal{C}\) is in \(\mathfrak{U}\). Then \(\Ob \mathcal{C}\) and \(\Mor \mathcal{C}\) are in \(\mathfrak{U}\) and hence \(\Map(\Arr \threearrowshape, \Mor \mathcal{C})\) is in \(\mathfrak{U}\). But then \(\Ob \threearrowgraph{\mathcal{C}}\) and \(\Arr \threearrowgraph{\mathcal{C}}\) are in \(\mathfrak{U}\), that is, \(\threearrowgraph{\mathcal{C}}\) is in \(\mathfrak{U}\).
\end{proof}

Our next step will be the introduction of an equivalence relation on the arrow set of the \(3\)-arrow graph.

\begin{definition}[fraction equality] \label{def:fraction_equality_relation_on_the_3-arrow_graph}
The equivalence relation \(\fractionequal\) on \(\Arr \threearrowgraph{\mathcal{C}}\) is defined to be generated by the following relation on \(\Arr \threearrowgraph{\mathcal{C}}\): Given \((b, f, a) \in \Arr \threearrowgraph{\mathcal{C}}\) and \(c \in \Mor \mathcal{C}\) with \(a c \in \Denominators \mathcal{C}\), the \(3\)-arrow \((b, f, a)\) is in relation to the \(3\)-arrow \((b, f c, a c)\); and given \((b, f, a) \in \Arr \threearrowgraph{\mathcal{C}}\) and \(c \in \Mor \mathcal{C}\) with \(c b \in \Denominators \mathcal{C}\), the \(3\)-arrow \((b, f, a)\) is in relation to the \(3\)-arrow \((c b, c f, a)\).
\[\begin{tikzpicture}[baseline=(m-2-1.base)]
  \matrix (m) [diagram without objects]{
    & & & \\
    & & & \\};
  \path[->, font=\scriptsize]
    (m-1-1) edge[equality] (m-2-1)
    (m-1-2) edge node[above] {\(f\)} (m-1-3)
            edge[equality] (m-2-2)
            edge[den] node[above] {\(b\)} (m-1-1)
    (m-1-3) edge node[right] {\(c\)} (m-2-3)
    (m-1-4) edge[equality] (m-2-4)
            edge[den] node[above] {\(a\)} (m-1-3)
    (m-2-2) edge node[above] {\(f c\)} (m-2-3)
            edge[den] node[above] {\(b\)} (m-2-1)
    (m-2-4) edge[den] node[above] {\(a c\)} (m-2-3);
\end{tikzpicture}
\qquad
\begin{tikzpicture}[baseline=(m-2-1.base)]
  \matrix (m) [diagram without objects]{
    & & & \\
    & & & \\};
  \path[->, font=\scriptsize]
    (m-1-2) edge node[above] {\(f\)} (m-1-3)
            edge[den] node[above] {\(b\)} (m-1-1)
    (m-1-4) edge[den] node[above] {\(a\)} (m-1-3)
    (m-2-1) edge[equality] (m-1-1)
    (m-2-2) edge node[above] {\(c f\)} (m-2-3)
            edge[den] node[above] {\(c b\)} (m-2-1)
            edge node[right] {\(c\)} (m-1-2)
    (m-2-3) edge[equality] (m-1-3)
    (m-2-4) edge[den] node[above] {\(a\)} (m-2-3)
            edge[equality] (m-1-4);
\end{tikzpicture}\]
Given \((b, f, a), (\tilde b, \tilde f, \tilde a) \in \Arr \threearrowgraph{\mathcal{C}}\) with \((b, f, a) \fractionequal (\tilde b, \tilde f, \tilde a)\), we say that \((b, f, a)\) and \((\tilde b, \tilde f, \tilde a)\) are \newnotion{fraction equal}.
\end{definition}

In practice, it is sometimes convenient to work with different generating sets for fraction equality. These are stated in the following remark.

\pagebreak 

\begin{remark} \label{rem:other_generating_sets_for_the_fraction_equality_relation_on_the_3-arrow_graph} \
\begin{enumerate}
\item \label{rem:other_generating_sets_for_the_fraction_equality_relation_on_the_3-arrow_graph:different_directions} The fraction equality relation \(\fractionequal\) on \(\Arr \threearrowgraph{\mathcal{C}}\) is generated by the following relation: Given \((b, f, a) \in \Arr \threearrowgraph{\mathcal{C}}\) and \(c, c' \in \Mor \mathcal{C}\) with \(a c, c' b \in \Denominators \mathcal{C}\), the \(3\)-arrow \((b, f, a)\) is in relation to the \(3\)-arrow \((c' b, c' f c, a c)\).
\[\begin{tikzpicture}[baseline=(m-2-1.base)]
  \matrix (m) [diagram without objects]{
    & & & \\
    & & & \\};
  \path[->, font=\scriptsize]
    (m-1-2) edge node[above] {\(f\)} (m-1-3)
            edge[den] node[above] {\(b\)} (m-1-1)
    (m-1-3) edge node[right] {\(c\)} (m-2-3)
    (m-1-4) edge[equality] (m-2-4)
            edge[den] node[above] {\(a\)} (m-1-3)
    (m-2-1) edge[equality] (m-1-1)
    (m-2-2) edge node[above] {\(c' f c\)} (m-2-3)
            edge[den] node[above] {\(c' b\)} (m-2-1)
            edge node[right] {\(c'\)} (m-1-2)
    (m-2-4) edge[den] node[above] {\(a c\)} (m-2-3);
\end{tikzpicture}\]
\item \label{rem:other_generating_sets_for_the_fraction_equality_relation_on_the_3-arrow_graph:equal_directions} The fraction equality relation \(\fractionequal\) on \(\Arr \threearrowgraph{\mathcal{C}}\) is generated by the following relation: Given \((b, f, a), (\tilde b, \tilde f, \tilde a)\) 
\(\in \Arr \threearrowgraph{\mathcal{C}}\), the \(3\)-arrow \((b, f, a)\) is in relation to the \(3\)-arrow \((\tilde b, \tilde f, \tilde a)\) if there exist \(c, c' \in \Mor \mathcal{C}\) with \(b = c' \tilde b\), \(f c = c' \tilde f\), \(a c = \tilde a\).
\[\begin{tikzpicture}[baseline=(m-2-1.base)]
  \matrix (m) [diagram without objects]{
    & & & \\
    & & & \\};
  \path[->, font=\scriptsize]
    (m-1-1) edge[equality] (m-2-1)
    (m-1-2) edge node[above] {\(f\)} (m-1-3)
            edge node[right] {\(c'\)} (m-2-2)
            edge[den] node[above] {\(b\)} (m-1-1)
    (m-1-3) edge node[right] {\(c\)} (m-2-3)
    (m-1-4) edge[equality] (m-2-4)
            edge[den] node[above] {\(a\)} (m-1-3)
    (m-2-2) edge node[above] {\(\tilde f\)} (m-2-3)
            edge[den] node[above] {\(\tilde b\)} (m-2-1)
    (m-2-4) edge[den] node[above] {\(\tilde a\)} (m-2-3);
\end{tikzpicture}\]
\end{enumerate}
\end{remark}

As \(\mathcal{C}\) is semi-saturated, the morphisms \(c\) and \(c'\) in definition~\ref{def:fraction_equality_relation_on_the_3-arrow_graph} and remark~\ref{rem:other_generating_sets_for_the_fraction_equality_relation_on_the_3-arrow_graph} are automatically denominators in \(\mathcal{C}\).

\begin{remark} \label{rem:invertibility_of_3-arrows_is_preserved_by_fraction_equality}
We suppose given \(3\)-arrows \((b, f, a)\) and \((\tilde b, \tilde f, \tilde a)\) in \(\mathcal{C}\). If \((b, f, a) \fractionequal (\tilde b, \tilde f, \tilde a)\), then \(f\) is a denominator in \(\mathcal{C}\) if and only if \(\tilde f\) is a denominator in \(\mathcal{C}\).
\end{remark}
\begin{proof}
This follows by the definition of fraction equality~\ref{def:fraction_equality_relation_on_the_3-arrow_graph} and by the semi-saturatedness of \(\mathcal{C}\).
\end{proof}

Before we study a further property of the fraction equality relation, we will show that fraction equality respects the graph structure on the \(3\)-arrow graph.

\begin{remark} \label{rem:fraction_equality_on_the_3-arrow_graph_is_a_congruence}
The fraction equality relation \(\fractionequal\) on \(\Arr \threearrowgraph{\mathcal{C}}\) defines a graph congruence on \(\threearrowgraph{\mathcal{C}}\). In particular, the quotient graph \((\threearrowgraph{\mathcal{C}}) / {\fractionequal}\) is defined.
\end{remark}
\begin{proof}
For \((b, f, a) \in \Arr \threearrowgraph{\mathcal{C}}\), \(c, c' \in \Mor \mathcal{C}\) with \(a c, c' b \in \Denominators \mathcal{C}\), we have
\begin{align*}
& \Source{(c' b, c' f c, a c)} = \Target(c' b) = \Target b = \Source{(b, f, a)} \text{ and} \\
& \Target{(c' b, c' f c, a c)} = \Source(a c) = \Source a = \Target{(b, f, a)}.
\end{align*}
Thus the assertion follows from remark~\ref{rem:other_generating_sets_for_the_fraction_equality_relation_on_the_3-arrow_graph}\ref{rem:other_generating_sets_for_the_fraction_equality_relation_on_the_3-arrow_graph:different_directions}.
\end{proof}

\begin{definition}[double fraction] \label{def:double_fraction}
Given a \(3\)-arrow \((b, f, a)\) in \(\mathcal{C}\), its equivalence class in the quotient graph \((\threearrowgraph \mathcal{C}) / {\fractionequal}\) is denoted by \(\doublefrac{b}{f}{a} := [(b, f, a)]_{\fractionequal}\) and is said to be the \newnotion{double fraction} of \((b, f, a)\).
\end{definition}

Now we will present a certain reduced form for \(3\)-arrows. We will see that every \(3\)-arrow is fraction equal to such a reduced form.

\begin{definition}[normal \(3\)-arrows] \label{def:normal_3-arrows}
A \(3\)-arrow \((p, f, i)\) in \(\mathcal{C}\) is said to be \newnotion{normal} if \(i\) is an S-denominator and \(p\) is a T-denominator in \(\mathcal{C}\).
\[\begin{tikzpicture}[baseline=(m-1-1.base)]
  \matrix (m) [diagram without objects]{
    & & & \\};
  \path[->, font=\scriptsize]
    (m-1-2) edge node[above] {\(f\)} (m-1-3)
            edge[tden] node[above] {\(p\)} (m-1-1)
    (m-1-4) edge[sden] node[above] {\(i\)} (m-1-3);
\end{tikzpicture}\]
\end{definition}

The following lemma and its proof is (essentially) taken from~\cite[sec.~36.5]{dwyer_hirschhorn_kan_smith:2004:homotopy_limit_functors_on_model_categories_and_homotopical_categories}.

\begin{lemma}[normalisation lemma] \label{lem:normalisation_lemma}
Every \(3\)-arrow in \(\mathcal{C}\) is fraction equal to a normal \(3\)-arrow in \(\mathcal{C}\).
\end{lemma}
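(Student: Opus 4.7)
My plan is to normalise the two outer denominators of a given 3-arrow $(b, f, a) \colon \threearrow{X}{\tilde X}{\tilde Y}{Y}$ independently, first the right one and then the left one, each step combining a factorisation from (Fac) with a weakly universal Ore completion from (WU). First I would factor the right denominator as $a = i_a p_a$ with $i_a \in \SDenominators \mathcal{C}$ and $p_a \in \TDenominators \mathcal{C}$. Since $p_a \in \TDenominators \mathcal{C}$ and $\Target p_a = \tilde Y = \Target f$, the weak pullback part of (WU) supplies morphisms $f'$ and $p' \in \TDenominators \mathcal{C}$ with $f' p_a = p' f$. The resulting diagram $(p' b, f', i_a)$ is a valid 3-arrow, since $p' b$ is a composite of denominators and $i_a$ is a denominator. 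To show $(b, f, a) \fractionequal (p' b, f', i_a)$, I would apply the second generator of Definition \ref{def:fraction_equality_relation_on_the_3-arrow_graph} to $(b, f, a)$ with $c := p'$ (legitimate because $p' b \in \Denominators \mathcal{C}$), obtaining $(p'b, p'f, a)$, and then apply the first generator to $(p' b, f', i_a)$ with $c := p_a$ (legitimate because $i_a p_a = a \in \Denominators \mathcal{C}$), obtaining $(p' b, f' p_a, i_a p_a) = (p'b, p'f, a)$ via commutativity of the pullback; transitivity closes the loop.

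The second step is formally dual. I would factor the new left denominator $p' b$ as $p' b = i_b p_b$ with $i_b \in \SDenominators \mathcal{C}$ and $p_b \in \TDenominators \mathcal{C}$. Since $i_b \in \SDenominators \mathcal{C}$ and shares its source with $f'$, the weak pushout part of (WU) supplies morphisms $f''$ and $i_a' \in \SDenominators \mathcal{C}$ with $i_b f'' = f' i_a'$. The 3-arrow $(p_b, f'', i_a i_a')$ is then normal: $p_b \in \TDenominators \mathcal{C}$ and, by multiplicativity of $\SDenominators \mathcal{C}$, $i_a i_a' \in \SDenominators \mathcal{C}$. The fraction equality $(p'b, f', i_a) \fractionequal (p_b, f'', i_a i_a')$ would be proved by the mirror of the previous argument: the second generator applied to $(p_b, f'', i_a i_a')$ with $c := i_b$ yields $(p' b, f' i_a', i_a i_a')$ by the pushout identity, and the first generator applied to $(p' b, f', i_a)$ with $c := i_a'$ yields the same 3-arrow. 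Composing both equivalences gives a normal 3-arrow fraction equal to $(b, f, a)$.

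The proof is essentially bookkeeping; I do not anticipate any serious obstacle beyond carefully tracking the four generator applications and verifying that every intermediate composite ($p' b$, $i_a p_a$, $i_b p_b$, $i_a i_a'$) lies in $\Denominators \mathcal{C}$, which follows from the multiplicativity of $\Denominators \mathcal{C}$ guaranteed by semi-saturatedness together with the multiplicativity of $\SDenominators \mathcal{C}$ and $\TDenominators \mathcal{C}$. The only conceptual point is that (Fac) alone is insufficient — one needs (WU) to transport the factorisations through the middle morphism $f$.
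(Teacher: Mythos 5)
Your proof is correct and is essentially the paper's own argument run in mirror image: the paper factors the left denominator \(b\) first and pushes out along its S-part, then factors the resulting right composite \(a i'\) and pulls back, whereas you factor the right denominator \(a\) first and pull back along its T-part, then factor the new left composite \(p' b\) and push out. The ingredients -- (Fac), both halves of (WU), multiplicativity of \(\SDenominators \mathcal{C}\), \(\TDenominators \mathcal{C}\) and \(\Denominators \mathcal{C}\), and the two generators of fraction equality -- are used in exactly the same way, so this is the same proof up to the order of the two normalisation steps.
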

\begin{proof}
We suppose given an arbitrary \(3\)-arrow \((b, f, a)\) in \(\mathcal{C}\). There exist an S-denominator \(i\) and a T-denominator \(p\) in \(\mathcal{C}\) with \(b = i p\), and there exist an S-denominator \(i'\) and a morphism \(f'\) in \(\mathcal{C}\) with \(i f' = f i'\). By multiplicativity, \(a i'\) is a denominator in \(\mathcal{C}\). Thus there exist an S-denominator \(j\) and a T-denominator \(q\) in \(\mathcal{C}\) with \(a i' = j q\), and there exist a T-denominator \(q'\) and a morphism \(f''\) in \(\mathcal{C}\) with \(f'' q = q' f'\). By multiplicativity, \(q' p\) is a T-denominator.
\[\begin{tikzpicture}[baseline=(m-3-1.base)]
  \matrix (m) [diagram without objects]{
    & & & \\
    & & & \\
    & & & \\};
  \path[->, font=\scriptsize]
    (m-1-1) edge[equality] (m-2-1)
    (m-1-2) edge node[above] {\(f\)} (m-1-3)
            edge[sden] node[right] {\(i\)} (m-2-2)
            edge[den] node[above] {\(b\)} (m-1-1)
    (m-1-3) edge[sden] node[right] {\(i'\)} (m-2-3)
    (m-1-4) edge[equality] (m-2-4)
            edge[den] node[above] {\(a\)} (m-1-3)
    (m-2-2) edge node[above] {\(f'\)} (m-2-3)
            edge[tden] node[above] {\(p\)} (m-2-1)
    (m-2-4) edge[den] node[above] {\(a i'\)} (m-2-3)
    (m-3-1) edge[equality] (m-2-1)
    (m-3-2) edge node[above] {\(f''\)} (m-3-3)
            edge[tden] node[above] {\(q' p\)} (m-3-1)
            edge[tden] node[right] {\(q'\)} (m-2-2)
    (m-3-3) edge[tden] node[right] {\(q\)} (m-2-3)
    (m-3-4) edge[sden] node[above] {\(j\)} (m-3-3)
            edge[equality] (m-2-4);
\end{tikzpicture}\]

Altogether, \((b, f, a) \fractionequal (p, f', a i') \fractionequal (q' p, f'', j)\), and since \(j\) is an S-denominator and \(q' p\) is a T-denominator, the \(3\)-arrow \((q' p, f'', j)\) is normal.
\end{proof}

\begin{corollary} \label{cor:3-arrows_with_common_denominators}
We suppose given a uni-fractionable category \(\mathcal{C}\) and \(3\)-arrows \((b_1, f_1, a_1)\) and \((b_2, f_2, a_2)\) in~\(\mathcal{C}\).
\begin{enumerate}
\item \label{cor:3-arrows_with_common_denominators:equal_source} If \(\Source{(b_1, f_1, a_1)} = \Source{(b_2, f_2, a_2)}\), then there exist normal \(3\)-arrows \((p, \tilde f_1, i_1)\) and \((p, \tilde f_2, i_2)\) in \(\mathcal{C}\) with \((b_1, f_1, a_1) \fractionequal (p, \tilde f_1, i_1)\) and \((b_2, f_2, a_2) \fractionequal (p, \tilde f_2, i_2)\).
\item \label{cor:3-arrows_with_common_denominators:equal_target} If \(\Target{(b_1, f_1, a_1)} = \Target{(b_2, f_2, a_2)}\), then there exist normal \(3\)-arrows \((p_1, \tilde f_1, i)\) and \((p_2, \tilde f_2, i)\) in \(\mathcal{C}\) with \((b_1, f_1, a_1) \fractionequal (p_1, \tilde f_1, i)\) and \((b_2, f_2, a_2) \fractionequal (p_2, \tilde f_2, i)\).
\item \label{cor:3-arrows_with_common_denominators:parallel} If \((b_1, f_1, a_1)\) and \((b_2, f_2, a_2)\) are parallel, then there exist normal \(3\)-arrows \((p, \tilde f_1, i)\) and \((p, \tilde f_2, i)\) in \(\mathcal{C}\) with \((b_1, f_1, a_1) \fractionequal (p, \tilde f_1, i)\) and \((b_2, f_2, a_2) \fractionequal (p, \tilde f_2, i)\).
\end{enumerate}
\end{corollary}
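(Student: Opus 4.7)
I will combine three tools: the normalisation lemma (Lemma~\ref{lem:normalisation_lemma}), the weakly universal Ore completion axiom (WU), and the multiplicativity of \(S\) and \(T\). The key observation underlying the whole proof is that the normalisation procedure preserves the source and the target of a 3-arrow, so in each part I may replace \((b_1, f_1, a_1)\) and \((b_2, f_2, a_2)\) by fraction-equal normal 3-arrows \((p_1, \hat f_1, i_1)\) and \((p_2, \hat f_2, i_2)\) satisfying the same source/target hypothesis as the original pair.

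For part (a), the common-source hypothesis then reads: \(p_1\) and \(p_2\) are two T-denominators with common target \(X\). Applying the pullback clause of (WU) to \(p_2 \in T\) and \(p_1\) produces a weak pullback square \(q_1 p_1 = q_2 p_2\) with \(q_2 \in T\); by multiplicativity of \(T\), the composite \(p := q_1 p_1 = q_2 p_2\) is a T-denominator. The generating relation \((b, f, a) \fractionequal (cb, cf, a)\), valid whenever \(cb \in \Denominators \mathcal{C}\), then yields \((p_k, \hat f_k, i_k) \fractionequal (p, q_k \hat f_k, i_k)\); these resulting 3-arrows are normal with the required common left denominator. Part (b) is formally dual: after normalisation, \(i_1, i_2 \in S\) share their source \(Y\), and the pushout clause of (WU) applied to \(i_2\) and \(i_1\) produces \(i := i_1 i_2' = i_2 i_1' \in S\) (again by multiplicativity) together with the generating relation \((b, f, a) \fractionequal (b, fc, ac)\), yielding the desired rewrite.

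For part (c), since parallelness gives both common source and common target, I first apply part (a) to obtain normal 3-arrows \((p, \hat f_1, i_1)\), \((p, \hat f_2, i_2)\) sharing the left denominator \(p\); crucially, the right legs \(i_1, i_2\) are untouched by that step, so they still share source \(Y\). I then apply the pushout construction of (b) directly to \(i_1, i_2 \in S\), producing a common right denominator \(i \in S\). Since this modifies only the right half of each 3-arrow, the common left denominator \(p\) is preserved, and the desired normal 3-arrows \((p, \tilde f_k, i)\) result.

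The main obstacle is administrative bookkeeping: at each rewrite one must verify that the morphism playing the role of \(c\) in the generating relation for \(\fractionequal\) satisfies \(ac \in \Denominators \mathcal{C}\) or \(cb \in \Denominators \mathcal{C}\), which follows from the inclusions \(S, T \subseteq \Denominators \mathcal{C}\) and the multiplicativity of \(\Denominators \mathcal{C}\). One must also verify that the newly formed left and right denominators lie in \(T\) and \(S\) respectively, which is guaranteed by multiplicativity of \(T\) and \(S\) together with the fact that (WU) returns the parallel leg of the pullback/pushout in the same distinguished class.
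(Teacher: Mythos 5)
Your proposal is correct and follows essentially the same route as the paper: normalise both \(3\)-arrows via the normalisation lemma (which preserves source and target since \(\fractionequal\) is a graph congruence), then use the weakly universal Ore completions (WU) together with multiplicativity of \(S\) and \(T\) to build a common T-denominator on the left and/or a common S-denominator on the right, invoking the generating relations of fraction equality. The only inessential difference is in part (c), where you perform the pullback-side and pushout-side completions sequentially, whereas the paper carries them out simultaneously in one diagram (defining \(\tilde f_k\) with both correction morphisms at once) -- the two organisations are interchangeable.
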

\begin{proof}
By the normalisation lemma~\ref{lem:normalisation_lemma}, there exist normal \(3\)-arrows \((p_k, g_k, i_k)\) in \(\mathcal{C}\) with \((b_k, f_k, a_k) \fractionequal (p_k, g_k, i_k)\) for \(k \in \{1, 2\}\). In particular, we have \(\Source{(p_k, g_k, i_k)} = \Source{(b_k, f_k, a_k)}\) and \(\Target{(p_k, g_k, i_k)} = \Target{(b_k, f_k, a_k)}\) for \(k \in \{1, 2\}\). Hence \(\Source{(b_1, f_1, a_1)} = \Source{(b_2, f_2, a_2)}\) implies that \(\Source{(p_1, g_1, i_1)} = \Source{(p_2, g_2, i_2)}\) and \(\Target{(b_1, f_1, a_1)} = \Target{(b_2, f_2, a_2)}\) implies that \(\Target{(p_1, g_1, i_1)} = \Target{(p_2, g_2, i_2)}\).
\begin{enumerate}
\item There exist a T-denominator \(p_2'\) and a morphism \(p_1'\) in \(\mathcal{C}\) with \(p_2' p_1 = p_1' p_2\). We define \(p := p_2' p_1 = p_1' p_2\), \(\tilde f_1 := p_2' g_1\), \(\tilde f_2 := p_1' g_2\).
\[\begin{tikzpicture}[baseline=(m-3-1.base)]
  \matrix (m) [diagram without objects]{
    & & & \\
    & & & \\
    & & & \\};
  \path[->, font=\scriptsize]
    (m-1-1) edge[equality] (m-2-1)
    (m-1-2) edge node[above] {\(g_1\)} (m-1-3)
            edge[tden] node[above] {\(p_1\)} (m-1-1)
    (m-1-4) edge[sden] node[above] {\(i_1\)} (m-1-3)
    (m-2-2) edge node[left] {\(p_1'\)} (m-3-2)
            edge[tden] node[above] {\(p\)} (m-2-1)
            edge[tden] node[left] {\(p_2'\)} (m-1-2)
    (m-2-2) edge node[right] {\(\tilde f_1\)} (m-1-3)
    (m-2-2) edge node[right=1pt] {\(\tilde f_2\)} (m-3-3)
    (m-3-1) edge[equality] (m-2-1)
    (m-3-2) edge node[above] {\(g_2\)} (m-3-3)
            edge[tden] node[above] {\(p_2\)} (m-3-1)
    (m-3-4) edge[sden] node[above] {\(i_2\)} (m-3-3);
\end{tikzpicture}\]
By multiplicativity, \(p = p_2' p_1\) is a T-denominator in \(\mathcal{C}\), and we have
\begin{align*}
& (p, \tilde f_1, i_1) = (p_2' p_1, p_2' g_1, i_1) \fractionequal (p_1, g_1, i_1) \fractionequal (b_1, f_1, a_1) \text{ and} \\
& (p, \tilde f_2, i_2) = (p_1' p_2, p_1' g_2, i_2) \fractionequal (p_2, g_2, i_2) \fractionequal (b_2, f_2, a_2).
\end{align*}
\item This is dual to~\ref{cor:3-arrows_with_common_denominators:equal_source}.
\item There exist a T-denominator \(p_2'\) and a morphism \(p_1'\) in \(\mathcal{C}\) with \(p_2' p_1 = p_1' p_2\), and there exist an S{\nbd}denominator \(i_1'\) and a morphism \(i_2'\) in \(\mathcal{C}\) with \(i_1 i_2' = i_2 i_1'\). We define \(p := p_2' p_1 = p_1' p_2\), \(i := i_1 i_2' = i_2 i_1'\), \(\tilde f_1 := p_2' g_1 i_2'\), \(\tilde f_2 := p_1' g_2 i_1'\).
\[\begin{tikzpicture}[baseline=(m-3-1.base)]
  \matrix (m) [diagram without objects]{
    & & & \\
    & & & \\
    & & & \\};
  \path[->, font=\scriptsize]
    (m-1-1) edge[equality] (m-2-1)
    (m-1-2) edge node[above] {\(g_1\)} (m-1-3)
            edge[tden] node[above] {\(p_1\)} (m-1-1)
    (m-1-3) edge node[right] {\(i_2'\)} (m-2-3)
    (m-1-4) edge[equality] (m-2-4)
            edge[sden] node[above] {\(i_1\)} (m-1-3)
    (m-2-2) edge node[right] {\(p_1'\)} (m-3-2)
            edge[tden] node[above] {\(p\)} (m-2-1)
            edge[tden] node[right] {\(p_2'\)} (m-1-2)
    (m-2-2.35) edge node[above] {\(\tilde f_1\)} (m-2-3.145)
    (m-2-2.-35) edge node[below] {\(\tilde f_2\)} (m-2-3.-145)
    (m-2-4) edge[sden] node[above] {\(i\)} (m-2-3)
    (m-3-1) edge[equality] (m-2-1)
    (m-3-2) edge node[above] {\(g_2\)} (m-3-3)
            edge[tden] node[above] {\(p_2\)} (m-3-1)
    (m-3-3) edge[sden] node[right] {\(i_1'\)} (m-2-3)
    (m-3-4) edge[sden] node[above] {\(i_2\)} (m-3-3)
            edge[equality] (m-2-4);
\end{tikzpicture}\]
The assertion now follows as in~\ref{cor:3-arrows_with_common_denominators:equal_source} and~\ref{cor:3-arrows_with_common_denominators:equal_target}. \qedhere
\end{enumerate}
\end{proof}

\section{The fraction category} \label{sec:the_fraction_category}

In this section, our main theorem~\ref{th:description_of_the_fraction_category} will be proven. We begin by constructing a localisation of a uni-fractionable category \(\mathcal{C}\) (with respect to its set of denominators \(\Denominators \mathcal{C}\)), see proposition~\ref{prop:welldefinedness_of_the_fraction_category} and proposition~\ref{prop:universal_property_of_the_fraction_category}. To this end, we consider the quotient graph \((\threearrowgraph{\mathcal{C}}) / {\fractionequal}\) of its \(3\)-arrow graph \(\threearrowgraph{\mathcal{C}}\) with respect to fraction equality \({\fractionequal}\). The crucial point in the construction will be the following lemma and its corollaries.

\begin{lemma}[factorisation lemma] \label{lem:factorisation_lemma}
We suppose given a uni-fractionable category \(\mathcal{C}\), denominators \(d\), \(e\) and morphisms \(f\), \(g\) in \(\mathcal{C}\) with \(f e = d g\). Moreover, we suppose given S-denominators \(i\), \(j\) and T-denominators \(p\), \(q\) in \(\mathcal{C}\) with \(d = i p\) and \(e = j q\).
\begin{enumerate}
\item \label{lem:factorisation_lemma:s-2-arrow} There exist S-denominators \(\tilde j\), \(k\), a T-denominator \(\tilde q\) and a morphism \(h\) in \(\mathcal{C}\) such that \(e = \tilde j \tilde q\), \(f \tilde j = i h\), \(p g = h \tilde q\), \(\tilde j = j k\), \(q = k \tilde q\).
\[\begin{tikzpicture}[baseline=(m-6-1.base)]
  \matrix (m) [diagram without objects]{
    & & \\
    & & \\
    & & \\
    & & \\
    & & \\
    & & \\};
  \path[->, font=\scriptsize]
    (m-2-1) edge[den=0.75] node[above, near end] {\(d\)} (m-2-3)
            edge node[left] {\(f\)} (m-4-1)
            edge[sden] node[left] {\(i\)} (m-1-2)
    (m-2-3) edge node[right] {\(g\)} (m-4-3)
    (m-4-1) edge[den=0.75] node[above, near end] {\(e\)} (m-4-3)
            edge[exists, sden] node[left] {\(\tilde j\)} (m-3-2)
    (m-6-1) edge[den] node[above] {\(e\)} (m-6-3)
            edge[equality] (m-4-1)
            edge[sden] node[left] {\(j\)} (m-5-2)
    (m-6-3) edge[equality] (m-4-3)
    (m-1-2) edge[tden] node[right] {\(p\)} (m-2-3)
            edge[cross line, exists] node[right, near end] {\(h\)} (m-3-2)
    (m-3-2) edge[exists, tden] node[right] {\(\tilde q\)} (m-4-3)
    (m-5-2) edge[tden] node[right] {\(q\)} (m-6-3)
            edge[cross line, exists, sden=0.25] node[right, near start] {\(k\)} (m-3-2);
\end{tikzpicture}\]
\item \label{lem:factorisation_lemma:t-2-arrow} There exist an S-denominator \(\tilde i\), T-denominators \(\tilde p\), \(r\) and a morphism \(h\) in \(\mathcal{C}\) such that \(d = \tilde i \tilde p\), \(f j = \tilde i h\), \(\tilde p g = h q\), \(i = \tilde i r\), \(\tilde p = r p\).
\[\begin{tikzpicture}[baseline=(m-6-1.base)]
  \matrix (m) [diagram without objects]{
    & & \\
    & & \\
    & & \\
    & & \\
    & & \\
    & & \\};
  \path[->, font=\scriptsize]
    (m-2-1) edge[den=0.75] node[above, near end] {\(d\)} (m-2-3)
            edge[sden] node[left] {\(i\)} (m-1-2)
    (m-4-1) edge[den=0.75] node[above, near end] {\(d\)} (m-4-3)
            edge node[left] {\(f\)} (m-6-1)
            edge[equality] (m-2-1)
            edge[exists, sden] node[left] {\(\tilde i\)} (m-3-2)
    (m-4-3) edge node[right] {\(g\)} (m-6-3)
            edge[equality] (m-2-3)
    (m-6-1) edge[den] node[above] {\(e\)} (m-6-3)
            edge[sden] node[left] {\(j\)} (m-5-2)
    (m-1-2) edge[tden] node[right] {\(p\)} (m-2-3)
    (m-3-2) edge[exists, tden] node[right] {\(\tilde p\)} (m-4-3)
            edge[cross line, exists] node[right, near end] {\(h\)} (m-5-2)
            edge[cross line, exists, tden=0.25] node[right, near start] {\(r\)} (m-1-2)
    (m-5-2) edge[tden] node[right] {\(q\)} (m-6-3);
\end{tikzpicture}\]
\end{enumerate}
\end{lemma}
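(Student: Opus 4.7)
The plan is to prove part (a) by constructing the new intermediate object (the apex of $\tilde j$ and $\tilde q$, i.e., the node $m$-$3$-$2$) as a weakly universal Ore completion and then refining it using the factorisation axiom; part (b) is treated dually. Fix names for the relevant vertices: write $d\colon X \to Y$ factoring as $i\colon X \to M$ followed by $p\colon M \to Y$, and $e\colon X' \to Y'$ factoring as $j\colon X' \to N$ followed by $q\colon N \to Y'$, with $f\colon X \to X'$ and $g\colon Y \to Y'$.

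For part (a), first apply (WU) to the S-denominator $i$ and the morphism $fj$, both starting at $X$. This produces a weak pushout square with an S-denominator $k\colon N \to \tilde M$ and a morphism $h\colon M \to \tilde M$ satisfying $ih = fj k$. Set $\tilde j := jk$, which is an S-denominator by multiplicativity of $S$, and note $f\tilde j = fjk = ih$. Next, the two morphisms $pg\colon M \to Y'$ and $q\colon N \to Y'$ are compatible with the pushout since $i\cdot pg = ipg = dg = fe = fjq = fj\cdot q$, so the weak universal property yields some $\tilde q_0\colon \tilde M \to Y'$ with $h\tilde q_0 = pg$ and $k\tilde q_0 = q$; then automatically $\tilde j \tilde q_0 = jk\tilde q_0 = jq = e$. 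By semi-saturatedness (2 out of 3) applied to $k\tilde q_0 = q \in \Denominators \mathcal{C}$ and $k \in \Denominators \mathcal{C}$, the morphism $\tilde q_0$ is at least a denominator.

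The final step is a normalisation by the factorisation axiom. By (Fac), write $\tilde q_0 = i''\tilde q$ with $i'' \in S$ and $\tilde q \in T$, and replace the intermediate object by the target of $i''$, simultaneously updating $k$ to $ki''$, $h$ to $hi''$, and $\tilde j$ to $\tilde j i'' = jki''$. All the required identities pull back without difficulty: $jki'' = j(ki'')$ shows $\tilde j$ still factors through $j$ via the (composite) S-denominator $ki''$; $k i'' \cdot \tilde q = k\tilde q_0 = q$; $h i'' \cdot \tilde q = h\tilde q_0 = pg$; and $f\tilde j = f j k i'' = i h i''$. Part (b) is entirely dual: apply (WU) to the T-denominator $q$ and the morphism $pg$ (which share target $Y'$) to obtain a weak pullback with a T-denominator $r\colon \tilde M \to M$ and a morphism $h\colon \tilde M \to N$, set $\tilde p := rp$, use the weak pullback property on $i$ and $fj$ to obtain some $\tilde i_0$ which is a denominator by 2 out of 3, and then factor $\tilde i_0 = \tilde i \, p''$ via (Fac) and absorb $p''$ into the T-side.

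The main obstacle is the last refinement step: the morphism obtained from the weak (co)limit property is only guaranteed by semi-saturatedness to be a denominator, not an element of the intended $T$ (respectively $S$). The factorisation axiom resolves this cleanly only because one can transfer the spurious $i''$ (respectively $p''$) into the adjacent S-part (respectively T-part) without disturbing the commutativities; the bookkeeping of what remains an S- or T-denominator under this replacement is where the argument has to be carried out carefully, but otherwise no further axioms are required.
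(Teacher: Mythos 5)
Your argument is correct and follows the paper's own proof essentially verbatim: the same weak pushout along the S-denominator \(i\) against \(f j\), the same induced morphism onto \(p g\) and \(q\) shown to be a denominator via (2\,of\,3), and the same (Fac)-refinement whose S-part is absorbed into \(k\) and \(\tilde j\) by multiplicativity, with part~\ref{lem:factorisation_lemma:t-2-arrow} handled dually. Only the letters differ (your \(k\), \(h\), \(\tilde q_0\), \(i''\) are the paper's \(i'\), \(\tilde h\), \(a\), \(\tilde k\)), so there is nothing to add.
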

\begin{proof} \
\begin{enumerate}
\item We let
\[\begin{tikzpicture}[baseline=(m-2-1.base)]
  \matrix (m) [diagram without objects]{
    & \\
    & \\};
  \path[->, font=\scriptsize]
    (m-1-1) edge[sden] node[above] {\(i\)} (m-1-2)
            edge node[left] {\(f j\)} (m-2-1)
    (m-1-2) edge node[right] {\(\tilde h\)} (m-2-2)
    (m-2-1) edge[sden] node[above] {\(i'\)} (m-2-2);
\end{tikzpicture}\]
be a weak pushout rectangle in \(\mathcal{C}\) such that \(i'\) is an S-denominator in \(\mathcal{C}\). Since
\[i p g = d g = f e = f j q,\]
there exists an induced morphism \(a\) with \(q = i' a\) and \(p g = \tilde h a\). By semi-saturatedness, \(a\) is a denominator in \(\mathcal{C}\), and thus there exist an S-denominator \(\tilde k\) and a T-denominator \(\tilde q\) with \(a = \tilde k \tilde q\).
\[\begin{tikzpicture}[baseline=(m-5-1.base)]
  \matrix (m) [diagram without objects=0.9em]{
    & & & & \\
    & & & & \\
    & & & & \\
    & & & & \\
    & & & & \\};
  \path[->, font=\scriptsize]
    (m-1-1) edge[sden] node[above] {\(i\)} (m-1-3)
            edge node[left] {\(f j\)} (m-3-1)
    (m-1-3) edge[bend left] node[right] {\(p g\)} (m-5-5)
            edge node[right] {\(\tilde h\)} (m-3-3)
    (m-3-1) edge[sden] node[above] {\(i'\)} (m-3-3)
            edge[bend right, tden] node[below] {\(q\)} (m-5-5)
    (m-3-3) edge[exists, den] node[left] {\(a\)} (m-5-5)
            edge[exists, sden] node[above, near end] {\(\tilde k\)} (m-3-4)
    (m-3-4) edge[exists, tden] node[right=-1pt, near start] {\(\tilde q\)} (m-5-5);
\end{tikzpicture}\]
We set \(h := \tilde h \tilde k\), \(k := i' \tilde k\), \(\tilde j := j i' \tilde k\) and get \(e = \tilde j \tilde q\), \(f \tilde j = i h\), \(p g = h \tilde q\), \(\tilde j = j k\), \(q = k \tilde q\). Moreover, \(k = i' \tilde k\) and \(\tilde j = j i' \tilde k\) are S-denominators in \(\mathcal{C}\) by multiplicativity.
\item This is dual to~\ref{lem:factorisation_lemma:s-2-arrow}. \qedhere
\end{enumerate}
\end{proof}

\begin{corollary} \label{cor:factorisation_lemma_for_one_given_factorisation}
We suppose given a uni-fractionable category \(\mathcal{C}\), denominators \(d\), \(e\) and morphisms \(f\), \(g\) in \(\mathcal{C}\) with \(f e = d g\).
\begin{enumerate}
\item \label{cor:factorisation_lemma_for_one_given_factorisation:s-2-arrow} Given an S-denominator \(i\) and a T-denominator \(p\) in \(\mathcal{C}\) with \(d = i p\), there exist an S-denominator \(j\), a T-denominator \(q\) and a morphism \(h\) in \(\mathcal{C}\) such that \(e = j q\), \(f j = i h\), \(p g = h q\).
\[\begin{tikzpicture}[baseline=(m-4-1.base)]
  \matrix (m) [diagram without objects]{
    & & \\
    & & \\
    & & \\
    & & \\};
  \path[->, font=\scriptsize]
    (m-2-1) edge[den=0.75] node[above, near end] {\(d\)} (m-2-3)
            edge node[left] {\(f\)} (m-4-1)
            edge[sden] node[left] {\(i\)} (m-1-2)
    (m-2-3) edge node[right] {\(g\)} (m-4-3)
    (m-4-1) edge[den] node[above] {\(e\)} (m-4-3)
            edge[exists, sden] node[left] {\(j\)} (m-3-2)
    (m-1-2) edge[tden] node[right] {\(p\)} (m-2-3)
            edge[cross line, exists] node[right, near end] {\(h\)} (m-3-2)
    (m-3-2) edge[exists, tden] node[right] {\(q\)} (m-4-3);
\end{tikzpicture}\]
\item \label{cor:factorisation_lemma_for_one_given_factorisation:t-2-arrow} Given an S-denominator \(j\) and a T-denominator \(q\) in \(\mathcal{C}\) with \(e = j q\), there exist an S-denominator \(i\), a T-denominator \(p\) and a morphism \(h\) in \(\mathcal{C}\) such that \(d = i p\), \(f j = i h\), \(p g = h q\).
\[\begin{tikzpicture}[baseline=(m-4-1.base)]
  \matrix (m) [diagram without objects]{
    & & \\
    & & \\
    & & \\
    & & \\};
  \path[->, font=\scriptsize]
    (m-2-1) edge[den=0.75] node[above, near end] {\(d\)} (m-2-3)
            edge node[left] {\(f\)} (m-4-1)
            edge[exists, sden] node[left] {\(i\)} (m-1-2)
    (m-2-3) edge node[right] {\(g\)} (m-4-3)
    (m-4-1) edge[den] node[above] {\(e\)} (m-4-3)
            edge[sden] node[left] {\(j\)} (m-3-2)
    (m-1-2) edge[exists, tden] node[right] {\(p\)} (m-2-3)
            edge[cross line, exists] node[right, near end] {\(h\)} (m-3-2)
    (m-3-2) edge[tden] node[right] {\(q\)} (m-4-3);
\end{tikzpicture}\]
\end{enumerate}
\end{corollary}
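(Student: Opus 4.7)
The plan is to deduce both parts directly from the factorisation lemma~\ref{lem:factorisation_lemma} by using the factorisation axiom (Fac) to manufacture the missing ingredient.

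For part~\ref{cor:factorisation_lemma_for_one_given_factorisation:s-2-arrow}, I would argue as follows. We are given a factorisation $d = i p$ with $i \in \SDenominators \mathcal{C}$ and $p \in \TDenominators \mathcal{C}$, but no factorisation of $e$ is assumed. Since $e$ is a denominator in $\mathcal{C}$, axiom (Fac) provides some S-denominator $j_0$ and some T-denominator $q_0$ with $e = j_0 q_0$. Applying lemma~\ref{lem:factorisation_lemma}\ref{lem:factorisation_lemma:s-2-arrow} to the square $f e = d g$ with the factorisations $d = i p$ and $e = j_0 q_0$ yields an S-denominator $\tilde j$, a T-denominator $\tilde q$ and a morphism $h$ (together with an S-denominator $k$ satisfying $\tilde j = j_0 k$ and $q_0 = k \tilde q$, which we can discard) such that $e = \tilde j \tilde q$, $f \tilde j = i h$ and $p g = h \tilde q$. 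Setting $j := \tilde j$ and $q := \tilde q$ gives the required data.

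For part~\ref{cor:factorisation_lemma_for_one_given_factorisation:t-2-arrow}, the argument is dual: starting from the given factorisation $e = j q$, apply (Fac) to the denominator $d$ to obtain some auxiliary factorisation $d = i_0 p_0$ with $i_0 \in \SDenominators \mathcal{C}$ and $p_0 \in \TDenominators \mathcal{C}$, then invoke lemma~\ref{lem:factorisation_lemma}\ref{lem:factorisation_lemma:t-2-arrow} on $f e = d g$ with the factorisations $d = i_0 p_0$ and $e = j q$ to obtain an S-denominator $\tilde i$, a T-denominator $\tilde p$ and a morphism $h$ (again discarding the auxiliary T-denominator $r$) satisfying $d = \tilde i \tilde p$, $f j = \tilde i h$ and $\tilde p g = h q$. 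Setting $i := \tilde i$ and $p := \tilde p$ finishes the proof.

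There is essentially no obstacle here, since lemma~\ref{lem:factorisation_lemma} already does all the work; the corollary merely observes that one of the two input factorisations in the lemma can always be produced from scratch via (Fac), so that only a single given factorisation on one side of the commutative square $f e = d g$ is needed. The contents $\tilde j = j_0 k$, $q_0 = k \tilde q$ (respectively $i_0 = \tilde i r$, $\tilde p = r p_0$) of the lemma which compare the new factorisation to the auxiliary one become irrelevant and are simply dropped.
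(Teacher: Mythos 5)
Your proposal is correct and is exactly the paper's argument: the paper proves the corollary by invoking the factorisation axiom (Fac) to supply the missing factorisation and then applying the factorisation lemma~\ref{lem:factorisation_lemma}, discarding the comparison data, just as you do. You have merely spelled out the details the paper leaves implicit.
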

\begin{proof}
This follows from the factorisation axiom and the factorisation lemma~\ref{lem:factorisation_lemma}.
\end{proof}

\begin{corollary} \label{cor:factorisation_lemma_for_no_given_factorisations}
We suppose given a uni-fractionable category \(\mathcal{C}\), denominators \(d\), \(e\) and morphisms \(f\), \(g\) in \(\mathcal{C}\) with \(f e = d g\).
There exist S-denominators \(i\), \(j\), T-denominators \(p\), \(q\) and a morphism \(h\) in \(\mathcal{C}\) with \(d = i p\), \(e = j q\), \(f j = i h\), \(p g = h q\).
\[\begin{tikzpicture}[baseline=(m-4-1.base)]
  \matrix (m) [diagram without objects]{
    & & \\
    & & \\
    & & \\
    & & \\};
  \path[->, font=\scriptsize]
    (m-2-1) edge[den=0.75] node[above, near end] {\(d\)} (m-2-3)
            edge node[left] {\(f\)} (m-4-1)
            edge[exists, sden] node[left] {\(i\)} (m-1-2)
    (m-2-3) edge node[right] {\(g\)} (m-4-3)
    (m-4-1) edge[den] node[above] {\(e\)} (m-4-3)
            edge[exists, sden] node[left] {\(j\)} (m-3-2)
    (m-1-2) edge[exists, tden] node[right] {\(p\)} (m-2-3)
            edge[cross line, exists] node[right, near end] {\(h\)} (m-3-2)
    (m-3-2) edge[exists, tden] node[right] {\(q\)} (m-4-3);
\end{tikzpicture}\]
\end{corollary}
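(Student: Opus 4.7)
The plan is to combine the factorisation axiom (Fac) with Corollary~\ref{cor:factorisation_lemma_for_one_given_factorisation} in a straightforward two-step manner: one axiom invocation produces a factorisation of $d$, and then the one-sided version of the factorisation lemma supplies everything else.

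Concretely, I would first apply axiom (Fac) to the denominator $d$ to obtain an S-denominator $i$ and a T-denominator $p$ in $\mathcal{C}$ with $d = i p$. Since the hypotheses $f e = d g$ and the chosen factorisation $d = i p$ are exactly what Corollary~\ref{cor:factorisation_lemma_for_one_given_factorisation}\ref{cor:factorisation_lemma_for_one_given_factorisation:s-2-arrow} requires, I would then invoke that corollary to produce an S-denominator $j$, a T-denominator $q$ and a morphism $h$ such that $e = j q$, $f j = i h$ and $p g = h q$. These are precisely the data demanded in the statement.

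There is essentially no obstacle: the only (minor) thing to check is that the chosen $i$, $p$ actually satisfy the hypothesis of Corollary~\ref{cor:factorisation_lemma_for_one_given_factorisation}\ref{cor:factorisation_lemma_for_one_given_factorisation:s-2-arrow}, which is immediate since $i$ and $p$ are an S-denominator and a T-denominator by construction and $f e = d g$ is assumed. A symmetric route starting from a factorisation $e = j q$ of $e$ given by (Fac) and then applying part~\ref{cor:factorisation_lemma_for_one_given_factorisation:t-2-arrow} of the corollary works equally well and yields the same conclusion, so no nontrivial case distinction is needed.
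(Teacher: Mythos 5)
Your proof is correct and is exactly the paper's argument: apply (Fac) to factorise \(d = i p\) with \(i \in \SDenominators \mathcal{C}\), \(p \in \TDenominators \mathcal{C}\), then invoke corollary~\ref{cor:factorisation_lemma_for_one_given_factorisation}\ref{cor:factorisation_lemma_for_one_given_factorisation:s-2-arrow} to obtain \(j\), \(q\), \(h\). The paper's proof is the same one-line combination of the factorisation axiom and that corollary, so nothing further is needed.
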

\begin{proof}
This follows from the factorisation axiom and corollary~\ref{cor:factorisation_lemma_for_one_given_factorisation}.
\end{proof}

The following proposition will essentially prove the first part of our main theorem~\ref{th:description_of_the_fraction_category}, cf.\ also proposition~\ref{prop:flexibility_of_composites_and_inverses_in_the_fraction_category} below.

\begin{proposition} \label{prop:welldefinedness_of_the_fraction_category}
For every uni-fractionable category \(\mathcal{C}\), there is a category structure on \((\threearrowgraph{\mathcal{C}}) / {\fractionequal}\), where the composition is constructed by the following procedure.

We suppose given \((b_1, f_1, a_1), (b_2, f_2, a_2) \in \Arr \threearrowgraph{\mathcal{C}}\) with \(\Target{(b_1, f_1, a_1)} = \Source{(b_2, f_2, a_2)}\), that is, with \(\Source a_1 = \Target b_2\). First, we choose an S-denominator \(j\) and a T-denominator \(q\) in \(\mathcal{C}\) with \(b_2 a_1 = j q\). Second, we choose a T-denominator \(q'\) and a morphism \(f_1'\) in \(\mathcal{C}\) with \(f_1' q = q' f_1\), and we choose an S-denominator \(j'\) and a morphism \(f_2'\) in \(\mathcal{C}\) with \(j f_2' = f_2 j'\).
\[\begin{tikzpicture}[baseline=(m-3-1.base)]
  \matrix (m) [diagram without objects=0.9em]{
    & & & & & & & & & & \\
    & & & & & & & & & & \\
    & & & & & & & & & & \\};
  \path[->, font=\scriptsize]
    (m-1-3) edge node[above] {\(f_1'\)} (m-1-6)
            edge[tden] node[left] {\(q'\)} (m-2-2)
    (m-1-6) edge node[above] {\(f_2'\)} (m-1-9)
            edge[tden] node[left] {\(q\)} (m-2-5)
    (m-2-2) edge node[above] {\(f_1\)} (m-2-5)
            edge[den] node[left] {\(b_1\)} (m-3-1)
    (m-2-7) edge node[above] {\(f_2\)} (m-2-10)
            edge[den] node[right] {\(b_2\)} (m-3-6)
            edge[sden] node[right] {\(j\)} (m-1-6)
    (m-2-10) edge[sden] node[right=1pt] {\(j'\)} (m-1-9)
    (m-3-6) edge[den] node[left] {\(a_1\)} (m-2-5)
    (m-3-11) edge[den] node[right] {\(a_2\)} (m-2-10);
\end{tikzpicture}\]
Then
\[(\doublefrac{b_1}{f_1}{a_1}) (\doublefrac{b_2}{f_2}{a_2}) = \doublefrac{q' b_1}{f_1' f_2'}{a_2 j'}.\]

The identity of \(X \in \Ob{(\threearrowgraph{\mathcal{C}}) / {\fractionequal}}\) is given by
\[1_X = \doublefrac{1_X}{1_X}{1_X}.\]
\end{proposition}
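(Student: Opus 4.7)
The task has four parts: (i) existence of the candidate composite; (ii) independence of the three choices of factorisation and Ore completion in the recipe; (iii) descent to fraction equality classes in each argument; and (iv) verification of the identity and associativity axioms. The factorisation lemma~\ref{lem:factorisation_lemma} and its corollaries, together with the generating relations for fraction equality gathered in remark~\ref{rem:other_generating_sets_for_the_fraction_equality_relation_on_the_3-arrow_graph}, will be the workhorses throughout.

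For (i), the morphism $b_2 a_1$ is a denominator by multiplicativity of $\Denominators \mathcal{C}$ and hence admits a factorisation $b_2 a_1 = jq$ with $j \in \SDenominators \mathcal{C}$ and $q \in \TDenominators \mathcal{C}$ by axiom~(Fac); the Ore completions $(q', f_1')$ and $(j', f_2')$ are furnished by axiom~(WU); and the outer morphisms $q' b_1$ and $a_2 j'$ are denominators by the multiplicativity of $\TDenominators \mathcal{C}$ and $\SDenominators \mathcal{C}$ combined with the multiplicativity of $\Denominators \mathcal{C}$.

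Part (ii) is the main obstacle. For two Ore completions $(q', f_1')$ and $(\tilde q', \tilde f_1')$ of the pair $(q, f_1)$, I would apply corollary~\ref{cor:factorisation_lemma_for_no_given_factorisations} to the equation $f_1' q = q' f_1$ and its tilded counterpart to extract S- and T-refinements linking the two completions; the fraction equality of the two candidate composites then follows from the generating moves in remark~\ref{rem:other_generating_sets_for_the_fraction_equality_relation_on_the_3-arrow_graph}\ref{rem:other_generating_sets_for_the_fraction_equality_relation_on_the_3-arrow_graph:equal_directions}. The argument for $(j', f_2')$ is dual. For two factorisations $b_2 a_1 = jq = \tilde j \tilde q$ of the same denominator, I would use corollary~\ref{cor:3-arrows_with_common_denominators} (together with the normalisation lemma~\ref{lem:normalisation_lemma}) to align them and propagate the alignment through the Ore completion layer by a second application of the factorisation lemma.

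For (iii), remark~\ref{rem:other_generating_sets_for_the_fraction_equality_relation_on_the_3-arrow_graph}\ref{rem:other_generating_sets_for_the_fraction_equality_relation_on_the_3-arrow_graph:equal_directions} reduces the problem to invariance under the generating moves $(b_1, f_1, a_1) \mapsto (c' b_1, c' f_1 c, a_1 c)$ (and dually on the second factor). Having (ii) at hand, I may choose a factorisation of $b_2 (a_1 c)$ whose T-part is $qc$ and whose S-part is $j$ (using that $qc$ remains a T-denominator by multiplicativity of $\TDenominators \mathcal{C}$), together with compatible Ore completions, so that the new recipe produces data linked to the old one by an admissible move. Finally, the identity $\doublefrac{1_X}{1_X}{1_X}$ acts neutrally because every step of the recipe may be chosen to be an identity, and associativity, for three composable $3$-arrows, reduces -- thanks to the freedom permitted by (ii) -- to producing one single compatible diagram that exhibits both bracketings; this diagram is assembled by two nested applications of corollary~\ref{cor:factorisation_lemma_for_no_given_factorisations}. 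The combinatorial bookkeeping of S-, T-, and generic denominators across two levels of Ore completions is where the labour lies, but no further ideas beyond the factorisation lemma are needed.
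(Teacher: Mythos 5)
There is a genuine gap at the step you yourself identify as the main obstacle, part (ii). Two different Ore completions \((q', f_1')\) and \((\tilde q', \tilde f_1')\) of the same pair \((f_1, q)\) cannot be compared by applying corollary~\ref{cor:factorisation_lemma_for_no_given_factorisations} to each of the two commutative squares separately: that corollary refines a \emph{single} square \(f e = d g\) into an S/T-factored square, but it produces no morphism relating the two completions to one another, and without such a mediating morphism no zigzag of the generating moves of remark~\ref{rem:other_generating_sets_for_the_fraction_equality_relation_on_the_3-arrow_graph} can be written down between the two candidate composites. The only source of comparison morphisms in a uni-fractionable category is the weak universal property in axiom (WU) -- if mere existence of Ore completions sufficed, weak universality would not have to be required -- and this is exactly how the paper proceeds: the composite is first \emph{defined} using weak pullback and weak pushout rectangles, whose weak universality furnishes the induced morphisms (\(c'\), \(c''\), \(\tilde c'\), \(\tilde c''\)) that witness fraction equality both between different choices of such rectangles and across the generating moves of \(\fractionequal\) (in this step the factorisation lemma~\ref{lem:factorisation_lemma} enters only to link the chosen factorisations of \(b_2 a_1\) and \(\tilde b_2 \tilde a_1\)); only after well-definedness is established does one show that arbitrary commutative quadrangles, with a T-denominator resp.\ S-denominator on the appropriate side, compute the same composite -- again by comparing them against a weak pullback resp.\ weak pushout. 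Your plan omits weak universality from the argument entirely, so the crucial comparison cannot be carried out.

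A second, smaller error occurs in (iii): for the generating move \((b_1, f_1, a_1) \mapsto (c' b_1, c' f_1 c, a_1 c)\) the morphism \(c\) is only a denominator (by semi-saturatedness), not a T-denominator, so \(q c\) need not be a T-denominator and the proposed factorisation \(b_2 a_1 c = j (q c)\) is not of the form required by the recipe. The flexibility that would repair this -- arbitrary denominators may replace the S- and T-denominators, proposition~\ref{prop:flexibility_of_composites_and_inverses_in_the_fraction_category} -- is only proved after the category structure and the universal property of \(\FractionCategory \mathcal{C}\) are available, so it cannot be invoked inside the present proof. The paper sidesteps this by treating the generating moves and the independence of choices simultaneously in a single diagram, rather than reducing the former to the latter; your associativity and identity arguments are broadly in the spirit of the paper's, but they too lean on the flexibility from (ii), which is the part that is not established.
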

\begin{proof}
We suppose given a uni-fractionable category \(\mathcal{C}\). Our first aim is to show that the construction described above is independent of all choices. To this end, we first consider the particular case of choosing a weak pullback of \(f_1\) and \(q\) and a weak pushout of \(f_2\) and \(j\) to obtain a T-denominator \(q'\), an S-denominator \(j'\) and morphisms \(f_1'\), \(f_2'\) in \(\mathcal{C}\).

We suppose given \((b_l, f_l, a_l), (\tilde b_l, \tilde f_l, \tilde a_l) \in \Arr \threearrowgraph{\mathcal{C}}\) and \(c_l, c_l' \in \Mor \mathcal{C}\) with \(b_l = c_l \tilde b_l\), \(f_l c_l' = c_l \tilde f_l\), \(a_l c_l' = \tilde a_l\) for \(l \in \{1, 2\}\), and such that \(\Target{(b_1, f_1, a_1)} = \Source{(b_2, f_2, a_2)}\).
\[\begin{tikzpicture}[baseline=(m-2-1.base)]
  \matrix (m) [diagram without objects]{
    & & & & & & \\
    & & & & & & \\};
  \path[->, font=\scriptsize]
    (m-1-1) edge[equality] (m-2-1)
    (m-1-2) edge node[above] {\(f_1\)} (m-1-3)
            edge node[right] {\(c_1\)} (m-2-2)
            edge[den] node[above] {\(b_1\)} (m-1-1)
    (m-1-3) edge node[right] {\(c_1'\)} (m-2-3)
    (m-1-4) edge[equality] (m-2-4)
            edge[den] node[above] {\(a_1\)} (m-1-3)
    (m-1-5) edge node[above] {\(f_2\)} (m-1-6)
            edge node[right] {\(c_2\)} (m-2-5)
            edge[den] node[above] {\(b_2\)} (m-1-4)
    (m-1-6) edge node[right] {\(c_2'\)} (m-2-6)
    (m-1-7) edge[equality] (m-2-7)
            edge[den] node[above] {\(a_2\)} (m-1-6)
    (m-2-2) edge node[above] {\(\tilde f_1\)} (m-2-3)
            edge[den] node[above] {\(\tilde b_1\)} (m-2-1)
    (m-2-4) edge[den] node[above] {\(\tilde a_1\)} (m-2-3)
    (m-2-5) edge node[above] {\(\tilde f_2\)} (m-2-6)
            edge[den] node[above] {\(\tilde b_2\)} (m-2-4)
    (m-2-7) edge[den] node[above] {\(\tilde a_2\)} (m-2-6);
\end{tikzpicture}\]
We choose S-denominators \(j\), \(\tilde j\) and T-denominators \(q\), \(\tilde q\) in \(\mathcal{C}\) such that \(b_2 a_1 = j q\) and \(\tilde b_2 \tilde a_1 = \tilde j \tilde q\). By the factorisation lemma~\ref{lem:factorisation_lemma}\ref{lem:factorisation_lemma:s-2-arrow}, there exist an S-denominator \(k\), a T-denominator \(r\) and morphisms \(c\), \(\tilde c\) in \(\mathcal{C}\) with \(\tilde b_2 \tilde a_1 = k r\), \(q c_1' = c r\), \(c_2 k = j c\), \(\tilde q = \tilde c r\), \(k = \tilde j \tilde c\). Next, we choose weak pullback rectangles
\[\begin{tikzpicture}[baseline=(m-2-1.base)]
  \matrix (m) [diagram without objects]{
    & \\
    & \\};
  \path[->, font=\scriptsize]
    (m-1-1) edge node[above] {\(f_1'\)} (m-1-2)
            edge[tden] node[left] {\(q'\)} (m-2-1)
    (m-1-2) edge[tden] node[right] {\(q\)} (m-2-2)
    (m-2-1) edge node[above] {\(f_1\)} (m-2-2);
\end{tikzpicture}
\text{ and }
\begin{tikzpicture}[baseline=(m-2-1.base)]
  \matrix (m) [diagram without objects]{
    & \\
    & \\};
  \path[->, font=\scriptsize]
    (m-1-1) edge node[above] {\(g_1\)} (m-1-2)
            edge[tden] node[left] {\(r'\)} (m-2-1)
    (m-1-2) edge[tden] node[right] {\(r\)} (m-2-2)
    (m-2-1) edge node[above] {\(\tilde f_1\)} (m-2-2);
\end{tikzpicture}
\text{ and }
\begin{tikzpicture}[baseline=(m-2-1.base)]
  \matrix (m) [diagram without objects]{
    & \\
    & \\};
  \path[->, font=\scriptsize]
    (m-1-1) edge node[above] {\(\tilde f_1'\)} (m-1-2)
            edge[tden] node[left] {\(\tilde q'\)} (m-2-1)
    (m-1-2) edge[tden] node[right] {\(\tilde q\)} (m-2-2)
    (m-2-1) edge node[above] {\(\tilde f_1\)} (m-2-2);
\end{tikzpicture}\]
in \(\mathcal{C}\) such that \(q'\), \(r'\), \(\tilde q'\) are T-denominators, and we choose weak pushout rectangles
\[\begin{tikzpicture}[baseline=(m-2-1.base)]
  \matrix (m) [diagram without objects]{
    & \\
    & \\};
  \path[->, font=\scriptsize]
    (m-1-1) edge node[above] {\(f_2'\)} (m-1-2)
    (m-2-1) edge node[above] {\(f_2\)} (m-2-2)
            edge[sden] node[left] {\(j\)} (m-1-1)
    (m-2-2) edge[sden] node[right] {\(j'\)} (m-1-2);
\end{tikzpicture}
\text{ and }
\begin{tikzpicture}[baseline=(m-2-1.base)]
  \matrix (m) [diagram without objects]{
    & \\
    & \\};
  \path[->, font=\scriptsize]
    (m-1-1) edge node[above] {\(g_2\)} (m-1-2)
    (m-2-1) edge node[above] {\(\tilde f_2\)} (m-2-2)
            edge[sden] node[left] {\(k\)} (m-1-1)
    (m-2-2) edge[sden] node[right] {\(k'\)} (m-1-2);
\end{tikzpicture}
\text{ and }
\begin{tikzpicture}[baseline=(m-2-1.base)]
  \matrix (m) [diagram without objects]{
    & \\
    & \\};
  \path[->, font=\scriptsize]
    (m-1-1) edge node[above] {\(\tilde f_2'\)} (m-1-2)
    (m-2-1) edge node[above] {\(\tilde f_2\)} (m-2-2)
            edge[sden] node[left] {\(\tilde j\)} (m-1-1)
    (m-2-2) edge[sden] node[right] {\(\tilde j'\)} (m-1-2);
\end{tikzpicture}\]
in \(\mathcal{C}\) such that \(j'\), \(k'\), \(\tilde j'\) are S-denominators. We obtain induced morphisms \(c'\) and \(\tilde c'\) on the weak pullbacks, that is, with \(q' c_1 = c' r'\), \(f_1' c = c' g_1\) and \(\tilde q' = c' r'\), \(\tilde f_1' \tilde c = \tilde c' g_1\), and induced morphisms \(c''\) and \(\tilde c''\) on the weak pushouts, that is, with \(c_2' k' = j' c''\), \(c g_2 = f_2' c''\) and \(k' = \tilde j' \tilde c''\), \(\tilde c g_2 = \tilde f_2' \tilde c''\).
\[\begin{tikzpicture}[baseline=(m-9-1.base)]
  \matrix (m) [diagram without objects=2.0em]{
    & & & & & & & & & & \\
    & & & & & & & & & & \\
    & & & & & & & & & & \\
    & & & & & & & & & & \\
    & & & & & & & & & & \\
    & & & & & & & & & & \\
    & & & & & & & & & & \\
    & & & & & & & & & & \\
    & & & & & & & & & & \\};
  \path[->, font=\scriptsize]
    (m-1-3) edge node[above] {\(f_1'\)} (m-1-6)
            edge node[right, near end] {\(c'\)} (m-4-3)
            edge[tden] node[left] {\(q'\)} (m-2-2)
    (m-1-6) edge node[above] {\(f_2'\)} (m-1-9)
            edge node[right, near end] {\(c\)} (m-4-6)
            edge[tden] node[left] {\(q\)} (m-2-5)
    (m-1-9) edge node[right, near end] {\(c''\)} (m-4-9)
    (m-4-3) edge node[above, near start] {\(g_1\)} (m-4-6)
            edge[tden] node[left, pos=0.4] {\(r'\)} (m-5-2)
    (m-4-6) edge node[above, near end] {\(g_2\)} (m-4-9)
            edge[tden] node[left] {\(r\)} (m-5-5)
    (m-7-3) edge node[above, near start] {\(\tilde f_1'\)} (m-7-6)
            edge[tden] node[left, pos=0.4] {\(\tilde q'\)} (m-8-2)
            edge node[right, near start] {\(\tilde c'\)} (m-4-3)
    (m-7-6) edge node[above, near end] {\(\tilde f_2'\)} (m-7-9)
            edge[tden] node[left] {\(\tilde q\)} (m-8-5)
            edge node[right, near start] {\(\tilde c\)} (m-4-6)
    (m-7-9) edge node[right, near start] {\(\tilde c''\)} (m-4-9)
    (m-2-2) edge[cross line] node[above] {\(f_1\)} (m-2-5)
            edge node[right] {\(c_1\)} (m-5-2)
            edge[den] node[left] {\(b_1\)} (m-3-1)
    (m-2-5) edge[cross line] node[right] {\(c_1'\)} (m-5-5)
    (m-2-7) edge[cross line] node[above] {\(f_2\)} (m-2-10)
            edge[cross line] node[right] {\(c_2\)} (m-5-7)
            edge[cross line] node[above, fill=white] {\(b_2 a_1\)} (m-2-5)
            edge[cross line, den] (m-2-5) 
            edge[sden] node[right] {\(j\)} (m-1-6)
    (m-2-10) edge node[right] {\(c_2'\)} (m-5-10)
             edge[sden] node[right=1pt] {\(j'\)} (m-1-9)
    (m-3-1) edge[equality] (m-6-1)
    (m-3-11) edge[equality] (m-6-11)
             edge[den] node[right] {\(a_2\)} (m-2-10)
    (m-5-2) edge[cross line] node[above] {\(\tilde f_1\)} (m-5-5)
            edge[den] node[left] {\(\tilde b_1\)} (m-6-1)
    (m-5-7) edge[cross line] node[above] {\(\tilde f_2\)} (m-5-10)
            edge[cross line] node[above, fill=white] {\(\tilde b_2 \tilde a_1\)} (m-5-5)
            edge[cross line, den] (m-5-5) 
            edge[sden] node[right] {\(k\)} (m-4-6)
    (m-5-10) edge[sden] node[right=1pt, pos=0.6] {\(k'\)} (m-4-9)
    (m-6-11) edge[den] node[right] {\(\tilde a_2\)} (m-5-10)
    (m-8-2) edge node[above] {\(\tilde f_1\)} (m-8-5)
            edge[den] node[left] {\(\tilde b_1\)} (m-9-1)
            edge[equality] (m-5-2)
    (m-8-5) edge[cross line, equality] (m-5-5)
    (m-8-7) edge node[above] {\(\tilde f_2\)} (m-8-10)
            edge[den] node[above] {\(\tilde b_2 \tilde a_1\)} (m-8-5)
            edge[sden] node[right=1pt] {\(\tilde j\)} (m-7-6)
            edge[cross line, equality] (m-5-7)
    (m-8-10) edge[sden] node[right=1pt, pos=0.6] {\(\tilde j'\)} (m-7-9)
             edge[equality] (m-5-10)
    (m-9-1) edge[equality] (m-6-1)
    (m-9-11) edge[den] node[right] {\(\tilde a_2\)} (m-8-10)
             edge[equality] (m-6-11);
\end{tikzpicture}\]
Hence we have \(q' b_1 = c' r' \tilde b_1\), \(f_1' f_2' c'' = c' g_1 g_2\), \(a_2 j' c'' = \tilde a_2 k'\) and therefore \((q' b_1, f_1' f_2', a_2 j') \fractionequal (r' \tilde b_1, g_1 g_2, \tilde a_2 k')\), and we get \(\tilde q' \tilde b_1 = \tilde c' r' \tilde b_1\), \(\tilde f_1' \tilde f_2' \tilde c'' = \tilde c' g_1 g_2\), \(\tilde a_2 \tilde j' \tilde c'' = \tilde a_2 k'\) and therefore \((r' \tilde b_1, g_1 g_2, \tilde a_2 k') \fractionequal (\tilde q' \tilde b_1, \tilde f_1' \tilde f_2', \tilde a_2 \tilde j')\).
\[\begin{tikzpicture}[baseline=(m-3-1.base)]
  \matrix (m) [diagram without objects]{
    & & & \\
    & & & \\
    & & & \\};
  \path[->, font=\scriptsize]
    (m-1-1) edge[equality] (m-2-1)
    (m-1-2) edge node[above] {\(f_1' f_2'\)} (m-1-3)
            edge node[right] {\(c'\)} (m-2-2)
            edge[den] node[above] {\(q' b_1\)} (m-1-1)
    (m-1-3) edge node[right] {\(c''\)} (m-2-3)
    (m-1-4) edge[equality] (m-2-4)
            edge[den] node[above] {\(a_2 j'\)} (m-1-3) 
    (m-2-2) edge node[above] {\(g_1 g_2\)} (m-2-3)
            edge[den] node[above] {\(r' \tilde b_1\)} (m-2-1)
    (m-2-4) edge[den] node[above] {\(\tilde a_2 k'\)} (m-2-3)
    (m-3-1) edge[equality] (m-2-1)
    (m-3-2) edge node[above] {\(\tilde f_1' \tilde f_2'\)} (m-3-3)
            edge[den] node[above] {\(\tilde q' \tilde b_1\)} (m-3-1)
            edge node[right] {\(\tilde c'\)} (m-2-2)
    (m-3-3) edge node[right] {\(\tilde c''\)} (m-2-3)
    (m-3-4) edge[den] node[above] {\(\tilde a_2 \tilde j'\)} (m-3-3)
            edge[equality] (m-2-4);
\end{tikzpicture}\]
Altogether, we have \((q' b_1, f_1' f_2', a_2 j') \fractionequal (\tilde q' \tilde b_1, \tilde f_1' \tilde f_2', \tilde a_2 \tilde j')\) in \(\threearrowgraph \mathcal{C}\), that is, we have
\[\doublefrac{q' b_1}{f_1' f_2'}{a_2 j'} = \doublefrac{\tilde q' \tilde b_1}{\tilde f_1' \tilde f_2'}{\tilde a_2 \tilde j'}\]
in \((\threearrowgraph \mathcal{C}) / {\fractionequal}\).

In the special case where \(c_1 = 1\), \(c_1' = 1\), \(c_2 = 1\), \(c_2' = 1\), we see that different choices of constructions via weak pullback and weak pushout rectangles lead to the same double fraction \(\doublefrac{q' b_1}{f_1' f_2'}{a_2 j'} = \doublefrac{\tilde q' b_1}{\tilde f_1' \tilde f_2'}{\tilde a_2 \tilde j'}\). Hence we obtain a well-defined map
\begin{align*}
& c\colon \Arr \threearrowgraph{\mathcal{C}} \fibreprod{\Target}{\Source} \Arr \threearrowgraph{\mathcal{C}} \map \Arr{(\threearrowgraph{\mathcal{C}}) / {\fractionequal}}, \\
& \qquad ((b_1, f_1, a_1), (b_2, f_2, a_2)) \mapsto \doublefrac{q' b_1}{f_1' f_2'}{a_2 j'},
\end{align*}
where \(q'\), \(f_1'\), \(f_2'\), \(j'\) are constructed as described above. Now the general case shows that \(c\) is independent of the choice of the representatives in the equivalence classes with respect to \({\fractionequal}\), and thus we obtain an induced map
\[\overline{c}\colon \Arr{(\threearrowgraph{\mathcal{C}}) / {\fractionequal}} \fibreprod{\Target}{\Source} \Arr{(\threearrowgraph{\mathcal{C}}) / {\fractionequal}} \map \Arr{(\threearrowgraph{\mathcal{C}}) / {\fractionequal}}\]
given by
\[\overline{c}(\doublefrac{b_1}{f_1}{a_1}, \doublefrac{b_2}{f_2}{a_2}) = c((b_1, f_1, a_1), (b_2, f_2, a_2)) = \doublefrac{q' b_1}{f_1' f_2'}{a_2 j'}\]
for \((b_1, f_1, a_1), (b_2, f_2, a_2) \in \Arr \threearrowgraph \mathcal{C}\) with \(\Target{(b_1, f_1, a_1)} = \Source{(b_2, f_2, a_2)}\).

We claim that arbitrary commutative quadrangles may be used instead of weak pullback and weak pushout rectangles to compute \(\overline{c}\). Indeed, given a weak pullback rectangle
\[\begin{tikzpicture}[baseline=(m-2-1.base)]
  \matrix (m) [diagram without objects]{
    & \\
    & \\};
  \path[->, font=\scriptsize]
    (m-1-1) edge node[above] {\(f_1'\)} (m-1-2)
            edge[tden] node[left] {\(q'\)} (m-2-1)
    (m-1-2) edge[tden] node[right] {\(q\)} (m-2-2)
    (m-2-1) edge node[above] {\(f_1\)} (m-2-2);
\end{tikzpicture}\]
and a weak pushout rectangle
\[\begin{tikzpicture}[baseline=(m-2-1.base)]
  \matrix (m) [diagram without objects]{
    & \\
    & \\};
  \path[->, font=\scriptsize]
    (m-1-1) edge node[above] {\(f_2'\)} (m-1-2)
    (m-2-1) edge node[above] {\(f_2\)} (m-2-2)
            edge[sden] node[left] {\(j\)} (m-1-1)
    (m-2-2) edge[sden] node[right] {\(j'\)} (m-1-2);
\end{tikzpicture}\]
and arbitrary commutative quadrangles
\[\begin{tikzpicture}[baseline=(m-2-1.base)]
  \matrix (m) [diagram without objects]{
    & \\
    & \\};
  \path[->, font=\scriptsize]
    (m-1-1) edge node[above] {\(\tilde f_1'\)} (m-1-2)
            edge[tden] node[left] {\(\tilde q'\)} (m-2-1)
    (m-1-2) edge[tden] node[right] {\(q\)} (m-2-2)
    (m-2-1) edge node[above] {\(f_1\)} (m-2-2);
\end{tikzpicture}
\text{ and }
\begin{tikzpicture}[baseline=(m-2-1.base)]
  \matrix (m) [diagram without objects]{
    & \\
    & \\};
  \path[->, font=\scriptsize]
    (m-1-1) edge node[above] {\(\tilde f_2'\)} (m-1-2)
    (m-2-1) edge node[above] {\(f_2\)} (m-2-2)
            edge[sden] node[left] {\(j\)} (m-1-1)
    (m-2-2) edge[sden] node[right] {\(\tilde j'\)} (m-1-2);
\end{tikzpicture}\]
such that \(q'\), \(\tilde q'\) are T-denominators and \(j'\), \(\tilde j'\) are S-denominators in \(\mathcal{C}\), we obtain induced morphisms \(c\) and \(c'\) such that \(\tilde q' = c q'\), \(\tilde f_1' = c f_1'\), \(\tilde f_2' = f_2' c'\), \(\tilde j' = j' c'\).
\[\begin{tikzpicture}[baseline=(m-5-1.base)]
  \matrix (m) [diagram without objects=0.9em]{
    & & & & & & & & & & \\
    & & & & & & & & & & \\
    & & & & & & & & & & \\
    & & & & & & & & & & \\
    & & & & & & & & & & \\};
  \path[->, font=\scriptsize]
    (m-1-3) edge[out=-10, in=125] node[above] {\(\tilde f_1'\)} (m-3-6)
            edge[exists] node[right] {\(c\)} (m-3-3)
            edge[tden, out=-145, in=100] node[left] {\(\tilde q'\)} (m-4-2)
    (m-3-3) edge node[above] {\(f_1'\)} (m-3-6)
            edge[tden] node[right] {\(q'\)} (m-4-2)
    (m-3-6) edge node[above] {\(f_2'\)} (m-3-9)
            edge[tden] node[left] {\(q\)} (m-4-5)
            edge[out=55, in=-170] node[above] {\(\tilde f_2'\)} (m-1-9)
    (m-3-9) edge[exists] node[right] {\(c'\)} (m-1-9)
    (m-4-2) edge node[above, near end] {\(f_1\)} (m-4-5)
            edge[den] node[left] {\(b_1\)} (m-5-1)
    (m-4-7) edge node[above, near start] {\(f_2\)} (m-4-10)
            edge[den] node[right] {\(b_2\)} (m-5-6)
            edge[sden] node[right] {\(j\)} (m-3-6)
    (m-4-10) edge[sden] node[left=1pt] {\(j'\)} (m-3-9)
            edge[sden, out=80, in=-35] node[right] {\(\tilde j'\)} (m-1-9)
    (m-5-6) edge[den] node[left] {\(a_1\)} (m-4-5)
    (m-5-11) edge[den] node[right] {\(a_2\)} (m-4-10);
\end{tikzpicture}\]
Hence \((\tilde q' b_1, \tilde f_1' \tilde f_2', a_2 \tilde j') = (c q' b_1, c f_1' f_2' c', a_2 j' c') \fractionequal (q' b_1, f_1' f_2', a_2 j')\) and therefore
\[\overline{c}(\doublefrac{b_1}{f_1}{a_1}, \doublefrac{b_2}{f_2}{a_2}) = \doublefrac{q' b_1}{f_1' f_2'}{a_2 j'} = \doublefrac{\tilde q' b_1}{\tilde f_1' \tilde f_2'}{a_2 \tilde j'}.\]
This proves the claim.

In addition to \(\overline{c}\), we define the map
\[e\colon \Ob{(\threearrowgraph{\mathcal{C}}) / {\fractionequal}} \map \Arr{(\threearrowgraph{\mathcal{C}}) / {\fractionequal}}, X \mapsto \doublefrac{1_X}{1_X}{1_X}.\]
To show that \((\threearrowgraph{\mathcal{C}}) / {\fractionequal}\) is a category with composition \(\overline{c}\) and identity map \(e\), it remains to verify the category axioms. By the definitions of \(\overline{c}\) and \(e\), we have
\[\Source{\overline{c}(\doublefrac{b_1}{f_1}{a_1}, \doublefrac{b_2}{f_2}{a_2})} = \Source{\doublefrac{q' b_1}{f_1' f_2'}{a_2 j'}} = \Target(q' b_1) = \Target b_1 = \Source{\doublefrac{b_1}{f_1}{a_1}}\]
and analogously \(\Target{\overline{c}(\doublefrac{b_1}{f_1}{a_1}, \doublefrac{b_2}{f_2}{a_2})} = \Target{\doublefrac{b_2}{f_2}{a_2}}\) for all \((b_1, f_1, a_1), (b_2, f_2, a_2) \in \Arr \threearrowgraph{\mathcal{C}}\) with \(\Target{\doublefrac{b_1}{f_1}{a_1}} = \Source{\doublefrac{b_2}{f_2}{a_2}}\), as well as
\[\Source{e(X)} = \Source{\doublefrac{1_X}{1_X}{1_X}} = \Target 1_X = X\]
and analogously \(\Target{e(X)} = X\) for all \(X \in \Ob{(\threearrowgraph{\mathcal{C}}) / {\fractionequal}}\).

For the associativity of \(\overline{c}\), we suppose given \((b_l, f_l, a_l) \in \Arr \threearrowgraph{\mathcal{C}}\) for \(l \in \{1, 2, 3\}\) such that \(\Target{\doublefrac{b_1}{f_1}{a_1}} = \Source{\doublefrac{b_2}{f_2}{a_2}}\) and \(\Target{\doublefrac{b_2}{f_2}{a_2}} = \Source{\doublefrac{b_3}{f_3}{a_3}}\). We choose S-denominators \(j\), \(\tilde j\) and T-denominators \(q\), \(\tilde q\) with \(b_2 a_1 = j q\) and \(b_3 a_2 = \tilde j \tilde q\). Then we choose T-denominators \(q'\), \(\tilde q'\) and morphisms \(f_1'\), \(\tilde f_2'\) in \(\mathcal{C}\) with \(f_1' q = q' f_1\) and \(\tilde f_2' \tilde q = \tilde q' f_2\), and we choose S-denominators \(j'\), \(\tilde j'\) and morphisms \(f_2'\), \(\tilde f_3'\) in \(\mathcal{C}\) with \(j f_2' = f_2 j'\) and \(\tilde j \tilde f_3' = f_3 \tilde j'\). By definition of \(\overline{c}\), we obtain
\begin{align*}
& \overline{c}(\doublefrac{b_1}{f_1}{a_1}, \doublefrac{b_2}{f_2}{a_2}) = \doublefrac{q' b_1}{f_1' f_2'}{a_2 j'}, \\
& \overline{c}(\doublefrac{b_2}{f_2}{a_2}, \doublefrac{b_3}{f_3}{a_3}) = \doublefrac{\tilde q' b_2}{\tilde f_2' \tilde f_3'}{a_3 \tilde j'}.
\end{align*}
Moreover, we have \(\tilde q' j f_2' = \tilde f_2' \tilde q j'\), and thus by corollary~\ref{cor:factorisation_lemma_for_no_given_factorisations} there exist S-denominators \(k\), \(\tilde k\), T-denominators \(r\), \(\tilde r\) and a morphism \(f_2''\) in \(\mathcal{C}\) with \(\tilde q j' = k r\), \(\tilde q' j = \tilde k \tilde r\), \(\tilde r f_2' = f_2'' r\), \(\tilde f_2' k = \tilde k f_2''\). We choose a T-denominator \(\tilde r'\) and a morphism \(f_1''\) in \(\mathcal{C}\) with \(f_1'' \tilde r = \tilde r' f_1'\), and we choose an S-denominator \(k'\) and a morphism \(f_3''\) in \(\mathcal{C}\) with \(k f_3'' = \tilde f_3' k'\). Then we obtain \(\tilde r' f_1' f_2' = f_1'' f_2'' r\), \(\tilde j k f_3'' = f_3 \tilde j' k'\), \(\tilde f_2' \tilde f_3' k' = \tilde k f_2'' f_3''\), \(f_1'' \tilde r q = \tilde r' q' f_1\), and therefore
\begin{align*}
\overline{c}(\overline{c}(\doublefrac{b_1}{f_1}{a_1}, \doublefrac{b_2}{f_2}{a_2}), \doublefrac{b_3}{f_3}{a_3}) & = \overline{c}(\doublefrac{q' b_1}{f_1' f_2'}{a_2 j'}, \doublefrac{b_3}{f_3}{a_3}) = \doublefrac{\tilde r' q' b_1}{f_1'' f_2'' f_3''}{a_3 \tilde j' k'} \\
& = \overline{c}(\doublefrac{b_1}{f_1}{a_1}, \doublefrac{\tilde q' b_2}{\tilde f_2' \tilde f_3'}{a_3 \tilde j'}) = \overline{c}(\doublefrac{b_1}{f_1}{a_1}, \overline{c}(\doublefrac{b_2}{f_2}{a_2}, \doublefrac{b_3}{f_3}{a_3})).
\end{align*}
Thus \(\overline{c}\) is associative.
\[\begin{tikzpicture}[baseline=(m-4-1.base)]
  \matrix (m) [diagram without objects=2.0em]{
    & & & & & & & & & & & & & & & \\
    & & & & & & & & & & & & & & & \\
    & & & & & & & & & & & & & & & \\
    & & & & & & & & & & & & & & & \\};
  \path[->, font=\scriptsize]
    (m-1-4) edge node[above] {\(f_1''\)} (m-1-7)
            edge[tden] node[left] {\(\tilde r'\)} (m-2-3)
    (m-1-7) edge node[above] {\(f_2''\)} (m-1-10)
            edge[tden] node[left] {\(\tilde r\)} (m-2-6)
    (m-1-10) edge node[above] {\(f_3''\)} (m-1-13)
            edge[tden] node[left] {\(r\)} (m-2-9)
    (m-2-3) edge node[above] {\(f_1'\)} (m-2-6)
            edge[tden] node[left] {\(q'\)} (m-3-2)
    (m-2-6) edge[tden] node[left] {\(q\)} (m-3-5)
    (m-2-6.10) edge node[above, near start] {\(f_2'\)} (m-2-9.170)
    (m-2-8) edge[tden] node[left] {\(\tilde q'\)} (m-3-7)
            edge[sden] node[right] {\(\tilde k\)} (m-1-7)
    (m-2-8.-10) edge node[above, near end] {\(\tilde f_2'\)} (m-2-11.-170)
    (m-2-11) edge node[above] {\(\tilde f_3'\)} (m-2-14)
             edge[tden] node[left] {\(\tilde q\)} (m-3-10)
             edge[sden] node[right] {\(k\)} (m-1-10)
    (m-2-14) edge[sden] node[right] {\(k'\)} (m-1-13)
    (m-3-2) edge node[above] {\(f_1\)} (m-3-5)
            edge[den] node[left] {\(b_1\)} (m-4-1)
    (m-3-7) edge node[above] {\(f_2\)} (m-3-10)
            edge[den] node[left] {\(b_2\)} (m-4-6)
            edge[sden] node[right] {\(j\)} (m-2-6)
    (m-3-10) edge[sden] node[right=1pt] {\(j'\)} (m-2-9)
    (m-3-12) edge node[above] {\(f_3\)} (m-3-15)
             edge[den] node[left] {\(b_3\)} (m-4-11)
             edge[sden] node[right=1pt] {\(\tilde j\)} (m-2-11)
    (m-3-15) edge[sden] node[right=1pt] {\(\tilde j'\)} (m-2-14)
    (m-4-6) edge[den] node[right] {\(a_1\)} (m-3-5)
    (m-4-11) edge[den] node[right] {\(a_2\)} (m-3-10)
    (m-4-16) edge[den] node[right] {\(a_3\)} (m-3-15);
\end{tikzpicture}\]

Finally, we suppose given \((b, f, a) \in \Arr \threearrowgraph{\mathcal{C}}\). We want to show that \(\overline{c}(\doublefrac{b}{f}{a}, e(\Target{\doublefrac{b}{f}{a}})) = \doublefrac{b}{f}{a}\). By the normalisation lemma~\ref{lem:normalisation_lemma}, there exists a normal arrow \((p, g, i) \in \Arr \threearrowgraph{\mathcal{C}}\) with \((b, f, a) \fractionequal (p, g, i)\). Since a factorisation of \(i\) into an S-denominator followed by a T-denominator is given by \(i = i 1\), we obtain
\[\overline{c}(\doublefrac{b}{f}{a}, e(\Target{\doublefrac{b}{f}{a}})) = \overline{c}(\doublefrac{p}{g}{i}, \doublefrac{1}{1}{1}) = \doublefrac{1 p}{g 1}{1 i} = \doublefrac{p}{g}{i} = \doublefrac{b}{f}{a}.\]
Analogously, we have \(\overline{c}(e(\Source{\doublefrac{b}{f}{a}}), \doublefrac{b}{f}{a}) = \doublefrac{b}{f}{a}\).
\[\begin{tikzpicture}[baseline=(m-3-1.base)]
  \matrix (m) [diagram without objects=0.9em]{
    & & & & & & & & & & \\
    & & & & & & & & & & \\
    & & & & & & & & & & \\};
  \path[->, font=\scriptsize]
    (m-1-3) edge node[above] {\(g\)} (m-1-6)
            edge[equality] (m-2-2)
    (m-1-6) edge[equality] (m-1-9)
            edge[equality] (m-2-5)
    (m-2-2) edge node[above] {\(g\)} (m-2-5)
            edge[tden] node[left] {\(p\)} (m-3-1)
    (m-2-7) edge[equality] (m-2-10)
            edge[equality] (m-3-6)
            edge[sden] node[right] {\(i\)} (m-1-6)
    (m-2-10) edge[sden] node[right] {\(i\)} (m-1-9)
    (m-3-6) edge[sden] node[left=1pt] {\(i\)} (m-2-5)
    (m-3-11) edge[equality] (m-2-10);
\end{tikzpicture}
\qquad
\begin{tikzpicture}[baseline=(m-3-1.base)]
  \matrix (m) [diagram without objects=0.9em]{
    & & & & & & & & & & \\
    & & & & & & & & & & \\
    & & & & & & & & & & \\};
  \path[->, font=\scriptsize]
    (m-1-3) edge[equality] (m-1-6)
            edge[tden] node[left] {\(p\)} (m-2-2)
    (m-1-6) edge node[above] {\(g\)} (m-1-9)
            edge[tden] node[left] {\(p\)} (m-2-5)
    (m-2-2) edge[equality] (m-2-5)
            edge[equality] (m-3-1)
    (m-2-7) edge node[above] {\(g\)} (m-2-10)
            edge[tden] node[right] {\(p\)} (m-3-6)
            edge[equality] (m-1-6)
    (m-2-10) edge[equality] (m-1-9)
    (m-3-6) edge[equality] (m-2-5)
    (m-3-11) edge[sden] node[right] {\(i\)} (m-2-10);
\end{tikzpicture}\]
Altogether, \((\threearrowgraph{\mathcal{C}}) / {\fractionequal}\) becomes a category with \((\doublefrac{b_1}{f_1}{a_1}) (\doublefrac{b_2}{f_2}{a_2}) = \overline{c}(\doublefrac{b_1}{f_1}{a_1}, \doublefrac{b_2}{f_2}{a_2})\) for \((b_1, f_1, a_1)\), 
\((b_2, f_2, a_2) \in \Arr \threearrowgraph{\mathcal{C}}\) with \(\Target{\doublefrac{b_1}{f_1}{a_1}} = \Source{\doublefrac{b_2}{f_2}{a_2}}\) and \(1_X = e(X)\) for \(X \in \Ob {(\threearrowgraph{\mathcal{C}}) / {\fractionequal}}\).
\end{proof}

\begin{definition}[fraction category] \label{def:fraction_category}
The \newnotion{fraction category} of a uni-fractionable category \(\mathcal{C}\) is defined to be the category \(\FractionCategory \mathcal{C}\), whose underlying graph is given by the quotient graph \((\threearrowgraph{\mathcal{C}}) / {\fractionequal}\) and whose composition and identities are given as in proposition~\ref{prop:welldefinedness_of_the_fraction_category}.
\end{definition}

Our next aim is to show that the fraction category of a uni-fractionable category is a localisation, which is going to be the second part of our main theorem~\ref{th:description_of_the_fraction_category}.

\begin{remark} \label{rem:splitting_double_fractions_in_fractions}
Given a uni-fractionable category \(\mathcal{C}\), we have
\[(\doublefrac{b_1}{f_1}{1}) (\doublefrac{1}{f_2}{a_2}) = \doublefrac{b_1}{f_1 f_2}{a_2}\]
for all \(3\)-arrows \((b_1, f_1, 1)\) and \((1, f_2, a_2)\) in \(\mathcal{C}\).
\end{remark}
\begin{proof}
This follows using the definition of the composition in proposition~\ref{prop:welldefinedness_of_the_fraction_category}.
\[\begin{tikzpicture}[baseline=(m-3-1.base)]
  \matrix (m) [diagram without objects=0.9em]{
    & & & & & & & & & & \\
    & & & & & & & & & & \\
    & & & & & & & & & & \\};
  \path[->, font=\scriptsize]
    (m-1-3) edge node[above] {\(f_1\)} (m-1-6)
            edge[equality] (m-2-2)
    (m-1-6) edge node[above] {\(f_2\)} (m-1-9)
            edge[equality] (m-2-5)
    (m-2-2) edge node[above] {\(f_1\)} (m-2-5)
            edge[den] node[left] {\(b_1\)} (m-3-1)
    (m-2-7) edge node[above] {\(f_2\)} (m-2-10)
            edge[equality] (m-3-6)
            edge[equality] (m-1-6)
    (m-2-10) edge[equality] (m-1-9)
    (m-3-6) edge[equality] (m-2-5)
    (m-3-11) edge[den] node[right] {\(a_2\)} (m-2-10);
\end{tikzpicture} \qedhere\]
\end{proof}

\begin{proposition}[universal property of the fraction category] \label{prop:universal_property_of_the_fraction_category}
The fraction category \(\FractionCategory \mathcal{C}\) of a uni-fractionable category \(\mathcal{C}\) is a localisation of \(\mathcal{C}\), where the localisation functor \(\LocalisationFunctor\colon \mathcal{C} \map \FractionCategory \mathcal{C}\) is given on the objects by
\[\LocalisationFunctor(X) = X\]
for \(X \in \Ob \mathcal{C}\) and on the morphisms by
\[\LocalisationFunctor(f) = \doublefrac{1}{f}{1}\]
for \(f \in \Mor \mathcal{C}\). The inverse of \(\LocalisationFunctor(d)\) for \(d \in \Denominators \mathcal{C}\) is given by
\[(\LocalisationFunctor(d))^{- 1} = \doublefrac{d}{1}{1} = \doublefrac{1}{1}{d}.\]
Given a category \(\mathcal{D}\) and a functor \(F\colon \mathcal{C} \map \mathcal{D}\) such that \(F d\) is invertible for all \(d \in \Denominators \mathcal{C}\), the unique functor \(\hat F\colon \FractionCategory \mathcal{C} \map \mathcal{D}\) with \(F = \hat F \comp \LocalisationFunctor\) is given by
\[\hat F(\doublefrac{b}{f}{a}) = (F b)^{- 1} (F f) (F a)^{- 1}.\]
Given a category \(\mathcal{D}\) and functors \(F, G\colon \mathcal{C} \map \mathcal{D}\) such that \(F d\) and \(G d\) are invertible for all \(d \in \Denominators \mathcal{C}\) and a transformation \(\alpha\colon F \map G\), the unique transformation \(\hat \alpha\colon \hat F \map \hat G\) with \(\alpha_X = \hat \alpha_{\LocalisationFunctor(X)}\) for \(X \in \Ob \mathcal{C}\) is given by \(\hat \alpha_X = \alpha_X\) for \(X \in \Ob \FractionCategory \mathcal{C} = \Ob \mathcal{C}\).
\end{proposition}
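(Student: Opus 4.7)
The proof splits into four claims, to be checked in order: (a) $\LocalisationFunctor$ is a functor with the stated formula, (b) $\LocalisationFunctor(d)$ is invertible for $d \in \Denominators \mathcal{C}$ with the two stated inverses, (c) existence and uniqueness of $\hat F$, and (d) existence and uniqueness of $\hat \alpha$. For (a), I would note that $\LocalisationFunctor(1_X) = \doublefrac{1_X}{1_X}{1_X} = 1_X$ holds by definition, and that $\LocalisationFunctor(fg) = \LocalisationFunctor(f) \LocalisationFunctor(g)$ follows by computing $(\doublefrac{1}{f}{1})(\doublefrac{1}{g}{1})$ via proposition~\ref{prop:welldefinedness_of_the_fraction_category} with the trivial factorisation $1 = 1 \cdot 1$ (using that arbitrary commutative squares suffice, as established in that proof), which gives $\doublefrac{1}{fg}{1}$ directly.

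For (b), I would fix a factorisation $d = ip$ from axiom (Fac) and compute $(\doublefrac{1}{d}{1})(\doublefrac{d}{1}{1})$ with $j = i$, $q = p$, and the evident commutative squares given by $q' = 1$, $f_1' = i$, $j' = i$, $f_2' = 1$, yielding $\doublefrac{1}{i}{i}$; then applying the first generator of fraction equality to $(1,1,1)$ with $c = i$ gives $(1,1,1) \fractionequal (1,i,i)$, so the product equals the identity $\doublefrac{1}{1}{1}$. The reverse composition is dual. The equality $\doublefrac{d}{1}{1} = \doublefrac{1}{1}{d}$ follows because both are fraction equal to $\doublefrac{d}{d}{d}$: applying generator~1 to $(d,1,1)$ with $c = d$ (using $ac = d \in \Denominators \mathcal{C}$) gives $(d,1,1) \fractionequal (d,d,d)$, and applying generator~2 to $(1,1,d)$ with $c = d$ (using $cb = d \in \Denominators \mathcal{C}$) gives $(1,1,d) \fractionequal (d,d,d)$.

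For (c), the key step is the decomposition identity $\doublefrac{b}{f}{a} = \LocalisationFunctor(b)^{-1} \LocalisationFunctor(f) \LocalisationFunctor(a)^{-1}$ in $\FractionCategory \mathcal{C}$, which follows from (b) combined with two applications of remark~\ref{rem:splitting_double_fractions_in_fractions}: first $\doublefrac{b}{1}{1} \cdot \doublefrac{1}{f}{1} = \doublefrac{b}{f}{1}$, then $\doublefrac{b}{f}{1} \cdot \doublefrac{1}{1}{a} = \doublefrac{b}{f}{a}$. This decomposition forces the formula for $\hat F$ given any $F$ satisfying $F = \hat F \comp \LocalisationFunctor$, so uniqueness is immediate. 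For existence, I would define $\hat F$ on representatives by $(b,f,a) \mapsto (Fb)^{-1}(Ff)(Fa)^{-1}$ and check well-definedness against the two generators of $\fractionequal$: in both cases the check reduces to cancelling $(Fc)(Fc)^{-1}$, where $c$ is automatically a denominator by semi-saturatedness, so $Fc$ is invertible by hypothesis. Functoriality on identities is immediate; functoriality on composites follows by applying $F$ to the three defining relations $b_2 a_1 = jq$, $f_1' q = q' f_1$, $j f_2' = f_2 j'$ from proposition~\ref{prop:welldefinedness_of_the_fraction_category} and rearranging with the invertibility of $Fq$ and $Fj$. Finally, $\hat F(\LocalisationFunctor(f)) = (F 1)^{-1}(Ff)(F 1)^{-1} = Ff$ is immediate.

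For (d), the stipulation $\hat \alpha_{\LocalisationFunctor(X)} = \alpha_X$ forces $\hat \alpha_X = \alpha_X$ since $\LocalisationFunctor$ is the identity on objects; this gives uniqueness. For existence, I would verify naturality of $\hat \alpha$ against an arbitrary morphism $\doublefrac{b}{f}{a}$ in $\FractionCategory \mathcal{C}$ by again invoking the decomposition $\doublefrac{b}{f}{a} = \LocalisationFunctor(b)^{-1} \LocalisationFunctor(f) \LocalisationFunctor(a)^{-1}$: naturality of $\alpha$ with respect to $b$, $f$, $a$ in $\mathcal{C}$ (which holds by hypothesis on $\alpha$), combined with the invertibility of $Fb$, $Fa$, $Gb$, $Ga$, rearranges into the naturality square for $\hat \alpha$ against each of $\LocalisationFunctor(b)^{-1}$, $\LocalisationFunctor(f)$, $\LocalisationFunctor(a)^{-1}$ separately, and composing these three squares gives naturality against $\doublefrac{b}{f}{a}$. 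The main obstacle is the fiddly but routine bookkeeping in the composition check for $\hat F$, where the left-to-right composition convention must be tracked carefully when inverting the commutativity relations; no new ingredients beyond what has already been developed are required.
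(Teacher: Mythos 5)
Your proposal is correct and takes essentially the same route as the paper's proof: the identity-on-objects functor \(\LocalisationFunctor(f) = \doublefrac{1}{f}{1}\), invertibility of \(\LocalisationFunctor(d)\), the decomposition \(\doublefrac{b}{f}{a} = (\LocalisationFunctor(b))^{-1}\LocalisationFunctor(f)(\LocalisationFunctor(a))^{-1}\) via remark~\ref{rem:splitting_double_fractions_in_fractions} for uniqueness of \(\hat F\) and for the naturality of \(\hat\alpha\), and existence of \(\hat F\) by checking the formula \((b,f,a)\mapsto (Fb)^{-1}(Ff)(Fa)^{-1}\) against the generators of \({\fractionequal}\) (with \(c\) a denominator by semi-saturatedness) and against the explicit composition recipe of proposition~\ref{prop:welldefinedness_of_the_fraction_category}. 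The only immaterial deviation is in the invertibility step, where you verify directly that \(\doublefrac{d}{1}{1}\) is a two-sided inverse (via a composition computation through a factorisation \(d = ip\)) and then identify \(\doublefrac{d}{1}{1} = \doublefrac{1}{1}{d}\) by fraction equality through \((d,d,d)\), whereas the paper exhibits \(\doublefrac{1}{1}{d}\) as a right and \(\doublefrac{d}{1}{1}\) as a left inverse using remark~\ref{rem:splitting_double_fractions_in_fractions} and lets uniqueness of two-sided inverses identify them.
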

\begin{proof}
We suppose given a uni-fractionable category \(\mathcal{C}\). We define a graph morphism \(L\colon \mathcal{C} \map \FractionCategory \mathcal{C}\) on the objects by \(L X := X\) for \(X \in \Ob \mathcal{C}\) and on the arrows by \(L f := \doublefrac{1}{f}{1}\) for \(f \in \Mor \mathcal{C}\). By remark~\ref{rem:splitting_double_fractions_in_fractions}, we get
\[L(f g) = \doublefrac{1}{f g}{1} = (\doublefrac{1}{f}{1}) (\doublefrac{1}{g}{1}) = (L f) (L g)\]
for all \(f, g \in \Mor \mathcal{C}\) with \(\Target f = \Source g\) and
\[L 1_X = \doublefrac{1_X}{1_X}{1_X} = 1_{L X}\]
for all \(X \in \Ob \mathcal{C}\), that is, \(L\) is a functor.

We want to show that \(\FractionCategory \mathcal{C}\) is a localisation of \(\mathcal{C}\) with localisation functor \(L\).
\begin{itemize}
\item[(Inv)] We let \(d \in \Denominators \mathcal{C}\) be given. By remark~\ref{rem:splitting_double_fractions_in_fractions}, we have
\begin{align*}
& (L d) (\doublefrac{1}{1}{d}) = (\doublefrac{1}{d}{1}) (\doublefrac{1}{1}{d}) = \doublefrac{1}{d}{d} = \doublefrac{1}{1}{1} = 1 \text{ and} \\
& (\doublefrac{d}{1}{1}) (L d) = (\doublefrac{d}{1}{1}) (\doublefrac{1}{d}{1}) = \doublefrac{d}{d}{1} = \doublefrac{1}{1}{1} = 1,
\end{align*}
that is, \(L d\) has a right inverse \(\doublefrac{1}{1}{d}\) and a left inverse \(\doublefrac{d}{1}{1}\). But then \(L d\) is invertible and the left and the right inverse coincide as the unique inverse of \(L d\), that is,
\[(L d)^{- 1} = \doublefrac{d}{1}{1} = \doublefrac{1}{1}{d}.\]
\item[(1-uni)] We let \(\mathcal{D}\) be a category and \(F\colon \mathcal{C} \map \mathcal{D}\) be a functor such that \(F d\) is invertible for all \(d \in \Denominators \mathcal{C}\). Since
\begin{align*}
& \Source((F b)^{- 1} (F f) (F a)^{- 1}) = \Source{(F b)^{- 1}} = \Target(F b) = F(\Target b) = F(\Source{(b, f, a)}), \\
& \Target((F b)^{- 1} (F f) (F a)^{- 1}) = \Target{(F a)^{- 1}} = \Source(F a) = F(\Source a) = F(\Target{(b, f, a)})
\end{align*}
for \((b, f, a) \in \Arr \threearrowgraph{\mathcal{C}}\), there is a graph morphism \(F'\colon \threearrowgraph{\mathcal{C}} \map \mathcal{D}\) given on the objects by \(F' X = F X\) for \(X \in \Ob \threearrowgraph{\mathcal{C}}\) and on the arrows by \(F'{(b, f, a)} = (F b)^{- 1} (F f) (F a)^{- 1}\) for \((b, f, a) \in \Arr \threearrowgraph{\mathcal{C}}\). Moreover, given \((b, f, a) \in \Arr \threearrowgraph{\mathcal{C}}\) and \(c, c' \in \Denominators \mathcal{C}\) with \(\Target c' = \Source b\), \(\Source c = \Target a\), we obtain
\begin{align*}
F'{(c' b, c' f c, a c)} & = (F(c' b))^{- 1} (F(c' f c)) (F(a c))^{- 1} = ((F c') (F b))^{- 1} ((F c') (F f) (F c)) ((F a) (F c))^{- 1} \\
& = (F b)^{- 1} (F c')^{- 1} (F c') (F f) (F c) (F c)^{- 1} (F a)^{- 1} = (F b)^{- 1} (F f) (F a)^{- 1} = F'{(b, f, a)}.
\end{align*}
Hence \(F'\) maps fraction equal \(3\)-arrows to the same morphism and we obtain an induced graph morphism \(\hat F\colon (\threearrowgraph{\mathcal{C}}) / {\fractionequal} \map \mathcal{D}\) with \(F' = \hat F \comp \quo\).
\[\begin{tikzpicture}[baseline=(m-2-1.base)]
  \matrix (m) [diagram]{
    \threearrowgraph{\mathcal{C}} & \mathcal{D} \\
    (\threearrowgraph{\mathcal{C}}) / {\fractionequal} & \\};
  \path[->, font=\scriptsize]
    (m-1-1) edge node[above] {\(F'\)} (m-1-2)
            edge node[left] {\(\quo\)} (m-2-1)
    (m-2-1) edge[exists] node[right] {\(\hat F\)} (m-1-2);
\end{tikzpicture}\]
Given \((b_1, f_1, a_1), (b_2, f_2, a_2) \in \Arr \threearrowgraph{\mathcal{C}}\) with \(\Target{(b_1, f_1, a_1)} = \Source{(b_2, f_2, a_2)}\), we have
\begin{align*}
\hat F((\doublefrac{b_1}{f_1}{a_1}) (\doublefrac{b_2}{f_2}{a_2})) & = \hat F(\doublefrac{q' b_1}{f_1' f_2'}{a_2 j'}) = (F(q' b_1))^{- 1} (F(f_1' f_2')) (F(a_2 j'))^{- 1} \\
& = (F b_1)^{- 1} (F q')^{- 1} (F f_1') (F f_2') (F j')^{- 1} (F a_2)^{- 1} \\
& = (F b_1)^{- 1} (F f_1) (F q)^{- 1} (F j)^{- 1} (F f_2) (F a_2)^{- 1} \\
& = (F b_1)^{- 1} (F f_1) (F a_1)^{- 1} (F b_2)^{- 1} (F f_2) (F a_2)^{- 1} \\
& = \hat F(\doublefrac{b_1}{f_1}{a_1}) \hat F(\doublefrac{b_2}{f_2}{a_2}),
\end{align*}
where \(j\), \(j'\), \(q\), \(q'\), \(f_1'\), \(f_2'\) are supposed to be constructed as in proposition~\ref{prop:welldefinedness_of_the_fraction_category}.
\[\begin{tikzpicture}[baseline=(m-3-1.base)]
  \matrix (m) [diagram without objects=0.9em]{
    & & & & & & & & & & \\
    & & & & & & & & & & \\
    & & & & & & & & & & \\};
  \path[->, font=\scriptsize]
    (m-1-3) edge node[above] {\(f_1'\)} (m-1-6)
            edge[tden] node[left] {\(q'\)} (m-2-2)
    (m-1-6) edge node[above] {\(f_2'\)} (m-1-9)
            edge[tden] node[left] {\(q\)} (m-2-5)
    (m-2-2) edge node[above] {\(f_1\)} (m-2-5)
            edge[den] node[left] {\(b_1\)} (m-3-1)
    (m-2-7) edge node[above] {\(f_2\)} (m-2-10)
            edge[den] node[right] {\(b_2\)} (m-3-6)
            edge[sden] node[right] {\(j\)} (m-1-6)
    (m-2-10) edge[sden] node[right=1pt] {\(j'\)} (m-1-9)
    (m-3-6) edge[den] node[left] {\(a_1\)} (m-2-5)
    (m-3-11) edge[den] node[right] {\(a_2\)} (m-2-10);
\end{tikzpicture}\]
Moreover, we have
\[\hat F(1_X) = \hat F(\doublefrac{1_X}{1_X}{1_X}) = (F 1_X)^{- 1} (F 1_X) (F 1_X)^{- 1} = 1_{F X}^{- 1} 1_{F X} 1_{F X}^{- 1} = 1_{F X} = 1_{\hat F X}\]
for \(X \in \Ob \FractionCategory \mathcal{C}\). This implies that \(\hat F\colon \FractionCategory \mathcal{C} \map \mathcal{D}\) is a functor, given by
\[\hat F(\doublefrac{b}{f}{a}) = F'{(b, f, a)} = (F b)^{- 1} (F f) (F a)^{- 1}\]
for every \((b, f, a) \in \Arr \threearrowgraph{\mathcal{C}}\). In particular,
\[\hat F L f = \hat F(\doublefrac{1}{f}{1}) = (F 1)^{- 1} (F f) (F 1)^{- 1} = 1^{- 1} (F f) 1^{- 1} = F f\]
for all \(f \in \Mor \mathcal{C}\), that is, \(\hat F \comp L = F\).

Conversely, given an arbitrary functor \(G\colon \FractionCategory \mathcal{C} \map \mathcal{D}\) with \(F = G \comp L\), we conclude by remark~\ref{rem:splitting_double_fractions_in_fractions} that
\begin{align*}
G(\doublefrac{b}{f}{a}) & = G((\doublefrac{b}{1}{1}) (\doublefrac{1}{f}{1}) (\doublefrac{1}{1}{a})) = G((L b)^{- 1} (L f) (L a)^{- 1}) = (G L b)^{- 1} (G L f) (G L a)^{- 1} \\
& = (F b)^{- 1} (F f) (F a)^{- 1}
\end{align*}
for \((b, f, a) \in \Arr \threearrowgraph{\mathcal{C}}\).
\item[(2-uni)] We suppose given a category \(\mathcal{D}\) and functors \(F, G\colon \mathcal{C} \map \mathcal{D}\) such that \(F d\) and \(G d\) are invertible for all \(d \in \Denominators \mathcal{C}\), and we let \(\hat F, \hat G\colon \FractionCategory \mathcal{C} \map \mathcal{D}\) be the unique functors with \(F = \hat F \comp L\) resp.\ \(G = \hat G \comp L\). Moreover, we suppose given a transformation \(\alpha\colon F \map G\). We define a family \(\hat \alpha := (\hat \alpha_X)_{X \in \Ob \FractionCategory \mathcal{C}}\) by \(\hat \alpha_X := \alpha_X\) for \(X \in \Ob \FractionCategory \mathcal{C} = \Ob \mathcal{C}\). Then \(\hat \alpha_{L X} = \hat \alpha_X = \alpha_X\) for \(X \in \Ob \mathcal{C}\). Moreover, \(\hat \alpha\) is a transformation from \(\hat F\) to \(\hat G\) since for every \(3\)-arrow \((b, f, a)\colon \threearrow{X}{\tilde X}{\tilde Y}{Y}\) in \(\mathcal{C}\), we have
\begin{align*}
\hat \alpha_X (\hat G(\doublefrac{b}{f}{a})) & = \alpha_X (G b)^{- 1} (G f) (G a)^{- 1} = (F b)^{- 1} \alpha_{\tilde X} (G f) (G a)^{- 1} = (F b)^{- 1} (F f) \alpha_{\tilde Y} (G a)^{- 1} \\
& = (F b)^{- 1} (F f) (F a)^{- 1} \alpha_Y = (\hat F(\doublefrac{b}{f}{a})) \hat \alpha_Y.
\end{align*}
Conversely, given an arbitrary transformation \(\beta\colon \hat F \map \hat G\) such that \(\beta_{L X} = \alpha_X\) for all \(X \in \Ob \mathcal{C}\), we necessarily have \(\beta_X = \beta_{L X} = \alpha_X\) for all \(X \in \Ob \FractionCategory \mathcal{C} = \Ob \mathcal{C}\).
\end{itemize}

Altogether, \(\FractionCategory \mathcal{C}\) is a localisation of \(\mathcal{C}\) with localisation functor \(\LocalisationFunctor[\FractionCategory \mathcal{C}] = L\).
\end{proof}

\begin{corollary}[splitting double fractions] \label{cor:splitting_double_fractions}
Given a uni-fractionable category \(\mathcal{C}\), we have
\[\doublefrac{b}{f}{a} = (\LocalisationFunctor(b))^{- 1} \LocalisationFunctor(f) (\LocalisationFunctor(a))^{- 1}\]
for each \(3\)-arrow \((b, f, a)\) in \(\mathcal{C}\).
\end{corollary}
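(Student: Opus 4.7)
The plan is to chase the definitions: express the right-hand side as a composite of three double fractions using the formulas from proposition~\ref{prop:universal_property_of_the_fraction_category}, then collapse it to $\doublefrac{b}{f}{a}$ using remark~\ref{rem:splitting_double_fractions_in_fractions}. Since $(b,f,a)$ is a $3$-arrow, both $b$ and $a$ are denominators in $\mathcal{C}$, so $\LocalisationFunctor(b)$ and $\LocalisationFunctor(a)$ are invertible in $\FractionCategory\mathcal{C}$ and the right-hand side makes sense.

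First I would invoke proposition~\ref{prop:universal_property_of_the_fraction_category} to rewrite
\[
(\LocalisationFunctor(b))^{-1}\,\LocalisationFunctor(f)\,(\LocalisationFunctor(a))^{-1}
= \bigl(\doublefrac{b}{1}{1}\bigr)\bigl(\doublefrac{1}{f}{1}\bigr)\bigl(\doublefrac{1}{1}{a}\bigr),
\]
using the explicit formulas $\LocalisationFunctor(f)=\doublefrac{1}{f}{1}$ and $(\LocalisationFunctor(b))^{-1}=\doublefrac{b}{1}{1}$, $(\LocalisationFunctor(a))^{-1}=\doublefrac{1}{1}{a}$.

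Next I would apply remark~\ref{rem:splitting_double_fractions_in_fractions} once to the first two factors, obtaining
\[
\bigl(\doublefrac{b}{1}{1}\bigr)\bigl(\doublefrac{1}{f}{1}\bigr) = \doublefrac{b}{1\cdot f}{1} = \doublefrac{b}{f}{1},
\]
and then a second time to combine this with the third factor,
\[
\bigl(\doublefrac{b}{f}{1}\bigr)\bigl(\doublefrac{1}{1}{a}\bigr) = \doublefrac{b}{f\cdot 1}{a} = \doublefrac{b}{f}{a}.
\]
Composing yields the desired equality. There is no real obstacle: the remark already isolates the case where composition requires no weak pushout/pullback construction (the bridging S- and T-denominators are identities), so the composition formula from proposition~\ref{prop:welldefinedness_of_the_fraction_category} reduces to a trivial diagram and the identification is immediate.
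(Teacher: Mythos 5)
Your proof is correct, but it takes a different route from the paper. You verify the identity by direct computation: you split the right-hand side into the three double fractions \((\doublefrac{b}{1}{1})(\doublefrac{1}{f}{1})(\doublefrac{1}{1}{a})\) using the explicit formulas from proposition~\ref{prop:universal_property_of_the_fraction_category}, and then collapse the product with two applications of remark~\ref{rem:splitting_double_fractions_in_fractions} (using associativity from proposition~\ref{prop:welldefinedness_of_the_fraction_category}); all hypotheses are satisfied, since identities are denominators by multiplicativity and the sources and targets match. The paper instead argues via 1-universality: since \(\LocalisationFunctor(d)\) is invertible for every denominator \(d\), there is a unique endofunctor \(\hat L\) of \(\FractionCategory \mathcal{C}\) with \(\LocalisationFunctor = \hat L \comp \LocalisationFunctor\), given explicitly by \(\hat L(\doublefrac{b}{f}{a}) = (\LocalisationFunctor(b))^{-1} \LocalisationFunctor(f) (\LocalisationFunctor(a))^{-1}\); comparing with \(\id_{\FractionCategory \mathcal{C}}\) and invoking uniqueness forces \(\hat L = \id\), which yields the claim. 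Your argument is more elementary and self-contained, and in fact reproduces the computation that the paper itself carries out inside the uniqueness part of the (1-uni) verification (where \(\doublefrac{b}{f}{a}\) is rewritten as \((\doublefrac{b}{1}{1})(\doublefrac{1}{f}{1})(\doublefrac{1}{1}{a})\) via the same remark), whereas the paper's proof of the corollary is shorter because it simply reuses the universal property just established. Both are complete proofs.
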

\begin{proof}
By proposition~\ref{prop:universal_property_of_the_fraction_category}, the fraction category \(\FractionCategory \mathcal{C}\) is a localisation of \(\mathcal{C}\). In particular, \(\LocalisationFunctor(d)\) is invertible for all \(d \in \Denominators \mathcal{C}\), and hence there exists a unique functor \(\hat L\colon \FractionCategory \mathcal{C} \map \FractionCategory \mathcal{C}\) with \(\LocalisationFunctor = \hat L \comp \LocalisationFunctor\), which is given by
\[\hat L(\doublefrac{b}{f}{a}) = (\LocalisationFunctor(b))^{- 1} \LocalisationFunctor(f) (\LocalisationFunctor(a))^{- 1}\]
for all \((b, f, a) \in \Arr \threearrowgraph{\mathcal{C}}\). But since \(\LocalisationFunctor = \id_{\FractionCategory \mathcal{C}} \comp \LocalisationFunctor\), we necessarily must have \(\hat L = \id_{\FractionCategory \mathcal{C}}\) and therefore the assertion holds.
\end{proof}

In the construction of the composition of the fraction category in proposition~\ref{prop:welldefinedness_of_the_fraction_category}, the occurring morphisms \(j\), \(j'\) were S-denominators, and \(q\), \(q'\) were T-denominators. We shall now show that it suffices to have a diagram with arbitrary denominators at their places to get the correct composite.

\begin{proposition} \label{prop:flexibility_of_composites_and_inverses_in_the_fraction_category}
We suppose given a uni-fractionable category \(\mathcal{C}\).
\begin{enumerate}
\item \label{prop:flexibility_of_composites_and_inverses_in_the_fraction_category:composition} We suppose given \(3\)-arrows \((b_1, f_1, a_1)\) and \((b_2, f_2, a_2)\) in \(\mathcal{C}\) with \(\Target{(b_1, f_1, a_1)} = \Source{(b_2, f_2, a_2)}\). Moreover, we suppose given denominators \(d\), \(d'\), \(e\), \(e'\) and morphisms \(g_1\), \(g_2\) in \(\mathcal{C}\) with \(b_2 a_1 = d e\), \(g_1 e = e' f_1\), \(d g_2 = f_2 d'\).
\[\begin{tikzpicture}[baseline=(m-3-1.base)]
  \matrix (m) [diagram without objects=0.9em]{
    & & & & & & & & & & \\
    & & & & & & & & & & \\
    & & & & & & & & & & \\};
  \path[->, font=\scriptsize]
    (m-1-3) edge node[above] {\(g_1\)} (m-1-6)
            edge[den] node[left] {\(e'\)} (m-2-2)
    (m-1-6) edge node[above] {\(g_2\)} (m-1-9)
            edge[den] node[left] {\(e\)} (m-2-5)
    (m-2-2) edge node[above] {\(f_1\)} (m-2-5)
            edge[den] node[left] {\(b_1\)} (m-3-1)
    (m-2-7) edge node[above] {\(f_2\)} (m-2-10)
            edge[den] node[right] {\(b_2\)} (m-3-6)
            edge[den] node[right] {\(d\)} (m-1-6)
    (m-2-10) edge[den] node[right] {\(d'\)} (m-1-9)
    (m-3-6) edge[den] node[left] {\(a_1\)} (m-2-5)
    (m-3-11) edge[den] node[right] {\(a_2\)} (m-2-10);
\end{tikzpicture}\]
Then we have
\[(\doublefrac{b_1}{f_1}{a_1}) (\doublefrac{b_2}{f_2}{a_2}) = \doublefrac{e' b_1}{g_1 g_2}{a_2 d'}.\]
\item \label{prop:flexibility_of_composites_and_inverses_in_the_fraction_category:inverse} Given a \(3\)-arrow \((b, d, a)\) in \(\mathcal{C}\) with a denominator \(d\), the double fraction \(\doublefrac{b}{d}{a}\) is invertible in \(\FractionCategory \mathcal{C}\), and the inverse of \(\doublefrac{b}{d}{a}\) can be constructed as follows. We choose denominators \(d_1\), \(d_1'\), \(d_2\), \(d_2'\), \(a'\), \(b'\) in \(\mathcal{C}\) with \(d = d_1 d_2\), \(d_1 b' = b d_1'\), \(a' d_2 = d_2' a\) (\footnote{For example, one can factorise \(d\) into a composite of an S-denominator \(d_1\) and a T-denominator \(d_2\) and then construct weakly universal Ore completions.}).
\[\begin{tikzpicture}[baseline=(m-2-1.base)]
  \matrix (m) [diagram without objects=0.9em]{
    & & & & & & & & \\
    & & & & & & & & \\};
  \path[->, font=\scriptsize]
    (m-1-5) edge[den] node[right=1pt] {\(d_2\)} (m-2-6)
            edge[den] node[above] {\(b'\)} (m-1-2)
    (m-1-8) edge[den] node[right=1pt] {\(d_2'\)} (m-2-9)
            edge[den] node[above] {\(a'\)} (m-1-5)
    (m-2-1) edge[den] node[left=1pt] {\(d_1'\)} (m-1-2)
    (m-2-4) edge[den] node[above] {\(d\)} (m-2-6)
            edge[den] node[above] {\(b\)} (m-2-1)
            edge[den] node[left] {\(d_1\)} (m-1-5)
    (m-2-9) edge[den] node[above] {\(a\)} (m-2-6);
\end{tikzpicture}\]
Then we have
\[(\doublefrac{b}{d}{a})^{- 1} = \doublefrac{d_2'}{a' b'}{d_1'}.\]
\end{enumerate}
\end{proposition}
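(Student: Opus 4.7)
Both parts should reduce to direct algebraic manipulations inside $\FractionCategory \mathcal{C}$ by invoking corollary~\ref{cor:splitting_double_fractions}, which writes every double fraction in the form $\doublefrac{b}{f}{a} = (\LocalisationFunctor(b))^{-1} \LocalisationFunctor(f) (\LocalisationFunctor(a))^{-1}$. Since $\LocalisationFunctor$ sends every denominator to an invertible morphism and since semi-saturatedness (hence multiplicativity) guarantees that composites such as $e' b_1$, $a_2 d'$, $d_1 b'$, $a' d_2$ are again denominators, the various expressions $\doublefrac{e'b_1}{g_1 g_2}{a_2 d'}$ and $\doublefrac{d_2'}{a'b'}{d_1'}$ make sense and can be expanded using the splitting formula.

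For part~\ref{prop:flexibility_of_composites_and_inverses_in_the_fraction_category:composition}, I expand both sides via corollary~\ref{cor:splitting_double_fractions} and use functoriality of $\LocalisationFunctor$ together with the hypotheses $e' f_1 = g_1 e$, $f_2 d' = d g_2$ and $b_2 a_1 = d e$. Inserting $(\LocalisationFunctor(e'))^{-1} \LocalisationFunctor(e')$ on the left and $\LocalisationFunctor(d') (\LocalisationFunctor(d'))^{-1}$ on the right gives
\begin{align*}
\LocalisationFunctor(f_1) (\LocalisationFunctor(a_1))^{-1} (\LocalisationFunctor(b_2))^{-1} \LocalisationFunctor(f_2)
  & = (\LocalisationFunctor(e'))^{-1} \LocalisationFunctor(e' f_1) \LocalisationFunctor(b_2 a_1)^{-1} \LocalisationFunctor(f_2 d') (\LocalisationFunctor(d'))^{-1} \\
  & = (\LocalisationFunctor(e'))^{-1} \LocalisationFunctor(g_1) \LocalisationFunctor(e) (\LocalisationFunctor(e))^{-1} (\LocalisationFunctor(d))^{-1} \LocalisationFunctor(d) \LocalisationFunctor(g_2) (\LocalisationFunctor(d'))^{-1} \\
  & = (\LocalisationFunctor(e'))^{-1} \LocalisationFunctor(g_1 g_2) (\LocalisationFunctor(d'))^{-1}.
\end{align*}
Multiplying by $(\LocalisationFunctor(b_1))^{-1}$ on the left and $(\LocalisationFunctor(a_2))^{-1}$ on the right and reapplying corollary~\ref{cor:splitting_double_fractions} yields $(\doublefrac{b_1}{f_1}{a_1})(\doublefrac{b_2}{f_2}{a_2}) = \doublefrac{e' b_1}{g_1 g_2}{a_2 d'}$.

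For part~\ref{prop:flexibility_of_composites_and_inverses_in_the_fraction_category:inverse}, since $b$, $d$, $a$ are all denominators, corollary~\ref{cor:splitting_double_fractions} exhibits $\doublefrac{b}{d}{a}$ as a product of three invertible morphisms in $\FractionCategory \mathcal{C}$, so it is invertible with inverse $\LocalisationFunctor(a) (\LocalisationFunctor(d))^{-1} \LocalisationFunctor(b)$. The relations $d = d_1 d_2$, $d_1 b' = b d_1'$ and $a' d_2 = d_2' a$ give $(\LocalisationFunctor(d))^{-1} = (\LocalisationFunctor(d_2))^{-1}(\LocalisationFunctor(d_1))^{-1}$, $\LocalisationFunctor(b') = (\LocalisationFunctor(d_1))^{-1} \LocalisationFunctor(b) \LocalisationFunctor(d_1')$, and $\LocalisationFunctor(a') = \LocalisationFunctor(d_2') \LocalisationFunctor(a) (\LocalisationFunctor(d_2))^{-1}$. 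Substituting into the expansion $\doublefrac{d_2'}{a'b'}{d_1'} = (\LocalisationFunctor(d_2'))^{-1} \LocalisationFunctor(a') \LocalisationFunctor(b') (\LocalisationFunctor(d_1'))^{-1}$, everything telescopes to $\LocalisationFunctor(a) (\LocalisationFunctor(d))^{-1} \LocalisationFunctor(b)$, matching the candidate inverse. The whole argument is essentially bookkeeping and there is no real obstacle: the heavy lifting (use of the Ore completions, normalisation, factorisation lemma) was already carried out in proving proposition~\ref{prop:welldefinedness_of_the_fraction_category} and corollary~\ref{cor:splitting_double_fractions}.
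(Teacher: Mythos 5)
Your proposal is correct and follows essentially the same route as the paper: both parts are proved by expanding all double fractions via corollary~\ref{cor:splitting_double_fractions}, using functoriality of \(\LocalisationFunctor\) and the given commutativity relations to rearrange the product of isomorphisms, and noting (via multiplicativity) that \(e' b_1\), \(a_2 d'\), \(d_1'\), \(d_2'\) are denominators so that the resulting expressions are again double fractions. No gaps; this is the paper's argument up to the direction in which the telescoping in part~\ref{prop:flexibility_of_composites_and_inverses_in_the_fraction_category:inverse} is carried out.
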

\begin{proof} \
\begin{enumerate}
\item We compute
\begin{align*}
(\doublefrac{b_1}{f_1}{a_1}) (\doublefrac{b_2}{f_2}{a_2}) & = (\LocalisationFunctor(b_1))^{- 1} \LocalisationFunctor(f_1) (\LocalisationFunctor(a_1))^{- 1} (\LocalisationFunctor(b_2))^{- 1} \LocalisationFunctor(f_2) (\LocalisationFunctor(a_2))^{- 1} \\
& = (\LocalisationFunctor(b_1))^{- 1} \LocalisationFunctor(f_1) (\LocalisationFunctor(e))^{- 1} (\LocalisationFunctor(d))^{- 1} \LocalisationFunctor(f_2) (\LocalisationFunctor(a_2))^{- 1} \\
& = (\LocalisationFunctor(b_1))^{- 1} (\LocalisationFunctor(e'))^{- 1} \LocalisationFunctor(g_1) \LocalisationFunctor(g_2) (\LocalisationFunctor(d'))^{- 1} (\LocalisationFunctor(a_2))^{- 1} \\
& = (\LocalisationFunctor(e' b_1))^{- 1} \LocalisationFunctor(g_1 g_2) (\LocalisationFunctor(a_2 d'))^{- 1} = \doublefrac{e' b_1}{g_1 g_2}{a_2 d'}.
\end{align*}
\item The double fraction \(\doublefrac{b}{d}{a} = (\LocalisationFunctor(b))^{- 1} \LocalisationFunctor(d) (\LocalisationFunctor(a))^{- 1}\) is invertible in \(\FractionCategory \mathcal{C}\) since the localisation functor \(\LocalisationFunctor\colon \mathcal{C} \map \FractionCategory \mathcal{C}\) maps denominators in \(\mathcal{C}\) to isomorphisms in \(\FractionCategory \mathcal{C}\).

Given denominators \(d_1\), \(d_1'\), \(d_2\), \(d_2'\), \(a'\), \(b'\) in \(\mathcal{C}\) with \(d = d_1 d_2\), \(d_1 b' = b d_1'\), \(a' d_2 = d_2' a\), we obtain
\begin{align*}
(\doublefrac{b}{d}{a})^{- 1} & = ((\LocalisationFunctor(b))^{- 1} \LocalisationFunctor(d) (\LocalisationFunctor(a))^{- 1})^{- 1} = \LocalisationFunctor(a) (\LocalisationFunctor(d))^{- 1} \LocalisationFunctor(b) = \LocalisationFunctor(a) (\LocalisationFunctor(d_1 d_2))^{- 1} \LocalisationFunctor(b) \\
& = \LocalisationFunctor(a) (\LocalisationFunctor(d_2))^{- 1} (\LocalisationFunctor(d_1))^{- 1} \LocalisationFunctor(b) = (\LocalisationFunctor(d_2'))^{- 1} \LocalisationFunctor(a') \LocalisationFunctor(b') (\LocalisationFunctor(d_1'))^{- 1} \\
& = (\LocalisationFunctor(d_2'))^{- 1} \LocalisationFunctor(a' b') (\LocalisationFunctor(d_1'))^{- 1} = \doublefrac{d_2'}{a' b'}{d_1'}. \qedhere
\end{align*}
\end{enumerate}
\end{proof}

The preceding proposition shows that the fraction category of a uni-fractionable category does not depend on the choice of S-denominators and T-denominators, as to be expected by the universal property of a localisation, cf.\ proposition~\ref{prop:universal_property_of_the_fraction_category}:

\begin{corollary} \label{cor:fraction_category_does_not_depend_on_s-_and_t-denominators}
Given uni-fractionable categories \(\mathcal{C}\) and \(\mathcal{C}'\) such that their underlying categories with denominators coincide, we have \(\FractionCategory \mathcal{C} = \FractionCategory \mathcal{C}'\).
\end{corollary}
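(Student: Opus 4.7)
The plan is to show that the underlying graphs, the identities, and the composition of $\FractionCategory \mathcal{C}$ and $\FractionCategory \mathcal{C}'$ all agree. Since the two uni-fractionable categories share the same underlying category with denominators, they in particular have the same object set, the same morphism set and the same set of denominators $\Denominators \mathcal{C} = \Denominators \mathcal{C}'$; only the distinguished subsets $\SDenominators$ and $\TDenominators$ may differ.

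First I would observe that the $3$-arrow graph depends only on the underlying category with denominators: by definition~\ref{def:3-arrow_graph}, $\Ob \threearrowgraph{\mathcal{C}}$ and $\Arr \threearrowgraph{\mathcal{C}}$ are described purely in terms of $\Ob \mathcal{C}$, $\Mor \mathcal{C}$ and $\Denominators \mathcal{C}$. Hence $\threearrowgraph{\mathcal{C}} = \threearrowgraph{\mathcal{C}'}$. Next, the fraction equality relation $\fractionequal$ from definition~\ref{def:fraction_equality_relation_on_the_3-arrow_graph} is generated by relations whose statement only involves morphisms and denominators, so the two relations on $\Arr \threearrowgraph{\mathcal{C}} = \Arr \threearrowgraph{\mathcal{C}'}$ coincide. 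Therefore the quotient graphs, and in particular the underlying graphs of $\FractionCategory \mathcal{C}$ and $\FractionCategory \mathcal{C}'$, are literally equal, and for every object $X$ the identity is $1_X = \doublefrac{1_X}{1_X}{1_X}$ in both categories.

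It remains to compare the two compositions. Here I would invoke proposition~\ref{prop:flexibility_of_composites_and_inverses_in_the_fraction_category}\ref{prop:flexibility_of_composites_and_inverses_in_the_fraction_category:composition}: given composable $3$-arrows $(b_1, f_1, a_1)$ and $(b_2, f_2, a_2)$, their composite in $\FractionCategory \mathcal{C}$ equals $\doublefrac{e' b_1}{g_1 g_2}{a_2 d'}$ whenever we have any factorisation $b_2 a_1 = d e$ and any Ore-type completions $g_1 e = e' f_1$, $d g_2 = f_2 d'$ through ordinary denominators $d$, $d'$, $e$, $e'$ in $\mathcal{C}$. But denominators are the same in $\mathcal{C}$ and $\mathcal{C}'$, so any such data in $\mathcal{C}$ is also such data in $\mathcal{C}'$ (and vice versa). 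Consequently both $\FractionCategory \mathcal{C}$ and $\FractionCategory \mathcal{C}'$ assign the same double fraction to the pair $((b_1, f_1, a_1), (b_2, f_2, a_2))$, so the compositions coincide.

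The only subtle point, and the one I would treat with care, is that proposition~\ref{prop:flexibility_of_composites_and_inverses_in_the_fraction_category} refers to the composition internal to \emph{one} fixed fraction category, so a priori one obtains two independent formulas. The resolution is that in each case the formula reduces the composite to a product $(\LocalisationFunctor(e' b_1))^{-1} \LocalisationFunctor(g_1 g_2) (\LocalisationFunctor(a_2 d'))^{-1}$, which in turn depends only on the localisation functor. Since both $\FractionCategory \mathcal{C}$ and $\FractionCategory \mathcal{C}'$ have the same graph, the same identities, and composites agreeing on every pair of composable arrows, we conclude $\FractionCategory \mathcal{C} = \FractionCategory \mathcal{C}'$.
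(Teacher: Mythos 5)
Your proposal is correct and follows essentially the same route as the paper: the paper's proof likewise observes that only the composition (not the underlying quotient graph or the identities) involves \(\SDenominators\) and \(\TDenominators\), and then invokes proposition~\ref{prop:flexibility_of_composites_and_inverses_in_the_fraction_category}\ref{prop:flexibility_of_composites_and_inverses_in_the_fraction_category:composition} to see that the composition is independent of these choices. Your extra paragraph about the localisation functor is not really needed — picking one common set of denominator data valid in both \(\mathcal{C}\) and \(\mathcal{C}'\) already shows both compositions yield the same double fraction — but it does no harm.
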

\begin{proof}
By the definition of the category structure of \(\FractionCategory \mathcal{C}\), see proposition~\ref{prop:welldefinedness_of_the_fraction_category}, only the definition of the composition depends on the definition of \(\SDenominators \mathcal{C}\) and \(\TDenominators \mathcal{C}\), and proposition~\ref{prop:flexibility_of_composites_and_inverses_in_the_fraction_category}\ref{prop:flexibility_of_composites_and_inverses_in_the_fraction_category:composition} shows that this composition is in fact independent of \(\SDenominators \mathcal{C}\) and \(\TDenominators \mathcal{C}\). Analogously for \(\mathcal{C}'\), and thus we have \(\FractionCategory \mathcal{C} = \FractionCategory \mathcal{C}'\).
\end{proof}

Next, we want to turn the construction of the fraction category into a functor.

\begin{remark} \label{rem:universe_of_the_fraction_category}
We suppose given a Grothendieck universe \(\mathfrak{U}\) such that \(\threearrowshape\) is in \(\mathfrak{U}\) and a uni-fractionable category \(\mathcal{C}\). If \(\mathcal{C}\) is in \(\mathfrak{U}\), then its fraction category \(\FractionCategory \mathcal{C}\) is in \(\mathfrak{U}\).
\end{remark}
\begin{proof}
Since the underlying graph of \(\FractionCategory \mathcal{C}\) is \(\threearrowgraph{\mathcal{C}} / {\fractionequal}\) and this graph is a quotient of \(\threearrowgraph{\mathcal{C}}\), the assertion follows from remark~\ref{rem:universe_of_the_3-arrow_graph}.
\end{proof}

\begin{proposition} \label{prop:fraction_category_as_a_functor} \
\begin{enumerate}
\item \label{prop:fraction_category_as_a_functor:induced_functor} Given uni-fractionable categories \(\mathcal{C}\) and \(\mathcal{D}\) and a morphism of categories with denominators \(F\colon \mathcal{C} \map \mathcal{D}\), there exists a unique induced functor
\[{\FractionCategory F}\colon \FractionCategory \mathcal{C} \map \FractionCategory \mathcal{D}\]
with \(\LocalisationFunctor[\FractionCategory \mathcal{D}] \comp F = (\FractionCategory F) \comp \LocalisationFunctor[\FractionCategory \mathcal{C}]\). It is given on the objects by
\[(\FractionCategory F) X = F X\]
for \(X \in \Ob \FractionCategory \mathcal{C}\) and on the morphisms by
\[(\FractionCategory F)(\doublefrac{b}{f}{a}) = \doublefrac{F b}{F f}{F a}\]
for \((b, f, a) \in \Arr \threearrowgraph{\mathcal{C}}\).
\item \label{prop:fraction_category_as_a_functor:functoriality} We suppose given a Grothendieck universe \(\mathfrak{U}\) such that \(\threearrowshape\) is in \(\mathfrak{U}\). The construction in~\ref{prop:fraction_category_as_a_functor:induced_functor} yields functors
\begin{align*}
& \FractionCategory\colon \UFrCat_{(\mathfrak{U})} \map \Cat_{(\mathfrak{U})} \text{ and} \\
& \FractionCategory\colon \UFr(\CatD_{(\mathfrak{U})}) \map \Cat_{(\mathfrak{U})}.
\end{align*}
\end{enumerate}
\end{proposition}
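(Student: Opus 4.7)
The plan is to derive part~\ref{prop:fraction_category_as_a_functor:induced_functor} from the universal property of the fraction category established in proposition~\ref{prop:universal_property_of_the_fraction_category}. Since $F$ is a morphism of categories with denominators, $F$ preserves denominators, so for every $d \in \Denominators \mathcal{C}$ the morphism $\LocalisationFunctor[\FractionCategory \mathcal{D}](F d)$ is invertible in $\FractionCategory \mathcal{D}$ (as $F d \in \Denominators \mathcal{D}$ and $\LocalisationFunctor[\FractionCategory \mathcal{D}]$ inverts denominators). Applying 1-universality of $\FractionCategory \mathcal{C}$ to the functor $\LocalisationFunctor[\FractionCategory \mathcal{D}] \comp F\colon \mathcal{C} \map \FractionCategory \mathcal{D}$ yields a unique functor $\FractionCategory F\colon \FractionCategory \mathcal{C} \map \FractionCategory \mathcal{D}$ satisfying $\LocalisationFunctor[\FractionCategory \mathcal{D}] \comp F = (\FractionCategory F) \comp \LocalisationFunctor[\FractionCategory \mathcal{C}]$. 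Observe that this argument uses only the denominator-preservation property of $F$; it does not require $F$ to preserve S- or T-denominators, which is precisely what will make part~\ref{prop:fraction_category_as_a_functor:functoriality} work for both $\UFrCat$ and $\UFr(\CatD)$.

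To extract the claimed formulas, note that $(\FractionCategory F) X = (\FractionCategory F)(\LocalisationFunctor[\FractionCategory \mathcal{C}] X) = \LocalisationFunctor[\FractionCategory \mathcal{D}](F X) = F X$ on objects. For morphisms, use corollary~\ref{cor:splitting_double_fractions}: for a $3$-arrow $(b, f, a)$ in $\mathcal{C}$ we have $\doublefrac{b}{f}{a} = (\LocalisationFunctor[\FractionCategory \mathcal{C}](b))^{-1} \LocalisationFunctor[\FractionCategory \mathcal{C}](f) (\LocalisationFunctor[\FractionCategory \mathcal{C}](a))^{-1}$. Since $\FractionCategory F$ is a functor and preserves the relations $\LocalisationFunctor[\FractionCategory \mathcal{D}] \comp F = (\FractionCategory F) \comp \LocalisationFunctor[\FractionCategory \mathcal{C}]$ and inverses, applying it gives
\[(\FractionCategory F)(\doublefrac{b}{f}{a}) = (\LocalisationFunctor[\FractionCategory \mathcal{D}](F b))^{-1} \LocalisationFunctor[\FractionCategory \mathcal{D}](F f) (\LocalisationFunctor[\FractionCategory \mathcal{D}](F a))^{-1} = \doublefrac{F b}{F f}{F a},\]
where the last equality uses corollary~\ref{cor:splitting_double_fractions} again (applied in $\FractionCategory \mathcal{D}$ to the $3$-arrow $(F b, F f, F a)$, which lies in $\Arr \threearrowgraph{\mathcal{D}}$ because $F$ preserves denominators).

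For part~\ref{prop:fraction_category_as_a_functor:functoriality}, I would verify compatibility with composition and identities using the uniqueness clause of part~\ref{prop:fraction_category_as_a_functor:induced_functor}. Given composable morphisms $F\colon \mathcal{C} \map \mathcal{D}$ and $G\colon \mathcal{D} \map \mathcal{E}$ in either $\UFrCat_{(\mathfrak{U})}$ or $\UFr(\CatD_{(\mathfrak{U})})$, both $\FractionCategory(G \comp F)$ and $(\FractionCategory G) \comp (\FractionCategory F)$ are functors $\FractionCategory \mathcal{C} \map \FractionCategory \mathcal{E}$ satisfying $\LocalisationFunctor[\FractionCategory \mathcal{E}] \comp (G \comp F) = H \comp \LocalisationFunctor[\FractionCategory \mathcal{C}]$; by uniqueness they coincide. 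The same uniqueness argument shows $\FractionCategory(\id_\mathcal{C}) = \id_{\FractionCategory \mathcal{C}}$. Finally, remark~\ref{rem:universe_of_the_fraction_category} ensures that $\FractionCategory \mathcal{C}$ is a $\mathfrak{U}$-category whenever $\mathcal{C}$ is in $\mathfrak{U}$, so the target category $\Cat_{(\mathfrak{U})}$ is correct.

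The construction is essentially routine once the universal property and the splitting formula from corollary~\ref{cor:splitting_double_fractions} are in hand; there is no genuine obstacle. The only point that deserves attention is the observation that no use is made of $F$ preserving S- or T-denominators, so that the same argument produces the functor on the larger category $\UFr(\CatD_{(\mathfrak{U})})$.
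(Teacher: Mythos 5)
Your proposal is correct and takes essentially the same route as the paper: the induced functor comes from the universal property of \(\FractionCategory \mathcal{C}\) applied to \(\LocalisationFunctor[\FractionCategory \mathcal{D}] \comp F\), the explicit formulas follow from corollary~\ref{cor:splitting_double_fractions}, and functoriality on both categories follows from the uniqueness clause. The only point you leave implicit in the \(\UFr(\CatD_{(\mathfrak{U})})\) case is well-definedness on \emph{objects}: an object of \(\UFr(\CatD_{(\mathfrak{U})})\) merely admits a uni-fractionable structure without being equipped with one, so to assign it a fraction category one chooses such a structure and must know that the result does not depend on the choice, which is exactly corollary~\ref{cor:fraction_category_does_not_depend_on_s-_and_t-denominators}; your observation that \(F\) need not preserve S- and T-denominators settles the morphism side, but the paper additionally cites that corollary for the object side.
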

\begin{proof} \
\begin{enumerate}
\item Since \(F\) preserves denominators and \(\LocalisationFunctor[\FractionCategory \mathcal{D}]\) maps denominators in \(\mathcal{D}\) to isomorphisms in \(\FractionCategory \mathcal{D}\), the composite \(\LocalisationFunctor[\FractionCategory \mathcal{D}] \comp F\) maps denominators in \(\mathcal{C}\) to isomorphisms in \(\FractionCategory \mathcal{D}\). Hence, by the universal property of \(\FractionCategory \mathcal{C}\), there exists a unique functor \({\FractionCategory F}\colon \FractionCategory \mathcal{C} \map \FractionCategory \mathcal{D}\) with \(\LocalisationFunctor[\FractionCategory \mathcal{D}] \comp F = (\FractionCategory F) \comp \LocalisationFunctor[\FractionCategory \mathcal{C}]\). It follows that
\[(\FractionCategory F) X = (\FractionCategory F) \LocalisationFunctor(X) = \LocalisationFunctor(F X) = F X\]
for \(X \in \Ob \mathcal{C}\) as well as
\begin{align*}
(\FractionCategory F)(\doublefrac{b}{f}{a}) & = (\FractionCategory F)((\LocalisationFunctor(b))^{- 1} \LocalisationFunctor(f) (\LocalisationFunctor(a))^{- 1}) \\
& = ((\FractionCategory F) \LocalisationFunctor(b))^{- 1} ((\FractionCategory F) \LocalisationFunctor(f)) ((\FractionCategory F) \LocalisationFunctor(a))^{- 1} \\
& = (\LocalisationFunctor(F b))^{- 1} \LocalisationFunctor(F f) (\LocalisationFunctor(F a))^{- 1} = \doublefrac{F b}{F f}{F a}
\end{align*}
for \((b, f, a) \in \Arr \threearrowgraph{\mathcal{C}}\).
\item We suppose given uni-fractionable categories \(\mathcal{C}\), \(\mathcal{D}\), \(\mathcal{E}\) in \(\mathfrak{U}\) and morphisms of categories with denominators \(F\colon \mathcal{C} \map \mathcal{D}\) and \(G\colon \mathcal{D} \map \mathcal{E}\). Then we have
\begin{align*}
& \LocalisationFunctor[\FractionCategory \mathcal{E}] \comp G \comp F = (\FractionCategory G) \comp \LocalisationFunctor[\FractionCategory \mathcal{D}] \comp F = (\FractionCategory G) \comp (\FractionCategory F) \comp \LocalisationFunctor[\FractionCategory \mathcal{C}] \text{ and} \\
& \LocalisationFunctor[\FractionCategory \mathcal{C}] \comp \id_{\mathcal{C}} = \LocalisationFunctor[\FractionCategory \mathcal{C}] = \id_{\FractionCategory \mathcal{C}} \comp \LocalisationFunctor[\FractionCategory \mathcal{C}],
\end{align*}
so by the uniqueness of the induced functor in~\ref{prop:fraction_category_as_a_functor:induced_functor}, we obtain \(\FractionCategory(G \comp F) = (\FractionCategory G) \comp (\FractionCategory F)\) and \(\FractionCategory \id_{\mathcal{C}} = \id_{\FractionCategory \mathcal{C}}\). Since every morphism of uni-fractionable categories is in particular a morphism of categories with denominators, we have a functor
\[\FractionCategory\colon \UFrCat \map \Cat.\]
Moreover, since the fraction category of a uni-fractionable category does not depend on the choice of S-denominators and T-denominators by corollary~\ref{cor:fraction_category_does_not_depend_on_s-_and_t-denominators}, we even have a functor
\[\FractionCategory\colon \UFr(\CatD) \map \Cat. \qedhere\]
\end{enumerate}
\end{proof}

Here is another elementary property of the fraction category, which will be needed in section~\ref{sec:co_products_and_additive_uni-fractionable_categories}, when we deal with (co)products.

\begin{proposition} \label{prop:morphisms_in_the_fraction_category_can_have_the_same_denominators}
We suppose given a uni-fractionable category \(\mathcal{C}\) and morphisms \(\varphi_1\) and \(\varphi_2\) in \(\FractionCategory \mathcal{C}\).
\begin{enumerate}
\item \label{prop:morphisms_in_the_fraction_category_can_have_the_same_denominators:equal_source} If \(\Source \varphi_1 = \Source \varphi_2\), then there exist normal \(3\)-arrows \((p, f_1, i_1)\) and \((p, f_2, i_2)\) in \(\mathcal{C}\) with \(\varphi_1 = \doublefrac{p}{f_1}{i_1}\) and \(\varphi_2 = \doublefrac{p}{f_2}{i_2}\).
\item \label{prop:morphisms_in_the_fraction_category_can_have_the_same_denominators:equal_target} If \(\Target \varphi_1 = \Target \varphi_2\), then there exist normal \(3\)-arrows \((p_1, f_1, i)\) and \((p_2, f_2, i)\) in \(\mathcal{C}\) with \(\varphi_1 = \doublefrac{p_1}{f_1}{i}\) and \(\varphi_2 = \doublefrac{p_2}{f_2}{i}\).
\item \label{prop:morphisms_in_the_fraction_category_can_have_the_same_denominators:parallel} If \(\varphi_1\) and \(\varphi_2\) are parallel, then there exist normal \(3\)-arrows \((p, f_1, i)\) and \((p, f_2, i)\) in \(\mathcal{C}\) with \(\varphi_1 = \doublefrac{p}{f_1}{i}\) and \(\varphi_2 = \doublefrac{p}{f_2}{i}\).
\end{enumerate}
\end{proposition}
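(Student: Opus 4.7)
The plan is to observe that this proposition is essentially a direct translation of Corollary~\ref{cor:3-arrows_with_common_denominators} from the level of $3$-arrows in $\threearrowgraph{\mathcal{C}}$ to the level of morphisms in $\FractionCategory \mathcal{C}$. The bridge is that by the very definition of $\FractionCategory \mathcal{C} = (\threearrowgraph{\mathcal{C}})/{\fractionequal}$, every morphism $\varphi$ in $\FractionCategory \mathcal{C}$ arises as $\varphi = \doublefrac{b}{g}{a}$ for some $3$-arrow $(b, g, a)$ in $\mathcal{C}$, and fraction equal $3$-arrows give rise to identical double fractions.

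First, in each of the three cases, I would pick arbitrary representing $3$-arrows $(b_1, g_1, a_1)$ and $(b_2, g_2, a_2)$ in $\mathcal{C}$ with $\varphi_k = \doublefrac{b_k}{g_k}{a_k}$ for $k \in \{1, 2\}$. Since source and target in $\FractionCategory \mathcal{C}$ are induced from those in $\threearrowgraph{\mathcal{C}}$ via the quotient graph morphism (cf.\ remark~\ref{rem:fraction_equality_on_the_3-arrow_graph_is_a_congruence}), the assumption $\Source \varphi_1 = \Source \varphi_2$ transfers to $\Source(b_1, g_1, a_1) = \Source(b_2, g_2, a_2)$, and analogously for targets and parallelism.

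Next, in case~\ref{prop:morphisms_in_the_fraction_category_can_have_the_same_denominators:equal_source} I apply Corollary~\ref{cor:3-arrows_with_common_denominators}\ref{cor:3-arrows_with_common_denominators:equal_source} to obtain normal $3$-arrows $(p, f_1, i_1)$ and $(p, f_2, i_2)$ with $(b_k, g_k, a_k) \fractionequal (p, f_k, i_k)$. Passing to equivalence classes yields $\varphi_k = \doublefrac{b_k}{g_k}{a_k} = \doublefrac{p}{f_k}{i_k}$, as required. Parts~\ref{prop:morphisms_in_the_fraction_category_can_have_the_same_denominators:equal_target} and~\ref{prop:morphisms_in_the_fraction_category_can_have_the_same_denominators:parallel} then follow in exactly the same manner from parts~\ref{cor:3-arrows_with_common_denominators:equal_target} and~\ref{cor:3-arrows_with_common_denominators:parallel} of the same corollary.

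There is no real obstacle here: all the work was already carried out in the normalisation lemma~\ref{lem:normalisation_lemma} and corollary~\ref{cor:3-arrows_with_common_denominators}. The only point worth mentioning explicitly is the (trivial) compatibility between source/target in the quotient graph and source/target of $3$-arrows, which was established in remark~\ref{rem:fraction_equality_on_the_3-arrow_graph_is_a_congruence}.
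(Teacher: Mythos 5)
Your proposal is correct and matches the paper's argument, which likewise reduces the proposition to corollary~\ref{cor:3-arrows_with_common_denominators} by choosing representing \(3\)-arrows and passing to double fractions. The extra remark on compatibility of source and target with the quotient graph structure is a fine (if routine) detail that the paper leaves implicit.
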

\begin{proof}
This follows from corollary~\ref{cor:3-arrows_with_common_denominators}.
\end{proof}

Our next aim is to give a sufficient (and necessary) criterion for saturatedness.

\begin{proposition}[{cf.~\cite[sec.~36.4]{dwyer_hirschhorn_kan_smith:2004:homotopy_limit_functors_on_model_categories_and_homotopical_categories}}] \label{prop:weakly_saturated_uni-fractionable_category_is_a_saturated_uni-fractionable_category}
A uni-fractionable category \(\mathcal{C}\) is saturated if and only if it is weakly saturated.
\end{proposition}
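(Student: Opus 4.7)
The implication from saturated to weakly saturated is general and was already recorded in the paper: it holds in any category with denominators, with no need for the uni-fractionable structure. So the substantive work lies in the converse.

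For the converse, assume $\mathcal{C}$ is weakly saturated and suppose $f\in\Mor\mathcal{C}$ has invertible image $\LocalisationFunctor(f)$ in $\FractionCategory\mathcal{C}$; the plan is to exhibit a composable triple of morphisms in $\mathcal{C}$ to which the $2$-of-$6$ axiom applies and forces $f\in\Denominators\mathcal{C}$. The first step is to invoke the normalisation lemma~\ref{lem:normalisation_lemma} to represent $(\LocalisationFunctor f)^{-1}=\doublefrac{p}{g}{i}$ for a normal $3$-arrow, so $p$ is a T-denominator and $i$ is an S-denominator. Next, I unpack the two unit identities in $\FractionCategory\mathcal{C}$ using the composition recipe of proposition~\ref{prop:welldefinedness_of_the_fraction_category}. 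From $\doublefrac{p}{g}{i}\LocalisationFunctor(f)=\doublefrac{1}{1}{1}$, axiom (WU) applied to the S-denominator $i$ and the morphism $f$ yields an S-denominator $j'$ and a morphism $f'$ with $if'=fj'$ and $\doublefrac{p}{gf'}{j'}=\doublefrac{1}{1}{1}$; dually, the identity $\LocalisationFunctor(f)\doublefrac{p}{g}{i}=\doublefrac{1}{1}{1}$ together with a weakly universal Ore pullback of $p$ along $f$ produces a T-denominator $q'$ and a morphism $f''$ with $f''p=q'f$ and $\doublefrac{q'}{f''g}{i}=\doublefrac{1}{1}{1}$.

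Now I invoke the main theorem~\ref{th:description_of_the_fraction_category}, which supplies an explicit $3$-by-$3$ witness diagram in $\mathcal{C}$ for each of the two fraction equalities above. Combining these diagrams with multiplicativity and the Ore relations $if'=fj'$, $f''p=q'f$, I extract composable morphisms $u,v,w$ in $\mathcal{C}$ such that $uv,vw\in\Denominators\mathcal{C}$ and $v$ agrees with $f$ up to pre- and post-composition with denominators. The $2$-of-$6$ axiom applied to the triple $(u,v,w)$ gives $v\in\Denominators\mathcal{C}$, and multiplicativity together with the $2$-of-$3$ axiom (available by semi-saturatedness) then propagates this to $f\in\Denominators\mathcal{C}$.

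The main obstacle will be the combinatorial bookkeeping required to unpack the $3$-by-$3$ witness diagrams of~\ref{th:description_of_the_fraction_category} and to verify that the extracted triple $(u,v,w)$ genuinely has $f$ as its middle entry (rather than some auxiliary morphism). The two Ore relations $if'=fj'$ and $f''p=q'f$ are precisely what allow $f$ to re-enter the picture after having been \emph{hidden} inside double fractions, so the bookkeeping has to be arranged so that these two identifications line up with the denominator columns of the two witness diagrams.
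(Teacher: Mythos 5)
Your setup coincides with the paper's own proof up to and including the two unit identities $\doublefrac{p}{g f'}{j'} = \doublefrac{1}{1}{1}$ and $\doublefrac{q'}{f'' g}{i} = \doublefrac{1}{1}{1}$, but the finish you sketch has a gap exactly at the point you yourself flag as ``bookkeeping'', and your guess at how it resolves is wrong: the (2\,of\,6) triple cannot be arranged so that its middle entry is $f$, or $f$ conjugated by denominators. What the two unit identities give is that the \emph{numerators} $g f'$ and $f'' g$ are denominators, so the composable triple is $(f'', g, f')$, whose middle entry is $g$, the numerator of the chosen inverse. Applying (2\,of\,6) then makes all of $f''$, $g$, $f'$ denominators, and the passage back to $f$ runs through the \emph{outer} entries via the Ore relations: from $i f' = f j'$ and multiplicativity, $f j'$ is a denominator, and since $j'$ is one, (2\,of\,3) gives $f \in \Denominators \mathcal{C}$ (equally well via $f'' p = q' f$). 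Once you aim at the outer entries instead of the middle one, your argument closes and agrees with the paper's.

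Separately, the detour through the $3$-by-$3$ witness diagrams of proposition~\ref{prop:3-arrow_calculus}/theorem~\ref{th:description_of_the_fraction_category} is far heavier than needed, and in the paper's ordering those results are only proved \emph{after} this proposition; there is no actual circularity (their proofs do not use saturatedness), but you would have to check this and reorder. The step you want -- that $g f'$ and $f'' g$ are denominators -- is immediate from remark~\ref{rem:invertibility_of_3-arrows_is_preserved_by_fraction_equality}: fraction equality preserves the property that the middle morphism of a $3$-arrow is a denominator, and the middle morphism of $(1, 1, 1)$ is an identity. If you insist on extracting it from a witness diagram you can (peel off the vertical S-denominators with (2\,of\,3), then the middle vertical denominators, then the T-denominators), but this amounts to re-proving remark~\ref{rem:invertibility_of_3-arrows_is_preserved_by_fraction_equality} with the full strength of the flipping lemma behind it.
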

\begin{proof}
We suppose given a uni-fractionable category \(\mathcal{C}\). Since saturatedness always implies weak saturatedness, it suffices to show that if \(\mathcal{C}\) is weakly saturated, then it is already saturated. So we suppose that \(\mathcal{C}\) is weakly saturated and we suppose given a morphism \(f\) in \(\mathcal{C}\) such that \(\LocalisationFunctor(f)\) is invertible in \(\FractionCategory \mathcal{C}\). We let \((p, g, i)\) be a normal \(3\)-arrow in \(\mathcal{C}\) with \((\LocalisationFunctor(f))^{- 1} = \doublefrac{p}{g}{i}\). Moreover, we choose a T-denominator \(p'\) and a morphism \(f'\) in \(\mathcal{C}\) with \(f' p = p' f\), and we choose an S-denominator \(i'\) and a morphism \(f''\) in \(\mathcal{C}\) with \(i f'' = f i'\).
\[\begin{tikzpicture}[baseline=(m-3-1.base)]
  \matrix (m) [diagram without objects=0.9em]{
    & & & & & & & & & & & & & & & \\
    & & & & & & & & & & & & & & & \\
    & & & & & & & & & & & & & & & \\};
  \path[->, font=\scriptsize]
    (m-1-3) edge node[above] {\(f'\)} (m-1-6)
            edge[tden] node[left] {\(p'\)} (m-2-2)
    (m-1-6) edge node[above] {\(g\)} (m-1-11)
            edge[tden] node[left] {\(p\)} (m-2-5)
    (m-1-11) edge node[above] {\(f''\)} (m-1-14)
             edge[equality] (m-2-10)
    (m-2-2) edge node[above] {\(f\)} (m-2-5)
            edge[equality] (m-3-1)
    (m-2-7) edge node[above] {\(g\)} (m-2-10)
            edge[tden] node[right] {\(p\)} (m-3-6)
            edge[equality] (m-1-6)
    (m-2-12) edge node[above] {\(f\)} (m-2-15)
             edge[equality] (m-3-11)
             edge[sden] node[right] {\(i\)} (m-1-11)
    (m-2-15) edge[sden] node[right] {\(i'\)} (m-1-14)
    (m-3-6) edge[equality] (m-2-5)
    (m-3-11) edge[sden] node[left=1pt] {\(i\)} (m-2-10)
    (m-3-16) edge[equality] (m-2-15);
\end{tikzpicture}\]
Then we have
\begin{align*}
& \doublefrac{1}{1}{1} = (\doublefrac{1}{f}{1}) (\doublefrac{p}{g}{i}) = \doublefrac{p'}{f' g}{i} \text{ and} \\
& \doublefrac{1}{1}{1} = (\doublefrac{p}{g}{i}) (\doublefrac{1}{f}{1}) = \doublefrac{p}{g f''}{i'}.
\end{align*}
We conclude that \(f' g\) and \(g f''\) must be denominators by remark~\ref{rem:invertibility_of_3-arrows_is_preserved_by_fraction_equality}. Hence (2\,of\,6) implies that \(f'\) and thus \(f\) is a denominator. Altogether, \(\mathcal{C}\) is saturated.
\end{proof}

\begin{corollary} \label{cor:description_of_isomorphisms_in_the_fraction_category_of_a_weakly_saturated_uni-fractionable_category}
The set of isomorphisms in the fraction category of a weakly saturated uni-fractionable category \(\mathcal{C}\) is given by
\[\Isomorphisms \FractionCategory \mathcal{C} = \{\doublefrac{b}{f}{a} \mid \text{\((b, f, a) \in \Arr \threearrowgraph{\mathcal{C}}\) with \(f \in \Denominators \mathcal{C}\)}\}.\]
\end{corollary}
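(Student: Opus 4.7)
The plan is to prove both inclusions of the claimed equality, using the splitting formula from corollary~\ref{cor:splitting_double_fractions} together with the characterisation of saturatedness from proposition~\ref{prop:weakly_saturated_uni-fractionable_category_is_a_saturated_uni-fractionable_category}.

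First, I would handle the easy inclusion \(\supseteq\). Suppose \((b, f, a)\) is a \(3\)-arrow in \(\mathcal{C}\) with \(f \in \Denominators \mathcal{C}\). By corollary~\ref{cor:splitting_double_fractions} we have \(\doublefrac{b}{f}{a} = (\LocalisationFunctor(b))^{-1} \LocalisationFunctor(f) (\LocalisationFunctor(a))^{-1}\). Since \(b\) and \(a\) are denominators (by definition of a \(3\)-arrow) and \(f\) is a denominator by assumption, each of the three factors is invertible in \(\FractionCategory \mathcal{C}\) by the defining property of the localisation functor. Hence the double fraction is an isomorphism.

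Next, for the inclusion \(\subseteq\), I would take an arbitrary isomorphism \(\varphi\) in \(\FractionCategory \mathcal{C}\) and write \(\varphi = \doublefrac{b}{f}{a}\) for some \(3\)-arrow \((b, f, a)\); it suffices to show that this particular representative has \(f \in \Denominators \mathcal{C}\). Applying corollary~\ref{cor:splitting_double_fractions} again we have \(\doublefrac{b}{f}{a} = (\LocalisationFunctor(b))^{-1} \LocalisationFunctor(f) (\LocalisationFunctor(a))^{-1}\), and since \(\LocalisationFunctor(b)\) and \(\LocalisationFunctor(a)\) are invertible, solving yields
\[\LocalisationFunctor(f) = \LocalisationFunctor(b) \cdot \doublefrac{b}{f}{a} \cdot \LocalisationFunctor(a),\]
which is a composite of isomorphisms in \(\FractionCategory \mathcal{C}\), hence an isomorphism.

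Finally, I would invoke proposition~\ref{prop:weakly_saturated_uni-fractionable_category_is_a_saturated_uni-fractionable_category} to conclude that \(\mathcal{C}\), being weakly saturated, is in fact saturated; thus any morphism \(f\) in \(\mathcal{C}\) with \(\LocalisationFunctor(f)\) invertible must already lie in \(\Denominators \mathcal{C}\). Combining both inclusions yields the claimed equality. The argument is essentially formal once saturatedness is available; no step presents a real obstacle, and the only subtle point is remembering that we are free to pick \emph{any} representative of the isomorphism class, so there is no need to check that every representative \((b, f, a)\) of an isomorphism has \(f\) a denominator — just that one does, which is immediate once \(\mathcal{C}\) is known to be saturated.
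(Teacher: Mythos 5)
Your proof is correct and follows essentially the same route as the paper: for both inclusions you use the splitting \(\doublefrac{b}{f}{a} = (\LocalisationFunctor(b))^{-1} \LocalisationFunctor(f) (\LocalisationFunctor(a))^{-1}\), and for the converse you solve for \(\LocalisationFunctor(f)\) and invoke proposition~\ref{prop:weakly_saturated_uni-fractionable_category_is_a_saturated_uni-fractionable_category} to pass from invertibility of \(\LocalisationFunctor(f)\) to \(f \in \Denominators \mathcal{C}\), exactly as in the paper. (Your closing remark about choosing a representative is harmless but unnecessary: your own argument already shows that \emph{every} representative of an isomorphism has \(f\) a denominator.)
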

\begin{proof}
Given a \(3\)-arrow \((b, f, a) \in \Arr \threearrowgraph{\mathcal{C}}\) with \(f \in \Denominators \mathcal{C}\), we have \(\LocalisationFunctor(b), \LocalisationFunctor(f), \LocalisationFunctor(a) \in \Isomorphisms \FractionCategory \mathcal{C}\) and hence \(\doublefrac{b}{f}{a} = (\LocalisationFunctor(b))^{- 1} \LocalisationFunctor(f) (\LocalisationFunctor(a))^{- 1} \in \Isomorphisms \FractionCategory \mathcal{C}\). Conversely, we suppose given an isomorphism \(\varphi \in \Isomorphisms \FractionCategory \mathcal{C}\) and we choose a \(3\)-arrow \((b, f, a) \in \Arr \threearrowgraph{\mathcal{C}}\) with \(\varphi = \doublefrac{b}{f}{a}\). Since \(a, b \in \Denominators \mathcal{C}\), we also have \(\LocalisationFunctor(b), \LocalisationFunctor(a) \in \Isomorphisms \FractionCategory \mathcal{C}\) and thus \(\LocalisationFunctor(f) = \LocalisationFunctor(b) \, \varphi \, \LocalisationFunctor(a) \in \Isomorphisms \FractionCategory \mathcal{C}\). But \(\mathcal{C}\) is saturated by proposition~\ref{prop:weakly_saturated_uni-fractionable_category_is_a_saturated_uni-fractionable_category}, whence \(f \in \Denominators \mathcal{C}\) follows.
\end{proof}

Now we come to the last part of the main theorem of this article, that is, we want to show that the uni-fractionable category \(\mathcal{C}\) admits a \(3\)-arrow calculus. It can be found in proposition~\ref{prop:3-arrow_calculus}. The key step of its proof is treated in the following lemma.

\begin{lemma}[flipping lemma] \label{lem:flipping_lemma}
We suppose given a uni-fractionable category \(\mathcal{C}\). Moreover, we suppose given \(3\)-arrows \((b_1, f_1, a_1)\), \((b_2, f_2, a_2)\), \((v_1, h_1, u_1)\), \((v_2, h_2, u_2)\), morphisms \(g_1\), \(g_1'\), \(g_1''\), \(g_2\), \(g_2'\), \(g_2''\), denominators \(d\), \(e\), an S{\nbd}denominator \(i_2\) and a T-denominator \(p_1\) in \(\mathcal{C}\), fitting into the following commutative diagram in~\(\mathcal{C}\).
\[\begin{tikzpicture}[baseline=(m-4-1.base)]
  \matrix (m) [diagram without objects]{
    & & & \\
    & & & \\
    & & & \\
    & & & \\};
  \path[->, font=\scriptsize]
    (m-1-1) edge[equality] (m-2-1)
    (m-1-2) edge node[above] {\(f_1\)} (m-1-3)
            edge node[right] {\(g_2''\)} (m-2-2)
            edge[den] node[above] {\(b_1\)} (m-1-1)
    (m-1-3) edge node[right] {\(g_2'\)} (m-2-3)
    (m-1-4) edge node[right] {\(g_2\)} (m-2-4)
            edge[den] node[above] {\(a_1\)} (m-1-3)
    (m-2-2) edge node[above] {\(h_1\)} (m-2-3)
            edge[den] node[above] {\(v_1\)} (m-2-1)
    (m-2-4) edge[den] node[above] {\(u_1\)} (m-2-3)
    (m-3-1) edge node[right] {\(g_1\)} (m-4-1)
            edge[tden] node[right] {\(p_1\)} (m-2-1)
    (m-3-2) edge node[above] {\(h_2\)} (m-3-3)
            edge node[right] {\(g_1'\)} (m-4-2)
            edge[den] node[above] {\(v_2\)} (m-3-1)
            edge[den] node[right] {\(d\)} (m-2-2)
    (m-3-3) edge node[right] {\(g_1''\)} (m-4-3)
            edge[den] node[right] {\(e\)} (m-2-3)
    (m-3-4) edge[equality] (m-4-4)
            edge[den] node[above] {\(u_2\)} (m-3-3)
            edge[sden] node[right] {\(i_2\)} (m-2-4)
    (m-4-2) edge node[above] {\(f_2\)} (m-4-3)
            edge[den] node[above] {\(b_2\)} (m-4-1)
    (m-4-4) edge[den] node[above] {\(a_2\)} (m-4-3);
\end{tikzpicture}\]
Then there exist \(3\)-arrows \((\tilde b_1, \tilde f_1, \tilde a_1)\), \((\tilde b_2, \tilde f_2, \tilde a_2)\) and normal \(3\)-arrows \((\tilde p_1, \tilde g_1, \tilde i_1)\), \((\tilde p_2, \tilde g_2, \tilde i_2)\) in \(\mathcal{C}\), fitting into the following commutative diagram in \(\mathcal{C}\).
\[\begin{tikzpicture}[baseline=(m-4-1.base)]
  \matrix (m) [diagram without objects]{
    & & & \\
    & & & \\
    & & & \\
    & & & \\};
  \path[->, font=\scriptsize]
    (m-1-2) edge node[above] {\(f_1\)} (m-1-3)
            edge[den] node[above] {\(b_1\)} (m-1-1)
    (m-1-4) edge[den] node[above] {\(a_1\)} (m-1-3)
    (m-2-1) edge node[right] {\(g_1\)} (m-3-1)
            edge[tden] node[right] {\(p_1\)} (m-1-1)
    (m-2-2) edge[exists] node[above] {\(\tilde f_1\)} (m-2-3)
            edge[exists] node[right] {\(\tilde g_1\)} (m-3-2)
            edge[exists, den] node[above] {\(\tilde b_1\)} (m-2-1)
            edge[exists, tden] node[right] {\(\tilde p_1\)} (m-1-2)
    (m-2-3) edge[exists] node[right] {\(\tilde g_2\)} (m-3-3)
            edge[exists, tden] node[right] {\(\tilde p_2\)} (m-1-3)
    (m-2-4) edge node[right] {\(g_2\)} (m-3-4)
            edge[exists, den] node[above] {\(\tilde a_1\)} (m-2-3)
            edge[equality] (m-1-4)
    (m-3-2) edge[exists] node[above] {\(\tilde f_2\)} (m-3-3)
            edge[exists, den] node[above] {\(\tilde b_2\)} (m-3-1)
    (m-3-4) edge[exists, den] node[above] {\(\tilde a_2\)} (m-3-3)
    (m-4-1) edge[equality] (m-3-1)
    (m-4-2) edge node[above] {\(f_2\)} (m-4-3)
            edge[den] node[above] {\(b_2\)} (m-4-1)
            edge[exists, sden] node[right] {\(\tilde i_1\)} (m-3-2)
    (m-4-3) edge[exists, sden] node[right] {\(\tilde i_2\)} (m-3-3)
    (m-4-4) edge[den] node[above] {\(a_2\)} (m-4-3)
            edge[sden] node[right] {\(i_2\)} (m-3-4);
\end{tikzpicture}\]
\end{lemma}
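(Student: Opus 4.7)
The plan is to build the 4-by-4 output diagram in coordinated stages, using the factorisation lemma in the centre and the weakly universal Ore completions of axiom (WU) at the outside.

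First, I apply corollary~\ref{cor:factorisation_lemma_for_no_given_factorisations} to the commutative central square \(d h_1 = h_2 e\) (formed by rows~2--3 and columns~2--3 of the input). This yields factorisations \(d = i p\) and \(e = j q\) with S-denominators \(i, j\), T-denominators \(p, q\) and a bridging morphism \(h\) satisfying \(h_2 j = i h\) and \(p h_1 = h q\). The two auxiliary objects introduced (the middles of these factorisations, call them \(X\) and \(X'\)) will serve as hubs through which the new \(3\)-arrows pass.

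Next I build the upper half of the output (rows~1--2). By iterated applications of (WU) along T-denominators (first pulling back \(b_1\) along \(p_1\), then further pulling back \(g_2''\) along \(p\) to reconcile the two universal problems), I obtain a single object \(\tilde X_1\) together with a T-denominator \(\tilde p_1\colon \tilde X_1 \map X_1\), a morphism \(\tilde b_1\colon \tilde X_1 \map Z_0\) with \(\tilde p_1 b_1 = \tilde b_1 p_1\) (so \(\tilde b_1 \in \Denominators \mathcal{C}\) by semi-saturatedness), and a morphism \(m_1\colon \tilde X_1 \map X\) with \(\tilde p_1 g_2'' = m_1 p\). Analogously, a weak pullback of \(g_2'\) along \(q\) yields \(\tilde X_2\), a T-denominator \(\tilde p_2\colon \tilde X_2 \map X_2\) and a morphism \(m_2\colon \tilde X_2 \map X'\) with \(\tilde p_2 g_2' = m_2 q\). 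The row-2 morphism \(\tilde f_1\colon \tilde X_1 \map \tilde X_2\) is induced by the weak pullback property of \(\tilde X_2\) applied to the cone on \(\tilde X_1\) with legs \(\tilde p_1 f_1\) and \(m_1 h\); the required compatibility \(\tilde p_1 f_1 g_2' = m_1 h q\) follows from the input equation \(f_1 g_2' = g_2'' h_1\) and from \(p h_1 = h q\). The denominator \(\tilde a_1\colon X_3 \map \tilde X_2\) is obtained in the same spirit, the second leg of its cone being a morphism \(X_3 \map X'\) assembled from the input equations \(a_1 g_2' = g_2 u_1\) and \(i_2 u_1 = u_2 e = u_2 j q\), again via one further Ore completion when needed.

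Dually, I construct the lower half of the output (rows~3--4) by performing weak pushouts along S-denominators: of \(a_2\) along \(i_2\) (which produces \(\tilde X_3'\), the S-denominator \(\tilde i_2\), and the induced denominator \(\tilde a_2\)), and of \(g_1', g_1''\) along \(i, j\) (producing \(\tilde X_1', \tilde X_2'\), the S-denominators \(\tilde i_1, \tilde i_2\), the row-3 morphisms \(\tilde b_2, \tilde f_2\), and morphisms \(n_1\colon X \map \tilde X_1'\) and \(n_2\colon X' \map \tilde X_2'\)). Setting \(\tilde g_1 := m_1 n_1\) and \(\tilde g_2 := m_2 n_2\) then yields the two normal \(3\)-arrows \((\tilde p_1, \tilde g_1, \tilde i_1)\) and \((\tilde p_2, \tilde g_2, \tilde i_2)\) as required. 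All remaining commutativities and denominator properties of the induced morphisms follow from the defining equations of the weak pullbacks and pushouts and from the equations of the factorisation lemma, together with repeated use of semi-saturatedness.

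The hard part is bookkeeping: a single object \(\tilde X_1\) must simultaneously carry the pullback data used to build the row-2 3-arrow \emph{and} serve as the top of the column-2 normal 3-arrow (and similarly for \(\tilde X_2\), \(\tilde X_1'\), \(\tilde X_2'\)), and all the bridging morphisms into \(X\) and \(X'\) must be chosen compatibly. The weakly universal nature of (WU)---existence without uniqueness of induced morphisms---is exactly what makes the coordinated construction succeed, since we can take iterated weak pullbacks and pushouts to impose the needed compatibilities stage by stage.
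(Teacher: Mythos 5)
Your overall architecture --- factor the middle denominators, take weak pullbacks along the T-parts in the upper half, weak pushouts along the S-parts in the lower half, and compose to obtain \(\tilde g_1\), \(\tilde g_2\) --- is the paper's, but there is a genuine gap in where your factorisations of \(d\) and \(e\) come from. You extract \(d = i p\), \(e = j q\) from corollary~\ref{cor:factorisation_lemma_for_no_given_factorisations} applied to the central square \(d h_1 = h_2 e\) alone, so these factorisations carry no compatibility with the outer columns: there is no reason that \(v_2\) factors through the S-denominator \(i\), nor that \(u_1\) factors through the T-denominator \(q\) (equivalently, that \(u_2 j = i_2 a\) and \(u_1 = a q\) for some \(a\)). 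Exactly these factorisation properties are what your later steps silently use: to induce \(\tilde b_2\) out of the weak pushout of \(g_1'\) along \(i\) with \(\tilde i_1 \tilde b_2 = b_2\) you need a cocone leg \(\lambda\) with \(i \lambda = g_1' b_2 = v_2 g_1\), i.e.\ \(v_2 g_1\) must factor through \(i\); to induce \(\tilde a_1\) into the weak pullback of \(g_2'\) along \(q\) with \(\tilde a_1 \tilde p_2 = a_1\) you need a leg \(\beta\) with \(a_1 g_2' = g_2 u_1 = \beta q\), i.e.\ \(g_2 u_1\) must factor through \(q\); and the squares \(i_2 \tilde a_2 = a_2 \tilde i_2\) and \(\tilde a_1 \tilde g_2 = g_2 \tilde a_2\) need precisely \(u_2 j = i_2 a\) and \(u_1 = a q\). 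Your fallback, ``one further Ore completion when needed,'' cannot manufacture these factorisations: the source of \(\tilde a_1\) must be the source of \(a_1\), the source of \(\tilde a_2\) the common target of \(g_2\) and \(i_2\), the target of \(\tilde b_2\) the target of \(b_2\), and the morphisms \(a_1\), \(a_2\), \(b_2\), \(g_2\), \(i_2\), \(p_1\), \(g_1\) are all pinned by the prescribed output diagram, so you cannot trade them for enlarged objects produced by further weak pullbacks or pushouts (that trick is legitimate for \(\tilde p_1\), whose source is free, which is why your iterated-pullback construction of \(\tilde b_1\) does work). Relatedly, you build two competing candidates for the lower middle-right object --- one from the pushout of \(a_2\) along \(i_2\), one from the pushout of \(g_1''\) along \(j\) --- and call the resulting S-denominator \(\tilde i_2\) in both cases, without reconciling them.

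The paper closes exactly this gap before any (WU) step: it first applies corollary~\ref{cor:factorisation_lemma_for_one_given_factorisation} to the two \emph{outer} squares, namely \(d v_1 = v_2 p_1\) relative to the given T-denominator \(p_1\) and \(u_2 e = i_2 u_1\) relative to the given S-denominator \(i_2\), obtaining \(d = j_1 q_1\) with \(v_2 = j_1 b\), \(q_1 v_1 = b p_1\), and \(e = \tilde j_2 \tilde q_2\) with \(u_1 = \tilde a \tilde q_2\), \(u_2 \tilde j_2 = i_2 \tilde a\); it then applies the full factorisation lemma~\ref{lem:factorisation_lemma}\ref{lem:factorisation_lemma:s-2-arrow} --- whose refinement clause (\(\tilde j = j k\), \(q = k \tilde q\)) is what the corollaries do not provide --- to the central square with \(d = j_1 q_1\) fixed, so that the new factorisation \(e = j_2 q_2\) is simultaneously compatible with the central square and, via \(a := \tilde a \tilde a'\), with the right-hand square. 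Only with these compatibilities in place do the weak pullbacks along \(q_1\), \(q_2\) and weak pushouts along \(j_1\), \(j_2\) yield all nine commuting squares; as written, your construction breaks down at \(\tilde a_1\), \(\tilde a_2\) and \(\tilde b_2\).
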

\begin{proof}
By corollary~\ref{cor:factorisation_lemma_for_one_given_factorisation}\ref{cor:factorisation_lemma_for_one_given_factorisation:s-2-arrow}, there exist S-denominators \(j_1\), \(\tilde j_2\), T-denominators \(q_1\), \(\tilde q_2\) and morphisms \(b\), \(\tilde a\) in \(\mathcal{C}\) with \(d = j_1 q_1\), \(e = \tilde j_2 \tilde q_2\), \(q_1 v_1 = b p_1\), \(v_2 = j_1 b\), \(u_1 = \tilde a \tilde q_2\), \(u_2 \tilde j_2 = i_2 \tilde a\).
\[\begin{tikzpicture}[baseline=(m-3-1.base)]
  \matrix (m) [diagram without objects]{
    & & & & & & & \\
    & & & & & & & \\
    & & & & & & & \\};
  \path[->, font=\scriptsize]
    (m-1-4) edge node[above] {\(h_1\)} (m-1-6)
            edge[den] node[above] {\(v_1\)} (m-1-2)
    (m-1-8) edge[den] node[above] {\(u_1\)} (m-1-6)
    (m-3-2) edge[equality] (m-2-1)
            edge[tden=0.25] node[right, near start] {\(p_1\)} (m-1-2)
    (m-3-4) edge node[above] {\(h_2\)} (m-3-6)
            edge[den] node[above] {\(v_2\)} (m-3-2)
            edge[exists, sden] node[left] {\(j_1\)} (m-2-3)
            edge[den] node[right] {\(d\)} (m-1-4)
    (m-3-6) edge[exists, sden] node[left] {\(\tilde j_2\)} (m-2-5)
            edge[den=0.25] node[right, near start] {\(e\)} (m-1-6)
    (m-3-8) edge[den] node[above] {\(u_2\)} (m-3-6)
            edge[sden] node[left] {\(i_2\)} (m-2-7)
            edge[sden] node[right] {\(i_2\)} (m-1-8)
    (m-2-1) edge[tden] node[left] {\(p_1\)} (m-1-2)
    (m-2-3) edge[exists, tden] node[left] {\(q_1\)} (m-1-4)
            edge[cross line, exists, den=0.25] node[above, near start] {\(b\)} (m-2-1)
    (m-2-5) edge[exists, tden] node[left] {\(\tilde q_2\)} (m-1-6)
    (m-2-7) edge[equality] (m-1-8)
            edge[cross line, exists, den=0.25] node[above, near start] {\(\tilde a\)} (m-2-5);
\end{tikzpicture}\]
Next, using the factorisation lemma~\ref{lem:factorisation_lemma}\ref{lem:factorisation_lemma:s-2-arrow}, there exist an S-denominator \(j_2\), a T-denominator \(q_2\), a morphism \(f\) and a denominator \(\tilde a'\) in \(\mathcal{C}\) with \(e = j_2 q_2\), \(q_1 h_1 = f q_2\), \(j_1 f = h_2 j_2\), \(\tilde q_2 = \tilde a' q_2\), \(j_2 = \tilde j_2 \tilde a'\).
\[\begin{tikzpicture}[baseline=(m-3-1.base)]
  \matrix (m) [diagram without objects]{
    & & & & & \\
    & & & & & \\
    & & & & & \\};
  \path[->, font=\scriptsize]
    (m-1-2) edge node[above] {\(h_1\)} (m-1-4)
    (m-1-6) edge[equality] (m-1-4)
    (m-3-2) edge node[above] {\(h_2\)} (m-3-4)
            edge[sden] node[left] {\(j_1\)} (m-2-1)
            edge[den=0.25] node[right, near start] {\(d\)} (m-1-2)
    (m-3-4) edge[exists, sden] node[left] {\(j_2\)} (m-2-3)
            edge[den=0.25] node[right, near start] {\(e\)} (m-1-4)
    (m-3-6) edge[equality] (m-3-4)
            edge[sden] node[left] {\(\tilde j_2\)} (m-2-5)
            edge[den] node[right] {\(e\)} (m-1-6)
    (m-2-1) edge[cross line, exists] node[above, near end] {\(f\)} (m-2-3)
            edge[tden] node[left] {\(q_1\)} (m-1-2)
    (m-2-3) edge[exists, tden] node[left] {\(q_2\)} (m-1-4)
    (m-2-5) edge[tden] node[left] {\(\tilde q_2\)} (m-1-6)
            edge[cross line, exists, den=0.25] node[above, near start] {\(\tilde a'\)} (m-2-3);
\end{tikzpicture}\]
We set \(a := \tilde a \tilde a'\) and obtain \(u_1 = a q_2\) and \(u_2 j_2 = i_2 a\).

Next, we choose weak pullback rectangles
\[\begin{tikzpicture}[baseline=(m-2-1.base)]
  \matrix (m) [diagram without objects]{
    & \\
    & \\};
  \path[->, font=\scriptsize]
    (m-1-1) edge node[above] {\(\tilde g_1'\)} (m-1-2)
            edge[tden] node[left] {\(\tilde p_1\)} (m-2-1)
    (m-1-2) edge[tden] node[right] {\(q_1\)} (m-2-2)
    (m-2-1) edge node[above] {\(g_2''\)} (m-2-2);
\end{tikzpicture}
\text{ and }
\begin{tikzpicture}[baseline=(m-2-1.base)]
  \matrix (m) [diagram without objects]{
    & \\
    & \\};
  \path[->, font=\scriptsize]
    (m-1-1) edge node[above] {\(\tilde g_2'\)} (m-1-2)
            edge[tden] node[left] {\(\tilde p_2\)} (m-2-1)
    (m-1-2) edge[tden] node[right] {\(q_2\)} (m-2-2)
    (m-2-1) edge node[above] {\(g_2'\)} (m-2-2);
\end{tikzpicture}\]
in \(\mathcal{C}\) such that \(\tilde p_1\) and \(\tilde p_2\) are T-denominators, and we choose weak pushout rectangles
\[\begin{tikzpicture}[baseline=(m-2-1.base)]
  \matrix (m) [diagram without objects]{
    & \\
    & \\};
  \path[->, font=\scriptsize]
    (m-1-1) edge node[above] {\(\tilde g_1''\)} (m-1-2)
    (m-2-1) edge node[above] {\(g_1'\)} (m-2-2)
            edge[sden] node[left] {\(j_1\)} (m-1-1)
    (m-2-2) edge[sden] node[right] {\(\tilde i_1\)} (m-1-2);
\end{tikzpicture}
\text{ and }
\begin{tikzpicture}[baseline=(m-2-1.base)]
  \matrix (m) [diagram without objects]{
    & \\
    & \\};
  \path[->, font=\scriptsize]
    (m-1-1) edge node[above] {\(\tilde g_2''\)} (m-1-2)
    (m-2-1) edge node[above] {\(g_1''\)} (m-2-2)
            edge[sden] node[left] {\(j_2\)} (m-1-1)
    (m-2-2) edge[sden] node[right] {\(\tilde i_2\)} (m-1-2);
\end{tikzpicture}\]
in \(\mathcal{C}\) such that \(\tilde i_1\) and \(\tilde i_2\) are S-denominators. We obtain induced morphisms \(\tilde b_1\), \(\tilde f_1\), \(\tilde a_1\) on the weak pullbacks, that is, with \(\tilde p_1 b_1 = \tilde b_1 p_1\), \(\tilde b_1 = \tilde g_1' b\), \(\tilde p_1 f_1 = \tilde f_1 \tilde p_2\), \(\tilde f_1 \tilde g_2' = \tilde g_1' f\), \(a_1 = \tilde a_1 \tilde p_2\), \(\tilde a_1 \tilde g_2' = g_2 a\), and induced morphisms \(\tilde b_2\), \(\tilde f_2\), \(\tilde a_2\) on the weak pushouts, that is, with \(b g_1 = \tilde g_1'' \tilde b_2\), \(\tilde i_1 \tilde b_2 = b_2\), \(f \tilde g_2'' = \tilde g_1'' \tilde f_2\), \(\tilde i_1 \tilde f_2 = f_2 \tilde i_2\), \(a \tilde g_2'' = \tilde a_2\), \(i_2 \tilde a_2 = a_2 \tilde i_2\).
\[\begin{tikzpicture}[baseline=(m-7-1.base)]
  \matrix (m) [diagram without objects]{
    & & & & & & & \\
    & & & & & & & \\
    & & & & & & & \\
    & & & & & & & \\
    & & & & & & & \\
    & & & & & & & \\
    & & & & & & & \\};
  \path[->, font=\scriptsize]
    (m-1-2) edge[equality] (m-3-2)
    (m-1-4) edge node[above] {\(f_1\)} (m-1-6)
            edge node[right, near end] {\(g_2''\)} (m-3-4)
            edge[den] node[above] {\(b_1\)} (m-1-2)
    (m-1-6) edge node[right, near end] {\(g_2'\)} (m-3-6)
    (m-1-8) edge node[right] {\(g_2\)} (m-3-8)
            edge[den] node[above] {\(a_1\)} (m-1-6)
    (m-3-4) edge node[above, near end] {\(h_1\)} (m-3-6)
            edge[den=0.25] node[above, near start] {\(v_1\)} (m-3-2)
    (m-3-8) edge[den=0.25] node[above, near start] {\(u_1\)} (m-3-6)
    (m-5-2) edge node[right, near end] {\(g_1\)} (m-7-2)
            edge[equality] (m-4-1)
            edge[tden=0.25] node[right, near start] {\(p_1\)} (m-3-2)
    (m-5-4) edge node[above, near end] {\(h_2\)} (m-5-6)
            edge node[right, near end] {\(g_1'\)} (m-7-4)
            edge[den=0.25] node[above, near start] {\(v_2\)} (m-5-2)
            edge[sden] node[left] {\(j_1\)} (m-4-3)
            edge[den=0.25] node[right, near start] {\(d\)} (m-3-4)
    (m-5-6) edge node[right, near end] {\(g_1''\)} (m-7-6)
            edge[sden] node[left] {\(j_2\)} (m-4-5)
            edge[den=0.25] node[right, near start] {\(e\)} (m-3-6)
    (m-5-8) edge[equality] (m-7-8)
            edge[den=0.25] node[above, near start] {\(u_2\)} (m-5-6)
            edge[sden] node[left] {\(i_2\)} (m-4-7)
            edge[sden] node[right] {\(i_2\)} (m-3-8)
    (m-7-2) edge[equality] (m-6-1)
    (m-7-4) edge node[above] {\(f_2\)} (m-7-6)
            edge[den] node[above] {\(b_2\)} (m-7-2)
            edge[sden] node[left] {\(\tilde i_1\)} (m-6-3)
    (m-7-6) edge[sden] node[left] {\(\tilde i_2\)} (m-6-5)
    (m-7-8) edge[den] node[above] {\(a_2\)} (m-7-6)
            edge[sden] node[left] {\(i_2\)} (m-6-7)
    (m-2-1) edge[equality] (m-4-1)
            edge[tden] node[left] {\(p_1\)} (m-1-2)
    (m-2-3) edge[cross line] node[above, near end] {\(\tilde f_1\)} (m-2-5)
            edge[cross line] node[left, near end] {\(\tilde g_1'\)} (m-4-3)
            edge[cross line, den=0.25] node[above, near start] {\(\tilde b_1\)} (m-2-1)
            edge[tden] node[left] {\(\tilde p_1\)} (m-1-4)
    (m-2-5) edge[cross line] node[left, near end] {\(\tilde g_2'\)} (m-4-5)
            edge[tden] node[left] {\(\tilde p_2\)} (m-1-6)
    (m-2-7) edge[cross line] node[left, near end] {\(g_2\)} (m-4-7)
            edge[cross line, den=0.25] node[above, near start] {\(\tilde a_1\)} (m-2-5)
            edge[equality] (m-1-8)
    (m-4-1) edge node[left, near end] {\(g_1\)} (m-6-1)
            edge[tden] node[left] {\(p_1\)} (m-3-2)
    (m-4-3) edge[cross line] node[above, near end] {\(f\)} (m-4-5)
            edge[cross line] node[left, near end] {\(\tilde g_1''\)} (m-6-3)
            edge[cross line, den=0.25] node[above, near start] {\(b\)} (m-4-1)
            edge[tden] node[left] {\(q_1\)} (m-3-4)
    (m-4-5) edge[cross line] node[left, near end] {\(\tilde g_2''\)} (m-6-5)
            edge[tden] node[left] {\(q_2\)} (m-3-6)
    (m-4-7) edge[cross line, den=0.25] node[above, near start] {\(a\)} (m-4-5)
            edge[cross line, equality] (m-6-7)
            edge[equality] (m-3-8)
    (m-6-3) edge[cross line] node[above, near end] {\(\tilde f_2\)} (m-6-5)
            edge[cross line, den=0.25] node[above, near start] {\(\tilde b_2\)} (m-6-1)
    (m-6-7) edge[cross line, den=0.25] node[above, near start] {\(\tilde a_2\)} (m-6-5);
\end{tikzpicture}\]
Setting \(\tilde g_1 := \tilde g_1' \tilde g_1''\) and \(\tilde g_2 := \tilde g_2' \tilde g_2''\) yields \(\tilde b_1 g_1 = \tilde g_1 \tilde b_2\), \(\tilde f_1 \tilde g_2 = \tilde g_1 \tilde f_2\), \(\tilde a_1 \tilde g_2 = g_2 \tilde a_2\). Moreover, \(\tilde a_1\), \(\tilde a_2\), \(\tilde b_1\), \(\tilde b_2\) are denominators in \(\mathcal{C}\) by semi-saturatedness.
\end{proof}

\begin{proposition}[{3-arrow calculus, cf.~\cite[sec.~36.3]{dwyer_hirschhorn_kan_smith:2004:homotopy_limit_functors_on_model_categories_and_homotopical_categories}}] \label{prop:3-arrow_calculus}
We suppose given a uni-fractionable category \(\mathcal{C}\).
\begin{enumerate}
\item \label{prop:3-arrow_calculus:fraction_equality} Given parallel \(3\)-arrows \((b_1, f_1, a_1)\) and \((b_2, f_2, a_2)\) in \(\mathcal{C}\), we have
\[\doublefrac{b_1}{f_1}{a_1} = \doublefrac{b_2}{f_2}{a_2}\]
in \(\FractionCategory \mathcal{C}\) if and only if there exist \(3\)-arrows \((\tilde b_1, \tilde f_1, \tilde a_1)\), \((\tilde b_2, \tilde f_2, \tilde a_2)\) and normal \(3\)-arrows \((p_1, d_1, i_1)\), \((p_2, d_2, i_2)\) with denominators \(d_1\), \(d_2\), fitting into the following commutative diagram in \(\mathcal{C}\).
\[\begin{tikzpicture}[baseline=(m-4-1.base)]
  \matrix (m) [diagram without objects]{
    & & & \\
    & & & \\
    & & & \\
    & & & \\};
  \path[->, font=\scriptsize]
    (m-1-2) edge node[above] {\(f_1\)} (m-1-3)
            edge[den] node[above] {\(b_1\)} (m-1-1)
    (m-1-4) edge[den] node[above] {\(a_1\)} (m-1-3)
    (m-2-1) edge[equality] (m-3-1)
            edge[equality] (m-1-1)
    (m-2-2) edge[exists] node[above] {\(\tilde f_1\)} (m-2-3)
            edge[exists, den] node[right] {\(d_1\)} (m-3-2)
            edge[exists, den] node[above] {\(\tilde b_1\)} (m-2-1)
            edge[exists, tden] node[right] {\(p_1\)} (m-1-2)
    (m-2-3) edge[exists, den] node[right] {\(d_2\)} (m-3-3)
            edge[exists, tden] node[right] {\(p_2\)} (m-1-3)
    (m-2-4) edge[equality] (m-3-4)
            edge[exists, den] node[above] {\(\tilde a_1\)} (m-2-3)
            edge[equality] (m-1-4)
    (m-3-2) edge[exists] node[above] {\(\tilde f_2\)} (m-3-3)
            edge[exists, den] node[above] {\(\tilde b_2\)} (m-3-1)
    (m-3-4) edge[exists, den] node[above] {\(\tilde a_2\)} (m-3-3)
    (m-4-1) edge[equality] (m-3-1)
    (m-4-2) edge node[above] {\(f_2\)} (m-4-3)
            edge[den] node[above] {\(b_2\)} (m-4-1)
            edge[exists, sden] node[right] {\(i_1\)} (m-3-2)
    (m-4-3) edge[exists, sden] node[right] {\(i_2\)} (m-3-3)
    (m-4-4) edge[den] node[above] {\(a_2\)} (m-4-3)
            edge[equality] (m-3-4);
\end{tikzpicture}\]
If \((b_1, f_1, a_1)\) and \((b_2, f_2, a_2)\) are normal \(3\)-arrows, then \((\tilde b_1, \tilde f_1, \tilde a_1)\) and \((\tilde b_2, \tilde f_2, \tilde a_2)\) can be chosen to be normal, too.
\item \label{prop:3-arrow_calculus:commutative_diagrams} Given \(3\)-arrows \((b_1, f_1, a_1)\), \((b_2, f_2, a_2)\) and normal \(3\)-arrows \((p_1, g_1, i_1)\), \((p_2, g_2, i_2)\) in \(\mathcal{C}\), we have
\[(\doublefrac{b_1}{f_1}{a_1}) (\doublefrac{p_2}{g_2}{i_2}) = (\doublefrac{p_1}{g_1}{i_1}) (\doublefrac{b_2}{f_2}{a_2})\]
in \(\FractionCategory \mathcal{C}\) if and only if there exist \(3\)-arrows \((\tilde b_1, \tilde f_1, \tilde a_1)\), \((\tilde b_2, \tilde f_2, \tilde a_2)\) and normal \(3\)-arrows \((\tilde p_1, \tilde g_1, \tilde i_1)\), \((\tilde p_2, \tilde g_2, \tilde i_2)\), fitting into the following commutative diagram in \(\mathcal{C}\).
\[\begin{tikzpicture}[baseline=(m-4-1.base)]
  \matrix (m) [diagram without objects]{
    & & & \\
    & & & \\
    & & & \\
    & & & \\};
  \path[->, font=\scriptsize]
    (m-1-2) edge node[above] {\(f_1\)} (m-1-3)
            edge[den] node[above] {\(b_1\)} (m-1-1)
    (m-1-4) edge[den] node[above] {\(a_1\)} (m-1-3)
    (m-2-1) edge node[right] {\(g_1\)} (m-3-1)
            edge[tden] node[right] {\(p_1\)} (m-1-1)
    (m-2-2) edge[exists] node[above] {\(\tilde f_1\)} (m-2-3)
            edge[exists] node[right] {\(\tilde g_1\)} (m-3-2)
            edge[exists, den] node[above] {\(\tilde b_1\)} (m-2-1)
            edge[exists, tden] node[right] {\(\tilde p_1\)} (m-1-2)
    (m-2-3) edge[exists] node[right] {\(\tilde g_2\)} (m-3-3)
            edge[exists, tden] node[right] {\(\tilde p_2\)} (m-1-3)
    (m-2-4) edge node[right] {\(g_2\)} (m-3-4)
            edge[exists, den] node[above] {\(\tilde a_1\)} (m-2-3)
            edge[tden] node[right] {\(p_2\)} (m-1-4)
    (m-3-2) edge[exists] node[above] {\(\tilde f_2\)} (m-3-3)
            edge[exists, den] node[above] {\(\tilde b_2\)} (m-3-1)
    (m-3-4) edge[exists, den] node[above] {\(\tilde a_2\)} (m-3-3)
    (m-4-1) edge[sden] node[right] {\(i_1\)} (m-3-1)
    (m-4-2) edge node[above] {\(f_2\)} (m-4-3)
            edge[den] node[above] {\(b_2\)} (m-4-1)
            edge[exists, sden] node[right] {\(\tilde i_1\)} (m-3-2)
    (m-4-3) edge[exists, sden] node[right] {\(\tilde i_2\)} (m-3-3)
    (m-4-4) edge[den] node[above] {\(a_2\)} (m-4-3)
            edge[sden] node[right] {\(i_2\)} (m-3-4);
\end{tikzpicture}\]
\end{enumerate}
\end{proposition}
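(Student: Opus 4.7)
The plan is to prove part (b) first, using the flipping lemma (lemma~\ref{lem:flipping_lemma}) as the main tool, and then to deduce part (a) by specialising (b) to identity $3$-arrows. For the ``if'' direction of (b), I compute both composites using proposition~\ref{prop:flexibility_of_composites_and_inverses_in_the_fraction_category}\ref{prop:flexibility_of_composites_and_inverses_in_the_fraction_category:composition}. Reading the data off the $4$-by-$4$ diagram, I take $d = \tilde a_1$, $e = \tilde p_2$, $e' = \tilde p_1$, $g_1' = \tilde f_1$, $g_2' = \tilde g_2$, $d' = \tilde a_2$; the commutativities $p_2 a_1 = \tilde a_1 \tilde p_2$, $\tilde f_1 \tilde p_2 = \tilde p_1 f_1$, $\tilde a_1 \tilde g_2 = g_2 \tilde a_2$ are then visible in the diagram, and the proposition yields $(\doublefrac{b_1}{f_1}{a_1})(\doublefrac{p_2}{g_2}{i_2}) = \doublefrac{\tilde p_1 b_1}{\tilde f_1 \tilde g_2}{i_2 \tilde a_2}$. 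Symmetrically, $(\doublefrac{p_1}{g_1}{i_1})(\doublefrac{b_2}{f_2}{a_2}) = \doublefrac{\tilde b_1 p_1}{\tilde g_1 \tilde f_2}{a_2 \tilde i_2}$. The three remaining commutativities $\tilde p_1 b_1 = \tilde b_1 p_1$, $\tilde f_1 \tilde g_2 = \tilde g_1 \tilde f_2$, $i_2 \tilde a_2 = a_2 \tilde i_2$ (top-left, centre, bottom-right squares) then make the two representatives coincide as $3$-arrows, so the double fractions are equal.

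The ``only if'' direction of (b) is the main obstacle. Suppose the two composites agree. I compute each of them explicitly via proposition~\ref{prop:welldefinedness_of_the_fraction_category}: factorise $p_2 a_1$ into an S-denominator followed by a T-denominator, then construct the weak pullback rectangle completing the T-denominator factor against $f_1$ and the weak pushout rectangle completing the S-denominator factor against $g_2$; analogously for the other composite. This produces two parallel, fraction-equal representative $3$-arrows, together with a commutative skeleton of denominators (factorisations, completion morphisms, and the data from $(p_1, g_1, i_1)$ and $(p_2, g_2, i_2)$) that fits exactly the input hypotheses of the flipping lemma. Applying the flipping lemma reshapes this denominator-based skeleton into the column-normal presentation demanded by the conclusion, producing simultaneously the normal middle-column $3$-arrows $(\tilde p_1, \tilde g_1, \tilde i_1), (\tilde p_2, \tilde g_2, \tilde i_2)$ and the intermediate middle-row $3$-arrows $(\tilde b_1, \tilde f_1, \tilde a_1), (\tilde b_2, \tilde f_2, \tilde a_2)$ fitting into the required $4$-by-$4$ diagram. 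The flipping lemma was designed precisely to convert this row-oriented denominator connection into the column-normal form needed here.

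Part (a) is then deduced from (b). The ``if'' direction is immediate: the T-denominator slab $(p_1, p_2)$ between rows $1$ and $2$ witnesses $(b_1, f_1, a_1) \fractionequal (\tilde b_1, \tilde f_1, \tilde a_1)$ via remark~\ref{rem:other_generating_sets_for_the_fraction_equality_relation_on_the_3-arrow_graph}\ref{rem:other_generating_sets_for_the_fraction_equality_relation_on_the_3-arrow_graph:equal_directions} (with $c = p_2$, $c' = p_1$), the $(d_1, d_2)$ slab gives $(\tilde b_1, \tilde f_1, \tilde a_1) \fractionequal (\tilde b_2, \tilde f_2, \tilde a_2)$ by the same remark, and the S-denominator slab $(i_1, i_2)$ gives $(\tilde b_2, \tilde f_2, \tilde a_2) \fractionequal (b_2, f_2, a_2)$; chaining these yields the equality of double fractions. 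For the ``only if'' direction, I apply part (b) with $(p_1, g_1, i_1) = (1_X, 1_X, 1_X)$ and $(p_2, g_2, i_2) = (1_Y, 1_Y, 1_Y)$ at the source $X$ and target $Y$. The composition equation reduces to $\doublefrac{b_1}{f_1}{a_1} = \doublefrac{b_2}{f_2}{a_2}$, and the $4$-by-$4$ diagram collapses in its trivial first and last columns to a $4$-row diagram of the required shape. The specialised commutativities $\tilde b_1 = \tilde g_1 \tilde b_2$ and $\tilde a_2 = \tilde a_1 \tilde g_2$, combined with $\tilde b_1, \tilde b_2, \tilde a_1, \tilde a_2 \in \Denominators \mathcal{C}$ and the $2$-out-of-$3$ axiom of semi-saturatedness, force $\tilde g_1, \tilde g_2 \in \Denominators \mathcal{C}$, so the middle columns acquire denominator middles as demanded. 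The refinement that $(\tilde b_1, \tilde f_1, \tilde a_1), (\tilde b_2, \tilde f_2, \tilde a_2)$ can be chosen normal when the boundary $3$-arrows are normal is obtained by subsequently applying the normalisation lemma to the intermediate rows and propagating the result through the construction.
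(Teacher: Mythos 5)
Your ``if'' directions are fine (the computation of the two composites from the diagram via proposition~\ref{prop:flexibility_of_composites_and_inverses_in_the_fraction_category}\ref{prop:flexibility_of_composites_and_inverses_in_the_fraction_category:composition} is exactly the paper's argument), and your derivation of part~\ref{prop:3-arrow_calculus:fraction_equality} from part~\ref{prop:3-arrow_calculus:commutative_diagrams} by specialising to identity \(3\)-arrows would work formally. But the core of your plan, the ``only if'' direction of~\ref{prop:3-arrow_calculus:commutative_diagrams}, has a genuine gap. Knowing that the two composites agree in \(\FractionCategory \mathcal{C}\) only tells you that the two representative \(3\)-arrows \((q' b_1, f_1' g_2', i_2 j')\) and \((\tilde q' p_1, g_1' f_2', a_2 \tilde j')\) are fraction equal, and \({\fractionequal}\) is the equivalence relation \emph{generated} by the elementary moves of definition~\ref{def:fraction_equality_relation_on_the_3-arrow_graph}/remark~\ref{rem:other_generating_sets_for_the_fraction_equality_relation_on_the_3-arrow_graph}: all you get is a zigzag of such moves of unknown length. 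The ``commutative skeleton'' you describe (the factorisation of \(p_2 a_1\), the Ore completions, and the data of \((p_1,g_1,i_1)\), \((p_2,g_2,i_2)\)) relates each composite to its own factors, but it does \emph{not} relate the two composites to each other, so it does not meet the hypotheses of the flipping lemma~\ref{lem:flipping_lemma}, which require the two sides to be already joined by intermediate \(3\)-arrows inside a single commutative four-row diagram. Converting the zigzag into such a single diagram is precisely the content of part~\ref{prop:3-arrow_calculus:fraction_equality}, and its proof needs an induction on the length of the chain with the flipping lemma applied at each step. That induction is absent from your proposal, and it cannot be bypassed by proving~\ref{prop:3-arrow_calculus:commutative_diagrams} first: your route is circular exactly at the hardest point. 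The paper's order is forced: prove~\ref{prop:3-arrow_calculus:fraction_equality} by induction over the generating relations using the flipping lemma, then prove~\ref{prop:3-arrow_calculus:commutative_diagrams} by constructing both composites as in proposition~\ref{prop:welldefinedness_of_the_fraction_category}, applying~\ref{prop:3-arrow_calculus:fraction_equality} to the two resulting parallel \(3\)-arrows, stacking everything into one large (eight-row) commutative diagram, and applying the flipping lemma twice before composing.

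A secondary point: the refinement in~\ref{prop:3-arrow_calculus:fraction_equality} that \((\tilde b_1, \tilde f_1, \tilde a_1)\), \((\tilde b_2, \tilde f_2, \tilde a_2)\) can be taken normal when the outer \(3\)-arrows are normal is not obtained by ``applying the normalisation lemma to the intermediate rows and propagating'': the normalisation lemma only normalises a \(3\)-arrow up to fraction equality, with no control over compatibility with the ambient diagram. One has to argue as the paper does, by factorising \(\tilde a_1\) and \(\tilde b_2\) into S- followed by T-denominators, forming new Ore completions against \(\tilde f_1\) and \(\tilde f_2\), and splicing the resulting squares into the existing diagram, using multiplicativity to see that the new outer entries are again S- resp.\ T-denominators.
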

\begin{proof} \
\begin{enumerate}
\item If we have a commutative diagram as stated, then we have
\[(b_1, f_1, a_1) \fractionequal (\tilde b_1, \tilde f_1, \tilde a_1) \fractionequal (\tilde b_2, \tilde f_2, \tilde a_2) \fractionequal (b_2, f_2, a_2)\]
and thus \(\doublefrac{b_1}{f_1}{a_1} = \doublefrac{b_2}{f_2}{a_2}\) in \(\FractionCategory \mathcal{C}\).

So we suppose conversely that \(\doublefrac{b_1}{f_1}{a_1} = \doublefrac{b_2}{f_2}{a_2}\) in \(\FractionCategory \mathcal{C}\), that is, we suppose that \((b_1, f_1, a_1) \fractionequal (b_2, f_2, a_2)\) in \(\threearrowgraph{\mathcal{C}}\). By remark~\ref{rem:other_generating_sets_for_the_fraction_equality_relation_on_the_3-arrow_graph}\ref{rem:other_generating_sets_for_the_fraction_equality_relation_on_the_3-arrow_graph:equal_directions}, there exist \(n \in \Naturals_0\), \((v_l, h_l, u_l) \in \Arr \threearrowgraph{\mathcal{C}}\) for \(l \in [0, 2 n + 1]\), \(c_l, c_l' \in \Mor \mathcal{C}\) for \(l \in [0, n]\), \(w_l, w_l' \in \Mor \mathcal{C}\) for \(l \in [0, n - 1]\), with \((v_0, h_0, u_0) = (b_1, f_1, a_1)\) and \((v_{2 n + 1}, h_{2 n + 1}, u_{2 n + 1}) = (b_2, f_2, a_2)\) as well as \(v_{2 l} = c_l v_{2 l + 1}\), \(h_{2 l} c_l' = c_l h_{2 l + 1}\), \(u_{2 l} c_l' = u_{2 l + 1}\) for \(l \in [0, n]\) and \(v_{2 l + 2} = w_l v_{2 l + 1}\), \(w_l h_{2 l + 1} = h_{2 l + 2} w_l'\), \(u_{2 l + 2} w_l' = u_{2 l + 1}\) for \(l \in [0, n - 1]\).
\[\begin{tikzpicture}[baseline=(m-3-1.base)]
  \matrix (m) [diagram without objects]{
    & & & \\
    & & & \\
    & & & \\};
  \path[->, font=\scriptsize]
    (m-1-1) edge[equality] (m-2-1)
    (m-1-2) edge node[above] {\(h_{2 l}\)} (m-1-3)
            edge node[right] {\(c_l\)} (m-2-2)
            edge[den] node[above] {\(v_{2 l}\)} (m-1-1)
    (m-1-3) edge node[right] {\(c_l'\)} (m-2-3)
    (m-1-4) edge[equality] (m-2-4)
            edge[den] node[above] {\(u_{2 l}\)} (m-1-3)
    (m-2-2) edge node[above] {\(h_{2 l + 1}\)} (m-2-3)
            edge[den] node[above] {\(v_{2 l + 1}\)} (m-2-1)
    (m-2-4) edge[den] node[above] {\(u_{2 l + 1}\)} (m-2-3)
    (m-3-1) edge[equality] (m-2-1)
    (m-3-2) edge node[above] {\(h_{2 l + 2}\)} (m-3-3)
            edge[den] node[above] {\(v_{2 l + 2}\)} (m-3-1)
            edge node[right] {\(w_l\)} (m-2-2)
    (m-3-3) edge node[right] {\(w_l'\)} (m-2-3)
    (m-3-4) edge[den] node[above] {\(u_{2 l + 2}\)} (m-3-3)
            edge[equality] (m-2-4);
\end{tikzpicture}\]
By semi-saturatedness, \(c_l\) and \(c_l'\) are denominators for all \(l \in [0, n]\) and \(w_l\), \(w_l'\) are denominators for all \(l \in [0, n - 1]\). Using the flipping lemma~\ref{lem:flipping_lemma} and induction on \(n \in \Naturals_0\) yields the first assertion.

Now let us suppose that \((b_1, f_1, a_1)\) and \((b_2, f_2, a_2)\) are normal \(3\)-arrows. By multiplicativity, \(\tilde b_1 = p_1 b_1\) is a T-denominator and \(\tilde a_2 = a_2 i_2\) is an S-denominator in \(\mathcal{C}\). We choose S-denominators \(j_1\), \(j_2\) and T-denominators \(q_1\), \(q_2\) with \(\tilde a_1 = j_1 q_1\) and \(\tilde b_2 = j_2 q_2\). Moreover, we choose a T-denominator \(q_1'\) and a morphism \(\tilde f_1'\) in \(\mathcal{C}\) with \(\tilde f_1' q_1 = q_1' \tilde f_1\), and we choose an S-denominator \(j_2'\) and a morphism \(\tilde f_2'\) in \(\mathcal{C}\) with \(j_2 \tilde f_2' = \tilde f_2 j_2'\).
\[\begin{tikzpicture}[baseline=(m-2-1.base)]
  \matrix (m) [diagram without objects=0.9em]{
    & & & & \\
    & & & & \\};
  \path[->, font=\scriptsize]
    (m-1-2) edge node[above] {\(\tilde f_1'\)} (m-1-4)
            edge[tden] node[left=1pt] {\(q_1'\)} (m-2-1)
    (m-1-4) edge[tden] node[left] {\(q_1\)} (m-2-3)
    (m-2-1) edge node[below] {\(\tilde f_1\)} (m-2-3)
    (m-2-5) edge[den] node[below] {\(\tilde a_1\)} (m-2-3)
            edge[sden] node[right] {\(j_1\)} (m-1-4);
\end{tikzpicture}
\qquad
\begin{tikzpicture}[baseline=(m-2-1.base)]
  \matrix (m) [diagram without objects=0.9em]{
    & & & & \\
    & & & & \\};
  \path[->, font=\scriptsize]
    (m-1-2) edge node[above] {\(\tilde f_2'\)} (m-1-4)
            edge[tden] node[left] {\(q_2\)} (m-2-1)
    (m-2-3) edge node[below] {\(\tilde f_2\)} (m-2-5)
            edge[den] node[below] {\(\tilde b_2\)} (m-2-1)
            edge[sden] node[right] {\(j_2\)} (m-1-2)
    (m-2-5) edge[sden] node[right=1pt] {\(j_2'\)} (m-1-4);
\end{tikzpicture}\]
We obtain the following commutative diagram.
\[\begin{tikzpicture}[baseline=(m-8-1.base)]
  \matrix (m) [diagram without objects]{
    & & & \\
    & & & \\
    & & & \\
    & & & \\
    & & & \\
    & & & \\
    & & & \\
    & & & \\};
  \path[->, font=\scriptsize]
    (m-1-2) edge node[above] {\(f_1\)} (m-1-3)
            edge[tden] node[above] {\(b_1\)} (m-1-1)
    (m-1-4) edge[sden] node[above] {\(a_1\)} (m-1-3)
    (m-2-1) edge[equality] (m-1-1)
    (m-2-2) edge node[above] {\(\tilde f_1\)} (m-2-3)
            edge[tden] node[above] {\(\tilde b_1\)} (m-2-1)
            edge[tden] node[right] {\(p_1\)} (m-1-2)
    (m-2-3) edge[tden] node[right] {\(p_2\)} (m-1-3)
    (m-2-4) edge[den] node[above] {\(\tilde a_1\)} (m-2-3)
            edge[equality] (m-1-4)
    (m-3-1) edge[equality] (m-4-1)
            edge[equality] (m-2-1)
    (m-3-2) edge node[above] {\(\tilde f_1'\)} (m-3-3)
            edge[tden] node[right] {\(q_1'\)} (m-4-2)
            edge[tden] node[above] {\(q_1' \tilde b_1\)} (m-3-1)
            edge[tden] node[right] {\(q_1'\)} (m-2-2)
    (m-3-3) edge[tden] node[right] {\(q_1\)} (m-4-3)
            edge[tden] node[right] {\(q_1\)} (m-2-3)
    (m-3-4) edge[equality] (m-4-4)
            edge[sden] node[above] {\(j_1\)} (m-3-3)
            edge[equality] (m-2-4)
    (m-4-1) edge[equality] (m-5-1)
    (m-4-2) edge node[above] {\(\tilde f_1\)} (m-4-3)
            edge[den] node[right] {\(d_1\)} (m-5-2)
            edge[tden] node[above] {\(\tilde b_1\)} (m-4-1)
    (m-4-3) edge[den] node[right] {\(d_2\)} (m-5-3)
    (m-4-4) edge[equality] (m-5-4)
            edge[den] node[above] {\(\tilde a_1\)} (m-4-3)
    (m-5-1) edge[equality] (m-6-1)
    (m-5-2) edge node[above] {\(\tilde f_2\)} (m-5-3)
            edge[sden] node[right] {\(j_2\)} (m-6-2)
            edge[den] node[above] {\(\tilde b_2\)} (m-5-1)
    (m-5-3) edge[sden] node[right] {\(j_2'\)} (m-6-3)
    (m-5-4) edge[equality] (m-6-4)
            edge[sden] node[above] {\(\tilde a_2\)} (m-5-3)
    (m-6-2) edge node[above] {\(\tilde f_2'\)} (m-6-3)
            edge[tden] node[above] {\(q_2\)} (m-6-1)
    (m-6-4) edge[sden] node[above] {\(\tilde a_2 j_2'\)} (m-6-3)
    (m-7-1) edge[equality] (m-6-1)
    (m-7-2) edge node[above] {\(\tilde f_2\)} (m-7-3)
            edge[den] node[above] {\(\tilde b_2\)} (m-7-1)
            edge[sden] node[right] {\(j_2\)} (m-6-2)
    (m-7-3) edge[sden] node[right] {\(j_2'\)} (m-6-3)
    (m-7-4) edge[sden] node[above] {\(\tilde a_2\)} (m-7-3)
            edge[equality] (m-6-4)
    (m-8-1) edge[equality] (m-7-1)
    (m-8-2) edge node[above] {\(f_2\)} (m-8-3)
            edge[tden] node[above] {\(b_2\)} (m-8-1)
            edge[sden] node[right] {\(i_1\)} (m-7-2)
    (m-8-3) edge[sden] node[right] {\(i_2\)} (m-7-3)
    (m-8-4) edge[sden] node[above] {\(a_2\)} (m-8-3)
            edge[equality] (m-7-4);
\end{tikzpicture}\]
By multiplicativity, \(q_1' \tilde b_1 = q_1' p_1 b_1\), \(q_1' p_1\), \(q_1 p_2\) are T-denominators and \(\tilde a_2 j_2' = a_2 i_2 j_2'\), \(i_1 j_2\), \(i_2 j_2'\) are S{\nbd}denominators in \(\mathcal{C}\). Altogether, the diagram
\[\begin{tikzpicture}[baseline=(m-4-1.base)]
  \matrix (m) [diagram without objects]{
    & & & \\
    & & & \\
    & & & \\
    & & & \\};
  \path[->, font=\scriptsize]
    (m-1-2) edge node[above] {\(f_1\)} (m-1-3)
            edge[tden] node[above] {\(b_1\)} (m-1-1)
    (m-1-4) edge[sden] node[above] {\(a_1\)} (m-1-3)
    (m-2-1) edge[equality] (m-3-1)
            edge[equality] (m-1-1)
    (m-2-2) edge node[above] {\(\tilde f_1'\)} (m-2-3)
            edge[den] node[right, near start] {\(q_1' d_1 j_2\)} (m-3-2)
            edge[tden] node[above] {\(q_1' \tilde b_1\)} (m-2-1)
            edge[tden] node[right, near end] {\(q_1' p_1\)} (m-1-2)
    (m-2-3) edge[den] node[right, near start] {\(q_1 d_2 j_2'\)} (m-3-3)
            edge[tden] node[right, near end] {\(q_1 p_2\)} (m-1-3)
    (m-2-4) edge[equality] (m-3-4)
            edge[sden] node[above] {\(j_1\)} (m-2-3)
            edge[equality] (m-1-4)
    (m-3-2) edge node[above] {\(\tilde f_2'\)} (m-3-3)
            edge[tden] node[above] {\(q_2\)} (m-3-1)
    (m-3-4) edge[sden] node[above] {\(\tilde a_2 j_2'\)} (m-3-3)
    (m-4-1) edge[equality] (m-3-1)
    (m-4-2) edge node[above] {\(f_2\)} (m-4-3)
            edge[tden] node[above] {\(b_2\)} (m-4-1)
            edge[sden] node[right, near end] {\(i_1 j_2\)} (m-3-2)
    (m-4-3) edge[sden] node[right, near end] {\(i_2 j_2'\)} (m-3-3)
    (m-4-4) edge[sden] node[above] {\(a_2\)} (m-4-3)
            edge[equality] (m-3-4);
\end{tikzpicture}\]
commutes, and \((q_1' \tilde b_1, \tilde f_1', j_1)\), \((q_2, \tilde f_2', \tilde a_2 j_2')\), \((q_1' p_1, q_1' d_1 j_2, i_1 j_2)\), \((q_1 p_2, q_1 d_2 j_2', i_2 j_2')\) are normal \(3\)-arrows.
\item If we have a commutative diagram as stated, then proposition~\ref{prop:flexibility_of_composites_and_inverses_in_the_fraction_category}\ref{prop:flexibility_of_composites_and_inverses_in_the_fraction_category:composition} implies that
\[(\doublefrac{b_1}{f_1}{a_1}) (\doublefrac{p_2}{g_2}{i_2}) = \doublefrac{\tilde p_1 b_1}{\tilde f_1 \tilde g_2}{i_2 \tilde a_2} = \doublefrac{\tilde b_1 p_1}{\tilde g_1 \tilde f_2}{a_2 \tilde i_2} = (\doublefrac{p_1}{g_1}{i_1}) (\doublefrac{b_2}{f_2}{a_2}).\]

So we suppose conversely that \((\doublefrac{b_1}{f_1}{a_1}) (\doublefrac{p_2}{g_2}{i_2}) = (\doublefrac{p_1}{g_1}{i_1}) (\doublefrac{b_2}{f_2}{a_2})\). We construct the composites \((\doublefrac{b_1}{f_1}{a_1}) (\doublefrac{p_2}{g_2}{i_2}) = \doublefrac{q' b_1}{f_1' g_2'}{i_2 j'}\) and \((\doublefrac{p_1}{g_1}{i_1}) (\doublefrac{b_2}{f_2}{a_2}) = \doublefrac{\tilde q' p_1}{g_1' f_2'}{a_2 \tilde j'}\) as in proposition~\ref{prop:welldefinedness_of_the_fraction_category}.
\[\begin{tikzpicture}[baseline=(m-4-1.base)]
  \matrix (m) [diagram without objects]{
    & & & \\
    & & & \\
    & & & \\
    & & & \\};
  \path[->, font=\scriptsize]
    (m-1-2) edge node[above] {\(f_1\)} (m-1-3)
            edge[den] node[above] {\(b_1\)} (m-1-1)
    (m-1-4) edge[den] node[above] {\(a_1\)} (m-1-3)
    (m-2-2) edge node[above] {\(f_1'\)} (m-2-3)
            edge[tden] node[right] {\(q'\)} (m-1-2)
    (m-2-3) edge node[right] {\(g_2'\)} (m-3-3)
            edge[tden] node[right] {\(q\)} (m-1-3)
    (m-2-4) edge node[right] {\(g_2\)} (m-3-4)
            edge[sden] node[above] {\(j\)} (m-2-3)
            edge[tden] node[right] {\(p_2\)} (m-1-4)
    (m-3-4) edge[sden] node[above] {\(j'\)} (m-3-3)
    (m-4-4) edge[sden] node[right] {\(i_2\)} (m-3-4);
\end{tikzpicture}
\qquad
\begin{tikzpicture}[baseline=(m-4-1.base)]
  \matrix (m) [diagram without objects]{
    & & & \\
    & & & \\
    & & & \\
    & & & \\};
  \path[->, font=\scriptsize]
    (m-2-1) edge node[right] {\(g_1\)} (m-3-1)
            edge[tden] node[right] {\(p_1\)} (m-1-1)
    (m-2-2) edge node[right] {\(g_1'\)} (m-3-2)
            edge[tden] node[above] {\(\tilde q'\)} (m-2-1)
    (m-3-2) edge node[above] {\(f_2'\)} (m-3-3)
            edge[tden] node[above] {\(\tilde q\)} (m-3-1)
    (m-4-1) edge[sden] node[right] {\(i_1\)} (m-3-1)
    (m-4-2) edge node[above] {\(f_2\)} (m-4-3)
            edge[den] node[above] {\(b_2\)} (m-4-1)
            edge[sden] node[right] {\(\tilde j\)} (m-3-2)
    (m-4-3) edge[sden] node[right] {\(\tilde j'\)} (m-3-3)
    (m-4-4) edge[den] node[above] {\(a_2\)} (m-4-3);
\end{tikzpicture}\]
Hence the following diagrams commute.
\[\begin{tikzpicture}[baseline=(m-4-1.base)]
  \matrix (m) [diagram without objects]{
    & & & \\
    & & & \\
    & & & \\
    & & & \\};
  \path[->, font=\scriptsize]
    (m-1-2) edge node[above] {\(f_1\)} (m-1-3)
            edge[den] node[above] {\(b_1\)} (m-1-1)
    (m-1-4) edge[den] node[above] {\(a_1\)} (m-1-3)
    (m-2-1) edge[equality] (m-3-1)
            edge[equality] (m-1-1)
    (m-2-2) edge node[above] {\(f_1'\)} (m-2-3)
            edge[equality] (m-3-2)
            edge[den] node[above] {\(q' b_1\)} (m-2-1)
            edge[tden] node[right] {\(q'\)} (m-1-2)
    (m-2-3) edge node[right] {\(g_2'\)} (m-3-3)
            edge[tden] node[right] {\(q\)} (m-1-3)
    (m-2-4) edge node[right] {\(g_2\)} (m-3-4)
            edge[sden] node[above] {\(j\)} (m-2-3)
            edge[tden] node[right] {\(p_2\)} (m-1-4)
    (m-3-2) edge node[above] {\(f_1' g_2'\)} (m-3-3)
            edge[den] node[above] {\(q' b_1\)} (m-3-1)
    (m-3-4) edge[sden] node[above] {\(j'\)} (m-3-3)
    (m-4-1) edge[equality] (m-3-1)
    (m-4-2) edge node[above] {\(f_1' g_2'\)} (m-4-3)
            edge[den] node[above] {\(q' b_1\)} (m-4-1)
            edge[equality] (m-3-2)
    (m-4-3) edge[equality] (m-3-3)
    (m-4-4) edge[sden] node[above] {\(i_2 j'\)} (m-4-3)
            edge[sden] node[right] {\(i_2\)} (m-3-4);
\end{tikzpicture}
\qquad
\begin{tikzpicture}[baseline=(m-4-1.base)]
  \matrix (m) [diagram without objects]{
    & & & \\
    & & & \\
    & & & \\
    & & & \\};
  \path[->, font=\scriptsize]
    (m-1-2) edge node[above] {\(g_1' f_2'\)} (m-1-3)
            edge[tden] node[above] {\(\tilde q' p_1\)} (m-1-1)
    (m-1-4) edge[den] node[above] {\(a_2 \tilde j'\)} (m-1-3)
    (m-2-1) edge node[right] {\(g_1\)} (m-3-1)
            edge[tden] node[right] {\(p_1\)} (m-1-1)
    (m-2-2) edge node[above] {\(g_1' f_2'\)} (m-2-3)
            edge node[right] {\(g_1'\)} (m-3-2)
            edge[tden] node[above] {\(\tilde q'\)} (m-2-1)
            edge[equality] (m-1-2)
    (m-2-3) edge[equality] (m-3-3)
            edge[equality] (m-1-3)
    (m-2-4) edge[equality] (m-3-4)
            edge[den] node[above] {\(a_2 \tilde j'\)} (m-2-3)
            edge[equality] (m-1-4)
    (m-3-2) edge node[above] {\(f_2'\)} (m-3-3)
            edge[tden] node[above] {\(\tilde q\)} (m-3-1)
    (m-3-4) edge[den] node[above] {\(a_2 \tilde j'\)} (m-3-3)
    (m-4-1) edge[sden] node[right] {\(i_1\)} (m-3-1)
    (m-4-2) edge node[above] {\(f_2\)} (m-4-3)
            edge[den] node[above] {\(b_2\)} (m-4-1)
            edge[sden] node[right] {\(\tilde j\)} (m-3-2)
    (m-4-3) edge[sden] node[right] {\(\tilde j'\)} (m-3-3)
    (m-4-4) edge[den] node[above] {\(a_2\)} (m-4-3)
            edge[equality] (m-3-4);
\end{tikzpicture}\]
By~\ref{prop:3-arrow_calculus:fraction_equality}, since
\[\doublefrac{q' b_1}{f_1' g_2'}{i_2 j'} = (\doublefrac{b_1}{f_1}{a_1}) (\doublefrac{p_2}{g_2}{i_2}) = (\doublefrac{p_1}{g_1}{i_1}) (\doublefrac{b_2}{f_2}{a_2}) = \doublefrac{\tilde q' p_1}{g_1' f_2'}{a_2 \tilde j'},\]
there exist \(3\)-arrows \((v_1, h_1, u_1)\), \((v_2, h_2, u_2)\) and normal \(3\)-arrows \((r_1, d_1, k_1)\), \((r_2, d_2, k_2)\) in \(\mathcal{C}\) with denominators \(d_1\), \(d_2\), fitting into the following commutative diagram.
\[\begin{tikzpicture}[baseline=(m-4-1.base)]
  \matrix (m) [diagram without objects]{
    & & & \\
    & & & \\
    & & & \\
    & & & \\};
  \path[->, font=\scriptsize]
    (m-1-2) edge node[above] {\(f_1' g_2'\)} (m-1-3)
            edge[den] node[above] {\(q' b_1\)} (m-1-1)
    (m-1-4) edge[sden] node[above] {\(i_2 j'\)} (m-1-3)
    (m-2-1) edge[equality] (m-3-1)
            edge[equality] (m-1-1)
    (m-2-2) edge[exists] node[above] {\(h_1\)} (m-2-3)
            edge[exists, den] node[right] {\(d_1\)} (m-3-2)
            edge[exists, den] node[above] {\(v_1\)} (m-2-1)
            edge[exists, tden] node[right] {\(r_1\)} (m-1-2)
    (m-2-3) edge[exists, den] node[right] {\(d_2\)} (m-3-3)
            edge[exists, tden] node[right] {\(r_2\)} (m-1-3)
    (m-2-4) edge[equality] (m-3-4)
            edge[exists, den] node[above] {\(u_1\)} (m-2-3)
            edge[equality] (m-1-4)
    (m-3-2) edge[exists] node[above] {\(h_2\)} (m-3-3)
            edge[exists, den] node[above] {\(v_2\)} (m-3-1)
    (m-3-4) edge[exists, den] node[above] {\(u_2\)} (m-3-3)
    (m-4-1) edge[equality] (m-3-1)
    (m-4-2) edge node[above] {\(g_1' f_2'\)} (m-4-3)
            edge[tden] node[above] {\(\tilde q' p_1\)} (m-4-1)
            edge[exists, sden] node[right] {\(k_1\)} (m-3-2)
    (m-4-3) edge[exists, sden] node[right] {\(k_2\)} (m-3-3)
    (m-4-4) edge[den] node[above] {\(a_2 \tilde j'\)} (m-4-3)
            edge[equality] (m-3-4);
\end{tikzpicture}\]
Altogether, the following diagram commutes.
\[\begin{tikzpicture}[baseline=(m-8-1.base)]
  \matrix (m) [diagram without objects]{
    & & & \\
    & & & \\
    & & & \\
    & & & \\
    & & & \\
    & & & \\
    & & & \\
    & & & \\};
  \path[->, font=\scriptsize]
    (m-1-2) edge node[above] {\(f_1\)} (m-1-3)
            edge[den] node[above] {\(b_1\)} (m-1-1)
    (m-1-4) edge[den] node[above] {\(a_1\)} (m-1-3)
    (m-2-1) edge[equality] (m-3-1)
            edge[equality] (m-1-1)
    (m-2-2) edge node[above] {\(f_1'\)} (m-2-3)
            edge[equality] (m-3-2)
            edge[den] node[above] {\(q' b_1\)} (m-2-1)
            edge[tden] node[right] {\(q'\)} (m-1-2)
    (m-2-3) edge node[right] {\(g_2'\)} (m-3-3)
            edge[tden] node[right] {\(q\)} (m-1-3)
    (m-2-4) edge node[right] {\(g_2\)} (m-3-4)
            edge[sden] node[above] {\(j\)} (m-2-3)
            edge[tden] node[right] {\(p_2\)} (m-1-4)
    (m-3-2) edge node[above] {\(f_1' g_2'\)} (m-3-3)
            edge[den] node[above] {\(q' b_1\)} (m-3-1)
    (m-3-4) edge[sden] node[above] {\(j'\)} (m-3-3)
    (m-4-1) edge[equality] (m-5-1)
            edge[equality] (m-3-1)
    (m-4-2) edge node[above] {\(h_1\)} (m-4-3)
            edge[den] node[right] {\(d_1\)} (m-5-2)
            edge[den] node[above] {\(v_1\)} (m-4-1)
            edge[tden] node[right] {\(r_1\)} (m-3-2)
    (m-4-3) edge[den] node[right] {\(d_2\)} (m-5-3)
            edge[tden] node[right] {\(r_2\)} (m-3-3)
    (m-4-4) edge[equality] (m-5-4)
            edge[den] node[above] {\(u_1\)} (m-4-3)
            edge[sden] node[right] {\(i_2\)} (m-3-4)
    (m-5-2) edge node[above] {\(h_2\)} (m-5-3)
            edge[den] node[above] {\(v_2\)} (m-5-1)
    (m-5-4) edge[den] node[above] {\(u_2\)} (m-5-3)
    (m-6-1) edge node[right] {\(g_1\)} (m-7-1)
            edge[tden] node[right] {\(p_1\)} (m-5-1)
    (m-6-2) edge node[above] {\(g_1' f_2'\)} (m-6-3)
            edge node[right] {\(g_1'\)} (m-7-2)
            edge[tden] node[above] {\(\tilde q'\)} (m-6-1)
            edge[sden] node[right] {\(k_1\)} (m-5-2)
    (m-6-3) edge[equality] (m-7-3)
            edge[sden] node[right] {\(k_2\)} (m-5-3)
    (m-6-4) edge[equality] (m-7-4)
            edge[den] node[above] {\(a_2 \tilde j'\)} (m-6-3)
            edge[equality] (m-5-4)
    (m-7-2) edge node[above] {\(f_2'\)} (m-7-3)
            edge[tden] node[above] {\(\tilde q\)} (m-7-1)
    (m-7-4) edge[den] node[above] {\(a_2 \tilde j'\)} (m-7-3)
    (m-8-1) edge[sden] node[right] {\(i_1\)} (m-7-1)
    (m-8-2) edge node[above] {\(f_2\)} (m-8-3)
            edge[den] node[above] {\(b_2\)} (m-8-1)
            edge[sden] node[right] {\(\tilde j\)} (m-7-2)
    (m-8-3) edge[sden] node[right] {\(\tilde j'\)} (m-7-3)
    (m-8-4) edge[den] node[above] {\(a_2\)} (m-8-3)
            edge[equality] (m-7-4);
\end{tikzpicture}\]
Applying the flipping lemma~\ref{lem:flipping_lemma} twice and composing yields the assertion:
\[\begin{tikzpicture}[baseline=(m-8-1.base)]
  \matrix (m) [diagram without objects=1.25em]{
    & & & \\
    & & & \\
    & & & \\
    & & & \\
    & & & \\
    & & & \\
    & & & \\
    & & & \\};
  \path[->, font=\scriptsize]
    (m-1-2) edge node[above] {\(f_1\)} (m-1-3)
            edge[den] node[above] {\(b_1\)} (m-1-1)
    (m-1-4) edge[den] node[above] {\(a_1\)} (m-1-3)
    (m-2-1) edge[equality, thick] (m-3-1)
            edge[equality] (m-1-1)
    (m-2-2) edge[thick] (m-2-3)
            edge[equality, thick] (m-3-2)
            edge[den, thick] (m-2-1)
            edge[tden] (m-1-2)
    (m-2-3) edge[thick] (m-3-3)
            edge[tden] (m-1-3)
    (m-2-4) edge[thick] node[right] {\(g_2\)} (m-3-4)
            edge[sden, thick] (m-2-3)
            edge[tden] node[right] {\(p_2\)} (m-1-4)
    (m-3-2) edge[thick] (m-3-3)
            edge[den, thick] (m-3-1)
    (m-3-4) edge[sden, thick] (m-3-3)
    (m-4-1) edge[equality, thick] (m-5-1)
            edge[equality, thick] (m-3-1)
    (m-4-2) edge[thick] (m-4-3)
            edge[den, thick] (m-5-2)
            edge[den, thick] (m-4-1)
            edge[tden, thick] (m-3-2)
    (m-4-3) edge[den, thick] (m-5-3)
            edge[tden, thick] (m-3-3)
    (m-4-4) edge[equality, thick] (m-5-4)
            edge[den, thick] (m-4-3)
            edge[sden, thick] node[right] {\(i_2\)} (m-3-4)
    (m-5-2) edge[thick] (m-5-3)
            edge[den, thick] (m-5-1)
    (m-5-4) edge[den, thick] (m-5-3)
    (m-6-1) edge node[left] {\(g_1\)} (m-7-1)
            edge[tden] node[left] {\(p_1\)} (m-5-1)
    (m-6-2) edge (m-6-3)
            edge (m-7-2)
            edge[tden] (m-6-1)
            edge[sden] (m-5-2)
    (m-6-3) edge[equality] (m-7-3)
            edge[sden] (m-5-3)
    (m-6-4) edge[equality] (m-7-4)
            edge[den] (m-6-3)
            edge[equality] (m-5-4)
    (m-7-2) edge (m-7-3)
            edge[tden] (m-7-1)
    (m-7-4) edge[den] (m-7-3)
    (m-8-1) edge[sden] node[left] {\(i_1\)} (m-7-1)
    (m-8-2) edge node[below] {\(f_2\)} (m-8-3)
            edge[den] node[below] {\(b_2\)} (m-8-1)
            edge[sden] (m-7-2)
    (m-8-3) edge[sden] (m-7-3)
    (m-8-4) edge[den] node[below] {\(a_2\)} (m-8-3)
            edge[equality] (m-7-4);
\end{tikzpicture}
\begin{tikzpicture}[baseline=(m-8-1.base)]
  \matrix (m) [diagram without objects=1.25em]{
    & & & \\
    & & & \\
    & & & \\
    & & & \\
    & & & \\
    & & & \\
    & & & \\
    & & & \\};
  \path[->, font=\scriptsize]
    (m-1-2) edge[thick] node[above] {\(f_1\)} (m-1-3)
            edge[den, thick] node[above] {\(b_1\)} (m-1-1)
    (m-1-4) edge[den, thick] node[above] {\(a_1\)} (m-1-3)
    (m-2-1) edge[equality, thick] (m-1-1)
    (m-2-2) edge[thick] (m-2-3)
            edge[den, thick] (m-2-1)
            edge[tden, thick] (m-1-2)
    (m-2-3) edge[tden, thick] (m-1-3)
    (m-2-4) edge[sden, thick] (m-2-3)
            edge[tden, thick] node[right] {\(p_2\)} (m-1-4)
    (m-3-1) edge[equality] (m-4-1)
            edge[equality, thick] (m-2-1)
    (m-3-2) edge[thick] (m-3-3)
            edge (m-4-2)
            edge[den, thick] (m-3-1)
            edge[tden, thick] (m-2-2)
    (m-3-3) edge (m-4-3)
            edge[tden, thick] (m-2-3)
    (m-3-4) edge[den, thick] (m-3-3)
            edge node[right] {\(g_2\)} (m-4-4)
            edge[equality, thick] (m-2-4)
    (m-4-2) edge[thick] (m-4-3)
            edge[den, thick] (m-4-1)
    (m-4-4) edge[den, thick] (m-4-3)
    (m-5-1) edge[equality, thick] (m-4-1)
    (m-5-2) edge[thick] (m-5-3)
            edge[den, thick] (m-5-1)
            edge[sden, thick] (m-4-2)
    (m-5-3) edge[sden, thick] (m-4-3)
    (m-5-4) edge[den, thick] (m-5-3)
            edge[sden, thick] node[right] {\(i_2\)} (m-4-4)
    (m-6-1) edge node[left] {\(g_1\)} (m-7-1)
            edge[tden, thick] node[left] {\(p_1\)} (m-5-1)
    (m-6-2) edge[thick] (m-6-3)
            edge (m-7-2)
            edge[tden, thick] (m-6-1)
            edge[sden, thick] (m-5-2)
    (m-6-3) edge[equality] (m-7-3)
            edge[sden, thick] (m-5-3)
    (m-6-4) edge[equality] (m-7-4)
            edge[den, thick] (m-6-3)
            edge[equality, thick] (m-5-4)
    (m-7-2) edge (m-7-3)
            edge[tden] (m-7-1)
    (m-7-4) edge[den] (m-7-3)
    (m-8-1) edge[sden] node[left] {\(i_1\)} (m-7-1)
    (m-8-2) edge node[below] {\(f_2\)} (m-8-3)
            edge[den] node[below] {\(b_2\)} (m-8-1)
            edge[sden] (m-7-2)
    (m-8-3) edge[sden] (m-7-3)
    (m-8-4) edge[den] node[below] {\(a_2\)} (m-8-3)
            edge[equality] (m-7-4);
\end{tikzpicture}
\begin{tikzpicture}[baseline=(m-8-1.base)]
  \matrix (m) [diagram without objects=1.25em]{
    & & & \\
    & & & \\
    & & & \\
    & & & \\
    & & & \\
    & & & \\
    & & & \\
    & & & \\};
  \path[->, font=\scriptsize]
    (m-2-2) edge node[above] {\(f_1\)} (m-2-3)
            edge[den] node[above] {\(b_1\)} (m-2-1)
    (m-2-4) edge[den] node[above] {\(a_1\)} (m-2-3)
    (m-3-1) edge[equality, thick] (m-4-1)
            edge[equality] (m-2-1)
    (m-3-2) edge[thick] (m-3-3)
            edge[thick] (m-4-2)
            edge[den, thick] (m-3-1)
            edge[tden] (m-2-2)
    (m-3-3) edge[thick] (m-4-3)
            edge[tden] (m-2-3)
    (m-3-4) edge[den, thick] (m-3-3)
            edge[thick] node[right] {\(g_2\)} (m-4-4)
            edge[tden] node[right] {\(p_2\)} (m-2-4)
    (m-4-2) edge[thick] (m-4-3)
            edge[den, thick] (m-4-1)
    (m-4-4) edge[den, thick] (m-4-3)
    (m-5-1) edge[thick] node[left] {\(g_1\)} (m-6-1)
            edge[tden, thick] node[left] {\(p_1\)} (m-4-1)
    (m-5-2) edge[thick] (m-5-3)
            edge[thick] (m-6-2)
            edge[tden, thick] (m-5-1)
            edge[sden, thick] (m-4-2)
    (m-5-3) edge[equality, thick] (m-6-3)
            edge[sden, thick] (m-4-3)
    (m-5-4) edge[den, thick] (m-5-3)
            edge[equality, thick] (m-6-4)
            edge[sden, thick] node[right] {\(i_2\)} (m-4-4)
    (m-6-2) edge[thick] (m-6-3)
            edge[tden, thick] (m-6-1)
    (m-6-4) edge[den, thick] (m-6-3)
    (m-7-1) edge[sden] node[left] {\(i_1\)} (m-6-1)
    (m-7-2) edge node[below] {\(f_2\)} (m-7-3)
            edge[den] node[below] {\(b_2\)} (m-7-1)
            edge[sden] (m-6-2)
    (m-7-3) edge[sden] (m-6-3)
    (m-7-4) edge[den] node[below] {\(a_2\)} (m-7-3)
            edge[equality] (m-6-4);
\end{tikzpicture}
\begin{tikzpicture}[baseline=(m-8-1.base)]
  \matrix (m) [diagram without objects=1.25em]{
    & & & \\
    & & & \\
    & & & \\
    & & & \\
    & & & \\
    & & & \\
    & & & \\
    & & & \\};
  \path[->, font=\scriptsize]
    (m-2-2) edge[thick] node[above] {\(f_1\)} (m-2-3)
            edge[den, thick] node[above] {\(b_1\)} (m-2-1)
    (m-2-4) edge[den, thick] node[above] {\(a_1\)} (m-2-3)
    (m-3-1) edge[equality, thick] (m-2-1)
    (m-3-2) edge[thick] (m-3-3)
            edge[den, thick] (m-3-1)
            edge[tden, thick] (m-2-2)
    (m-3-3) edge[tden, thick] (m-2-3)
    (m-3-4) edge[den, thick] (m-3-3)
            edge[tden, thick] node[right] {\(p_2\)} (m-2-4)
    (m-4-1) edge node[left] {\(g_1\)} (m-5-1)
            edge[tden, thick] node[left] {\(p_1\)} (m-3-1)
    (m-4-2) edge[thick] (m-4-3)
            edge (m-5-2)
            edge[den, thick] (m-4-1)
            edge[tden, thick] (m-3-2)
    (m-4-3) edge (m-5-3)
            edge[tden, thick] (m-3-3)
    (m-4-4) edge node[right] {\(g_2\)} (m-5-4)
            edge[den, thick] (m-4-3)
            edge[equality, thick] (m-3-4)
    (m-5-2) edge[thick] (m-5-3)
            edge[den, thick] (m-5-1)
    (m-5-4) edge[den, thick] (m-5-3)
    (m-6-1) edge[equality, thick] (m-5-1)
    (m-6-2) edge[thick] (m-6-3)
            edge[tden, thick] (m-6-1)
            edge[sden, thick] (m-5-2)
    (m-6-3) edge[sden, thick] (m-5-3)
    (m-6-4) edge[den, thick] (m-6-3)
            edge[sden, thick] node[right] {\(i_2\)} (m-5-4)
    (m-7-1) edge[sden, thick] node[left] {\(i_1\)} (m-6-1)
    (m-7-2) edge node[below] {\(f_2\)} (m-7-3)
            edge[den, thick] node[below] {\(b_2\)} (m-7-1)
            edge[sden, thick] (m-6-2)
    (m-7-3) edge[sden, thick] (m-6-3)
    (m-7-4) edge[den, thick] node[below] {\(a_2\)} (m-7-3)
            edge[equality, thick] (m-6-4);
\end{tikzpicture}
\begin{tikzpicture}[baseline=(m-8-1.base)]
  \matrix (m) [diagram without objects=1.25em]{
    & & & \\
    & & & \\
    & & & \\
    & & & \\
    & & & \\
    & & & \\
    & & & \\
    & & & \\};
  \path[->, font=\scriptsize]
    (m-3-2) edge node[above] {\(f_1\)} (m-3-3)
            edge[den] node[above] {\(b_1\)} (m-3-1)
    (m-3-4) edge[den] node[above] {\(a_1\)} (m-3-3)
    (m-4-1) edge node[left] {\(g_1\)} (m-5-1)
            edge[tden] node[left] {\(p_1\)} (m-3-1)
    (m-4-2) edge (m-4-3)
            edge (m-5-2)
            edge[den] (m-4-1)
            edge[tden] (m-3-2)
    (m-4-3) edge (m-5-3)
            edge[tden] (m-3-3)
    (m-4-4) edge node[right] {\(g_2\)} (m-5-4)
            edge[den] (m-4-3)
            edge[tden] node[right] {\(p_2\)} (m-3-4)
    (m-5-2) edge (m-5-3)
            edge[den] (m-5-1)
    (m-5-4) edge[den] (m-5-3)
    (m-6-1) edge[sden] node[left] {\(i_1\)} (m-5-1)
    (m-6-2) edge node[below] {\(f_2\)} (m-6-3)
            edge[den] node[below] {\(b_2\)} (m-6-1)
            edge[sden] (m-5-2)
    (m-6-3) edge[sden] (m-5-3)
    (m-6-4) edge[den] node[below] {\(a_2\)} (m-6-3)
            edge[sden] node[right] {\(i_2\)} (m-5-4);
\end{tikzpicture} \qedhere\]
\end{enumerate}
\end{proof}

Altogether, we have proven the following main theorem of this article.

\begin{theorem} \label{th:description_of_the_fraction_category}
The fraction category \(\FractionCategory \mathcal{C}\) of a uni-fractionable category \(\mathcal{C}\) (see definition~\ref{def:uni-fractionable_categories_and_their_morphisms}\ref{def:uni-fractionable_categories_and_their_morphisms:uni-fractionable_category}) fulfills the following properties.
\begin{enumerate}
\item \label{th:description_of_the_fraction_category:category} The object set of \(\FractionCategory \mathcal{C}\) is the object set of \(\mathcal{C}\). The morphism set of \(\FractionCategory \mathcal{C}\) consists of double fractions, that is, equivalence classes of \(3\)-arrows with respect to fraction equality, where a \(3\)-arrow \((b, f, a)\) is a diagram
\[\begin{tikzpicture}[baseline=(m-1-1.base)]
  \matrix (m) [diagram without objects]{
    & & & \\};
  \path[->, font=\scriptsize]
    (m-1-2) edge node[above] {\(f\)} (m-1-3)
            edge[den] node[above] {\(b\)} (m-1-1)
    (m-1-4) edge[den] node[above] {\(a\)} (m-1-3);
\end{tikzpicture}\]
in \(\mathcal{C}\) with denominators \(a\) and \(b\). For every \(3\)-arrow \((b, f, a)\) in \(\mathcal{C}\), source and target of the double fraction \(\doublefrac{b}{f}{a}\) are given by \(\Source{\doublefrac{b}{f}{a}} = \Target b\) and \(\Target{\doublefrac{b}{f}{a}} = \Source a\). Given \(3\)-arrows \((b_1, f_1, a_1)\) and \((b_2, f_2, a_2)\) in \(\mathcal{C}\) with \(\Target{\doublefrac{b_1}{f_1}{a_1}} = \Source{\doublefrac{b_2}{f_2}{a_2}}\), the composite of the double fractions can be constructed as follows: One chooses denominators \(d\), \(d'\), \(e\), \(e'\) and morphisms \(g_1\), \(g_2\) in \(\mathcal{C}\) with \(b_2 a_1 = d e\), \(g_1 e = e' f_1\), \(d g_2 = f_2 d'\). Then \((\doublefrac{b_1}{f_1}{a_1}) (\doublefrac{b_2}{f_2}{a_2}) = \doublefrac{e' b_1}{g_1 g_2}{a_2 d'}\).
\[\begin{tikzpicture}[baseline=(m-3-1.base)]
  \matrix (m) [diagram without objects=0.9em]{
    & & & & & & & & & & \\
    & & & & & & & & & & \\
    & & & & & & & & & & \\};
  \path[->, font=\scriptsize]
    (m-1-3) edge node[above] {\(g_1\)} (m-1-6)
            edge[den] node[left] {\(e'\)} (m-2-2)
    (m-1-6) edge node[above] {\(g_2\)} (m-1-9)
            edge[den] node[left] {\(e\)} (m-2-5)
    (m-2-2) edge node[above] {\(f_1\)} (m-2-5)
            edge[den] node[left] {\(b_1\)} (m-3-1)
    (m-2-7) edge node[above] {\(f_2\)} (m-2-10)
            edge[den] node[right] {\(b_2\)} (m-3-6)
            edge[den] node[right] {\(d\)} (m-1-6)
    (m-2-10) edge[den] node[right] {\(d'\)} (m-1-9)
    (m-3-6) edge[den] node[left] {\(a_1\)} (m-2-5)
    (m-3-11) edge[den] node[right] {\(a_2\)} (m-2-10);
\end{tikzpicture}\]
The identity of an object \(X\) in \(\FractionCategory \mathcal{C}\) is given by \(1_X = \doublefrac{1_X}{1_X}{1_X}\).
\item \label{th:description_of_the_fraction_category:localisation} The fraction category \(\FractionCategory \mathcal{C}\) is a localisation of \(\mathcal{C}\), where the localisation functor \(\LocalisationFunctor\colon \mathcal{C} \map \FractionCategory \mathcal{C}\) is given on the objects by \(\LocalisationFunctor(X) = X\) for \(X \in \Ob \mathcal{C}\) and on the morphisms by \(\LocalisationFunctor(f) = \doublefrac{1}{f}{1}\) for \(f \in \Mor \mathcal{C}\). The inverse of \(\LocalisationFunctor(d)\) for \(d \in \Denominators \mathcal{C}\) is given by \((\LocalisationFunctor(d))^{- 1} = \doublefrac{d}{1}{1} = \doublefrac{1}{1}{d}\).

Given a functor \(F\colon \mathcal{C} \map \mathcal{D}\) such that \(F d\) is invertible for all \(d \in \Denominators \mathcal{C}\), the unique functor \(\hat F\colon \FractionCategory \mathcal{C} \map \mathcal{D}\) with \(F = \hat F \comp \LocalisationFunctor\) is given by \(\hat F(\doublefrac{b}{f}{a}) = (F b)^{- 1} (F f) (F a)^{- 1}\).

Given functors \(F, G\colon \mathcal{C} \map \mathcal{D}\) such that \(F d\) and \(G d\) are invertible for all \(d \in \Denominators \mathcal{C}\), and given a transformation \(\alpha\colon F \map G\), the unique transformation \(\hat \alpha\colon \hat F \map \hat G\) with \(\alpha_X = \hat \alpha_{\LocalisationFunctor(X)}\) for \(X \in \Ob \mathcal{C}\) is given by \(\hat \alpha_X = \alpha_X\) for \(X \in \Ob \FractionCategory \mathcal{C} = \Ob \mathcal{C}\).
\item \label{th:description_of_the_fraction_category:3-arrow_calculus} Given \(3\)-arrows \((b_1, f_1, a_1)\), \((b_2, f_2, a_2)\) and normal \(3\)-arrows \((p_1, g_1, i_1)\), \((p_2, g_2, i_2)\) in \(\mathcal{C}\), we have
\[(\doublefrac{b_1}{f_1}{a_1}) (\doublefrac{p_2}{g_2}{i_2}) = (\doublefrac{p_1}{g_1}{i_1}) (\doublefrac{b_2}{f_2}{a_2})\]
if and only if there exist \(3\)-arrows \((\tilde b_1, \tilde f_1, \tilde a_1)\), \((\tilde b_2, \tilde f_2, \tilde a_2)\) and normal \(3\)-arrows \((\tilde p_1, \tilde g_1, \tilde i_1)\), \((\tilde p_2, \tilde g_2, \tilde i_2)\) in \(\mathcal{C}\), fitting into the following commutative diagram in \(\mathcal{C}\).
\[\begin{tikzpicture}[baseline=(m-4-1.base)]
  \matrix (m) [diagram without objects]{
    & & & \\
    & & & \\
    & & & \\
    & & & \\};
  \path[->, font=\scriptsize]
    (m-1-2) edge node[above] {\(f_1\)} (m-1-3)
            edge[den] node[above] {\(b_1\)} (m-1-1)
    (m-1-4) edge[den] node[above] {\(a_1\)} (m-1-3)
    (m-2-1) edge node[right] {\(g_1\)} (m-3-1)
            edge[tden] node[right] {\(p_1\)} (m-1-1)
    (m-2-2) edge[exists] node[above] {\(\tilde f_1\)} (m-2-3)
            edge[exists] node[right] {\(\tilde g_1\)} (m-3-2)
            edge[exists, den] node[above] {\(\tilde b_1\)} (m-2-1)
            edge[exists, tden] node[right] {\(\tilde p_1\)} (m-1-2)
    (m-2-3) edge[exists] node[right] {\(\tilde g_2\)} (m-3-3)
            edge[exists, tden] node[right] {\(\tilde p_2\)} (m-1-3)
    (m-2-4) edge node[right] {\(g_2\)} (m-3-4)
            edge[exists, den] node[above] {\(\tilde a_1\)} (m-2-3)
            edge[tden] node[right] {\(p_2\)} (m-1-4)
    (m-3-2) edge[exists] node[above] {\(\tilde f_2\)} (m-3-3)
            edge[exists, den] node[above] {\(\tilde b_2\)} (m-3-1)
    (m-3-4) edge[exists, den] node[above] {\(\tilde a_2\)} (m-3-3)
    (m-4-1) edge[sden] node[right] {\(i_1\)} (m-3-1)
    (m-4-2) edge node[above] {\(f_2\)} (m-4-3)
            edge[den] node[above] {\(b_2\)} (m-4-1)
            edge[exists, sden] node[right] {\(\tilde i_1\)} (m-3-2)
    (m-4-3) edge[exists, sden] node[right] {\(\tilde i_2\)} (m-3-3)
    (m-4-4) edge[den] node[above] {\(a_2\)} (m-4-3)
            edge[sden] node[right] {\(i_2\)} (m-3-4);
\end{tikzpicture}\]
\end{enumerate}
\end{theorem}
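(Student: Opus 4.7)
The plan is to observe that this main theorem is a compilation of results already proved earlier in this section; nothing substantively new needs to be shown, and the proof will consist of citing and assembling the relevant propositions in the correct order. No genuine obstacle is anticipated.

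For part~\ref{th:description_of_the_fraction_category:category}, the underlying graph of $\FractionCategory \mathcal{C}$, its source and target maps, and the identity formula $1_X = \doublefrac{1_X}{1_X}{1_X}$ are given by Definition~\ref{def:fraction_category} together with Proposition~\ref{prop:welldefinedness_of_the_fraction_category}. The only subtlety is that the composition law in part (a) is phrased in terms of \emph{arbitrary} denominators $d$, $d'$, $e$, $e'$ with $b_2 a_1 = de$, $g_1 e = e' f_1$, $d g_2 = f_2 d'$, while the composition constructed in Proposition~\ref{prop:welldefinedness_of_the_fraction_category} uses a specific factorisation via an S-denominator and a T-denominator together with Ore completions along S- and T-denominators. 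I would bridge this gap by invoking Proposition~\ref{prop:flexibility_of_composites_and_inverses_in_the_fraction_category}\ref{prop:flexibility_of_composites_and_inverses_in_the_fraction_category:composition}, which shows exactly that the composite may be computed from any such commutative diagram of denominators, not just from one arising from the weakly universal Ore completions.

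For part~\ref{th:description_of_the_fraction_category:localisation}, the statement coincides verbatim with the conclusion of Proposition~\ref{prop:universal_property_of_the_fraction_category}: the fraction category is a localisation, the localisation functor sends $f$ to $\doublefrac{1}{f}{1}$, the inverse of $\LocalisationFunctor(d)$ for $d \in \Denominators \mathcal{C}$ is given by $\doublefrac{d}{1}{1} = \doublefrac{1}{1}{d}$, and both the induced functor $\hat F$ and the induced transformation $\hat \alpha$ take the stated forms. Hence this part requires no further argument beyond transcription.

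For part~\ref{th:description_of_the_fraction_category:3-arrow_calculus}, this is exactly the content of Proposition~\ref{prop:3-arrow_calculus}\ref{prop:3-arrow_calculus:commutative_diagrams}. The real work underlying this item has already been done: the ``if'' direction follows immediately from Proposition~\ref{prop:flexibility_of_composites_and_inverses_in_the_fraction_category}\ref{prop:flexibility_of_composites_and_inverses_in_the_fraction_category:composition}, while the ``only if'' direction reduces, via the characterisation of fraction equality as a chain of elementary moves from Remark~\ref{rem:other_generating_sets_for_the_fraction_equality_relation_on_the_3-arrow_graph}, to iterated application of the flipping lemma~\ref{lem:flipping_lemma}. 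Thus the theorem is complete upon citing these three propositions in sequence, with the only delicate step being the cosmetic adjustment in part (a) mentioned above.
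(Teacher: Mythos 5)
Your proposal is correct and matches the paper's own proof, which likewise assembles the theorem by citing proposition~\ref{prop:welldefinedness_of_the_fraction_category} together with proposition~\ref{prop:flexibility_of_composites_and_inverses_in_the_fraction_category}\ref{prop:flexibility_of_composites_and_inverses_in_the_fraction_category:composition} for part~\ref{th:description_of_the_fraction_category:category}, proposition~\ref{prop:universal_property_of_the_fraction_category} for part~\ref{th:description_of_the_fraction_category:localisation}, and proposition~\ref{prop:3-arrow_calculus}\ref{prop:3-arrow_calculus:commutative_diagrams} for part~\ref{th:description_of_the_fraction_category:3-arrow_calculus}. Your observation that the flexibility proposition is what bridges the gap between the Ore-completion construction and the arbitrary-denominator formulation of the composition is exactly the point the paper's citation of that proposition is meant to cover.
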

\begin{proof}
This follows from propositions~\ref{prop:welldefinedness_of_the_fraction_category} and~\ref{prop:flexibility_of_composites_and_inverses_in_the_fraction_category}\ref{prop:flexibility_of_composites_and_inverses_in_the_fraction_category:composition}, proposition~\ref{prop:universal_property_of_the_fraction_category} and proposition~\ref{prop:3-arrow_calculus}\ref{prop:3-arrow_calculus:commutative_diagrams}.
\end{proof}

As a consequence of \(3\)-arrow calculus, we get the following criterion. For a related 2-arrow version of this result, cf.~\cite[ch.~1, \S 2, th.~4-2]{verdier:1963:categories_derivees} and~\cite[III.2.10]{gelfand_manin:2003:methods_of_homological_algebra}.

\begin{proposition} \label{prop:full_uni-fractionable_subcategory_induces_equivalence_of_the_fraction_categories}
We suppose given a uni-fractionable category \(\mathcal{C}\) and a category with denominators \(\mathcal{U}\) such that \(\mathcal{U}\) is a full subcategory of \(\mathcal{C}\) and \(\Denominators \mathcal{U} = (\Denominators \mathcal{C}) \intersection (\Mor \mathcal{U})\). Moreover, we suppose that \(\mathcal{U}\) fulfills one of the following two dual conditions.
\begin{enumerate}
\item \label{prop:full_uni-fractionable_subcategory_induces_equivalence_of_the_fraction_categories:s-resolution} For every object \(X\) in \(\mathcal{C}\), there exist an object \(\tilde X\) in \(\mathcal{U}\) and a denominator \(d\colon \tilde X \map X\) in \(\mathcal{C}\). Moreover, for every S-denominator \(i\colon U \map \tilde U\) with \(U\) in \(\mathcal{U}\), it follows that \(\tilde U\) is in \(\mathcal{U}\).
\item \label{prop:full_uni-fractionable_subcategory_induces_equivalence_of_the_fraction_categories:t-resolution} For every object \(X\) in \(\mathcal{C}\), there exist an object \(\tilde X\) in \(\mathcal{U}\) and a denominator \(d\colon X \map \tilde X\) in \(\mathcal{C}\). Moreover, for every T-denominator \(p\colon \tilde U \map U\) with \(U\) in \(\mathcal{U}\), it follows that \(\tilde U\) is in \(\mathcal{U}\).
\end{enumerate}
Then the inclusion functor \(\inc\colon \mathcal{U} \map \mathcal{C}\) induces an equivalence \({\FractionCategory \inc}\colon \FractionCategory \mathcal{U} \map \FractionCategory \mathcal{C}\).
\end{proposition}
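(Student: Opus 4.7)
I will assume hypothesis (a); hypothesis (b) is handled by the dual argument, swapping the roles of S- and T-denominators and of weak pushouts and weak pullbacks throughout. My plan is to proceed in four steps.

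First, I would equip \(\mathcal{U}\) with the induced uni-fractionable structure \(\SDenominators \mathcal{U} := \SDenominators \mathcal{C} \intersection \Mor \mathcal{U}\) and \(\TDenominators \mathcal{U} := \TDenominators \mathcal{C} \intersection \Mor \mathcal{U}\), so that \(\FractionCategory \mathcal{U}\) and \(\FractionCategory \inc\) become well-defined via proposition~\ref{prop:fraction_category_as_a_functor}. Semi-saturatedness is inherited directly from \(\mathcal{C}\) since \(\Denominators \mathcal{U} = \Denominators \mathcal{C} \intersection \Mor \mathcal{U}\). For (Fac), any factorisation \(d = i p\) of a denominator \(d\) of \(\mathcal{U}\) inside \(\mathcal{C}\) has its intermediate object forced into \(\mathcal{U}\) by the S-closure of (a) applied to \(i\). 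Weak pushout Ore completions along S-denominators present no difficulty: the pushout vertex in \(\mathcal{C}\) automatically lies in \(\mathcal{U}\) by S-closure, and fullness of \(\mathcal{U}\) transfers the weak universal property from \(\mathcal{C}\) to \(\mathcal{U}\). The delicate case is weak pullback Ore completions along T-denominators, where I would form the weak pullback in \(\mathcal{C}\), resolve its apex by a denominator from \(\mathcal{U}\) via (a), and use (Fac) and S-closure to replace this resolution by a T-denominator; verification of the weak lifting property requires one further application of (a) to each test object.

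Essential surjectivity is immediate: for \(X \in \Ob \mathcal{C}\), condition (a) provides a denominator \(\tilde X \map X\) with \(\tilde X \in \Ob \mathcal{U}\), which becomes an isomorphism under \(\LocalisationFunctor[\FractionCategory \mathcal{C}]\), so \(X \isomorphic (\FractionCategory \inc)(\tilde X)\) in \(\FractionCategory \mathcal{C}\). For fullness, given \(\varphi\colon U \map V\) in \(\FractionCategory \mathcal{C}\) with \(U, V \in \Ob \mathcal{U}\), I would use the normalisation lemma~\ref{lem:normalisation_lemma} to write \(\varphi = \doublefrac{p}{f}{i}\) with \(p\colon W \map U\) a T-denominator, \(f\colon W \map W'\), and \(i\colon V \map W'\) an S-denominator. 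Since \(V \in \Ob \mathcal{U}\) and \(i\) is an S-denominator, S-closure from (a) places \(W' \in \Ob \mathcal{U}\) automatically. To handle \(W\), apply (a) together with (Fac) and S-closure to obtain a T-denominator \(d\colon \tilde W \map W\) with \(\tilde W \in \Ob \mathcal{U}\). Then \((d p, d f, i)\) is a normal \(3\)-arrow in \(\mathcal{U}\), and corollary~\ref{cor:splitting_double_fractions} computes \(\doublefrac{d p}{d f}{i} = \doublefrac{p}{f}{i} = \varphi\) in \(\FractionCategory \mathcal{C}\), so \(\varphi = (\FractionCategory \inc)(\doublefrac{d p}{d f}{i})\).

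The main obstacle will be faithfulness. Given parallel morphisms \(\varphi_1, \varphi_2\colon U \map V\) in \(\FractionCategory \mathcal{U}\) that agree in \(\FractionCategory \mathcal{C}\), I would first use proposition~\ref{prop:morphisms_in_the_fraction_category_can_have_the_same_denominators}\ref{prop:morphisms_in_the_fraction_category_can_have_the_same_denominators:parallel} to represent them by normal \(3\)-arrows \((p_k, f_k, i_k)\) in \(\mathcal{U}\) sharing common outer denominators, and then invoke the equality criterion of proposition~\ref{prop:3-arrow_calculus}\ref{prop:3-arrow_calculus:fraction_equality} in \(\mathcal{C}\). This produces a commutative \(3 \times 3\) diagram witnessing \(\doublefrac{p_1}{f_1}{i_1} = \doublefrac{p_2}{f_2}{i_2}\), whose top and bottom rows are the given normal \(3\)-arrows, whose middle two rows are again normal \(3\)-arrows (by the addendum to that proposition), and whose vertical legs in the middle columns are normal \(3\)-arrows in \(\mathcal{C}\). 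The row-three objects automatically lie in \(\mathcal{U}\), since the vertical S-denominators connecting row four to row three have their sources in \(\mathcal{U}\) and S-closure from (a) propagates membership. The row-two objects, however, are only connected to row one by T-denominators and need not lie in \(\mathcal{U}\) a priori; I would transport them into \(\mathcal{U}\) by resolving each via a T-denominator from \(\mathcal{U}\) using (a) together with the factorisation/S-closure trick of the first step, and then iteratively reassembling the diagram via the flipping lemma~\ref{lem:flipping_lemma} so that the result lives entirely in \(\mathcal{U}\). Proposition~\ref{prop:3-arrow_calculus}\ref{prop:3-arrow_calculus:fraction_equality} applied inside \(\mathcal{U}\) then yields \(\varphi_1 = \varphi_2\) in \(\FractionCategory \mathcal{U}\). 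The hard part is ensuring that the various resolutions and flippings can be arranged so as to preserve the full \(3 \times 3\) commutativity simultaneously, which relies on the weakly universal Ore completions for \(\mathcal{U}\) established in the first step.
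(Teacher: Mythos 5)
There is a genuine gap, and it sits in the two places where your argument departs from what the hypotheses can deliver. First, your opening step claims that condition \ref{prop:full_uni-fractionable_subcategory_induces_equivalence_of_the_fraction_categories:s-resolution} makes \(\mathcal{U}\), with the induced sets \(\SDenominators \mathcal{C} \intersection \Mor \mathcal{U}\) and \(\TDenominators \mathcal{C} \intersection \Mor \mathcal{U}\), into a uni-fractionable category. The S-side Ore completions and (Fac) do go through as you say, but the T-side weak pullback completions do not: after forming the weak pullback in \(\mathcal{C}\) and replacing its apex \(X'\) by a \(\mathcal{U}\)-object \(W\) via a resolution and a factorisation, the weak universal property in \(\mathcal{U}\) demands, for a test object \(T \in \Ob \mathcal{U}\), a morphism from \(T\) itself into \(W\); the induced morphism \(u\colon T \map X'\) from the weak pullback in \(\mathcal{C}\) must then be lifted through the T-denominator \(W \map X'\), and a uni-fractionable category has no lifting axiom that produces such a lift. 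Resolving \(T\) by condition \ref{prop:full_uni-fractionable_subcategory_induces_equivalence_of_the_fraction_categories:s-resolution} changes the test object and does not help. (This is exactly the point where, in the Quillen model category application, the lifting axiom is invoked to show that \(\Cof(\mathcal{M})\) is uni-fractionable, cf.\ example~\ref{ex:model_categories_are_uni-fractionable_categories}.) The paper neither proves nor needs such an induced structure: it takes the uni-fractionable structure on \(\mathcal{U}\) (needed for \(\FractionCategory \mathcal{U}\) and for normal \(3\)-arrows in \(\mathcal{U}\)) as given and verified separately in the applications, and the proof of the proposition only uses fraction equality in \(\threearrowgraph \mathcal{U}\) plus normalisation, never weak Ore completions inside \(\mathcal{U}\).

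Second, your faithfulness step is built on a misreading of the \(4\)-row diagram and is left incomplete precisely where it need not be hard. In the diagram produced by proposition~\ref{prop:3-arrow_calculus}\ref{prop:3-arrow_calculus:fraction_equality} (with the normality addendum), the second row is itself a normal \(3\)-arrow \((\tilde p_1, \tilde f_1, \tilde i_1)\colon \threearrow{U}{X_1}{Y_1}{V}\), so its third object \(Y_1\) receives the S-denominator \(\tilde i_1\) from \(V \in \Ob \mathcal{U}\) and lies in \(\mathcal{U}\) by S-closure; together with the two third-row objects, the \emph{only} object possibly outside \(\mathcal{U}\) is \(X_1\). The paper fixes this by choosing one denominator \(d\colon \tilde X_1 \map X_1\) with \(\tilde X_1 \in \Ob \mathcal{U}\) and precomposing the second row and its outgoing verticals with \(d\), which preserves commutativity trivially and immediately gives \((p_1, f_1, i_1) \fractionequal (p_2, f_2, i_2)\) in \(\threearrowgraph \mathcal{U}\). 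Your plan instead resolves "each" second-row object and reassembles via the flipping lemma~\ref{lem:flipping_lemma} inside \(\mathcal{U}\), relying on the Ore completions of your first step; besides being unnecessary, this inherits the gap above, and you yourself leave open "the hard part" of keeping the whole diagram commutative under these resolutions. Replacing that machinery by the single precomposition closes the argument.
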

\begin{proof}
We suppose that \(\mathcal{U}\) fulfills~\ref{prop:full_uni-fractionable_subcategory_induces_equivalence_of_the_fraction_categories:s-resolution}, the other case follows by duality. To show that \(\FractionCategory \inc\) is an equivalence of categories, we will verify that \(\FractionCategory \inc\) is full, faithful and dense. Since for every \(X \in \Ob \mathcal{C}\) there exist \(\tilde X \in \Ob \mathcal{U}\) and a denominator \(d\colon \tilde X \map X\) in \(\mathcal{C}\), we have \(X \isomorphic \tilde X = (\FractionCategory \inc) \tilde X\) in \(\FractionCategory \mathcal{C}\). Hence \(\FractionCategory \inc\) is dense. To prove that \(\FractionCategory \inc\) is full and faithful, we have to show that the map
\[{_{\FractionCategory \mathcal{U}}}(U, V) \map {_{\FractionCategory \mathcal{C}}}(U, V), \varphi \mapsto (\FractionCategory \inc) \varphi\]
is bijective for \(U, V \in \Ob \mathcal{U}\).

To show surjectivity, we suppose given a morphism \(\psi \in {_{\FractionCategory \mathcal{C}}}(U, V)\) and a normal \(3\)-arrow \((p, f, i)\colon \threearrow{U}{X}{Y}{V}\) in \(\mathcal{C}\) with \(\psi = \doublefrac{p}{f}{i}\). Since \(i\) is an S-denominator and \(V\) is an object in \(\mathcal{U}\), it follows that \(Y\) is an object in \(\mathcal{U}\). Moreover, there exists an object \(\tilde X\) in \(\mathcal{U}\) and a denominator \(d\colon \tilde X \map X\).
\[\begin{tikzpicture}[baseline=(m-2-1.base)]
  \matrix (m) [diagram]{
    U & \tilde X & Y & V \\
    U & X & Y & V \\};
  \path[->, font=\scriptsize]
    (m-1-1) edge[equality] (m-2-1)
    (m-1-2) edge node[above] {\(d f\)} (m-1-3)
            edge[den] node[right] {\(d\)} (m-2-2)
            edge[den] node[above] {\(d p\)} (m-1-1)
    (m-1-3) edge[equality] (m-2-3)
    (m-1-4) edge[equality] (m-2-4)
            edge[den] node[above] {\(i\)} (m-1-3)
    (m-2-2) edge node[above] {\(f\)} (m-2-3)
            edge[tden] node[above] {\(p\)} (m-2-1)
    (m-2-4) edge[sden] node[above] {\(i\)} (m-2-3);
\end{tikzpicture}\]
It follows that \((p, f, i) \fractionequal (d p, d f, i)\), and as \((d p, d f, i)\) is a \(3\)-arrow in \(\mathcal{U}\), we have
\[\psi = \doublefrac{p}{f}{i} = \doublefrac{d p}{d f}{i} = (\FractionCategory \inc)(\doublefrac{d p}{d f}{i}).\]
Thus \({_{\FractionCategory \mathcal{U}}}(U, V) \map {_{\FractionCategory \mathcal{C}}}(U, V), \varphi \mapsto (\FractionCategory \inc) \varphi\) is surjective.

To show injectivity, we suppose given \(\varphi_1, \varphi_2 \in {_{\FractionCategory \mathcal{U}}}(U, V)\) with \((\FractionCategory \inc) \varphi_1 = (\FractionCategory \inc) \varphi_2\). We choose normal \(3\)-arrows \((p_1, f_1, i_1)\colon \threearrow{U}{U_1}{V_1}{V}\) and \((p_2, f_2, i_2)\colon \threearrow{U}{U_2}{V_2}{V}\) in \(\mathcal{U}\) with \(\varphi_1 = \doublefrac{p_1}{f_1}{i_1}\) and \(\varphi_2 = \doublefrac{p_2}{f_2}{i_2}\). By proposition~\ref{prop:3-arrow_calculus}\ref{prop:3-arrow_calculus:fraction_equality}, there exist normal \(3\)-arrows \((\tilde p_1, \tilde f_1, \tilde i_1)\colon \threearrow{U}{X_1}{Y_1}{V}\), \((\tilde p_2, \tilde f_2, \tilde i_2)\colon \threearrow{U}{X_2}{Y_2}{V}\), \((q_1, d_1, j_1)\colon \threearrow{U_1}{X_1}{X_2}{U_2}\), \((q_2, d_2, j_2)\colon \threearrow{V_1}{Y_1}{Y_2}{V_2}\) in \(\mathcal{C}\) with denominators \(d_1\), \(d_2\), fitting into a commutative diagram as follows.
\[\begin{tikzpicture}[baseline=(m-4-1.base)]
  \matrix (m) [diagram]{
    U & U_1 & V_1 & V \\
    U & X_1 & Y_1 & V \\
    U & X_2 & Y_2 & V \\
    U & U_2 & V_2 & V \\};
  \path[->, font=\scriptsize]
    (m-1-2) edge node[above] {\(f_1\)} (m-1-3)
            edge[tden] node[above] {\(p_1\)} (m-1-1)
    (m-1-4) edge[sden] node[above] {\(i_1\)} (m-1-3)
    (m-2-1) edge[equality] (m-3-1)
            edge[equality] (m-1-1)
    (m-2-2) edge[exists] node[above] {\(\tilde f_1\)} (m-2-3)
            edge[exists, den] node[right] {\(d_1\)} (m-3-2)
            edge[exists, tden] node[above] {\(\tilde p_1\)} (m-2-1)
            edge[exists, tden] node[right] {\(q_1\)} (m-1-2)
    (m-2-3) edge[exists, den] node[right] {\(d_2\)} (m-3-3)
            edge[exists, tden] node[right] {\(q_2\)} (m-1-3)
    (m-2-4) edge[equality] (m-3-4)
            edge[exists, sden] node[above] {\(\tilde i_1\)} (m-2-3)
            edge[equality] (m-1-4)
    (m-3-2) edge[exists] node[above] {\(\tilde f_2\)} (m-3-3)
            edge[exists, tden] node[above] {\(\tilde p_2\)} (m-3-1)
    (m-3-4) edge[exists, sden] node[above] {\(\tilde i_2\)} (m-3-3)
    (m-4-1) edge[equality] (m-3-1)
    (m-4-2) edge node[above] {\(f_2\)} (m-4-3)
            edge[tden] node[above] {\(p_2\)} (m-4-1)
            edge[exists, sden] node[right] {\(j_1\)} (m-3-2)
    (m-4-3) edge[exists, sden] node[right] {\(j_2\)} (m-3-3)
    (m-4-4) edge[sden] node[above] {\(i_2\)} (m-4-3)
            edge[equality] (m-3-4);
\end{tikzpicture}\]
Since \(\tilde i_1\) resp.\ \(j_1\) resp.\ \(j_2\) is an S-denominator and \(V\) resp.\ \(U_2\) resp.\ \(V_2\) is an object in \(\mathcal{U}\), it follows that \(Y_1\) resp.\ \(X_2\) resp.\ \(Y_2\) is an object in \(\mathcal{U}\). Moreover, there exists an object \(\tilde X_1\) in \(\mathcal{U}\) and a denominator \(d\colon \tilde X_1 \map X_1\) in \(\mathcal{C}\). Thus we obtain the following commutative diagram in which all objects -- and hence all morphisms -- are in \(\mathcal{U}\), and where \(d \tilde p_1\) is a denominator by multiplicativity.
\[\begin{tikzpicture}[baseline=(m-4-1.base)]
  \matrix (m) [diagram]{
    U & U_1 & V_1 & V \\
    U & \tilde X_1 & Y_1 & V \\
    U & X_2 & Y_2 & V \\
    U & U_2 & V_2 & V \\};
  \path[->, font=\scriptsize]
    (m-1-2) edge node[above] {\(f_1\)} (m-1-3)
            edge[tden] node[above] {\(p_1\)} (m-1-1)
    (m-1-4) edge[sden] node[above] {\(i_1\)} (m-1-3)
    (m-2-1) edge[equality] (m-3-1)
            edge[equality] (m-1-1)
    (m-2-2) edge node[above] {\(d \tilde f_1\)} (m-2-3)
            edge[den] node[right] {\(d d_1\)} (m-3-2)
            edge[den] node[above] {\(d \tilde p_1\)} (m-2-1)
            edge[den] node[right] {\(d q_1\)} (m-1-2)
    (m-2-3) edge[den] node[right] {\(d_2\)} (m-3-3)
            edge[tden] node[right] {\(q_2\)} (m-1-3)
    (m-2-4) edge[equality] (m-3-4)
            edge[sden] node[above] {\(\tilde i_1\)} (m-2-3)
            edge[equality] (m-1-4)
    (m-3-2) edge node[above] {\(\tilde f_2\)} (m-3-3)
            edge[tden] node[above] {\(\tilde p_2\)} (m-3-1)
    (m-3-4) edge[sden] node[above] {\(\tilde i_2\)} (m-3-3)
    (m-4-1) edge[equality] (m-3-1)
    (m-4-2) edge node[above] {\(f_2\)} (m-4-3)
            edge[tden] node[above] {\(p_2\)} (m-4-1)
            edge[sden] node[right] {\(j_1\)} (m-3-2)
    (m-4-3) edge[sden] node[right] {\(j_2\)} (m-3-3)
    (m-4-4) edge[sden] node[above] {\(i_2\)} (m-4-3)
            edge[equality] (m-3-4);
\end{tikzpicture}\]
But this implies
\[\varphi_1 = \doublefrac{p_1}{f_1}{i_1} = \doublefrac{p_2}{f_2}{i_2} = \varphi_2\]
in \(\FractionCategory \mathcal{U}\). Therefore the map \({_{\FractionCategory \mathcal{U}}}(U, V) \map {_{\FractionCategory \mathcal{C}}}(U, V), \varphi \mapsto (\FractionCategory \inc) \varphi\) is injective.
\end{proof}

\section{(Co)products and additive uni-fractionable categories} \label{sec:co_products_and_additive_uni-fractionable_categories}

Some of our examples of uni-fractionable categories in section~\ref{sec:applications} have finite coproducts or products or are even additive categories, so it is a natural question to ask whether these features are preserved when passing to the fraction category.

\begin{proposition} \label{prop:uni-fractionable_categories_admitting_finite_co_products}
We suppose given a uni-fractionable category \(\mathcal{C}\).
\begin{enumerate}
\item \label{prop:uni-fractionable_categories_admitting_finite_co_products:coproducts} We suppose that \(\mathcal{C}\) admits finite coproducts.
\begin{enumerate}
\item \label{prop:uni-fractionable_categories_admitting_finite_co_products:coproducts:denominators_closedness_implies_preservation_by_localisation} If \(\Denominators \mathcal{C}\) is closed under finite coproducts, then the fraction category \(\FractionCategory \mathcal{C}\) admits finite coproducts and the localisation functor \(\LocalisationFunctor\colon \mathcal{C} \map \FractionCategory \mathcal{C}\) preserves finite coproducts. In this case, we have \(\ini[\LocalisationFunctor(\initialobject)]_{\LocalisationFunctor(X)} = \LocalisationFunctor(\ini[\initialobject]_X)\colon \LocalisationFunctor(\initialobject) \map \LocalisationFunctor(X)\) for \(X \in \Ob \mathcal{C}\), and we have
\[\begin{smallpmatrix} \doublefrac{b_1}{f_1}{a} \\ \doublefrac{b_2}{f_2}{a} \end{smallpmatrix}^{\LocalisationFunctor(X_1 \smallcoprod X_2)} = \doublefrac{(b_1 \smallcoprod b_2)}{\begin{smallpmatrix} f_1 \\ f_2 \end{smallpmatrix}^{\tilde X_1 \smallcoprod \tilde X_2}}{a}\colon \LocalisationFunctor(X_1 \smallcoprod X_2) \map \LocalisationFunctor(Y)\]
for \(3\)-arrows \((b_1, f_1, a)\colon \threearrow{X_1}{\tilde X_1}{\tilde Y}{Y}\) and \((b_2, f_2, a)\colon \threearrow{X_2}{\tilde X_2}{\tilde Y}{Y}\) in \(\mathcal{C}\).
\item \label{prop:uni-fractionable_categories_admitting_finite_co_products:coproducts:in_saturated_case_the_implies_preservation_by_localisation_implies_denominator_closedness} If \(\mathcal{C}\) is saturated and the localisation functor \(\LocalisationFunctor\colon \mathcal{C} \map \FractionCategory \mathcal{C}\) preserves finite coproducts, then \(\Denominators \mathcal{C}\) is closed under finite coproducts.
\end{enumerate}
\item \label{prop:uni-fractionable_categories_admitting_finite_co_products:products} We suppose that \(\mathcal{C}\) admits finite products.
\begin{enumerate}
\item \label{prop:uni-fractionable_categories_admitting_finite_co_products:products:denominators_closedness_implies_preservation_by_localisation} If \(\Denominators \mathcal{C}\) is closed under finite products, then the fraction category \(\FractionCategory \mathcal{C}\) admits finite products and the localisation functor \(\LocalisationFunctor\colon \mathcal{C} \map \FractionCategory \mathcal{C}\) preserves finite products. In this case, we have \(\ter[\LocalisationFunctor(\terminalobject)]_{\LocalisationFunctor(X)} = \LocalisationFunctor(\ter[\terminalobject]_X)\colon \LocalisationFunctor(X) \map \LocalisationFunctor(\terminalobject)\) for \(X \in \Ob \mathcal{C}\), and we have
\[\begin{smallpmatrix} \doublefrac{b}{f_1}{a_1} & \doublefrac{b}{f_2}{a_2} \end{smallpmatrix}^{\LocalisationFunctor(Y_1 \smallprod Y_2)} = \doublefrac{b}{\begin{smallpmatrix} f_1 & f_2 \end{smallpmatrix}^{\tilde Y_1 \smallprod \tilde Y_2}}{(a_1 \smallprod a_2)}\colon \LocalisationFunctor(X) \map \LocalisationFunctor(Y_1 \smallprod Y_2)\]
for \(3\)-arrows \((b, f_1, a_1)\colon \threearrow{X}{\tilde X}{\tilde Y_1}{Y_1}\) and \((b, f_2, a_2)\colon \threearrow{X}{\tilde X}{\tilde Y_2}{Y_2}\) in \(\mathcal{C}\).
\item \label{prop:uni-fractionable_categories_admitting_finite_co_products:products:in_saturated_case_the_implies_preservation_by_localisation_implies_denominator_closedness} If \(\mathcal{C}\) is saturated and the localisation functor \(\LocalisationFunctor\colon \mathcal{C} \map \FractionCategory \mathcal{C}\) preserves finite products, then \(\Denominators \mathcal{C}\) is closed under finite products.
\end{enumerate}
\item \label{prop:uni-fractionable_categories_admitting_finite_co_products:sums} We suppose that \(\mathcal{C}\) admits finite sums.
\begin{enumerate}
\item \label{prop:uni-fractionable_categories_admitting_finite_co_products:sums:denominators_closedness_implies_preservation_by_localisation} If \(\Denominators \mathcal{C}\) is closed under finite sums, then the fraction category \(\FractionCategory \mathcal{C}\) admits finite sums and the localisation functor \(\LocalisationFunctor\colon \mathcal{C} \map \FractionCategory \mathcal{C}\) preserves finite sums. In this case, we have \(0 = \LocalisationFunctor(0)\colon \LocalisationFunctor(X) \map \LocalisationFunctor(Y)\) for \(X, Y \in \Ob \mathcal{C}\). Moreover, we have
\[\begin{smallpmatrix} \doublefrac{b_1}{f_1}{a} \\ \doublefrac{b_2}{f_2}{a} \end{smallpmatrix}^{\LocalisationFunctor(X_1 \directsum X_2)} = \doublefrac{(b_1 \directsum b_2)}{\begin{smallpmatrix} f_1 \\ f_2 \end{smallpmatrix}^{\tilde X_1 \directsum \tilde X_2}}{a}\colon \LocalisationFunctor(X_1 \directsum X_2) \map \LocalisationFunctor(Y)\]
for \(3\)-arrows \((b_1, f_1, a)\colon \threearrow{X_1}{\tilde X_1}{\tilde Y}{Y}\) and \((b_2, f_2, a)\colon \threearrow{X_2}{\tilde X_2}{\tilde Y}{Y}\) in \(\mathcal{C}\), and we have
\[\begin{smallpmatrix} \doublefrac{b}{f_1}{a_1} & \doublefrac{b}{f_2}{a_2} \end{smallpmatrix}^{\LocalisationFunctor(Y_1 \directsum Y_2)} = \doublefrac{b}{\begin{smallpmatrix} f_1 & f_2 \end{smallpmatrix}^{\tilde Y_1 \directsum \tilde Y_2}}{(a_1 \directsum a_2)}\colon \LocalisationFunctor(X) \map \LocalisationFunctor(Y_1 \directsum Y_2)\]
for \(3\)-arrows \((b, f_1, a_1)\colon \threearrow{X}{\tilde X}{\tilde Y_1}{Y_1}\) and \((b, f_2, a_2)\colon \threearrow{X}{\tilde X}{\tilde Y_2}{Y_2}\) in \(\mathcal{C}\).
\item \label{prop:uni-fractionable_categories_admitting_finite_co_products:sums:in_saturated_case_the_implies_preservation_by_localisation_implies_denominator_closedness} If \(\mathcal{C}\) is saturated and the localisation functor \(\LocalisationFunctor\colon \mathcal{C} \map \FractionCategory \mathcal{C}\) preserves finite sums, then \(\Denominators \mathcal{C}\) is closed under finite sums.
\end{enumerate}
\end{enumerate}
\end{proposition}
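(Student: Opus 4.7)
My plan is to treat parts (a), (b), (c) largely in parallel: (b) follows from (a) by passing to the opposite uni-fractionable category $\mathcal{C}^{\op}$ (in which $\SDenominators$ and $\TDenominators$ are swapped and whose fraction category is $(\FractionCategory \mathcal{C})^{\op}$), and (c) combines (a) and (b) since a finite sum is simultaneously a coproduct and a product with compatible embedding/projection structure, while $0 = \LocalisationFunctor(0)$ holds in any localisation because the unique morphism factoring through a zero object is preserved. I therefore outline only (a).

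For (a)(i), I first verify that $\LocalisationFunctor(\initialobject)$ is initial in $\FractionCategory \mathcal{C}$. Any morphism $\LocalisationFunctor(\initialobject) \map \LocalisationFunctor(Z)$ lifts to a $3$-arrow $(p, f, i) \colon \threearrow{\initialobject}{X}{Y}{Z}$; initiality of $\initialobject$ in $\mathcal{C}$ forces $\ini_X \cdot p = 1_\initialobject$, so $(\LocalisationFunctor(p))^{-1} = \LocalisationFunctor(\ini_X)$ in $\FractionCategory \mathcal{C}$ by theorem~\ref{th:description_of_the_fraction_category}\ref{th:description_of_the_fraction_category:localisation}, and together with $\ini_X \cdot f = \ini_Y$ and $\ini_Z \cdot i = \ini_Y$ this collapses $\doublefrac{p}{f}{i}$ to $\LocalisationFunctor(\ini_Z)$, giving both existence and uniqueness as well as $\ini[\LocalisationFunctor(\initialobject)]_{\LocalisationFunctor(X)} = \LocalisationFunctor(\ini[\initialobject]_X)$. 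For the binary coproduct, given $\varphi_k \colon \LocalisationFunctor(X_k) \map \LocalisationFunctor(Y)$ I invoke proposition~\ref{prop:morphisms_in_the_fraction_category_can_have_the_same_denominators}\ref{prop:morphisms_in_the_fraction_category_can_have_the_same_denominators:equal_target} to write $\varphi_k = \doublefrac{p_k}{f_k}{i}$ as normal $3$-arrows with common target denominator $i$. Closedness of $\Denominators \mathcal{C}$ under coproducts makes $p_1 \smallcoprod p_2$ a denominator, so $\varphi := \doublefrac{p_1 \smallcoprod p_2}{\begin{smallpmatrix} f_1 \\ f_2 \end{smallpmatrix}}{i}$ is a morphism $\LocalisationFunctor(X_1 \smallcoprod X_2) \map \LocalisationFunctor(Y)$. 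Applying the composition formula of proposition~\ref{prop:flexibility_of_composites_and_inverses_in_the_fraction_category}\ref{prop:flexibility_of_composites_and_inverses_in_the_fraction_category:composition} with the trivial factorisation $(p_1 \smallcoprod p_2) \cdot 1 = 1 \cdot (p_1 \smallcoprod p_2)$, together with the naturality square $\emb_k^{\tilde X_1 \smallcoprod \tilde X_2} \cdot (p_1 \smallcoprod p_2) = p_k \cdot \emb_k^{X_1 \smallcoprod X_2}$ and the identity $\emb_k \cdot \begin{smallpmatrix} f_1 \\ f_2 \end{smallpmatrix} = f_k$, reduces $\LocalisationFunctor(\emb_k) \cdot \varphi$ to $\doublefrac{p_k}{f_k}{i} = \varphi_k$; the displayed formula with general denominators $b_k$ in place of $p_k$ then follows from the uniqueness of the induced morphism.

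For uniqueness of $\varphi$, suppose $\varphi'$ has the same property. Starting from a normal representative $\varphi' = \doublefrac{q}{g}{j}$ and forming weak pullbacks of each $\emb_k$ along the T-denominator $q$, I obtain denominators $q_k \colon V_k \map X_k$ and morphisms $V_k \map W$ whose coproduct-induced arrow $V_1 \smallcoprod V_2 \map W$, together with the denominator $q_1 \smallcoprod q_2$, lets me represent $\varphi' = \doublefrac{q_1 \smallcoprod q_2}{\begin{smallpmatrix} g_1 \\ g_2 \end{smallpmatrix}}{j}$. The assumption then yields $\doublefrac{p_k}{f_k}{i} = \doublefrac{q_k}{g_k}{j}$ in $\FractionCategory \mathcal{C}$ for $k \in \{1, 2\}$, and by proposition~\ref{prop:3-arrow_calculus}\ref{prop:3-arrow_calculus:fraction_equality} each such equality is witnessed by a commutative four-row diagram in $\mathcal{C}$. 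Taking componentwise coproducts of these two diagrams produces a diagram of the same shape for the coproduct-level $3$-arrows whose connecting arrows are all denominators by the closedness hypothesis, and the ``if'' direction of the same proposition --- which rests only on the generators of definition~\ref{def:fraction_equality_relation_on_the_3-arrow_graph} and hence requires no $\SDenominators$ or $\TDenominators$ structure --- yields $\varphi = \varphi'$. For (a)(ii), assume $\mathcal{C}$ is saturated and $\LocalisationFunctor$ preserves coproducts: for $d_1, d_2 \in \Denominators \mathcal{C}$ the morphism $\LocalisationFunctor(d_1 \smallcoprod d_2) = \LocalisationFunctor(d_1) \smallcoprod \LocalisationFunctor(d_2)$ is a coproduct of isomorphisms, hence an isomorphism, and proposition~\ref{prop:weakly_saturated_uni-fractionable_category_is_a_saturated_uni-fractionable_category} forces $d_1 \smallcoprod d_2 \in \Denominators \mathcal{C}$; the initial-object case is analogous.

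The main technical obstacle is precisely the uniqueness step of (a)(i): although $\Denominators \mathcal{C}$ is closed under coproducts by hypothesis, $\SDenominators$ and $\TDenominators$ are not assumed so, so the coproducted witness of proposition~\ref{prop:3-arrow_calculus}\ref{prop:3-arrow_calculus:fraction_equality} may fail to contain normal connecting $3$-arrows --- but this obstacle is harmless because the generators of definition~\ref{def:fraction_equality_relation_on_the_3-arrow_graph} demand only denominator status.
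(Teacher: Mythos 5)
Your overall architecture is sound and close to the paper's: the initial-object argument, the construction of the candidate $\doublefrac{(p_1 \smallcoprod p_2)}{\begin{smallpmatrix} f_1 \\ f_2 \end{smallpmatrix}}{i}$ via common-denominator representatives and the computation $\LocalisationFunctor(\emb_k)\varphi = \varphi_k$, part (a)(ii), and the reductions of (b) by duality and (c) by combination all match the paper. The gap is in the uniqueness step, and it sits exactly where the paper spends most of its effort. The two witness diagrams you extract from proposition~\ref{prop:3-arrow_calculus}\ref{prop:3-arrow_calculus:fraction_equality} for $\doublefrac{p_k}{f_k}{i} = \doublefrac{q_k}{g_k}{j}$ ($k = 1, 2$) have \emph{different} target-side columns (different intermediate objects and different connecting morphisms over $Y$), so ``componentwise coproducts'' cannot leave the target side alone: the coproducted diagram has outer rows $(p_1 \smallcoprod p_2, f_1 \smallcoprod f_2, i \smallcoprod i)$ and $(q_1 \smallcoprod q_2, g_1 \smallcoprod g_2, j \smallcoprod j)$, which are $3$-arrows from $X_1 \smallcoprod X_2$ to $Y \smallcoprod Y$, not the representatives $(p_1 \smallcoprod p_2, \begin{smallpmatrix} f_1 \\ f_2 \end{smallpmatrix}, i)$ and $(q_1 \smallcoprod q_2, \begin{smallpmatrix} g_1 \\ g_2 \end{smallpmatrix}, j)$ of $\varphi$ and $\varphi'$. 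Hence the ``if'' direction of the calculus applied to that diagram gives an equality of morphisms $\LocalisationFunctor(X_1 \smallcoprod X_2) \map \LocalisationFunctor(Y \smallcoprod Y)$, not $\varphi = \varphi'$; the step ``yields $\varphi = \varphi'$'' is a non sequitur as written. (The obstacle you flag -- non-normality of the coproducted connecting $3$-arrows -- is indeed harmless; the duplication of the target is the real problem.)

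The gap is repairable, and the repair makes your route a genuinely different proof from the paper's. From the coproducted diagram you do get $\doublefrac{(p_1 \smallcoprod p_2)}{(f_1 \smallcoprod f_2)}{(i \smallcoprod i)} = \doublefrac{(q_1 \smallcoprod q_2)}{(g_1 \smallcoprod g_2)}{(j \smallcoprod j)}$ (all outer arrows are denominators by closedness under coproducts). Now postcompose with $\LocalisationFunctor(\begin{smallpmatrix} 1 \\ 1 \end{smallpmatrix})$ for the fold morphism $\begin{smallpmatrix} 1 \\ 1 \end{smallpmatrix}\colon Y \smallcoprod Y \map Y$ and use corollary~\ref{cor:splitting_double_fractions} together with $(f_1 \smallcoprod f_2)\begin{smallpmatrix} 1 \\ 1 \end{smallpmatrix} = \begin{smallpmatrix} f_1 \\ f_2 \end{smallpmatrix}$ and $(i \smallcoprod i)\begin{smallpmatrix} 1 \\ 1 \end{smallpmatrix} = \begin{smallpmatrix} 1 \\ 1 \end{smallpmatrix} i$ (and likewise with $j$) to see that this postcomposition carries the two sides to $\varphi$ and $\varphi'$ respectively; this closes the argument without any merging construction. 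The paper instead avoids ever duplicating $Y$: it arranges $\varphi = \doublefrac{p}{f}{i}$, $\varphi' = \doublefrac{p}{f'}{i}$ with common denominators, extracts witnesses for $\LocalisationFunctor(e_k f) = \LocalisationFunctor(e_k f')$, merges their target-side columns by weak pushouts along the S-denominators so that the two diagrams share one column, coproducts only the source column, and finally cancels the denominator $\begin{smallpmatrix} e_1 \\ e_2 \end{smallpmatrix}$ using semi-saturatedness. Either supplement is needed; without one of them the uniqueness claim is unproved.
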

\begin{proof} \
\begin{enumerate}
\item
\begin{enumerate}
\item We suppose that \(\Denominators \mathcal{C}\) is closed under finite coproducts. Moreover, we suppose given \(X \in \Ob \mathcal{C}\). Then \(\LocalisationFunctor(\ini[\initialobject]_X)\) is a morphism from \(\LocalisationFunctor(\initialobject)\) to \(\LocalisationFunctor(X)\). So let us suppose given an arbitrary morphism \(\varphi\colon \LocalisationFunctor(\initialobject) \map X\) in \(\FractionCategory \mathcal{C}\), and we let \((b, f, a)\colon \threearrow{\initialobject}{I}{\tilde X}{X}\) be a \(3\)-arrow in \(\mathcal{C}\) with \(\varphi = \doublefrac{b}{f}{a}\). By the universal property of \(\initialobject\), we have \(\ini[\initialobject]_{I} b = 1_{\initialobject}\) and \(\ini[\initialobject]_{I} f = \ini[\initialobject]_{\tilde X} = \ini[\initialobject]_{X} a\), and therefore
\[\varphi = \doublefrac{b}{f}{a} = \doublefrac{\ini[\initialobject]_{I} b}{\ini[\initialobject]_{I} f}{a} = \doublefrac{1_{\initialobject}}{\ini[\initialobject]_X}{1} = \LocalisationFunctor(\ini[\initialobject]_X).\]
Hence \(\LocalisationFunctor(\initialobject)\) is an initial object in \(\FractionCategory \mathcal{C}\) with \(\ini[\LocalisationFunctor(\initialobject)]_{\LocalisationFunctor(X)} = \LocalisationFunctor(\ini[\initialobject]_X)\) for all \(X \in \Ob \mathcal{C}\).
\[\begin{tikzpicture}[baseline=(m-2-1.base)]
  \matrix (m) [diagram]{
    \initialobject & \initialobject & X & X \\
    \initialobject & I & \tilde X & X \\};
  \path[->, font=\scriptsize]
    (m-1-1) edge[equality] (m-2-1)
    (m-1-2) edge node[above] {\(\ini[\initialobject]_X\)} (m-1-3)
            edge[den] node[right] {\(\ini[\initialobject]_{I}\)} (m-2-2)
            edge[equality] (m-1-1)
    (m-1-3) edge[den] node[right] {\(a\)} (m-2-3)
    (m-1-4) edge[equality] (m-2-4)
            edge[equality] (m-1-3)
    (m-2-2) edge node[above] {\(f\)} (m-2-3)
            edge[den] node[above] {\(b\)} (m-2-1)
    (m-2-4) edge[den] node[above] {\(a\)} (m-2-3);
\end{tikzpicture}\]

Next, we suppose given morphisms \(\varphi_1\colon X_1 \map Y\) and \(\varphi_2\colon X_2 \map Y\) in \(\FractionCategory \mathcal{C}\). By proposition~\ref{prop:morphisms_in_the_fraction_category_can_have_the_same_denominators}, there exist \(3\)-arrows \((b_1, f_1, a)\colon \threearrow{X_1}{\tilde X_1}{\tilde Y}{Y}\) and \((b_2, f_2, a)\colon \threearrow{X_2}{\tilde X_2}{\tilde Y}{Y}\) in \(\mathcal{C}\) with \(\varphi_1 = \doublefrac{b_1}{f_1}{a}\) and \(\varphi_2 = \doublefrac{b_2}{f_2}{a}\). As \(b_1 \smallcoprod b_2\) is a denominator in \(\mathcal{C}\) by assumption, we have the \(3\)-arrow \((b_1 \smallcoprod b_2, \begin{smallpmatrix} f_1 \\ f_2 \end{smallpmatrix}, a)\) in \(\mathcal{C}\). Moreover, since \(\emb_k^{\tilde X_1 \smallcoprod \tilde X_2} (b_1 \smallcoprod b_2) = b_k \emb_k^{X_1 \smallcoprod X_2}\), we have
\[\LocalisationFunctor(\emb_k^{X_1 \smallcoprod X_2}) (\doublefrac{(b_1 \smallcoprod b_2)}{\begin{smallpmatrix} f_1 \\ f_2 \end{smallpmatrix}^{\tilde X_1 \smallcoprod \tilde X_2}}{a}) = \doublefrac{b_k}{\emb_k^{\tilde X_1 \smallcoprod \tilde X_2} \begin{smallpmatrix} f_1 \\ f_2 \end{smallpmatrix}^{\tilde X_1 \smallcoprod \tilde X_2}}{a} = \doublefrac{b_k}{f_k}{a} = \varphi_k\]
for \(k \in \{1, 2\}\).
\[\begin{tikzpicture}[baseline=(m-3-1.base)]
  \matrix (m) [diagram mixed=1.25em]{
    & & \tilde X_k & & & \tilde X_1 \smallcoprod \tilde X_2 & & & \tilde Y & & \\
    & X_k & & & X_1 \smallcoprod X_2 & & \tilde X_1 \smallcoprod \tilde X_2 & & & \tilde Y & \\
    X_k & & & & & X_1 \smallcoprod X_2 & & & & & Y \\};
  \path[->, font=\scriptsize]
    (m-1-3) edge node[above] {\(\emb_k^{\tilde X_1 \smallcoprod \tilde X_2}\)} (m-1-6)
            edge[den] node[left] {\(b_k\)} (m-2-2)
    (m-1-6) edge node[above] {\(\begin{smallpmatrix} f_1 \\ f_2 \end{smallpmatrix}^{\tilde X_1 \smallcoprod \tilde X_2}\)} (m-1-9)
            edge[den] node[left=3pt] {\(b_1 \smallcoprod b_2\)} (m-2-5)
    (m-2-2) edge node[above] {\(\emb_k^{X_1 \smallcoprod X_2}\)} (m-2-5)
            edge[equality] (m-3-1)
    (m-2-7) edge node[above, pos=0.25] {\(\begin{smallpmatrix} f_1 \\ f_2 \end{smallpmatrix}^{\tilde X_1 \smallcoprod \tilde X_2}\)} (m-2-10)
            edge[den] node[right=4pt] {\(b_1 \smallcoprod b_2\)} (m-3-6)
            edge[equality] (m-1-6)
    (m-2-10) edge[equality] (m-1-9)
    (m-3-6) edge[equality] (m-2-5)
    (m-3-11) edge[den] node[right] {\(a\)} (m-2-10);
\end{tikzpicture}\]
Conversely, we suppose given morphisms \(\varphi, \varphi'\colon \LocalisationFunctor(X_1 \smallcoprod X_2) \map \LocalisationFunctor(Y)\) in \(\FractionCategory \mathcal{C}\) such that \linebreak 
\(\LocalisationFunctor(\emb_k^{X_1 \smallcoprod X_2}) \varphi = \LocalisationFunctor(\emb_k^{X_1 \smallcoprod X_2}) \varphi' = \varphi_k\) for \(k \in \{1, 2\}\). By proposition~\ref{prop:morphisms_in_the_fraction_category_can_have_the_same_denominators}, there exist normal \(3\)-arrows \((p, f, i), (p, f', i)\colon \threearrow{X_1 \smallcoprod X_2}{\tilde X}{\tilde Y}{Y}\) in \(\mathcal{C}\) with \(\varphi = \doublefrac{p}{f}{i}\) and \(\varphi' = \doublefrac{p}{f'}{i}\). For \(k \in \{1, 2\}\), we choose a T-denominator \(p_k\colon \tilde X_k \map X_k\) and a morphism \(e_k\colon \tilde X_k \map \tilde X\) in \(\mathcal{C}\) with \(p_k \emb_k^{X_1 \smallcoprod X_2} = e_k p\). Then we have
\[\varphi_k = \LocalisationFunctor(\emb_k^{X_1 \smallcoprod X_2}) \varphi = \LocalisationFunctor(\emb_k^{X_1 \smallcoprod X_2}) (\doublefrac{p}{f}{i}) = \doublefrac{p_k}{e_k f}{i}\]
for \(k \in \{1, 2\}\).
\[\begin{tikzpicture}[baseline=(m-3-1.base)]
  \matrix (m) [diagram mixed=1.25em]{
    & & \tilde X_k & & & \tilde X & & & \tilde Y & & \\
    & X_k & & & X_1 \smallcoprod X_2 & & \tilde X & & & \tilde Y & \\
    X_k & & & & & X_1 \smallcoprod X_2 & & & & & Y \\};
  \path[->, font=\scriptsize]
    (m-1-3) edge node[above] {\(e_k\)} (m-1-6)
            edge[tden] node[left] {\(p_k\)} (m-2-2)
    (m-1-6) edge node[above] {\(f\)} (m-1-9)
            edge[tden] node[left=3pt] {\(p\)} (m-2-5)
    (m-2-2) edge node[above] {\(\emb_k^{X_1 \smallcoprod X_2}\)} (m-2-5)
            edge[equality] (m-3-1)
    (m-2-7) edge node[above] {\(f\)} (m-2-10)
            edge[tden] node[right=1pt] {\(p\)} (m-3-6)
            edge[equality] (m-1-6)
    (m-2-10) edge[equality] (m-1-9)
    (m-3-6) edge[equality] (m-2-5)
    (m-3-11) edge[sden] node[right] {\(i\)} (m-2-10);
\end{tikzpicture}\]
Analogously, we also have \(\varphi_k = \doublefrac{p_k}{e_k f'}{i}\) and therefore \(\LocalisationFunctor(e_k f) = \LocalisationFunctor(e_k f')\) for \(k \in \{1, 2\}\). By proposition~\ref{prop:3-arrow_calculus}\ref{prop:3-arrow_calculus:fraction_equality}, there exist normal \(3\)-arrows \((\tilde p_k, \tilde f_k, \tilde i_k)\), \((\tilde p_k', \tilde f_k', \tilde i_k')\), \((q_k, d_k, j_k)\), \((\tilde q_k, \tilde d_k, \tilde j_k)\) in \(\mathcal{C}\) with denominators \(d_k, \tilde d_k\) for \(k \in \{1, 2\}\), fitting into the following commutative diagrams in \(\mathcal{C}\).
\[\begin{tikzpicture}[baseline=(m-4-1.base)]
  \matrix (m) [diagram without objects]{
    & & & \\
    & & & \\
    & & & \\
    & & & \\};
  \path[->, font=\scriptsize]
    (m-1-2) edge node[above] {\(e_1 f\)} (m-1-3)
            edge[equality] (m-1-1)
    (m-1-4) edge[equality] (m-1-3)
    (m-2-1) edge[equality] (m-3-1)
            edge[equality] (m-1-1)
    (m-2-2) edge[exists] node[above] {\(\tilde f_1\)} (m-2-3)
            edge[exists, den] node[right] {\(d_1\)} (m-3-2)
            edge[exists, tden] node[above] {\(\tilde p_1\)} (m-2-1)
            edge[exists, tden] node[right] {\(q_1\)} (m-1-2)
    (m-2-3) edge[exists, den] node[right] {\(\tilde d_1\)} (m-3-3)
            edge[exists, tden] node[right] {\(\tilde q_1\)} (m-1-3)
    (m-2-4) edge[equality] (m-3-4)
            edge[exists, sden] node[above] {\(\tilde i_1\)} (m-2-3)
            edge[equality] (m-1-4)
    (m-3-2) edge[exists] node[above] {\(\tilde f_1'\)} (m-3-3)
            edge[exists, tden] node[above] {\(\tilde p_1'\)} (m-3-1)
    (m-3-4) edge[exists, sden] node[above] {\(\tilde i_1'\)} (m-3-3)
    (m-4-1) edge[equality] (m-3-1)
    (m-4-2) edge node[above] {\(e_1 f'\)} (m-4-3)
            edge[equality] (m-4-1)
            edge[exists, sden] node[right] {\(j_1\)} (m-3-2)
    (m-4-3) edge[exists, sden] node[right] {\(\tilde j_1\)} (m-3-3)
    (m-4-4) edge[equality] (m-4-3)
            edge[equality] (m-3-4);
\end{tikzpicture}
\qquad
\begin{tikzpicture}[baseline=(m-4-1.base)]
  \matrix (m) [diagram without objects]{
    & & & \\
    & & & \\
    & & & \\
    & & & \\};
  \path[->, font=\scriptsize]
    (m-1-2) edge node[above] {\(e_2 f\)} (m-1-3)
            edge[equality] (m-1-1)
    (m-1-4) edge[equality] (m-1-3)
    (m-2-1) edge[equality] (m-3-1)
            edge[equality] (m-1-1)
    (m-2-2) edge[exists] node[above] {\(\tilde f_2\)} (m-2-3)
            edge[exists, den] node[right] {\(d_2\)} (m-3-2)
            edge[exists, tden] node[above] {\(\tilde p_2\)} (m-2-1)
            edge[exists, tden] node[right] {\(q_2\)} (m-1-2)
    (m-2-3) edge[exists, den] node[right] {\(\tilde d_2\)} (m-3-3)
            edge[exists, tden] node[right] {\(\tilde q_2\)} (m-1-3)
    (m-2-4) edge[equality] (m-3-4)
            edge[exists, sden] node[above] {\(\tilde i_2\)} (m-2-3)
            edge[equality] (m-1-4)
    (m-3-2) edge[exists] node[above] {\(\tilde f_2'\)} (m-3-3)
            edge[exists, tden] node[above] {\(\tilde p_2'\)} (m-3-1)
    (m-3-4) edge[exists, sden] node[above] {\(\tilde i_2'\)} (m-3-3)
    (m-4-1) edge[equality] (m-3-1)
    (m-4-2) edge node[above] {\(e_2 f'\)} (m-4-3)
            edge[equality] (m-4-1)
            edge[exists, sden] node[right] {\(j_2\)} (m-3-2)
    (m-4-3) edge[exists, sden] node[right] {\(\tilde j_2\)} (m-3-3)
    (m-4-4) edge[equality] (m-4-3)
            edge[equality] (m-3-4);
\end{tikzpicture}\]
We let
\[\begin{tikzpicture}[baseline=(m-2-1.base)]
  \matrix (m) [diagram without objects]{
    & \\
    & \\};
  \path[->, font=\scriptsize]
    (m-1-1) edge[den] node[left] {\(\bar i_2\)} (m-2-1)
    (m-1-2) edge[sden] node[right] {\(\tilde i_2\)} (m-2-2)
            edge[sden] node[above] {\(\tilde i_1\)} (m-1-1)
    (m-2-2) edge[sden] node[above] {\(\bar i_1\)} (m-2-1);
\end{tikzpicture}
\text{ and }
\begin{tikzpicture}[baseline=(m-2-1.base)]
  \matrix (m) [diagram without objects]{
    & \\
    & \\};
  \path[->, font=\scriptsize]
    (m-1-1) edge[den] node[left] {\(\bar i_2'\)} (m-2-1)
    (m-1-2) edge[sden] node[right] {\(\tilde i_2'\)} (m-2-2)
            edge[sden] node[above] {\(\tilde i_1'\)} (m-1-1)
    (m-2-2) edge[sden] node[above] {\(\bar i_1'\)} (m-2-1);
\end{tikzpicture}\]
be weak pushout rectangles in \(\mathcal{C}\) such that \(\bar i_1\) and \(\bar i_1'\) are S-denominators, so that we obtain morphisms \(q\), \(d\), \(j\) such that the following diagram commutes.
\[\begin{tikzpicture}[baseline=(m-8-1.base)]
  \matrix (m) [diagram without objects]{
    & & & \\
    & & & \\
    & & & \\
    & & & \\
    & & & \\
    & & & \\
    & & & \\
    & & & \\};
  \path[->, font=\scriptsize]
    (m-1-2) edge[equality] (m-2-1)
    (m-1-4) edge[equality] (m-2-3)
            edge[equality] (m-1-2)
    (m-3-2) edge[den=0.25]  node[left=-2pt, near start] {\(\tilde d_1\)} (m-5-2)
            edge[den] node[left] {\(\bar i_2\)} (m-4-1)
            edge[tden=0.75] node[left=-2pt, near end] {\(\tilde q_1\)} (m-1-2)
    (m-3-4) edge[equality] (m-5-4)
            edge[sden] node[left] {\(\tilde i_2\)} (m-4-3)
            edge[equality] (m-1-4)
            edge[sden=0.25] node[above, near start] {\(\tilde i_1\)} (m-3-2)
    (m-5-2) edge[den] node[left] {\(\bar i_2'\)} (m-6-1)
    (m-5-4) edge[sden] node[left] {\(\tilde i_2'\)} (m-6-3)
            edge[sden=0.25] node[above, near start] {\(\tilde i_1'\)} (m-5-2)
    (m-7-2) edge[equality] (m-8-1)
            edge[sden=0.75] node[left=-2pt, near end] {\(\tilde j_1\)} (m-5-2)
    (m-7-4) edge[equality] (m-5-4)
            edge[equality] (m-8-3)
            edge[equality] (m-7-2)
    (m-2-3) edge[cross line, equality] (m-2-1)
    (m-4-1) edge[exists, den] node[left] {\(d\)} (m-6-1)
            edge[exists, den] node[left] {\(q\)} (m-2-1)
    (m-4-3) edge[cross line, den=0.25] node[left, near start] {\(\tilde d_2\)} (m-6-3)
            edge[cross line, sden=0.25] node[above, near start] {\(\bar i_1\)} (m-4-1)
            edge[cross line, tden=0.75] node[left, near end] {\(\tilde q_2\)} (m-2-3)
    (m-6-3) edge[cross line, sden=0.25] node[above, near start] {\(\bar i_1'\)} (m-6-1)
    (m-8-1) edge[exists, sden] node[left] {\(j\)} (m-6-1)
    (m-8-3) edge[equality] (m-8-1)
            edge[cross line, sden=0.75] node[left, near end] {\(\tilde j_2\)} (m-6-3);
\end{tikzpicture}\]
Thus we obtain the following commutative diagrams.
\[\begin{tikzpicture}[baseline=(m-4-1.base)]
  \matrix (m) [diagram without objects]{
    & & & \\
    & & & \\
    & & & \\
    & & & \\};
  \path[->, font=\scriptsize]
    (m-1-2) edge node[above] {\(e_1 f\)} (m-1-3)
            edge[equality] (m-1-1)
    (m-1-4) edge[equality] (m-1-3)
    (m-2-1) edge[equality] (m-3-1)
            edge[equality] (m-1-1)
    (m-2-2) edge node[above] {\(\tilde f_1 \bar i_2\)} (m-2-3)
            edge[den] node[right] {\(d_1\)} (m-3-2)
            edge[tden] node[above] {\(\tilde p_1\)} (m-2-1)
            edge[tden] node[right] {\(q_1\)} (m-1-2)
    (m-2-3) edge[den] node[right] {\(d\)} (m-3-3)
            edge[den] node[right] {\(q\)} (m-1-3)
    (m-2-4) edge[equality] (m-3-4)
            edge[sden] node[above] {\(\tilde i_1 \bar i_2\)} (m-2-3)
            edge[equality] (m-1-4)
    (m-3-2) edge node[above] {\(\tilde f_1' \bar i_2'\)} (m-3-3)
            edge[tden] node[above] {\(\tilde p_1'\)} (m-3-1)
    (m-3-4) edge[sden] node[above] {\(\tilde i_1' \bar i_2'\)} (m-3-3)
    (m-4-1) edge[equality] (m-3-1)
    (m-4-2) edge node[above] {\(e_1 f'\)} (m-4-3)
            edge[equality] (m-4-1)
            edge[sden] node[right] {\(j_1\)} (m-3-2)
    (m-4-3) edge[sden] node[right] {\(j\)} (m-3-3)
    (m-4-4) edge[equality] (m-4-3)
            edge[equality] (m-3-4);
\end{tikzpicture}
\qquad
\begin{tikzpicture}[baseline=(m-4-1.base)]
  \matrix (m) [diagram without objects]{
    & & & \\
    & & & \\
    & & & \\
    & & & \\};
  \path[->, font=\scriptsize]
    (m-1-2) edge node[above] {\(e_2 f\)} (m-1-3)
            edge[equality] (m-1-1)
    (m-1-4) edge[equality] (m-1-3)
    (m-2-1) edge[equality] (m-3-1)
            edge[equality] (m-1-1)
    (m-2-2) edge node[above] {\(\tilde f_2 \bar i_1\)} (m-2-3)
            edge[den] node[right] {\(d_2\)} (m-3-2)
            edge[tden] node[above] {\(\tilde p_2\)} (m-2-1)
            edge[tden] node[right] {\(q_2\)} (m-1-2)
    (m-2-3) edge[den] node[right] {\(d\)} (m-3-3)
            edge[den] node[right] {\(q\)} (m-1-3)
    (m-2-4) edge[equality] (m-3-4)
            edge[sden] node[above] {\(\tilde i_2 \bar i_1\)} (m-2-3)
            edge[equality] (m-1-4)
    (m-3-2) edge node[above] {\(\tilde f_2' \bar i_1'\)} (m-3-3)
            edge[tden] node[above] {\(\tilde p_2'\)} (m-3-1)
    (m-3-4) edge[sden] node[above] {\(\tilde i_2' \bar i_1'\)} (m-3-3)
    (m-4-1) edge[equality] (m-3-1)
    (m-4-2) edge node[above] {\(e_2 f'\)} (m-4-3)
            edge[equality] (m-4-1)
            edge[sden] node[right] {\(j_2\)} (m-3-2)
    (m-4-3) edge[sden] node[right] {\(j\)} (m-3-3)
    (m-4-4) edge[equality] (m-4-3)
            edge[equality] (m-3-4);
\end{tikzpicture}\]
Using coproducts, these diagrams provide in turn the following commutative diagram.
\[\begin{tikzpicture}[baseline=(m-4-1.base)]
  \matrix (m) [diagram without objects=3.75em]{
    & & & \\
    & & & \\
    & & & \\
    & & & \\};
  \path[->, font=\scriptsize]
    (m-1-2) edge node[above] {\(\begin{smallpmatrix} e_1 f \\ e_2 f \end{smallpmatrix}\)} (m-1-3)
            edge[equality] (m-1-1)
    (m-1-4) edge[equality] (m-1-3)
    (m-2-1) edge[equality] (m-3-1)
            edge[equality] (m-1-1)
    (m-2-2) edge node[above] {\(\begin{smallpmatrix} \tilde f_1 \bar i_2 \\ \tilde f_2 \bar i_1 \end{smallpmatrix}\)} (m-2-3)
            edge[den] node[right, near start] {\(d_1 \smallcoprod d_2\)} (m-3-2)
            edge[den] node[above] {\(\tilde p_1 \smallcoprod \tilde p_2\)} (m-2-1)
            edge[den] node[right, near end] {\(q_1 \smallcoprod q_2\)} (m-1-2)
    (m-2-3) edge[den] node[right] {\(d\)} (m-3-3)
            edge[den] node[right] {\(q\)} (m-1-3)
    (m-2-4) edge[equality] (m-3-4)
            edge[sden] node[above] {\(\tilde i_1 \bar i_2\)} (m-2-3)
            edge[equality] (m-1-4)
    (m-3-2) edge node[above] {\(\begin{smallpmatrix} \tilde f_1' \bar i_2' \\ \tilde f_2' \bar i_1' \end{smallpmatrix}\)} (m-3-3)
            edge[den] node[above] {\(\tilde p_1' \smallcoprod \tilde p_2'\)} (m-3-1)
    (m-3-4) edge[sden] node[above] {\(\tilde i_1' \bar i_2'\)} (m-3-3)
    (m-4-1) edge[equality] (m-3-1)
    (m-4-2) edge node[above] {\(\begin{smallpmatrix} e_1 f' \\ e_2 f' \end{smallpmatrix}\)} (m-4-3)
            edge[equality] (m-4-1)
            edge[den] node[right, near end] {\(j_1 \smallcoprod j_2\)} (m-3-2)
    (m-4-3) edge[sden] node[right] {\(j\)} (m-3-3)
    (m-4-4) edge[equality] (m-4-3)
            edge[equality] (m-3-4);
\end{tikzpicture}\]
We finally have
\[\LocalisationFunctor(\begin{smallpmatrix} e_1 \\ e_2 \end{smallpmatrix}) \LocalisationFunctor(f) = \LocalisationFunctor(\begin{smallpmatrix} e_1 \\ e_2 \end{smallpmatrix} f) = \LocalisationFunctor(\begin{smallpmatrix} e_1 f \\ e_2 f \end{smallpmatrix}) = \LocalisationFunctor(\begin{smallpmatrix} e_1 f' \\ e_2 f' \end{smallpmatrix}) = \LocalisationFunctor(\begin{smallpmatrix} e_1 \\ e_2 \end{smallpmatrix} f') = \LocalisationFunctor(\begin{smallpmatrix} e_1 \\ e_2 \end{smallpmatrix}) \LocalisationFunctor(f').\]
On the other hand,
\[\begin{smallpmatrix} e_1 \\ e_2 \end{smallpmatrix} p = \begin{smallpmatrix} e_1 p \\ e_2 p \end{smallpmatrix} = \begin{smallpmatrix} p_1 \emb_1^{X_1 \smallcoprod X_2} \\ p_2 \emb_2^{X_1 \smallcoprod X_2} \end{smallpmatrix} = p_1 \smallcoprod p_2\]
implies that \(\begin{smallpmatrix} e_1 \\ e_2 \end{smallpmatrix}\) is a denominator in \(\mathcal{C}\) by semi-saturatedness, so we have \(\LocalisationFunctor(f) = \LocalisationFunctor(f')\) and therefore
\[\varphi = \doublefrac{p}{f}{i} = \doublefrac{p}{f'}{i} = \varphi'.\]
Altogether, \(\LocalisationFunctor(X_1 \smallcoprod X_2)\) is a coproduct of \(\LocalisationFunctor(X_1)\) and \(\LocalisationFunctor(X_2)\) with embeddings \(\emb_k^{\LocalisationFunctor(X_1 \smallcoprod X_2)} = \LocalisationFunctor(\emb_k^{X_1 \smallcoprod X_2})\) for \(k \in \{1, 2\}\).
\item We suppose that \(\mathcal{C}\) is saturated and that \(\LocalisationFunctor\) preserves finite coproducts. Moreover, we suppose given denominators \(d_1\colon X_1 \map Y_1\) and \(d_2\colon X_2 \map Y_2\) in \(\mathcal{C}\). Then we have
\begin{align*}
\LocalisationFunctor(d_k) \emb_k^{\LocalisationFunctor(Y_1 \smallcoprod Y_2)} & = \LocalisationFunctor(d_k) \LocalisationFunctor(\emb_k^{Y_1 \smallcoprod Y_2}) = \LocalisationFunctor(d_k \emb_k^{Y_1 \smallcoprod Y_2}) = \LocalisationFunctor(\emb_k^{X_1 \smallcoprod X_2} (d_1 \smallcoprod d_2)) \\
& = \LocalisationFunctor(\emb_k^{X_1 \smallcoprod X_2}) \LocalisationFunctor(d_1 \smallcoprod d_2) = \emb_k^{\LocalisationFunctor(X_1 \smallcoprod X_2)} \LocalisationFunctor(d_1 \smallcoprod d_2).
\end{align*}
Since \(d_1\) and \(d_2\) are denominators, \(\LocalisationFunctor(d_1)\) and \(\LocalisationFunctor(d_2)\) are isomorphisms. But then \(\LocalisationFunctor(d_1 \smallcoprod d_2)\) is also an isomorphism and hence \(d_1 \smallcoprod d_2\) is a denominator since \(\mathcal{C}\) is saturated.
\end{enumerate}
\item This is dual to~\ref{prop:uni-fractionable_categories_admitting_finite_co_products:coproducts}.
\item This follows from~\ref{prop:uni-fractionable_categories_admitting_finite_co_products:coproducts} and~\ref{prop:uni-fractionable_categories_admitting_finite_co_products:products}. \qedhere
\end{enumerate}
\end{proof}

The preceding criterion motivates the next definition.

\begin{definition} \label{def:additive_uni-fractionable_category}
An \newnotion{additive uni-fractionable category} is a uni-fractionable category \(\mathcal{A}\) such that the underlying category of \(\mathcal{A}\) is equipped with the structure of an additive category and such that \(\Denominators \mathcal{A}\) is closed under finite sums.
\end{definition}

\begin{remark} \label{rem:denominators_closed_under_finite_co_products}
We suppose given a uni-fractionable category \(\mathcal{C}\).
\begin{enumerate}
\item \label{rem:denominators_closed_under_finite_co_products:coproducts} We suppose that \(\mathcal{C}\) admits finite coproducts. Then \(\Denominators \mathcal{C}\) is closed under finite coproducts if and only if \(i \smallcoprod j\) is a denominator for all S-denominators \(i\), \(j\) in \(\mathcal{C}\) and \(p \smallcoprod q\) is a denominator for all T-denominators \(p\), \(q\) in \(\mathcal{C}\).
\item \label{rem:denominators_closed_under_finite_co_products:products} We suppose that \(\mathcal{C}\) admits finite products. Then \(\Denominators \mathcal{C}\) is closed under finite products if and only if \(i \smallprod j\) is a denominator for all S-denominators \(i\), \(j\) in \(\mathcal{C}\) and \(p \smallprod q\) is a denominator for all T-denominators \(p\), \(q\) in \(\mathcal{C}\).
\end{enumerate}
\end{remark}
\begin{proof} \
\begin{enumerate}
\item If \(\Denominators \mathcal{C}\) is closed under finite coproducts, then in particular \(i \smallcoprod j\) is a denominator for all S-denominators \(i\), \(j\), and \(p \smallcoprod q\) is a denominator for all T-denominators \(p\), \(q\) in \(\mathcal{C}\). So let us conversely suppose that \(i \smallcoprod j\) is a denominator for all S-denominators \(i\), \(j\) and that \(p \smallcoprod q\) is a denominator for all T-denominators \(p\), \(q\), and let us suppose given denominators \(d\), \(e\) in \(\mathcal{C}\). Then there exist S-denominators \(i\), \(j\) and T-denominators \(p\), \(q\) with \(d = i p\) and \(e = j q\), and hence
\[d \smallcoprod e = (i p) \smallcoprod (j q) = (i \smallcoprod j) (p \smallcoprod q)\]
is a denominator in \(\mathcal{C}\) by multiplicativity. Thus \(\Denominators \mathcal{C}\) is closed under finite coproducts. \qedhere
\end{enumerate}
\end{proof}

Recall that every hom-set in a category that admits finite sums carries a unique structure of a commutative monoid such that addition of morphisms becomes compatible with composition~\cite[ch.~VIII, sec.~2, ex.~4(a)]{maclane:1998:categories_for_the_working_mathematician}. In modern terms: Such a category is enriched over the category of commutative monoids in a unique way. Moreover, every hom-set becomes an abelian group, that is, the category under consideration is additive, if and only if every identity has a negative element with respect to the addition on its hom-set~\cite[ch.~VIII, sec.~2, ex.~4(b)]{maclane:1998:categories_for_the_working_mathematician}. The latter condition is equivalent to the condition that the morphism \(\begin{smallpmatrix} 1 & 0 \\ 1 & 1 \end{smallpmatrix}\) is always an isomorphism. A functor between additive categories is additive if and only if it preserves finite sums, that is, if and only if the image of every (chosen) finite sum is a finite sum of the images, such that the embeddings resp.\ projections are the images of the embeddings resp.\ projections. Cf.\ also~\cite[sec.~18--19]{maclane:1950:duality_for_groups},~\cite[sec.~3.1--3.2]{kuenzer:2010:homologische_algebra}.

\begin{proposition} \label{prop:additive_uni-fractionable_categories_induce_additive_fraction_categories_and_additive_localisation_functors}
Given an additive uni-fractionable category \(\mathcal{A}\), the additive structure of \(\mathcal{A}\) induces an additive structure on the fraction category \(\FractionCategory \mathcal{A}\) such that the localisation functor \(\LocalisationFunctor\colon \mathcal{A} \map \FractionCategory \mathcal{A}\) becomes an additive functor. For parallel \(3\)-arrows \((b, f, a)\) and \((b, g, a)\) in \(\mathcal{A}\) (cf.\ proposition~\ref{prop:morphisms_in_the_fraction_category_can_have_the_same_denominators}), we have
\[\doublefrac{b}{f}{a} + \doublefrac{b}{g}{a} = \doublefrac{b}{(f + g)}{a}.\]
\end{proposition}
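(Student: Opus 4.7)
The plan is to bootstrap off proposition~\ref{prop:uni-fractionable_categories_admitting_finite_co_products}\ref{prop:uni-fractionable_categories_admitting_finite_co_products:sums} and then exploit the intrinsic characterisation of additivity (recalled in the paragraph preceding the proposition) that a category with finite sums is additive precisely when, on every object $Y$, the endomorphism $\begin{smallpmatrix} 1_Y & 0 \\ 1_Y & 1_Y \end{smallpmatrix}\colon Y \directsum Y \map Y \directsum Y$ is invertible.

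First, since $\mathcal{A}$ is an additive uni-fractionable category, it admits finite sums and $\Denominators \mathcal{A}$ is closed under finite sums by definition~\ref{def:additive_uni-fractionable_category}. Proposition~\ref{prop:uni-fractionable_categories_admitting_finite_co_products}\ref{prop:uni-fractionable_categories_admitting_finite_co_products:sums}\ref{prop:uni-fractionable_categories_admitting_finite_co_products:sums:denominators_closedness_implies_preservation_by_localisation} therefore yields that $\FractionCategory \mathcal{A}$ admits finite sums, with $\LocalisationFunctor(X_1 \directsum X_2)$ a sum of $\LocalisationFunctor(X_1)$ and $\LocalisationFunctor(X_2)$ via $\LocalisationFunctor$ applied to the structural embeddings, projections, and zero object, and that $\LocalisationFunctor\colon \mathcal{A} \map \FractionCategory \mathcal{A}$ preserves finite sums. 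In particular, $\FractionCategory \mathcal{A}$ is canonically enriched over commutative monoids, and $\LocalisationFunctor$ respects this enrichment since a sum-preserving functor between categories with finite sums is automatically compatible with the induced commutative monoid structure on hom-sets.

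Next I will show $\FractionCategory \mathcal{A}$ is additive. Since $\Ob \FractionCategory \mathcal{A} = \Ob \mathcal{A}$, every object of $\FractionCategory \mathcal{A}$ is of the form $\LocalisationFunctor(Y)$ for some $Y \in \Ob \mathcal{A}$. By additivity of $\mathcal{A}$ the morphism $\begin{smallpmatrix} 1_Y & 0 \\ 1_Y & 1_Y \end{smallpmatrix}$ is an isomorphism in $\mathcal{A}$, and applying the sum-preserving functor $\LocalisationFunctor$ to this relation maps it to the analogous morphism on $\LocalisationFunctor(Y) \directsum \LocalisationFunctor(Y) = \LocalisationFunctor(Y \directsum Y)$ in $\FractionCategory \mathcal{A}$, which is therefore invertible. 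By the criterion recalled in the text, $\FractionCategory \mathcal{A}$ is additive, and the sum-preserving functor $\LocalisationFunctor$ is then automatically additive.

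Finally, the explicit formula follows from corollary~\ref{cor:splitting_double_fractions} together with additivity of $\LocalisationFunctor$ and bilinearity of composition in the additive category $\FractionCategory \mathcal{A}$: for parallel $3$-arrows $(b, f, a)$ and $(b, g, a)$ in $\mathcal{A}$,
\begin{align*}
\doublefrac{b}{f}{a} + \doublefrac{b}{g}{a}
& = (\LocalisationFunctor(b))^{-1} \LocalisationFunctor(f) (\LocalisationFunctor(a))^{-1} + (\LocalisationFunctor(b))^{-1} \LocalisationFunctor(g) (\LocalisationFunctor(a))^{-1} \\
& = (\LocalisationFunctor(b))^{-1} (\LocalisationFunctor(f) + \LocalisationFunctor(g)) (\LocalisationFunctor(a))^{-1} \\
& = (\LocalisationFunctor(b))^{-1} \LocalisationFunctor(f + g) (\LocalisationFunctor(a))^{-1} = \doublefrac{b}{(f + g)}{a}.
\end{align*}
The only non-routine step is the verification of additivity of $\FractionCategory \mathcal{A}$, and this is immediate from the diagrammatic criterion once one uses that $\LocalisationFunctor$ is bijective on objects and preserves finite sums.
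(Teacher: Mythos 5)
Your proposal is correct and follows essentially the same route as the paper's proof: invoke proposition~\ref{prop:uni-fractionable_categories_admitting_finite_co_products}\ref{prop:uni-fractionable_categories_admitting_finite_co_products:sums} to get finite sums in \(\FractionCategory \mathcal{A}\) preserved by \(\LocalisationFunctor\), choose \(\LocalisationFunctor(Y) \directsum \LocalisationFunctor(Y) = \LocalisationFunctor(Y \directsum Y)\) so that \(\begin{smallpmatrix} 1 & 0 \\ 1 & 1 \end{smallpmatrix}\) in \(\FractionCategory \mathcal{A}\) is the image of the corresponding isomorphism in \(\mathcal{A}\), conclude additivity by the recalled criterion, and derive the formula from corollary~\ref{cor:splitting_double_fractions} and additivity of \(\LocalisationFunctor\). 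No gaps.
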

\begin{proof}
By proposition~\ref{prop:uni-fractionable_categories_admitting_finite_co_products}\ref{prop:uni-fractionable_categories_admitting_finite_co_products:sums}\ref{prop:uni-fractionable_categories_admitting_finite_co_products:sums:denominators_closedness_implies_preservation_by_localisation}, \(\LocalisationFunctor(0)\) is a zero object in \(\FractionCategory \mathcal{A}\), and for objects \(X_1\), \(X_2\) in \(\mathcal{A}\), the object \(\LocalisationFunctor(X_1 \directsum X_2)\) is a sum of \(\LocalisationFunctor(X_1)\) and \(\LocalisationFunctor(X_2)\) in \(\FractionCategory \mathcal{A}\) with \(\emb_k^{\LocalisationFunctor(X_1 \directsum X_2)} = \LocalisationFunctor(\emb_k^{X_1 \directsum X_2})\) and \(\pr_k^{\LocalisationFunctor(X_1 \directsum X_2)} = \LocalisationFunctor(\pr_k^{X_1 \directsum X_2})\) for \(k \in \{1, 2\}\). Thus \(\FractionCategory \mathcal{A}\) admits finite sums. For the purpose of this proof, let us choose \(\LocalisationFunctor(X_1) \directsum \LocalisationFunctor(X_2) := \LocalisationFunctor(X_1 \directsum X_2)\) for \(X_1, X_2 \in \Ob \mathcal{A}\), so that we can use matrix notation for induced morphisms between those objects. Then we have \(\begin{smallpmatrix} 1 & 0 \\ 1 & 1 \end{smallpmatrix} = \LocalisationFunctor(\begin{smallpmatrix} 1 & 0 \\ 1 & 1 \end{smallpmatrix})\colon \LocalisationFunctor(X) \directsum \LocalisationFunctor(X) \map \LocalisationFunctor(X) \directsum \LocalisationFunctor(X)\) for every object \(X\) in \(\mathcal{A}\), and so \(\begin{smallpmatrix} 1 & 0 \\ 1 & 1 \end{smallpmatrix}\) is an isomorphism. Altogether, \(\FractionCategory \mathcal{A}\) is an additive category and \(\LocalisationFunctor\colon \mathcal{A} \map \FractionCategory \mathcal{A}\) is an additive functor. In particular, we obtain
\begin{align*}
\doublefrac{b}{f}{a} + \doublefrac{b}{g}{a} & = (\LocalisationFunctor(b))^{- 1} \LocalisationFunctor(f) (\LocalisationFunctor(a))^{- 1} + (\LocalisationFunctor(b))^{- 1} \LocalisationFunctor(g) (\LocalisationFunctor(a))^{- 1} \\
& = (\LocalisationFunctor(b))^{- 1} (\LocalisationFunctor(f) + \LocalisationFunctor(g)) (\LocalisationFunctor(a))^{- 1} = (\LocalisationFunctor(b))^{- 1} \LocalisationFunctor(f + g) (\LocalisationFunctor(a))^{- 1} \\
& = \doublefrac{b}{(f + g)}{a}
\end{align*}
for parallel \(3\)-arrows \((b, f, a)\) and \((b, g, a)\) in \(\mathcal{A}\).
\end{proof}

\section{Applications} \label{sec:applications}

In this final section, we consider some examples and applications.

\subsection*{Quillen model categories} \label{ssec:quillen_model_categories}

Given a Quillen model category \(\mathcal{M}\)~\cite[ch.~I, \S 1, def.~1]{quillen:1967:homotopical_algebra}, we denote by \(\Cof(\mathcal{M})\) the full subcategory of cofibrant objects, by \(\Fib(\mathcal{M})\) the full subcategory of fibrant objects and by \(\Bif(\mathcal{M})\) the full subcategory of bifibrant (that is, cofibrant and fibrant) objects.

\begin{example} \label{ex:model_categories_are_uni-fractionable_categories}
Given a Quillen model category \(\mathcal{M}\), the categories \(\mathcal{M}\), \(\Cof(\mathcal{M})\), \(\Fib(\mathcal{M})\), \(\Bif(\mathcal{M})\) carry the structure of uni-fractionable categories, where
\begin{align*}
\Denominators \mathcal{C} & = \{w \in \Mor \mathcal{C} \mid \text{\(w\) is a weak equivalence}\}, \\
\SDenominators \mathcal{C} & = \{i \in \Mor \mathcal{C} \mid \text{\(i\) is an acyclic cofibration}\}, \\
\TDenominators \mathcal{C} & = \{p \in \Mor \mathcal{C} \mid \text{\(p\) is an acyclic fibration}\}
\end{align*}
for \(\mathcal{C} \in \{\mathcal{M}, \Cof(\mathcal{M}), \Fib(\mathcal{M}), \Bif(\mathcal{M})\}\). In particular, the homotopy category \(\HomotopyCategory \mathcal{M}\) is isomorphic to \(\FractionCategory \mathcal{M}\). If \(\mathcal{M}\) is a closed Quillen model category, then \(\Denominators \mathcal{C}\) is saturated for \(\mathcal{C} \in \{\mathcal{M}, \Cof(\mathcal{M}), \Fib(\mathcal{M}), \Bif(\mathcal{M})\}\). The localisation functor \(\LocalisationFunctor\colon \mathcal{C} \map \FractionCategory \mathcal{C}\) preserves finite coproducts for \(\mathcal{C} \in \{\Cof(\mathcal{M}), \Bif(\mathcal{M})\}\) and finite products for \(\mathcal{C} \in \{\Fib(\mathcal{M}), \Bif(\mathcal{M})\}\). (\footnote{In general, the localisation functor \(\LocalisationFunctor\colon \mathcal{M} \map \FractionCategory \mathcal{M}\) does not preserve finite coproducts or finite products since the set of denominators in a closed Quillen model category need not be closed under finite (co)products. A counterexample is provided by \((\Integers / 4 \downarrow \fgMod(\Integers / 4))\), cf.~\cite[rem.~3.11]{dwyer_spalinski:1995:homotopy_theories_and_model_categories}, as considered in~\cite[ex.]{dwyer_radulescu-banu_thomas:2010:faithfulness_of_a_functor_of_quillen}: The coproduct of \(2\colon (\Integers / 4, 1) \map (\Integers / 4, 2)\) with itself is given by \(\begin{smallpmatrix} 2 & 0 \end{smallpmatrix}\colon (\Integers / 4, 1) \map (\Integers / 4 \directsum \Integers / 2, \begin{smallpmatrix} 2 & 0 \end{smallpmatrix})\); the former is a weak equivalence, but the latter is not since \(\Integers / 4\) is a bijective object and \(\Integers / 4 \directsum \Integers / 2\) is not a bijective object in \(\fgMod(\Integers / 4)\).})
\end{example}
\begin{proof} \
\begin{enumerate}
\item \label{ex:model_categories_are_uni-fractionable_categories:proof:whole_model_category} We consider \(\mathcal{M}\) and verify the axioms of a uni-fractionable category.
\begin{itemize}
\item[(Cat)] By definition of a Quillen model category, weak equivalences, cofibrations and fibrations are closed under composition and contain all isomorphisms. Hence in particular weak equivalences, acyclic cofibrations and acyclic fibrations are closed under composition and contain all identities.
\item[(2\,of\,3)] This holds by definition of a Quillen model category.
\item[(WU)] We suppose given an acyclic cofibration \(i\colon X \map X'\) and a morphism \(f\colon X \map Y\) in \(\mathcal{M}\), and we let
\[\begin{tikzpicture}[baseline=(m-2-1.base)]
  \matrix (m) [diagram]{
    X' & Y' \\
    X & Y \\};
  \path[->, font=\scriptsize]
    (m-1-1) edge node[above] {\(f'\)} (m-1-2)
    (m-2-1) edge node[above] {\(f\)} (m-2-2)
            edge node[left] {\(i\)} (m-1-1)
    (m-2-2) edge node[right] {\(i'\)} (m-1-2);
\end{tikzpicture}\]
be a pushout rectangle in \(\mathcal{C}\). Then \(i'\) is an acyclic cofibration.

The other assertion follows by duality.
\item[(Fac)] Since every morphism decomposes into a composite of a cofibration followed by an acyclic fibration, the assertion follows by semi-saturatedness.
\end{itemize}
Altogether, \(\mathcal{M}\) becomes a uni-fractionable category with
\begin{align*}
\Denominators \mathcal{M} & = \{w \in \Mor \mathcal{M} \mid \text{\(w\) is a weak equivalence}\}, \\
\SDenominators \mathcal{M} & = \{i \in \Mor \mathcal{M} \mid \text{\(i\) is an acyclic cofibration}\}, \\
\TDenominators \mathcal{M} & = \{p \in \Mor \mathcal{M} \mid \text{\(p\) is an acyclic fibration}\}.
\end{align*}
The assertion on the saturatedness of \(\mathcal{M}\) is proven in~\cite[ch.~I, \S 5, prop.~1]{quillen:1967:homotopical_algebra}.
\item \label{ex:model_categories_are_uni-fractionable_categories:proof:cofibrant_objects} We consider \(\Cof(\mathcal{M})\) and have to verify the axioms of a uni-fractionable category. Since (Cat) and (2\,of\,3) hold for \(\mathcal{M}\) by~\ref{ex:model_categories_are_uni-fractionable_categories:proof:whole_model_category}, they hold in particular for \(\Cof(\mathcal{M})\).
\begin{itemize}
\item[(WU)] We suppose given an acyclic cofibration \(i\colon X \map X'\) and a morphism \(f\colon X \map Y\) in \(\Cof(\mathcal{M})\), and we let
\[\begin{tikzpicture}[baseline=(m-2-1.base)]
  \matrix (m) [diagram without objects]{
    X' & Y' \\
    X & Y \\};
  \path[->, font=\scriptsize]
    (m-1-1) edge node[above] {\(f'\)} (m-1-2)
    (m-2-1) edge node[above] {\(f\)} (m-2-2)
            edge node[left] {\(i\)} (m-1-1)
    (m-2-2) edge node[right] {\(i'\)} (m-1-2);
\end{tikzpicture}\]
be a pushout rectangle in \(\mathcal{C}\). Then \(i'\) is an acyclic cofibration, and since \(Y\) is cofibrant and \(i'\) is in particular a cofibration, it follows that \(Y'\) is also cofibrant.

Now we suppose given an acyclic fibration \(p\colon Y' \map Y\) and a morphism \(f\colon X \map Y\) in \(\Cof(\mathcal{M})\), and we let
\[\begin{tikzpicture}[baseline=(m-2-1.base)]
  \matrix (m) [diagram]{
    X' & Y' \\
    X & Y \\};
  \path[->, font=\scriptsize]
    (m-1-1) edge node[above] {\(f'\)} (m-1-2)
            edge node[left] {\(p'\)} (m-2-1)
    (m-1-2) edge node[right] {\(p\)} (m-2-2)
    (m-2-1) edge node[above] {\(f\)} (m-2-2);
\end{tikzpicture}\]
be a pullback rectangle in \(\mathcal{C}\). Then \(p'\) is an acyclic fibration. We consider a strong cofibrant approximation of \(X'\), that is, we let \(\tilde X'\) be a cofibrant object together with an acyclic fibration \(q\colon \tilde X' \map X'\). The composite \(q p'\) is an acyclic fibration by multiplicativity. We will show that
\[\begin{tikzpicture}[baseline=(m-2-1.base)]
  \matrix (m) [diagram without objects]{
    \tilde X' & Y' \\
    X & Y \\};
  \path[->, font=\scriptsize]
    (m-1-1) edge node[above] {\(q f'\)} (m-1-2)
            edge node[left] {\(q p'\)} (m-2-1)
    (m-1-2) edge node[right] {\(p\)} (m-2-2)
    (m-2-1) edge node[above] {\(f\)} (m-2-2);
\end{tikzpicture}\]
is a weak pullback of \(f\) along \(p\). To this end, we suppose given an object \(T \in \Ob \Cof(\mathcal{M})\) and morphisms \(s\colon T \map X\), \(t\colon T \map Y'\) with \(s f = t p\). By the universal property of \(X'\), there exists a (unique) morphism \(u\colon T \map X'\) such that \(u p' = s\) and \(u f' = t\).
\[\begin{tikzpicture}[baseline=(m-3-1.base)]
  \matrix (m) [diagram]{
    T & & \\
    & X' & Y' \\
    & X & Y \\};
  \path[->, font=\scriptsize]
    (m-1-1) edge[out=0, in=130] node[above] {\(t\)} (m-2-3)
            edge[exists] node[right] {\(u\)} (m-2-2)
            edge[out=-90, in=140] node[left] {\(s\)} (m-3-2)
    (m-2-2) edge node[above] {\(f'\)} (m-2-3)
            edge node[left] {\(p'\)} (m-3-2)
    (m-2-3) edge node[right] {\(p\)} (m-3-3)
    (m-3-2) edge node[above] {\(f\)} (m-3-3);
\end{tikzpicture}\]
Moreover, since \(T\) is cofibrant and \(q\) is an acyclic fibration, there exists a lift \(\hat u\colon T \map \tilde X'\) such that \(u = \hat u q\).
\[\begin{tikzpicture}[baseline=(m-2-1.base)]
  \matrix (m) [diagram]{
    & \tilde X' \\
    T & X' \\};
  \path[->, font=\scriptsize]
    (m-1-2) edge node[right] {\(q\)} (m-2-2)
    (m-2-1) edge node[above] {\(u\)} (m-2-2)
            edge[exists] node[left] {\(\hat u\)} (m-1-2);
\end{tikzpicture}\]
Now we have \(\hat u q p' = u p' = s\) and \(\hat u q f' = u f' = t\).
\[\begin{tikzpicture}[baseline=(m-3-1.base)]
  \matrix (m) [diagram]{
    T & & \\
    & \tilde X' & Y' \\
    & X & Y \\};
  \path[->, font=\scriptsize]
    (m-1-1) edge[out=0, in=130] node[above] {\(t\)} (m-2-3)
            edge node[right] {\(\hat u\)} (m-2-2)
            edge[out=-90, in=140] node[left] {\(s\)} (m-3-2)
    (m-2-2) edge node[above] {\(q f'\)} (m-2-3)
            edge node[left] {\(q p'\)} (m-3-2)
    (m-2-3) edge node[right] {\(p\)} (m-3-3)
    (m-3-2) edge node[above] {\(f\)} (m-3-3);
\end{tikzpicture}\]
\item[(Fac)] We let \(w\colon X \map Y\) be a weak equivalence in \(\Cof(\mathcal{M})\). Then there exists an acyclic cofibration \(i\colon X \map Z\) and an acyclic fibration \(p\colon Z \map Y\) in \(\mathcal{M}\) with \(w = i p\).
\[\begin{tikzpicture}[baseline=(m-2-1.base)]
  \matrix (m) [diagram=0.9em]{
    & Z & \\
    X & & Y \\};
  \path[->, font=\scriptsize]
    (m-1-2) edge[exists] node[right=2pt] {\(p\)} (m-2-3)
    (m-2-1) edge node[above] {\(w\)} (m-2-3)
            edge[exists] node[left] {\(i\)} (m-1-2);
\end{tikzpicture}\]
But since \(X\) is cofibrant and \(i\) is a cofibration, \(Z\) is cofibrant, too.
\end{itemize}
Altogether, \(\Cof(\mathcal{M})\) becomes a uni-fractionable category with
\begin{align*}
\Denominators \Cof(\mathcal{M}) & = \{w \in \Mor \Cof(\mathcal{M}) \mid \text{\(w\) is a weak equivalence}\}, \\
\SDenominators \Cof(\mathcal{M}) & = \{i \in \Mor \Cof(\mathcal{M}) \mid \text{\(i\) is an acyclic cofibration}\}, \\
\TDenominators \Cof(\mathcal{M}) & = \{p \in \Mor \Cof(\mathcal{M}) \mid \text{\(p\) is an acyclic fibration}\}.
\end{align*}
The assertion on the saturatedness of \(\Cof(\mathcal{M})\) follows from~\ref{ex:model_categories_are_uni-fractionable_categories:proof:whole_model_category} since if \(\LocalisationFunctor[\FractionCategory \Cof(\mathcal{M})](f)\) is an isomorphism, then also \(\LocalisationFunctor[\FractionCategory \mathcal{M}](f)\) is an isomorphism. The fact that the localisation functor \(\LocalisationFunctor\colon \Cof(\mathcal{M}) \map \FractionCategory \Cof(\mathcal{M})\) preserves finite coproducts follows from the gluing lemma~\cite[lem.~7.4]{gunnarsson:1978:abstract_homotopy_theory_and_related_topics}, cf.\ also~\cite[ch.~II, lem.~8.8]{goerss_jardine:1999:simplicial_homotopy_theory}, and proposition~\ref{prop:uni-fractionable_categories_admitting_finite_co_products}\ref{prop:uni-fractionable_categories_admitting_finite_co_products:coproducts}\ref{prop:uni-fractionable_categories_admitting_finite_co_products:coproducts:denominators_closedness_implies_preservation_by_localisation}.
\item \label{ex:model_categories_are_uni-fractionable_categories:proof:fibrant_objects} This is dual to~\ref{ex:model_categories_are_uni-fractionable_categories:proof:cofibrant_objects}.
\item \label{ex:model_categories_are_uni-fractionable_categories:proof:bifibrant_objects} This is a combination of~\ref{ex:model_categories_are_uni-fractionable_categories:proof:cofibrant_objects} and~\ref{ex:model_categories_are_uni-fractionable_categories:proof:fibrant_objects}. \qedhere
\end{enumerate}
\end{proof}

As an application of our abstract machinery, we obtain the following part of Quillen's homotopy category theorem~\cite[ch.~I, \S 1, th.~1]{quillen:1967:homotopical_algebra}. Given a Quillen model category \(\mathcal{M}\), we (re-)define the \newnotion{homotopy category} of \(\mathcal{C} \in \{\mathcal{M}, \Cof(\mathcal{M}), \Fib(\mathcal{M}), \Bif(\mathcal{M})\}\) by \(\HomotopyCategory \mathcal{C} := \FractionCategory \mathcal{C}\), using the uni-fractionable category structures from the preceding example.

\begin{example} \label{ex:part_of_quillens_homotopy_category_theorem}
We suppose given a Quillen model category \(\mathcal{M}\). The commutative diagram of inclusion functors
\[\begin{tikzpicture}[baseline=(m-3-2.base)]
  \matrix (m) [diagram]{
    & \Cof(\mathcal{M}) & \\
    \Bif(\mathcal{M}) & & \mathcal{M} \\
    & \Fib(\mathcal{M}) & \\};
  \path[->, font=\scriptsize]
    (m-1-2) edge node[right=1pt] {\(\inc\)} (m-2-3)
    (m-2-1) edge node[left] {\(\inc\)} (m-3-2)
            edge node[left=1pt] {\(\inc\)} (m-1-2)
    (m-3-2) edge node[right] {\(\inc\)} (m-2-3);
\end{tikzpicture}\]
induces a commutative diagram of equivalences
\[\begin{tikzpicture}[baseline=(m-3-2.base)]
  \matrix (m) [diagram]{
    & \HomotopyCategory{\Cof(\mathcal{M})} & \\
    \HomotopyCategory{\Bif(\mathcal{M})} & & \HomotopyCategory \mathcal{M} \\
    & \HomotopyCategory{\Fib(\mathcal{M})} & \\};
  \path[->, font=\scriptsize]
    (m-1-2) edge node[sloped, above] {\(\categoricalequivalent\)} (m-2-3)
    (m-2-1) edge node[sloped, above] {\(\categoricalequivalent\)} (m-1-2)
            edge node[sloped, above] {\(\categoricalequivalent\)} (m-3-2)
    (m-3-2) edge node[sloped, above] {\(\categoricalequivalent\)} (m-2-3);
\end{tikzpicture}.\]

In particular, \(\HomotopyCategory{\Bif(\mathcal{M})} \categoricalequivalent \HomotopyCategory \mathcal{M}\).
\end{example}
\begin{proof}
This follows from proposition~\ref{prop:full_uni-fractionable_subcategory_induces_equivalence_of_the_fraction_categories}, using criterion~\ref{prop:full_uni-fractionable_subcategory_induces_equivalence_of_the_fraction_categories:s-resolution} for \(\inc\colon \Cof(\mathcal{M}) \map \mathcal{M}\) and \(\inc\colon \Bif(\mathcal{M}) \map \Fib(\mathcal{M})\), and using criterion~\ref{prop:full_uni-fractionable_subcategory_induces_equivalence_of_the_fraction_categories:t-resolution} for \(\inc\colon \Fib(\mathcal{M}) \map \mathcal{M}\) and \(\inc\colon \Bif(\mathcal{M}) \map \Cof(\mathcal{M})\).
\end{proof}

The proof of Quillen's homotopy category theorem, which states in particular that the homotopy category \(\HomotopyCategory \mathcal{M}\) is equivalent to the quotient category \(\Bif(\mathcal{M}) / {\homotopic}\), where \({\homotopic}\) denotes the homotopy congruence, can now be completed as in~\cite[cor.~1.2.9]{hovey:1999:model_categories} by showing that \(\Bif(\mathcal{M}) / {\homotopic}\) fulfills the universal property of a localisation, which is essentially a corollary of Whitehead's theorem~\cite[prop.~1.2.8]{hovey:1999:model_categories}.

\subsection*{Derivable categories} \label{ssec:derivable_categories}

Recall that a \newnotion{derivable category} in the sense of \eigenname{Cisinski}~\cite[sec.~2.25]{cisinski:2010:categories_derivables} consists of the same data as a Quillen model category, that is, a category \(\mathcal{C}\) together with three distinguished subsets of morphisms, called \newnotion{cofibrations}, \newnotion{fibrations} and \newnotion{weak equivalences}, subject to the following axioms, where (co)fibrant objects and acyclic (co)fibrations are defined as in the Quillen model category case: The set of weak equivalences is supposed to be semi-saturated. The set of cofibrations is supposed to be closed under (binary) composition. There exists an initial object in \(\mathcal{C}\), which is supposed to be cofibrant. The set of cofibrant objects is supposed to be closed under isomorphisms. The set of cofibrations between cofibrant objects and the subset of acyclic cofibrations therein are supposed to be stable under pushouts along morphisms between cofibrant objects. Every morphism with cofibrant source object factors into a cofibration followed by a weak equivalence. And dually for the fibrations and fibrant objects.

For homotopical algebra in derivable categories, cf.\ also the manuscript of \eigenname{R{\u{a}}dulescu-Banu}~\cite{radulescu-banu:2006:cofibrations_in_homotopy_theory}, who uses the terminology \newnotion{Anderson-Brown-Cisinski premodel category}.

Derivable categories are a natural generalisation of \newnotion{categories of fibrant objects} in the sense of \eigenname{K.~Brown}~\cite[sec.~1]{brown:1974:abstract_homotopy_theory_and_generalized_sheaf_cohomology}. More precisely: Given a derivable category, then its full subcategory of fibrant objects is a category of fibrant objects in this sense, and its full subcategory of cofibrant objects fulfills the dual properties. Conversely, given a category \(\mathcal{C}\) together with distinguished subsets of cofibrations, fibrations and weak equivalences such that there exists an terminal object in \(\mathcal{C}\) and such that the full subcategory of fibrant objects is a category of fibrant objects in the sense of \eigenname{K.~Brown}, and dually, then \(\mathcal{C}\) fulfills all axioms of a derivable category except possibly for the stronger factorisation axioms of \eigenname{Cisinski}. These stronger factorisation axioms are sufficient to obtain the desirable equivalences between the homotopy categories of the full subcategories of (co)fibrant objects in \(\mathcal{C}\) and the homotopy category of \(\mathcal{C}\), see~\cite[prop.~1.8]{cisinski:2010:categories_derivables}.

In the proof of example~\ref{ex:model_categories_are_uni-fractionable_categories}, we have not used the existence of general finite limits and colimits~\cite[ch.~I, \S 1, def.~1, ax.~M0]{quillen:1967:homotopical_algebra}. Moreover, to show that a Quillen model category carries the structure of a uni-fractionable category, we also did not use the lifting axioms~\cite[ch.~I, \S 1, def.~1, ax.~M1]{quillen:1967:homotopical_algebra}. Thus we obtain the following more general example.

\begin{example} \label{ex:derivable_categories_with_additional_properties_are_uni-fractionable_categories}
We let \(\mathcal{C}\) be a derivable category such that the following properties hold.
\begin{itemize}
\item Every identity in \(\mathcal{C}\) is a cofibration and a fibration. (\footnote{This is no restriction: Given a derivable category \(\mathcal{C}\) with set of cofibrations \(C\), set of fibrations \(F\) and set of weak equivalences \(W\), the underlying category of \(\mathcal{C}\) also becomes a derivable category with set of cofibrations \(C \union \{1_X \mid X \in \Ob \mathcal{C}\}\), set of fibrations \(F \union \{1_X \mid X \in \Ob \mathcal{C}\}\) and set of weak equivalences \(W\).})
\item Given an acyclic cofibration \(i\colon X \map X'\) and a morphism \(f\colon X \map Y\) in \(\mathcal{C}\), there exists a pushout rectangle
\[\begin{tikzpicture}[baseline=(m-2-1.base)]
  \matrix (m) [diagram]{
    X' & Y' \\
    X & Y \\};
  \path[->, font=\scriptsize]
    (m-1-1) edge node[above] {\(f'\)} (m-1-2)
    (m-2-1) edge node[above] {\(f\)} (m-2-2)
            edge node[left] {\(i\)} (m-1-1)
    (m-2-2) edge node[right] {\(i'\)} (m-1-2);
\end{tikzpicture}\]
in \(\mathcal{C}\) such that \(i'\) is an acyclic cofibration. Dually, given an acyclic fibration \(p\colon Y' \map Y\) and a morphism \(f\colon X \map Y\) in \(\mathcal{C}\), there exists a pullback rectangle
\[\begin{tikzpicture}[baseline=(m-2-1.base)]
  \matrix (m) [diagram]{
    X' & Y' \\
    X & Y \\};
  \path[->, font=\scriptsize]
    (m-1-1) edge node[above] {\(f'\)} (m-1-2)
            edge node[left] {\(p'\)} (m-2-1)
    (m-1-2) edge node[right] {\(p\)} (m-2-2)
    (m-2-1) edge node[above] {\(f\)} (m-2-2);
\end{tikzpicture}\]
in \(\mathcal{C}\) such that \(p'\) is an acyclic fibration.
\item For every weak equivalence \(w\colon X \map Y\) in \(\mathcal{C}\) there exists an acyclic cofibration \(i\colon X \map Z\) and an acyclic fibration \(p\colon Z \map Y\) with \(w = i p\).
\[\begin{tikzpicture}[baseline=(m-2-1.base)]
  \matrix (m) [diagram=0.9em]{
    & Z & \\
    X & & Y \\};
  \path[->, font=\scriptsize]
    (m-1-2) edge[exists] node[right=2pt] {\(p\)} (m-2-3)
    (m-2-1) edge node[above] {\(w\)} (m-2-3)
            edge[exists] node[left] {\(i\)} (m-1-2);
\end{tikzpicture}\]
\end{itemize}
Then \(\mathcal{C}\) carries the structure of a uni-fractionable category, where
\begin{align*}
\Denominators \mathcal{C} & = \{w \in \Mor \mathcal{C} \mid \text{\(w\) is a weak equivalence}\}, \\
\SDenominators \mathcal{C} & = \{i \in \Mor \mathcal{C} \mid \text{\(i\) is an acyclic cofibration}\}, \\
\TDenominators \mathcal{C} & = \{p \in \Mor \mathcal{C} \mid \text{\(p\) is an acyclic fibration}\}.
\end{align*}
\end{example}
\begin{proof}
This is the same proof as for a Quillen model category, see part~\ref{ex:model_categories_are_uni-fractionable_categories:proof:whole_model_category} of the proof of example~\ref{ex:model_categories_are_uni-fractionable_categories}.
\end{proof}

\subsection*{Complexes and exact categories} \label{ssec:complexes_and_exact_categories}

Our next example yields a construction for the derived category of an arbitrary abelian category \(\mathcal{A}\). We denote by \(\CohomologyGroup\colon \Com(\mathcal{A}) \map \mathcal{A}^{\DiscreteCategory \Integers}\) the cohomology functor, where \(\DiscreteCategory \Integers\) denotes the discrete category associated to the set \(\Integers\) of integers.

\begin{example} \label{ex:category_of_complexes_in_an_abelian_category_with_quasi-isomorphisms_as_denominators}
The category \(\Com(\mathcal{A})\) of complexes in an abelian category \(\mathcal{A}\) carries the structure of a saturated additive uni-fractionable category, where
\begin{align*}
\Denominators \Com(\mathcal{A}) & = \{f \in \Mor \Com(\mathcal{A}) \mid \text{\(\CohomologyGroup(f)\) is an isomorphism}\}, \\
\SDenominators \Com(\mathcal{A}) & = \{i \in \Denominators \Com(\mathcal{A}) \mid \text{\(i\) is a monomorphism}\}, \\
\TDenominators \Com(\mathcal{A}) & = \{p \in \Denominators \Com(\mathcal{A}) \mid \text{\(p\) is an epimorphism}\}.
\end{align*}
In particular, the derived category \(\DerivedCategory(\mathcal{A})\) is isomorphic to \(\FractionCategory \Com(\mathcal{A})\).
\end{example}
\begin{proof}
First of all, the set \(\{f \in \Mor \Com(\mathcal{A}) \mid \text{\(\CohomologyGroup(f)\) is an isomorphism}\}\) is closed under finite sums since the cohomology functor \(\CohomologyGroup\colon \Com(\mathcal{A}) \map \mathcal{A}^{\DiscreteCategory \Integers}\) is additive. We verify the axioms of a uni-fractionable category.
\begin{itemize}
\item[(2\,of\,3)] Given morphisms \(f, g \in \Mor \Com(\mathcal{A})\) with \(\Target f = \Source g\), we have \(\CohomologyGroup(f) \CohomologyGroup(g) = \CohomologyGroup(f g)\). Hence if two out of the morphisms \(\CohomologyGroup(f)\), \(\CohomologyGroup(g)\), \(\CohomologyGroup(f g)\) are isomorphisms, then so is the third.
\item[(Cat)] For every \(X \in \Ob \Com(\mathcal{A})\), we have \(\CohomologyGroup(1_X) = 1_{\CohomologyGroup(X)}\), so in particular \(\CohomologyGroup(1_X)\) is an isomorphism. Thus the set \(\{f \in \Mor \Com(\mathcal{A}) \mid \text{\(\CohomologyGroup(f)\) is an isomorphism}\}\) is multiplicative. But then also its subsets of monomorphisms resp.\ epimorphisms are multiplicative since monomorphisms compose to monomorphisms resp.\ epimorphisms compose to epimorphisms.
\item[(WU)] We suppose given a monomorphism \(i\colon X \map X'\) and a morphism \(f\colon X \map Y\) in \(\Com(\mathcal{A})\), and we let
\[\begin{tikzpicture}[baseline=(m-2-1.base)]
  \matrix (m) [diagram]{
    X' & Y' \\
    X & Y \\};
  \path[->, font=\scriptsize]
    (m-1-1) edge node[above] {\(f'\)} (m-1-2)
    (m-2-1) edge node[above] {\(f\)} (m-2-2)
            edge node[left] {\(i\)} (m-1-1)
    (m-2-2) edge node[right] {\(i'\)} (m-1-2);
\end{tikzpicture}\]
be a pushout rectangle in \(\Com(\mathcal{A})\). Then we obtain an induced isomorphism \(\Cokernel i \map \Cokernel i'\). A consideration of the long exact cohomology sequence induced by
\[X \morphism[i] X' \morphism[\quo] \Cokernel i\]
shows that \(\CohomologyGroup(i)\) is an isomorphism if and only if \(\CohomologyGroup(\Cokernel i) \isomorphic 0\), and analogously for \(i'\). So if \(\CohomologyGroup(i)\) is an isomorphism, then also \(\CohomologyGroup(i')\) is an isomorphism.

The other assertion follows by duality.
\item[(Fac)] This follows from the fact that every morphism \(f\colon X \map Y\) in \(\Com(\mathcal{A})\) can be factorised into the morphism \(\begin{smallpmatrix} f & \ins \end{smallpmatrix}\colon X \map Y \directsum \Cone X\), where \(\ins\colon X \map \Cone X\) is the insertion of \(X\) into the cone of \(X\), followed by the (split) epimorphism \(\begin{smallpmatrix} 1 \\ 0 \end{smallpmatrix}\colon Y \directsum \Cone X \map Y\), cf.~\cite[III.3.2--3]{gelfand_manin:2003:methods_of_homological_algebra}.
\[\begin{tikzpicture}[baseline=(m-3-1.base)]
  \matrix (m) [diagram=0.9em]{
    & Y \directsum \Cone X & \\
    & & \\
    X & & Y \\};
  \path[->, font=\scriptsize]
    (m-1-2) edge node[right] {\(\begin{smallpmatrix} 1 \\ 0 \end{smallpmatrix}\)} (m-3-3)
    (m-3-1) edge node[above] {\(f\)} (m-3-3)
            edge node[left] {\(\begin{smallpmatrix} f & \ins \end{smallpmatrix}\)} (m-1-2);
\end{tikzpicture}\]
The morphism \(\begin{smallpmatrix} f & \ins \end{smallpmatrix}\) is a monomorphism as \(\ins\) is a monomorphism. Moreover, \(\CohomologyGroup(\begin{smallpmatrix} 1 \\ 0 \end{smallpmatrix})\colon \CohomologyGroup(Y \directsum \Cone X) \map \CohomologyGroup(Y)\) is an isomorphism since \(\CohomologyGroup\) is an additive functor and \(\CohomologyGroup(\Cone X) \isomorphic 0\). Hence if \(\CohomologyGroup(f)\) is an isomorphism, then also \(\CohomologyGroup(\begin{smallpmatrix} f & \ins \end{smallpmatrix})\) is an isomorphism. (\footnote{Alternatively, one can give a factorisation using the cylinder of \(f\) instead of \(Y \directsum \Cone X\), cf.\ for example~\cite[III.3.2--3]{gelfand_manin:2003:methods_of_homological_algebra}.}) \qedhere
\end{itemize}
Altogether, \(\Com(\mathcal{A})\) becomes an additive uni-fractionable category with
\begin{align*}
\Denominators \Com(\mathcal{A}) & = \{f \in \Mor \Com(\mathcal{A}) \mid \text{\(\CohomologyGroup(f)\) is an isomorphism}\}, \\
\SDenominators \Com(\mathcal{A}) & = \{i \in \Denominators \Com(\mathcal{A}) \mid \text{\(i\) is a monomorphism}\}, \\
\TDenominators \Com(\mathcal{A}) & = \{p \in \Denominators \Com(\mathcal{A}) \mid \text{\(p\) is an epimorphism}\}.
\end{align*}

To show that \(\Com(\mathcal{A})\) is saturated, we suppose given morphisms \(f, g, h \in \Mor \Com(\mathcal{A})\) with \(\Target f = \Source g\) and \(\Target g = \Source h\) and such that \(f g\) and \(g h\) are denominators in \(\Com(\mathcal{A})\), that is, \(\CohomologyGroup(f g)\) and \(\CohomologyGroup(g h)\) are isomorphisms in \(\mathcal{A}^{\DiscreteCategory \Integers}\). Then also \(\CohomologyGroup(g)\) is an isomorphism with \(\CohomologyGroup(g)^{- 1} = \CohomologyGroup(f g)^{- 1} \CohomologyGroup(f) = \CohomologyGroup(h) \CohomologyGroup(g h)^{- 1}\) and hence \(g\) is a denominator in \(\Com(\mathcal{A})\). But since \(\Com(\mathcal{A})\) is semi-saturated, this already implies that \(\Com(\mathcal{A})\) is weakly saturated and therefore saturated by proposition~\ref{prop:weakly_saturated_uni-fractionable_category_is_a_saturated_uni-fractionable_category}.
\end{proof}

The preceding example of complexes can be generalised to exact categories in the sense of \eigenname{Quillen}~\cite[\S 2, pp.~99--100]{quillen:1973:higher_algebraic_k-theory_I} as follows in example~\ref{ex:exact_categories_with_enough_formal_cones_with_respect_to_subcategories_closed_under_pure_short_exact_sequences}. A denominator in example~\ref{ex:category_of_complexes_in_an_abelian_category_with_quasi-isomorphisms_as_denominators} is a morphism of complexes such that the induced morphisms on the cohomology level are isomorphisms. This is what one usually calls a \emph{quasi-isomorphism}, and can be characterised as follows: A morphism of complexes with entries in an abelian category is a quasi-isomorphism if and only if its cone is acyclic~\cite[cor.~1.5.4]{weibel:1997:an_introduction_to_homological_algebra}. Since we have no cohomology functor for exact categories, we have to clarify first what we want to understand by a (formal) cone and a quasi-isomorphism in an exact category. We will develop these notions and some facts in appendix~\ref{sec:formal_cones_in_exact_categories}, cf.\ in particular definitions~\ref{def:formal_cone_with_respect_to_a_subcategory_of_an_exact_category} resp.~\ref{def:exact_category_having_enough_formal_cones} resp.~\ref{def:quasi-isomorphisms_in_an_exact_category_having_enough_formal_cones} resp.~\ref{def:subcategories_closed_under_pure_short_exact_sequences} for the definitions of formal cones resp.\ having enough formal cones resp.\ quasi-isomorphisms resp.\ closedness under pure short sequences.

\begin{example} \label{ex:exact_categories_with_enough_formal_cones_with_respect_to_subcategories_closed_under_pure_short_exact_sequences} \
We suppose given an exact category \(\mathcal{E}\) and a non-empty full subcategory \(\mathcal{U}\) of \(\mathcal{E}\) that is closed under pure short exact sequences and such that \(\mathcal{E}\) has enough formal \(\mathcal{U}\)-cones. Then \(\mathcal{E}\) carries the structure of a uni-fractionable category with
\begin{align*}
\Denominators \mathcal{E} & = \{f \in \Mor \mathcal{E} \mid \text{\(f\) is a \(\mathcal{U}\)-quasi-isomorphism}\}, \\
\SDenominators \mathcal{E} & = \{i \in \Denominators \mathcal{E} \mid \text{\(i\) is a pure monomorphism}\} \\
& = \{i \in \Mor \mathcal{E} \mid \text{\(i\) is a pure monomorphism with \(\Cokernel i \in \Ob \mathcal{U}\)}\}, \\
\TDenominators \mathcal{E} & = \{p \in \Denominators \mathcal{E} \mid \text{\(p\) is a pure epimorphism}\} \\
& = \{p \in \Mor \mathcal{E} \mid \text{\(p\) is a pure epimorphism with \(\Kernel p \in \Ob \mathcal{U}\)}\}.
\end{align*}
If moreover \(\mathcal{U}\) is closed under summands, then \(\mathcal{E}\) is saturated.
\end{example}
\begin{proof}
The set of quasi-isomorphisms in \(\mathcal{E}\) is closed under finite sums by remark~\ref{rem:for_subcategories_closed_under_pure_short_exact_sequences_the_set_of_quasi-isomorphisms_is_closed_under_sums} and semi-saturated by proposition~\ref{prop:set_of_quasi-isomorphisms_in_an_exact_category_is_semi-saturated}, hence in particular multiplicative. The set of pure monomorphisms that are quasi-isomorphisms is also multiplicative since the set of pure monomorphisms in an exact category is multiplicative~\cite[def.~2.1]{buehler:2010:exact_categories}. Dually, the set of pure epimorphisms that are quasi-isomorphisms is multiplicative. Axiom (WU) is fulfilled by corollary~\ref{cor:quasi-isomorphisms_that_are_pure_mono_or_pure_epi_are_stable_under_bicartesian_squares} since pure monomorphisms in an exact category are stable under pushouts and pure epimorphisms are stable under pullbacks~\cite[def.~2.1]{buehler:2010:exact_categories}. Finally, every quasi-isomorphism factors into a pure monomorphism that is a quasi-isomorphism and a pure epimorphism that is a quasi-isomorphism by corollary~\ref{cor:equivalent_descriptions_for_quasi-isomorphisms_in_exact_categories_having_enough_formal_cones}. Altogether, \(\mathcal{E}\) becomes an additive uni-fractionable category with
\begin{align*}
\Denominators \mathcal{E} & = \{f \in \Mor \mathcal{E} \mid \text{\(f\) is a \(\mathcal{U}\)-quasi-isomorphism}\}, \\
\SDenominators \mathcal{E} & = \{i \in \Denominators \mathcal{E} \mid \text{\(i\) is a pure monomorphism}\}, \\
\TDenominators \mathcal{E} & = \{p \in \Denominators \mathcal{E} \mid \text{\(p\) is a pure epimorphism}\}.
\end{align*}
Moreover, \(\SDenominators \mathcal{E} = \{i \in \Mor \mathcal{E} \mid \text{\(i\) is a pure monomorphism with \(\Cokernel i \in \Ob \mathcal{U}\)}\}\) and \(\TDenominators \mathcal{E} = \{p \in \Mor \mathcal{E} \mid \text{\(p\) is a pure epimorphism with \(\Kernel p \in \Ob \mathcal{U}\)}\}\) by proposition~\ref{prop:criterion_for_pure_monos_resp_pure_epis_being_quasi-isomorphism}. The assertion on the saturatedness of \(\mathcal{E}\) follows from proposition~\ref{prop:for_closed_under_taking_summands_subcategories_the_set_of_quasi-isomorphisms_in_an_exact_category_is_weakly_saturated} and proposition~\ref{prop:weakly_saturated_uni-fractionable_category_is_a_saturated_uni-fractionable_category}.
\end{proof}

Recall that an additive category is said to be \newnotion{idempotent splitting} if every morphism \(e\) with \(e^2 = e\) is split. (\eigenname{B{\"u}hler} uses the notion ``idempotent complete'', see~\cite[def.~6.1]{buehler:2010:exact_categories}.) Moreover, recall that the category of complexes with entries in an exact category becomes an exact category with degreewise pure short exact sequences, see~\cite[lem.~9.1]{buehler:2010:exact_categories}.

As an application of example~\ref{ex:exact_categories_with_enough_formal_cones_with_respect_to_subcategories_closed_under_pure_short_exact_sequences}, we obtain in particular a \(3\)-arrow calculus for the derived category of an idempotent splitting exact category without passing to the homotopy category in advance, cf.~\cite{neeman:1990:the_derived_category_of_an_exact_category},~\cite[sec.~10]{buehler:2010:exact_categories}.

\begin{example} \label{ex:category_of_complexes_in_an_idempotent_splitting_exact_category_with_quasi-isomorphisms_as_denominators}
The category \(\Com(\mathcal{E})\) of complexes in an idempotent splitting exact category \(\mathcal{E}\) carries the structure of a saturated additive uni-fractionable category, where
\begin{align*}
\Denominators \Com(\mathcal{E}) & = \{f \in \Mor \Com(\mathcal{E}) \mid \text{\(f\) is a quasi-isomorphism}\}, \\
\SDenominators \Com(\mathcal{E}) & = \{i \in \Denominators \Com(\mathcal{E}) \mid \text{\(i\) is a pure monomorphism}\} \\
& = \{i \in \Mor \Com(\mathcal{E}) \mid \text{\(i\) is a pure monomorphism with \(\Cokernel i\) purely acyclic}\}, \\
\TDenominators \Com(\mathcal{E}) & = \{p \in \Denominators \Com(\mathcal{E}) \mid \text{\(p\) is a pure epimorphism}\} \\
& = \{p \in \Mor \Com(\mathcal{E}) \mid \text{\(p\) is a pure epimorphism with \(\Kernel p\) purely acyclic}\}.
\end{align*}
In particular, the derived category \(\DerivedCategory(\mathcal{E})\) is isomorphic to \(\FractionCategory \Com(\mathcal{E})\).
\end{example}
\begin{proof}
By~\cite[rem.~10.17]{buehler:2010:exact_categories} and corollary~\ref{cor:equivalent_descriptions_for_quasi-isomorphisms_in_exact_categories_having_enough_formal_cones}, a quasi-isomorphism, in the sense of~\cite[def.~10.16]{buehler:2010:exact_categories}, is precisely a quasi-isomorphism with respect to the full subcategory of pure acyclic complexes~\cite[def.~4(2)]{buehler_kuenzer:2009:some_elementary_considerations_in_exact_categories} in the sense of definition~\ref{def:quasi-isomorphisms_in_an_exact_category_having_enough_formal_cones}. Moreover, the full subcategory of pure acyclic complexes is closed under pure short exact sequences~\cite[cor.~29]{buehler_kuenzer:2009:some_elementary_considerations_in_exact_categories} and under summands~\cite[lem.~10.7]{buehler:2010:exact_categories}, and the exact category \(\Com(\mathcal{E})\) has enough formal cones with respect to this full subcategory~\cite[def.~9.2, rem.~9.9, prop.~10.9]{buehler:2010:exact_categories}. Thus the assertion follows from example~\ref{ex:exact_categories_with_enough_formal_cones_with_respect_to_subcategories_closed_under_pure_short_exact_sequences}.
\end{proof}

\subsection*{Classical examples} \label{ssec:classical_examples}

We finish this article by considering some classical examples, which yield in fact a 2-arrow calculus, but nonetheless fit in our framework. Note that example~\ref{ex:thick_subcategories_of_abelian_and_triangulated_categories_yield_uni-fractionable_categories}\ref{ex:thick_subcategories_of_abelian_and_triangulated_categories_yield_uni-fractionable_categories:abelian_category_with_factorised_structure} implicitly occurs in \eigenname{Grothendieck}'s T{\^o}hoku article~\cite[sec.~1.11]{grothendieck:1957:sur_quelques_points_d_algebre_homologique}. This example differs from the others since here the S-denominators are epimorphisms and the T{\nbd}denominators are monomorphisms, while in all our other examples the S-denominators are ``mono-like'' (certain cofibrations resp.\ monomorphisms resp.\ pure monomorphisms) and the T-denominators are ``epi-like'' (certain fibrations resp.\ epimorphisms resp.\ pure epimorphisms).

Recall that a \newnotion{thick subcategory} of an abelian category \(\mathcal{A}\) is a non-empty full (abelian) subcategory \(\mathcal{U}\) that is closed under extensions, subobjects and quotient objects. Recall that a \newnotion{thick subcategory} of a Verdier triangulated category \(\mathcal{V}\) is a (non-empty) full triangulated subcategory \(\mathcal{U}\) that is closed under taking summands.

\begin{example} \label{ex:thick_subcategories_of_abelian_and_triangulated_categories_yield_uni-fractionable_categories} \
\begin{enumerate}
\item \label{ex:thick_subcategories_of_abelian_and_triangulated_categories_yield_uni-fractionable_categories:abelian_category_with_obvious_structure} We suppose given an abelian category \(\mathcal{A}\) and a thick subcategory \(\mathcal{U}\) in \(\mathcal{A}\). Then \(\mathcal{A}\) carries the structure of an additive uni-fractionable category, where
\[\Denominators \mathcal{A} = \SDenominators \mathcal{A} = \TDenominators \mathcal{A} = \{f \in \Mor \mathcal{A} \mid \text{\(\Kernel f\) and \(\Cokernel f\) are in \(\mathcal{U}\)}\}.\]
In particular, the Serre quotient of \(\mathcal{A}\) by \(\mathcal{U}\) is isomorphic to \(\FractionCategory \mathcal{A}\).
\item \label{ex:thick_subcategories_of_abelian_and_triangulated_categories_yield_uni-fractionable_categories:abelian_category_with_factorised_structure} We suppose given an abelian category \(\mathcal{A}\) and a thick subcategory \(\mathcal{U}\) in \(\mathcal{A}\). Then \(\mathcal{A}\) carries the structure of an additive uni-fractionable category, where
\begin{align*}
& \Denominators \mathcal{A} = \{f \in \Mor \mathcal{A} \mid \text{\(\Kernel f\) and \(\Cokernel f\) are in \(\mathcal{U}\)}\}, \\
& \SDenominators \mathcal{A} = \{s \in \Denominators \mathcal{A} \mid \text{\(s\) is an epimorphism}\}, \\
& \TDenominators \mathcal{A} = \{t \in \Denominators \mathcal{A} \mid \text{\(t\) is a monomorphism}\}.
\end{align*}
In particular, the Serre quotient of \(\mathcal{A}\) by \(\mathcal{U}\) is isomorphic to \(\FractionCategory \mathcal{A}\).
\item \label{ex:thick_subcategories_of_abelian_and_triangulated_categories_yield_uni-fractionable_categories:verdier_triangulated_category} We suppose given a Verdier triangulated category \(\mathcal{V}\) and a thick subcategory \(\mathcal{U}\) in \(\mathcal{V}\). Then \(\mathcal{V}\) carries the structure of an additive uni-fractionable category, where
\[\Denominators \mathcal{V} = \SDenominators \mathcal{V} = \TDenominators \mathcal{V} = \{f \in \Mor \mathcal{V} \mid \text{a cone of \(f\) is in \(\mathcal{U}\)}\}.\]
In particular, the Verdier quotient of \(\mathcal{V}\) by \(\mathcal{U}\) is isomorphic to \(\FractionCategory \mathcal{V}\).
\end{enumerate}
\end{example}
\begin{proof} \
\begin{enumerate}
\item We set \(D := \{f \in \Mor \mathcal{A} \mid \text{\(\Kernel f\) and \(\Cokernel f\) are in \(\mathcal{U}\)}\}\) and have to verify the axioms (Cat), (2\,of\,3) and (WU).
\begin{itemize}
\item[(Cat)] We suppose given morphisms \(f\colon X \map Y\) and \(g\colon Y \map Z\) in \(\mathcal{A}\) with \(f, g \in D\), so that \(\Kernel f\), \(\Cokernel f\), \(\Kernel g\), \(\Cokernel g\) are in \(\mathcal{U}\). By the circumference lemma~\cite[Lem.~132]{kuenzer:2010:homologische_algebra}, we have an exact sequence
\[0 \morphism \Kernel f \morphism \Kernel(f g) \morphism \Kernel g \morphism \Cokernel f \morphism \Cokernel(f g) \morphism \Cokernel g \morphism 0.\]
Since \(\Kernel f\) and \(\Kernel g\) are in \(\mathcal{U}\), it follows that \(\Kernel(f g)\) is in \(\mathcal{U}\), and since \(\Cokernel f\) and \(\Cokernel g\) are in \(\mathcal{U}\), it follows that \(\Cokernel(f g)\) is in \(\mathcal{U}\). Thus we have \(f g \in D\).

Moreover, \(0 \in \Ob \mathcal{U}\) and therefore \(1_X \in D\) for all \(X \in \Ob \mathcal{A}\).
\item[(2\,of\,3)] We suppose given morphisms \(f\colon X \map Y\) and \(g\colon Y \map Z\) in \(\mathcal{A}\) with \(f, f g \in D\), so that the objects \(\Kernel f\), \(\Cokernel f\), \(\Kernel(f g)\), \(\Cokernel(f g)\) are in \(\mathcal{U}\). The circumference lemma~\cite[Lem.~132]{kuenzer:2010:homologische_algebra} implies that \(\Kernel g\) is in \(\mathcal{U}\) since \(\Kernel(f g)\) and \(\Cokernel f\) are in \(\mathcal{U}\), and that \(\Cokernel g\) is in \(\mathcal{U}\) since \(\Cokernel(f g)\) is in \(\mathcal{U}\). Thus we have \(g \in D\).

The other case follows by duality.
\item[(WU)] We suppose given morphisms \(d\colon X \map X'\) and \(f\colon X \map Y\) in \(\mathcal{A}\) with \(d \in D\), and we let
\[\begin{tikzpicture}[baseline=(m-2-1.base)]
  \matrix (m) [diagram without objects]{
    X' & Y' \\
    X & X' \\};
  \path[->, font=\scriptsize]
    (m-1-1) edge node[above] {\(f'\)} (m-1-2)
    (m-2-1) edge node[above] {\(f\)} (m-2-2)
            edge node[left] {\(d\)} (m-1-1)
    (m-2-2) edge node[right] {\(d'\)} (m-1-2);
\end{tikzpicture}\]
be a pushout rectangle in \(\mathcal{A}\). Then the induced morphism \(\Cokernel d \map \Cokernel d'\) is an isomorphism and the induced morphism \(\Kernel d \map \Kernel d'\) is an epimorphism. Thus since \(\Kernel d\) and \(\Cokernel d\) are in \(\mathcal{U}\), it follows that \(\Kernel d'\) and \(\Cokernel d'\) are in \(\mathcal{U}\), that is, \(d' \in D\).

The other property follows by duality.
\end{itemize}
Altogether, there is a structure of a uni-fractionable category on \(\mathcal{A}\) with \(\Denominators \mathcal{A} = \SDenominators \mathcal{A} = \TDenominators \mathcal{A} = D\). Moreover, \(\Denominators \mathcal{A}\) is closed under finite sums since \(\mathcal{U}\) is an additive subcategory, and hence \(\mathcal{A}\) is an additive uni-fractionable category.
\item The axioms (Cat), (2\,of\,3) and (WU) as well as additivity follow from~\ref{ex:thick_subcategories_of_abelian_and_triangulated_categories_yield_uni-fractionable_categories:abelian_category_with_obvious_structure}, taking into account that epimorphisms are stable under composition and pushouts, and dually. Moreover, (Fac) holds since every morphism in an abelian category factorises into an epimorphism with the same kernel followed by a monomorphism with the same cokernel.
\item We set \(D := \{f \in \Mor \mathcal{V} \mid \text{a cone of \(f\) is in \(\mathcal{U}\)}\}\) and have to verify the axioms of a uni-fractionable category. In the following, given a morphism \(f \in \Mor \mathcal{V}\), we denote by \(C_f\) a chosen cone of \(f\). Since every cone of \(f\) is isomorphic to \(C_f\)~\cite[IV.1.4 b)]{gelfand_manin:2003:methods_of_homological_algebra}, we have \(f \in D\) if and only if \(C_f\) is in \(\mathcal{U}\). The shift functor in \(\mathcal{V}\) is denoted by \(\ShiftFunctor\).
\begin{itemize}
\item[(Cat)] We suppose given morphisms \(f\colon X \map Y\) and \(g\colon Y \map Z\) with \(f, g \in D\), so that \(C_f\) and \(C_g\) are in \(\mathcal{U}\). By the octahedral axiom, we get a distinguished triangle
\[\dots \morphism[\ShiftFunctor^{- 1} v] \ShiftFunctor^{- 1} C_g \morphism[\ShiftFunctor^{- 1} w] C_f \morphism[u] C_{f g} \morphism[v] C_g \morphism[w] \ShiftFunctor C_f \morphism[\ShiftFunctor u] \dots,\]
and in particular, \(C_{f g}\) is a cone of \(\ShiftFunctor^{- 1} w\). Since \(C_g\) is in \(\mathcal{U}\), it follows that \(\ShiftFunctor^{- 1} C_g\) is in \(\mathcal{U}\). But then \(\ShiftFunctor^{- 1} w\) is a morphism in \(\mathcal{U}\) and thus \(C_{f g}\) is an object in \(\mathcal{U}\).

Moreover, \(0 \in \Ob \mathcal{U}\) implies \(1_X \in D\) for all \(X \in \Ob \mathcal{V}\).
\item[(2\,of\,3)] We suppose given morphisms \(f\colon X \map Y\) and \(g\colon Y \map Z\) in \(\mathcal{V}\), and we use the distinguished triangle obtained by the octahedral axiom from above. If \(f, f g \in D\), then \(C_f\) and \(C_{f g}\) are in \(\mathcal{U}\), hence \(u\) is a morphism in \(\mathcal{U}\) and therefore \(C_g\) is in \(\mathcal{U}\), that is, \(g \in D\). Analogously, \(f g, g \in D\) implies that \(C_{f g}\) and \(C_g\) are in \(\mathcal{U}\), hence \(v\) is a morphism in \(\mathcal{U}\) and \(\ShiftFunctor C_f\) is an object in \(\mathcal{U}\), and therefore \(C_f\) is in \(\mathcal{U}\), that is, \(f \in D\).
\item[(WU)] We suppose given morphisms \(d\colon X \map X'\) and \(f\colon X \map Y\) in \(\mathcal{V}\) with \(d \in D\), and we let
\[\begin{tikzpicture}[baseline=(m-2-1.base)]
  \matrix (m) [diagram without objects]{
    X' & Y' \\
    X & Y \\};
  \path[->, font=\scriptsize]
    (m-1-1) edge node[above] {\(f'\)} (m-1-2)
    (m-2-1) edge node[above] {\(f\)} (m-2-2)
            edge node[left] {\(d\)} (m-1-1)
    (m-2-2) edge node[right] {\(d'\)} (m-1-2);
\end{tikzpicture}\]
be a dweak square in \(\mathcal{V}\), that is, a quadrangle whose diagonal sequence fits in a distinguished triangle, and hence in particular a weak pushout. Then \(C_d\) is a cone of \(d'\), cf.~\cite[lem.~1.4.4]{neeman:2001:triangulated_categories}. Hence \(d' \in D\) as \(C_d\) is in \(\mathcal{U}\).

The other property follows by duality.
\end{itemize}
Altogether, there is a structure of a uni-fractionable category on \(\mathcal{V}\) with \(\Denominators \mathcal{V} = \SDenominators \mathcal{V} = \TDenominators \mathcal{V} = D\). Moreover, additivity of \(\mathcal{V}\) follows from the additivity of \(\mathcal{U}\). \qedhere
\end{enumerate}
\end{proof}

\appendix

\section{Formal cones in exact categories} \label{sec:formal_cones_in_exact_categories}

In this appendix, we develop a theory about ``formal cones'' and ``quasi-isomorphisms'' in an exact category relative to a suitable subcategory. This will be used to generalise example~\ref{ex:category_of_complexes_in_an_abelian_category_with_quasi-isomorphisms_as_denominators}, where we have shown that the category of complexes in an abelian category carries a uni-fractionable category structure in such a way that the fraction category becomes the derived category, to the case of idempotent splitting exact categories, see example~\ref{ex:category_of_complexes_in_an_idempotent_splitting_exact_category_with_quasi-isomorphisms_as_denominators}.

We consider an exact category \(\mathcal{E}\) in the sense of \eigenname{Quillen}~\cite[\S 2, pp.~99--100]{quillen:1973:higher_algebraic_k-theory_I}, cf.\ also~\cite[app.~A]{keller:1990:chain_complexes_and_stable_categories},~\cite[def.~2.1]{buehler:2010:exact_categories}. The distinguished short exact sequences in \(\mathcal{E}\) will be called \newnotion{pure short exact sequences}. Likewise, the monomorphisms occurring in a pure short exact sequence are called \newnotion{pure monomorphisms}, and the epimorphisms occurring in a pure short exact sequence are called \newnotion{pure epimorphisms}.

During this appendix, we suppose given an exact category \(\mathcal{E}\) and a non-empty full subcategory \(\mathcal{U}\) of \(\mathcal{E}\). From remark~\ref{rem:for_subcategories_closed_under_pure_short_exact_sequences_the_set_of_quasi-isomorphisms_is_closed_under_sums} on, we suppose that \(\mathcal{U}\) is closed under pure short exact sequences, see definition~\ref{def:subcategories_closed_under_pure_short_exact_sequences}, and that \(\mathcal{E}\) has enough formal \(\mathcal{U}\)-cones, see definition~\ref{def:exact_category_having_enough_formal_cones}.

\begin{definition}[formal cone] \label{def:formal_cone_with_respect_to_a_subcategory_of_an_exact_category} \
\begin{enumerate}
\item \label{def:formal_cone_with_respect_to_a_subcategory_of_an_exact_category:objects} We suppose given an object \(X\) in \(\mathcal{E}\). A \newnotion{formal cone} with respect to \(\mathcal{U}\) (or \newnotion{formal \(\mathcal{U}\)-cone} or just \newnotion{formal cone}) of \(X\) consists of an object \(C\) in \(\mathcal{U}\) together with a pure monomorphism \(i\colon X \map C\). By abuse of notation, we denote the formal cone as well as its underlying object by \(C\). The pure monomorphism \(i\) is called the \newnotion{insertion} in \(C\). Given a formal cone \(C\) of \(X\) with insertion \(i\), we write \(\ins = \ins^C := i\).
\item \label{def:formal_cone_with_respect_to_a_subcategory_of_an_exact_category:morphisms} We suppose given a morphism \(f\colon X \map Y\) in \(\mathcal{E}\). Given a formal \(\mathcal{U}\)-cone \(C_X\) of \(X\), a \newnotion{formal cone} with respect to \(\mathcal{U}\) (or \newnotion{formal \(\mathcal{U}\)-cone} or just \newnotion{formal cone}) of \(f\) \newnotion{corresponding to \(C_X\)} consists of an object \(C\) in \(\mathcal{U}\) together with a pure monomorphism \(i\colon Y \map C\) such that there exists a pushout rectangle
\[\begin{tikzpicture}[baseline=(m-2-1.base)]
  \matrix (m) [diagram]{
    C_X & C \\
    X & Y \\};
  \path[->, font=\scriptsize]
    (m-1-1) edge node[above] {\(f'\)} (m-1-2)
    (m-2-1) edge node[above] {\(f\)} (m-2-2)
            edge node[left] {\(\ins^{C_X}\)} (m-1-1)
    (m-2-2) edge node[right] {\(i\)} (m-1-2);
\end{tikzpicture}\]
in \(\mathcal{E}\). By abuse of notation, we denote the formal cone as well as its underlying object by \(C\). The pure monomorphism \(i\colon Y \map C\) is called the \newnotion{insertion} in \(C\). Given a formal cone \(C\) of \(f\) corresponding to \(C_X\) with insertion \(i\), we write \(\ins = \ins^C := i\).

A \newnotion{formal cone} with respect to \(\mathcal{U}\) (or \newnotion{formal \(\mathcal{U}\)-cone} or just \newnotion{formal cone}) of \(f\) is a formal \(\mathcal{U}\)-cone of \(f\) corresponding to some formal \(\mathcal{U}\)-cone of \(X\).
\end{enumerate}
\end{definition}

The cone of a complex resp.\ of a morphism of complexes with entries in an additive category as defined for example in~\cite[def.~9.2]{buehler:2010:exact_categories} is a formal cone with respect to the full subcategory of acyclic complexes (or even of split acyclic complexes).

\begin{remark} \label{rem:for_additive_subcategories_the_sum_of_formal_cones_is_a_formal_cone}
We suppose that \(\mathcal{U}\) is an additive subcategory of \(\mathcal{E}\). 
\begin{enumerate}
\item \label{rem:for_additive_subcategories_the_sum_of_formal_cones_is_a_formal_cone:objects} We suppose given objects \(X_1\) and \(X_2\) in \(\mathcal{E}\), a formal \(\mathcal{U}\)-cone \(C_1\) of \(X_1\) and a formal \(\mathcal{U}\)-cone \(C_2\) of \(X_2\). Then \(C_1 \directsum C_2\) is a formal \(\mathcal{U}\)-cone of \(X_1 \directsum X_2\) with \(\ins^{C_1 \directsum C_2} = \ins^{C_1} \directsum \ins^{C_2}\).
\item \label{rem:for_additive_subcategories_the_sum_of_formal_cones_is_a_formal_cone:morphisms} We suppose given morphisms \(f_1\colon X_1 \map Y_1\) and \(f_2\colon X_2 \map Y_2\) in \(\mathcal{E}\), a formal \(\mathcal{U}\)-cone \(C_1\) of \(f_1\) and a formal \(\mathcal{U}\)-cone \(C_2\) of \(f_2\). Then \(C_1 \directsum C_2\) is a formal \(\mathcal{U}\)-cone of \(f_1 \directsum f_2\) with \(\ins^{C_1 \directsum C_2} = \ins^{C_1} \directsum \ins^{C_2}\).
\end{enumerate}
\end{remark}

\begin{definition}[having enough formal cones] \label{def:exact_category_having_enough_formal_cones}
The exact category \(\mathcal{E}\) is said to \newnotion{have enough formal cones} with respect to \(\mathcal{U}\) (or to \newnotion{have enough formal \(\mathcal{U}\)-cones} or just to \newnotion{have enough formal cones}) if there exists a formal \(\mathcal{U}\)-cone of every object in \(\mathcal{E}\).
\end{definition}

\begin{definition}[quasi-isomorphism] \label{def:quasi-isomorphisms_in_an_exact_category_having_enough_formal_cones}
We suppose that \(\mathcal{E}\) has enough formal \(\mathcal{U}\)-cones. A \newnotion{quasi-isomorphism} with respect to \(\mathcal{U}\) (or \newnotion{\(\mathcal{U}\)-quasi-isomorphism} or just \newnotion{quasi-isomorphism}) is a morphism \(f\) in \(\mathcal{E}\) such that there exists a formal cone of \(f\) that is in \(\mathcal{U}\).
\end{definition}

\begin{definition}[closed under pure short exact sequences] \label{def:subcategories_closed_under_pure_short_exact_sequences} \
The full subcategory \(\mathcal{U}\) of \(\mathcal{E}\) is said to be \newnotion{closed under pure short exact sequences} in \(\mathcal{E}\) if it fulfills the following axiom.
\begin{itemize}
\item[(Seq)] Given a pure short exact sequence
\[X' \morphism X \morphism X''\]
in \(\mathcal{E}\) such that two out of the objects \(X'\), \(X\), \(X''\) are in \(\mathcal{U}\), then so is the third.
\end{itemize}
\end{definition}

\begin{remark} \label{rem:subcategories_closed_under_pure_short_exact_sequences_contain_a_null_object_and_are_closed_under_isomorphisms}
If \(\mathcal{U}\) is closed under pure short exact sequences, then \(\mathcal{U}\) is closed under isomorphisms and an additive subcategory of \(\mathcal{E}\).
\end{remark}
\begin{proof}
Since \(\mathcal{U}\) is supposed to be non-empty, there exists an object \(X\) in \(\mathcal{U}\). But then
\[X \morphism[1_X] X \morphism[0] 0\]
is a pure short exact sequence~\cite[def.~2.1]{buehler:2010:exact_categories} and hence \(0\) is in \(\mathcal{U}\). Given objects \(X_1\) and \(X_2\) in \(\mathcal{U}\), we have the pure short exact sequence
\[X_1 \morphism[\begin{smallpmatrix} 1 & 0 \end{smallpmatrix}] X_1 \directsum X_2 \morphism[\begin{smallpmatrix} 0 \\ 1 \end{smallpmatrix}] X_2\]
by~\cite[lem.~2.7]{buehler:2010:exact_categories} and therefore \(X_1 \directsum X_2\) is in \(\mathcal{U}\). Thus \(\mathcal{U}\) is an additive subcategory of \(\mathcal{E}\).

Finally, given an object \(X\) in \(\mathcal{U}\) and an isomorphism \(f\colon X \map Y\) in \(\mathcal{E}\), we have the pure short exact sequence
\[X \morphism[f] Y \morphism[0] 0\]
and hence \(Y\) is in \(\mathcal{U}\).
\end{proof}

From now on, we suppose that \(\mathcal{U}\) is closed under pure short exact sequences and that \(\mathcal{E}\) has enough formal \(\mathcal{U}\)-cones.

\begin{remark} \label{rem:for_subcategories_closed_under_pure_short_exact_sequences_the_set_of_quasi-isomorphisms_is_closed_under_sums}
The set of \(\mathcal{U}\)-quasi-isomorphisms in \(\mathcal{E}\) is closed under finite sums.
\end{remark}
\begin{proof}
This follows from remark~\ref{rem:subcategories_closed_under_pure_short_exact_sequences_contain_a_null_object_and_are_closed_under_isomorphisms} and remark~\ref{rem:for_additive_subcategories_the_sum_of_formal_cones_is_a_formal_cone}\ref{rem:for_additive_subcategories_the_sum_of_formal_cones_is_a_formal_cone:morphisms}.
\end{proof}

\begin{proposition} \label{prop:criterion_for_pure_monos_resp_pure_epis_being_quasi-isomorphism} \
\begin{enumerate}
\item \label{prop:criterion_for_pure_monos_resp_pure_epis_being_quasi-isomorphism:pure_mono} A pure monomorphism \(i\) in \(\mathcal{E}\) is a \(\mathcal{U}\)-quasi-isomorphism if and only if \(\Cokernel i\) is in \(\mathcal{U}\).
\item \label{prop:criterion_for_pure_monos_resp_pure_epis_being_quasi-isomorphism:pure_epi} A pure epimorphism \(p\) in \(\mathcal{E}\) is a \(\mathcal{U}\)-quasi-isomorphism if and only if \(\Kernel p\) is in \(\mathcal{U}\).
\end{enumerate}
\end{proposition}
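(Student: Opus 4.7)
The plan is to prove part~\ref{prop:criterion_for_pure_monos_resp_pure_epis_being_quasi-isomorphism:pure_mono} directly from the definition of a formal cone (definition~\ref{def:formal_cone_with_respect_to_a_subcategory_of_an_exact_category}) by exploiting the pushout square appearing there, and to deduce part~\ref{prop:criterion_for_pure_monos_resp_pure_epis_being_quasi-isomorphism:pure_epi} by the evident dual argument in the opposite exact category.

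For~\ref{prop:criterion_for_pure_monos_resp_pure_epis_being_quasi-isomorphism:pure_mono}, suppose given a pure monomorphism \(i \colon X \map Y\). By definition, a formal \(\mathcal{U}\)-cone of \(i\) is obtained from a chosen formal \(\mathcal{U}\)-cone \(C_X\) of \(X\) as the top-right corner of a pushout rectangle
\[\begin{tikzpicture}[baseline=(m-2-1.base)]
  \matrix (m) [diagram]{
    C_X & C \\
    X & Y \\};
  \path[->, font=\scriptsize]
    (m-1-1) edge node[above] {\(i'\)} (m-1-2)
    (m-2-1) edge node[above] {\(i\)} (m-2-2)
            edge node[left] {\(\ins^{C_X}\)} (m-1-1)
    (m-2-2) edge node[right] {\(\ins^C\)} (m-1-2);
\end{tikzpicture}\]
in \(\mathcal{E}\). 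The key point from exact category theory is that the pushout of a pure monomorphism along any morphism produces a pushout square whose opposite side is again a pure monomorphism with canonically isomorphic cokernel; so since \(i\) is a pure monomorphism, \(i'\) is a pure monomorphism with \(\Cokernel i' \isomorphic \Cokernel i\). We therefore obtain a pure short exact sequence
\[C_X \morphism[i'] C \morphism \Cokernel i\]
in \(\mathcal{E}\) with \(C_X \in \Ob \mathcal{U}\).

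From this sequence, both implications fall out using closedness of \(\mathcal{U}\) under pure short exact sequences. If \(\Cokernel i \in \Ob \mathcal{U}\), then since \(\mathcal{E}\) has enough formal \(\mathcal{U}\)-cones we may pick any formal \(\mathcal{U}\)-cone \(C_X\) of \(X\) and form the pushout above to obtain a formal \(\mathcal{U}\)-cone \(C\) of \(i\); closedness of \(\mathcal{U}\) under pure short exact sequences applied to \(C_X \map C \map \Cokernel i\) gives \(C \in \Ob \mathcal{U}\), so that \(i\) is a \(\mathcal{U}\)-quasi-isomorphism. Conversely, if \(i\) is a \(\mathcal{U}\)-quasi-isomorphism, then some formal \(\mathcal{U}\)-cone \(C\) (with corresponding \(C_X\)) lies in \(\mathcal{U}\), and the same pure short exact sequence forces \(\Cokernel i \in \Ob \mathcal{U}\).

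Part~\ref{prop:criterion_for_pure_monos_resp_pure_epis_being_quasi-isomorphism:pure_epi} is formally dual: the argument proceeds verbatim in \(\mathcal{E}^\op\), where pure epimorphisms become pure monomorphisms, kernels become cokernels, and the dual cone construction yields a pullback square producing a pure short exact sequence \(\Kernel p \map C_X \map C\) to which closedness of \(\mathcal{U}\) under pure short exact sequences is applied. The only tiny subtlety worth checking is that all the cone and cokernel manipulations we use really are covered by the properties of pure monomorphisms in exact categories (stability of pure monos under pushouts, plus the Noether-type cokernel isomorphism for a pushout of a pure mono), so that the diagram used in the definition of formal cone automatically sits in a pure short exact sequence of the required form; once that is in hand, no further calculation is required.
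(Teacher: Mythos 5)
Your treatment of part~\ref{prop:criterion_for_pure_monos_resp_pure_epis_being_quasi-isomorphism:pure_mono} is correct and is essentially the paper's own argument: push out the insertion of a formal cone \(C_X\) of the source along \(i\), use that the pushout of a pure monomorphism is a pure monomorphism with isomorphic cokernel, and apply closedness of \(\mathcal{U}\) under pure short exact sequences to \(C_X \map C \map \Cokernel i\).

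Part~\ref{prop:criterion_for_pure_monos_resp_pure_epis_being_quasi-isomorphism:pure_epi}, however, does not follow ``verbatim in \(\mathcal{E}^\op\)'', and this is a genuine gap. The standing hypotheses and the notion being characterised are not self-dual: having enough formal \(\mathcal{U}\)-cones (definition~\ref{def:exact_category_having_enough_formal_cones}) asks for a pure monomorphism from every object \emph{into} an object of \(\mathcal{U}\), and a \(\mathcal{U}\)-quasi-isomorphism is defined via formal cones, that is, via pushouts of such insertions. Dualising verbatim would prove a statement about a cocone-based notion (pullbacks along pure epimorphisms \emph{from} objects of \(\mathcal{U}\)) under the hypothesis that every object admits such a pure epimorphism -- neither the statement to be proved nor an available hypothesis; note that the paper only remarks \emph{after} corollary~\ref{cor:equivalent_descriptions_for_quasi-isomorphisms_in_exact_categories_having_enough_formal_cones} that quasi-isomorphisms are a self-dual notion, and only provided \(\mathcal{E}\) also satisfies the dual of the enough-cones condition, so invoking duality here would be circular. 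The correct argument for a pure epimorphism \(p\colon X \map Y\) stays with the \emph{same} pushout square as in part~\ref{prop:criterion_for_pure_monos_resp_pure_epis_being_quasi-isomorphism:pure_mono}: push out \(\ins^{C_X}\) along \(p\) to obtain a formal cone \(C\) of \(p\); this square is also a pullback by \cite[prop.~2.12]{buehler:2010:exact_categories}, the parallel morphism \(p'\colon C_X \map C\) is again a pure epimorphism by the dual of \cite[prop.~2.15]{buehler:2010:exact_categories}, and \(\Kernel p' \isomorphic \Kernel p\) by the pullback property. Only then does one obtain the pure short exact sequence \(\Kernel p \map C_X \map C\) you wrote down, and closedness of \(\mathcal{U}\) under pure short exact sequences finishes the proof; the existence of this sequence rests on these bicartesian-square facts, not on formal duality.
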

\begin{proof}
We let \(f\colon X \map Y\) be a morphism in \(\mathcal{E}\) and we let \(C_X\) be a formal cone of \(X\). Moreover, we let \(C_f\) be a formal cone of \(f\) corresponding to \(C_X\), that is, we suppose that there exists a pushout rectangle
\[\begin{tikzpicture}[baseline=(m-2-1.base)]
  \matrix (m) [diagram]{
    C_X & C_f \\
    X & Y \\};
  \path[->, font=\scriptsize]
    (m-1-1) edge node[above] {\(f'\)} (m-1-2)
    (m-2-1) edge node[above] {\(f\)} (m-2-2)
            edge node[left] {\(\ins^{C_X}\)} (m-1-1)
    (m-2-2) edge node[right] {\(\ins^{C_f}\)} (m-1-2);
\end{tikzpicture}\]
in \(\mathcal{E}\). By~\cite[prop.~2.12]{buehler:2010:exact_categories}, this rectangle is also a pullback.

Let us first suppose that \(f\) is a pure monomorphism. Then \(f'\) is also a pure monomorphism by~\cite[def.~2.1]{buehler:2010:exact_categories}, and since \(\mathcal{U}\) is closed under pure short exact sequences, the formal cone \(C_f\) is in \(\mathcal{U}\) if and only if \(\Cokernel f'\) is in \(\mathcal{U}\). But by~\cite[prop.~2.12]{buehler:2010:exact_categories}, we have \(\Cokernel f \isomorphic \Cokernel f'\), so \(C_f\) is in \(\mathcal{U}\) if and only if \(\Cokernel f\) is in \(\mathcal{U}\). Since \(C_X\) and \(C_f\) were chosen arbitrarily, this means that \(f\) is a quasi-isomorphism if and only if \(\Cokernel f\) is in \(\mathcal{U}\).

Next, let us suppose that \(f\) is a pure epimorphism. Then \(f'\) is also a pure epimorphism by~\cite[dual of prop.~2.15]{buehler:2010:exact_categories}, and since \(\mathcal{U}\) is closed under pure short exact sequences, the formal cone \(C_f\) is in \(\mathcal{U}\) if and only if \(\Kernel f'\) is in \(\mathcal{U}\). By~\cite[dual of prop.~2.12]{buehler:2010:exact_categories}, we have \(\Kernel f \isomorphic \Kernel f'\), so \(C_f\) is in \(\mathcal{U}\) if and only if \(\Kernel f\) is in \(\mathcal{U}\). Since \(C_X\) and \(C_f\) were chosen arbitrarily, this means that \(f\) is a quasi-isomorphism if and only if \(\Kernel f\) is in \(\mathcal{U}\).
\end{proof}

\begin{corollary} \label{cor:quasi-isomorphisms_that_are_pure_mono_or_pure_epi_are_stable_under_bicartesian_squares} \
\begin{enumerate}
\item Given a pushout rectangle
\[\begin{tikzpicture}[baseline=(m-2-1.base)]
  \matrix (m) [diagram without objects]{
    X' & Y' \\
    X & Y \\};
  \path[->, font=\scriptsize]
    (m-1-1) edge node[above] {\(f'\)} (m-1-2)
    (m-2-1) edge node[above] {\(f\)} (m-2-2)
            edge node[left] {\(i\)} (m-1-1)
    (m-2-2) edge node[right] {\(i'\)} (m-1-2);
\end{tikzpicture}\]
in \(\mathcal{E}\) with pure monomorphism \(i\), then \(i\) is a \(\mathcal{U}\)-quasi-isomorphism if and only if \(i'\) is a \(\mathcal{U}\)-quasi-isomorphism.
\item Given a pullback rectangle
\[\begin{tikzpicture}[baseline=(m-2-1.base)]
  \matrix (m) [diagram without objects]{
    X' & Y' \\
    X & Y \\};
  \path[->, font=\scriptsize]
    (m-1-1) edge node[above] {\(f'\)} (m-1-2)
            edge node[left] {\(p'\)} (m-2-1)
    (m-1-2) edge node[right] {\(p\)} (m-2-2)
    (m-2-1) edge node[above] {\(f\)} (m-2-2);
\end{tikzpicture}\]
in \(\mathcal{E}\) with pure epimorphism \(p\), then \(p\) is a \(\mathcal{U}\)-quasi-isomorphism if and only if \(p'\) is a \(\mathcal{U}\)-quasi-isomorphism.
\end{enumerate}
\end{corollary}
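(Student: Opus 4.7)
The plan is to reduce both parts directly to Proposition~\ref{prop:criterion_for_pure_monos_resp_pure_epis_being_quasi-isomorphism}, using the standard fact that pure monomorphisms in an exact category are stable under pushouts (and dually, pure epimorphisms are stable under pullbacks), together with the isomorphism of the relevant (co)kernels across such bicartesian squares.

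For part~(a), I would first note that, since \(i\) is a pure monomorphism, the pushout axiom for exact categories forces \(i'\) to be a pure monomorphism as well, so that Proposition~\ref{prop:criterion_for_pure_monos_resp_pure_epis_being_quasi-isomorphism}\ref{prop:criterion_for_pure_monos_resp_pure_epis_being_quasi-isomorphism:pure_mono} is applicable to both \(i\) and \(i'\). Next, the same Bühler Proposition~2.12 already invoked in the proof of Proposition~\ref{prop:criterion_for_pure_monos_resp_pure_epis_being_quasi-isomorphism} yields that the induced morphism \(\Cokernel i \map \Cokernel i'\) is an isomorphism. Combined with the fact (Remark~\ref{rem:subcategories_closed_under_pure_short_exact_sequences_contain_a_null_object_and_are_closed_under_isomorphisms}) that \(\mathcal{U}\) is closed under isomorphisms, this shows that \(\Cokernel i\) is in \(\mathcal{U}\) if and only if \(\Cokernel i'\) is in \(\mathcal{U}\). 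An application of Proposition~\ref{prop:criterion_for_pure_monos_resp_pure_epis_being_quasi-isomorphism}\ref{prop:criterion_for_pure_monos_resp_pure_epis_being_quasi-isomorphism:pure_mono} to each side then gives the claimed equivalence.

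Part~(b) is handled by the dual argument: the pullback of a pure epimorphism along an arbitrary morphism is a pure epimorphism, and the dual of Bühler's Proposition~2.12 provides an isomorphism \(\Kernel p' \map \Kernel p\); Proposition~\ref{prop:criterion_for_pure_monos_resp_pure_epis_being_quasi-isomorphism}\ref{prop:criterion_for_pure_monos_resp_pure_epis_being_quasi-isomorphism:pure_epi} closes the argument. Since all of the necessary stability and isomorphism statements are either axiomatic for exact categories or have already been used verbatim in the proof of the preceding proposition, I do not expect any substantive obstacle; the proof amounts to little more than assembling these citations.
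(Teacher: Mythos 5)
Your proposal is correct and follows essentially the same route as the paper: the paper's proof likewise reduces both parts to proposition~\ref{prop:criterion_for_pure_monos_resp_pure_epis_being_quasi-isomorphism}, citing B{\"u}hler's proposition~2.12 and its dual for the isomorphisms \(\Cokernel i \isomorphic \Cokernel i'\) and \(\Kernel p \isomorphic \Kernel p'\). Your additional remarks that \(i'\) is again a pure monomorphism (resp.\ \(p'\) a pure epimorphism) and that \(\mathcal{U}\) is closed under isomorphisms just make explicit what the paper leaves implicit.
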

\begin{proof}
This follows by proposition~\ref{prop:criterion_for_pure_monos_resp_pure_epis_being_quasi-isomorphism} since these pushouts induce isomorphisms on the cokernels of the pure monomorphisms and these pullbacks induce isomorphisms on the kernels of the pure epimorphisms~\cite[prop.~2.12 and its dual]{buehler:2010:exact_categories}.
\end{proof}

\begin{proposition} \label{prop:formal_cone_factorisation}
We let \(f\colon X \map Y\) be a morphism in \(\mathcal{E}\). For every formal \(\mathcal{U}\)-cone \(C_X\) of \(X\), we have the factorisation \(f = \begin{smallpmatrix} f & \ins \end{smallpmatrix} \begin{smallpmatrix} 1 \\ 0 \end{smallpmatrix}\), where \(\begin{smallpmatrix} f & \ins \end{smallpmatrix}\colon X \map Y \directsum C_X\) is a pure monomorphism such that \(\Cokernel \begin{smallpmatrix} f & \ins \end{smallpmatrix}\) carries the structure of a formal \(\mathcal{U}\)-cone of \(f\) corresponding to \(C_X\) and where \(\begin{smallpmatrix} 1 \\ 0 \end{smallpmatrix}\colon Y \directsum C_X \map Y\) is a split pure epimorphism with \(\Kernel \begin{smallpmatrix} 1 \\ 0 \end{smallpmatrix} \isomorphic C_X\). In particular, \(\begin{smallpmatrix} 1 \\ 0 \end{smallpmatrix}\) is a \(\mathcal{U}\)-quasi-isomorphism.
\[\begin{tikzpicture}[baseline=(m-3-1.base)]
  \matrix (m) [diagram=0.9em]{
    & Y \directsum C_X & \\
    & & \\
    X & & Y \\};
  \path[->, font=\scriptsize]
    (m-1-2) edge node[right] {\(\begin{smallpmatrix} 1 \\ 0 \end{smallpmatrix}\)} (m-3-3)
    (m-3-1) edge node[above] {\(f\)} (m-3-3)
            edge node[left] {\(\begin{smallpmatrix} f & \ins \end{smallpmatrix}\)} (m-1-2);
\end{tikzpicture}\]
\end{proposition}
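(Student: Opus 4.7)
The factorisation identity $f = \begin{smallpmatrix} f & \ins \end{smallpmatrix} \begin{smallpmatrix} 1 \\ 0 \end{smallpmatrix}$ is immediate by biproduct arithmetic in $Y \directsum C_X$: composing the induced morphism to the product with the induced morphism out of the coproduct yields $f \cdot 1_Y + \ins \cdot 0 = f$. Hence the substance of the proof distributes across the two factors, which I would handle independently.

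For the projection factor $\begin{smallpmatrix} 1 \\ 0 \end{smallpmatrix} \colon Y \directsum C_X \map Y$, observe that $\emb_1$ is a section (one checks $\emb_1 \begin{smallpmatrix} 1 \\ 0 \end{smallpmatrix} = 1_Y$ directly from the biproduct relations $\emb_k \pr_l = \kdelta_{k,l}$). Thus $\begin{smallpmatrix} 1 \\ 0 \end{smallpmatrix}$ is a split, and therefore pure, epimorphism, and the standard biproduct identification gives $\Kernel \begin{smallpmatrix} 1 \\ 0 \end{smallpmatrix} \isomorphic C_X$ with inclusion $\emb_2$. Since $C_X$ lies in $\mathcal{U}$ by hypothesis, proposition~\ref{prop:criterion_for_pure_monos_resp_pure_epis_being_quasi-isomorphism}\ref{prop:criterion_for_pure_monos_resp_pure_epis_being_quasi-isomorphism:pure_epi} immediately yields that $\begin{smallpmatrix} 1 \\ 0 \end{smallpmatrix}$ is a $\mathcal{U}$-quasi-isomorphism.

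To see that $\begin{smallpmatrix} f & \ins \end{smallpmatrix} \colon X \map Y \directsum C_X$ is a pure monomorphism, I would exhibit it as a composite of two pure monomorphisms via the matrix identity
\[\begin{smallpmatrix} f & \ins \end{smallpmatrix} = \begin{smallpmatrix} f & 1_X \end{smallpmatrix} \cdot (1_Y \directsum \ins),\]
which one verifies by direct computation in the biproduct. The first factor $\begin{smallpmatrix} f & 1_X \end{smallpmatrix} \colon X \map Y \directsum X$ admits $\pr_2$ as a retraction, hence is a split monomorphism and therefore a pure monomorphism. The second factor $1_Y \directsum \ins \colon Y \directsum X \map Y \directsum C_X$ is a pure monomorphism as the direct sum of $1_Y$ with the pure monomorphism $\ins$ (sums of pure short exact sequences are pure short exact). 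Composition of pure monomorphisms is a pure monomorphism, so $\begin{smallpmatrix} f & \ins \end{smallpmatrix}$ is a pure monomorphism.

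The most delicate step, and what I expect to be the main obstacle, is to equip $\Cokernel \begin{smallpmatrix} f & \ins \end{smallpmatrix}$ with the structure of a formal $\mathcal{U}$-cone of $f$ corresponding to $C_X$. My plan is to identify this cokernel with the pushout of $\ins$ along $f$: since $\ins$ is a pure monomorphism, this pushout exists, and after postcomposing with the automorphism $1_Y \directsum (-1_{C_X})$ of $Y \directsum C_X$ to absorb a sign, $\begin{smallpmatrix} f & \ins \end{smallpmatrix}$ becomes the canonical map whose cokernel computes the pushout in any additive exact category. The resulting pushout rectangle is exactly the commutative square required by the definition of a formal cone of $f$; the insertion map $Y \map \Cokernel \begin{smallpmatrix} f & \ins \end{smallpmatrix}$ is then a pure monomorphism by stability of pure monomorphisms under pushouts (\cite[prop.~2.12]{buehler:2010:exact_categories}), and its cokernel is identified with $\Cokernel \ins$. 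Finally, to see that $\Cokernel \begin{smallpmatrix} f & \ins \end{smallpmatrix}$ lies in $\mathcal{U}$, one combines closedness of $\mathcal{U}$ under pure short exact sequences, applied to $Y \map \Cokernel \begin{smallpmatrix} f & \ins \end{smallpmatrix} \map \Cokernel \ins$ coming from the pushout, with the pure short exact sequence $X \map C_X \map \Cokernel \ins$ witnessing $C_X \in \mathcal{U}$.
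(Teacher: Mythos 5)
Up to your final paragraph, your argument is essentially the paper's, lightly repackaged: the paper applies \cite[prop.~2.12]{buehler:2010:exact_categories} to the pushout of \(\ins^{C_X}\) along \(f\) and reads off the pure short exact sequence \(X \map Y \directsum C_X \map C_f\), which gives at one stroke that \(\begin{smallpmatrix} f & \ins \end{smallpmatrix}\) is a pure monomorphism and that its cokernel is isomorphic to a formal cone of \(f\) corresponding to \(C_X\); your two-step factorisation of the monomorphism and your sign-twisted identification of the cokernel with the pushout amount to the same facts. One caution on justification: ``split monomorphism, therefore pure monomorphism'' is not a valid implication in an arbitrary exact category (a coretraction is admissible only once it is known to admit a cokernel, which in general needs weak idempotent completeness). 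It is harmless here, since \(\begin{smallpmatrix} f & 1_X \end{smallpmatrix}\) is the composite of the pure monomorphism \(\begin{smallpmatrix} 0 & 1_X \end{smallpmatrix} = \emb_2\colon X \map Y \directsum X\) with the automorphism \(\begin{smallpmatrix} 1 & 0 \\ f & 1 \end{smallpmatrix}\) of \(Y \directsum X\), and the projection \(\begin{smallpmatrix} 1 \\ 0 \end{smallpmatrix}\) is pure because the biproduct sequence is a pure short exact sequence by \cite[lem.~2.7]{buehler:2010:exact_categories} (which is what the paper quotes); but you should argue this way rather than via the general slogan.

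The genuine error is the closing claim that \(\Cokernel \begin{smallpmatrix} f & \ins \end{smallpmatrix}\) lies in \(\mathcal{U}\). The deduction is invalid: axiom (Seq) of definition~\ref{def:subcategories_closed_under_pure_short_exact_sequences} needs \emph{two} of the three terms of a pure short exact sequence in \(\mathcal{U}\), whereas in \(X \map C_X \map \Cokernel \ins\) only \(C_X\) is known to lie in \(\mathcal{U}\) (so you cannot conclude \(\Cokernel \ins \in \Ob \mathcal{U}\)), and in \(Y \map \Cokernel \begin{smallpmatrix} f & \ins \end{smallpmatrix} \map \Cokernel \ins\) neither outer term is known to lie in \(\mathcal{U}\). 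Worse, the claim you are aiming at is false for general \(f\) and is not asserted by the proposition: membership of a formal cone of the morphism \(f\) in \(\mathcal{U}\) is exactly the condition characterising \(\mathcal{U}\)-quasi-isomorphisms (definition~\ref{def:quasi-isomorphisms_in_an_exact_category_having_enough_formal_cones}, corollary~\ref{cor:equivalent_descriptions_for_quasi-isomorphisms_in_exact_categories_having_enough_formal_cones}), so if it held for every \(f\), every morphism would be a quasi-isomorphism and the appendix would collapse. What ``carries the structure of a formal \(\mathcal{U}\)-cone of \(f\) corresponding to \(C_X\)'' requires is only the data you already produced: the pure monomorphism \(Y \map \Cokernel \begin{smallpmatrix} f & \ins \end{smallpmatrix}\) fitting into a pushout rectangle over \(\ins^{C_X}\); this is also how the proposition is used afterwards, where membership of such cones in \(\mathcal{U}\) is treated as a nontrivial condition (cf.\ the proofs of propositions~\ref{prop:criterion_for_pure_monos_resp_pure_epis_being_quasi-isomorphism} and~\ref{prop:set_of_quasi-isomorphisms_in_an_exact_category_is_semi-saturated}). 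Delete the final membership argument (and tighten the purity justifications as above), and your proof is correct and essentially the paper's.
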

\begin{proof}
We let \(C_X\) be a formal cone of \(X\) and we let \(C_f\) be a formal cone of \(f\) corresponding to \(C_X\), so that there exists a pushout rectangle
\[\begin{tikzpicture}[baseline=(m-2-1.base)]
  \matrix (m) [diagram]{
    C_X & C_f \\
    X & Y \\};
  \path[->, font=\scriptsize]
    (m-1-1) edge node[above] {\(f'\)} (m-1-2)
    (m-2-1) edge node[above] {\(f\)} (m-2-2)
            edge node[left] {\(\ins^{C_X}\)} (m-1-1)
    (m-2-2) edge node[right] {\(\ins^{C_f}\)} (m-1-2);
\end{tikzpicture}\]
in \(\mathcal{E}\). We get the factorisation \(f = \begin{smallpmatrix} f & \ins^{C_X} \end{smallpmatrix} \begin{smallpmatrix} 1 \\ 0 \end{smallpmatrix}\).
\[\begin{tikzpicture}[baseline=(m-3-1.base)]
  \matrix (m) [diagram=0.9em]{
    & Y \directsum C_X & \\
    & & \\
    X & & Y \\};
  \path[->, font=\scriptsize]
    (m-1-2) edge[exists] node[right=2pt] {\(\begin{smallpmatrix} 1 \\ 0 \end{smallpmatrix}\)} (m-3-3)
    (m-3-1) edge node[above] {\(f\)} (m-3-3)
            edge[exists] node[left] {\(\begin{smallpmatrix} f & \ins^{C_X} \end{smallpmatrix}\)} (m-1-2);
\end{tikzpicture}\]
By~\cite[prop.~2.12]{buehler:2010:exact_categories}, we have the pure short exact sequence
\[X \morphism[\begin{smallpmatrix} f & \ins^{C_X} \end{smallpmatrix}] Y \directsum C_X \morphism[\begin{smallpmatrix} \ins^{C_f} \\ - f' \end{smallpmatrix}] C_f.\]
Hence \(\begin{smallpmatrix} f & \ins^{C_X} \end{smallpmatrix}\colon X \map Y \directsum C_X\) is a pure monomorphism and \(\Cokernel \begin{smallpmatrix} f & \ins^{C_X} \end{smallpmatrix} \isomorphic C_f\). Moreover, since \(C_X\) is in \(\mathcal{U}\) and the split short exact sequence
\[C_X \morphism[\begin{smallpmatrix} 0 & 1 \end{smallpmatrix}] Y \directsum C_X \morphism[\begin{smallpmatrix} 1 \\ 0 \end{smallpmatrix}] Y\]
is a pure short exact sequence by~\cite[lem.~2.7]{buehler:2010:exact_categories}, the morphism \(\begin{smallpmatrix} 1 \\ 0 \end{smallpmatrix}\colon Y \directsum C_X \map Y\) is a quasi-isomorphism by proposition~\ref{prop:criterion_for_pure_monos_resp_pure_epis_being_quasi-isomorphism}\ref{prop:criterion_for_pure_monos_resp_pure_epis_being_quasi-isomorphism:pure_epi}.
\end{proof}

\begin{proposition} \label{prop:set_of_quasi-isomorphisms_in_an_exact_category_is_semi-saturated}
The set of \(\mathcal{U}\)-quasi-isomorphisms in \(\mathcal{E}\) is a semi-saturated denominator set in \(\mathcal{E}\).
\end{proposition}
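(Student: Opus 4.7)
The plan is to verify the two axioms of a semi-saturated denominator set for the set of \(\mathcal{U}\)-quasi-isomorphisms in \(\mathcal{E}\), namely multiplicativity (Cat) and the 2-out-of-3 axiom.

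Identities are immediate: for any object \(X\), the trivial pushout rectangle with \(1_X\) on one side and \(\ins^{C_X}\) on two adjacent sides shows that any formal \(\mathcal{U}\)-cone \(C_X\) is also a formal \(\mathcal{U}\)-cone of \(1_X\), which is therefore a quasi-isomorphism. The core of the proof is the following key lemma: if \(f \colon X \map Y\) is a \(\mathcal{U}\)-quasi-isomorphism, then \emph{every} formal cone of \(f\) (corresponding to any formal cone of \(X\)) lies in \(\mathcal{U}\). Given two formal cones \(C_X\) and \(C_X'\) of \(X\) with corresponding formal cones \(C_f\) and \(C_f'\) of \(f\), I would form the sum cone \(C_X'' := C_X \directsum C_X'\) with insertion \(\begin{smallpmatrix} \ins^{C_X} \\ \ins^{C_X'} \end{smallpmatrix} \colon X \map C_X \directsum C_X'\); this is a pure monomorphism because it factors as the split mono \(\begin{smallpmatrix} 1 \\ 1 \end{smallpmatrix} \colon X \map X \directsum X\) followed by the pure mono \(\ins^{C_X} \directsum \ins^{C_X'}\). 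Constructing the pushout \(C_f''\) of \(f\) along \(\ins^{C_X''}\) and exploiting the projections \(C_X \directsum C_X' \map C_X\) and \(C_X \directsum C_X' \map C_X'\), the \(3 \times 3\) lemma for exact categories applied to the Mayer--Vietoris pure short exact sequences yields pure short exact sequences \(C_X \map C_f'' \map C_f'\) and \(C_X' \map C_f'' \map C_f\). Since \(C_X, C_X' \in \mathcal{U}\), closure of \(\mathcal{U}\) under pure short exact sequences delivers \(C_f \in \mathcal{U} \iff C_f'' \in \mathcal{U} \iff C_f' \in \mathcal{U}\).

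With the key lemma in hand, all three cases of 2-out-of-3 (and in particular closure under composition) can be handled simultaneously by a single noether-type construction. Given composable morphisms \(f \colon X \map Y\) and \(g \colon Y \map Z\), pick formal cones \(C_X\) of \(X\) and \(C_Y\) of \(Y\). By proposition~\ref{prop:formal_cone_factorisation}, the morphisms \(\alpha := \begin{smallpmatrix} f & \ins^{C_X} \end{smallpmatrix} \colon X \map Y \directsum C_X\) and \(\beta_0 := \begin{smallpmatrix} g & \ins^{C_Y} \end{smallpmatrix} \colon Y \map Z \directsum C_Y\) are pure monos whose cokernels are formal cones \(C_f\) of \(f\) and \(C_g\) of \(g\), respectively. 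Setting \(\beta := \beta_0 \directsum 1_{C_X} \colon Y \directsum C_X \map Z \directsum C_Y \directsum C_X\), a pure mono with cokernel \(C_g\), the composite \(\beta \alpha\) is a pure mono and the standard pure short exact sequence of cokernels of two composable pure monos produces \(C_f \map \Cokernel(\beta \alpha) \map C_g\). On the other hand, \(\beta \alpha = \begin{smallpmatrix} fg & \ins^{C_Y} f & \ins^{C_X} \end{smallpmatrix}\) coincides, after negating the last two coordinates of the target, with the Mayer--Vietoris pure mono for the pushout of \(fg\) along the formal \(\mathcal{U}\)-cone \(C_Y \directsum C_X\) of \(X\), whose insertion \(\begin{smallpmatrix} \ins^{C_Y} f & \ins^{C_X} \end{smallpmatrix} = (\ins^{C_Y} \directsum 1_{C_X}) \alpha\) is itself a pure mono. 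Hence \(\Cokernel(\beta \alpha)\) is isomorphic to a formal cone of \(fg\). By the key lemma, \(C_f\), \(C_g\), \(\Cokernel(\beta \alpha)\) lie in \(\mathcal{U}\) respectively if and only if \(f\), \(g\), \(fg\) are quasi-isomorphisms. The 2-out-of-3 closure of \(\mathcal{U}\) under pure short exact sequences, applied to \(C_f \map \Cokernel(\beta \alpha) \map C_g\), then yields all three cases of the 2-out-of-3 axiom for quasi-isomorphisms at once, including closure under composition.

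The main obstacle is the key lemma; specifically, the \(3 \times 3\) diagram chase needed to produce the two pure short exact sequences relating \(C_f\), \(C_f'\), and \(C_f''\) with the cones \(C_X\) and \(C_X'\). Once that is established, the rest reduces to routine bookkeeping with Mayer--Vietoris pushout squares and the standard short exact sequence of cokernels of composable pure monomorphisms in an exact category.
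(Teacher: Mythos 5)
Your proposal is correct in outline but takes a genuinely different route from the paper. You first establish a key lemma -- for a \(\mathcal{U}\)-quasi-isomorphism \emph{every} formal \(\mathcal{U}\)-cone lies in \(\mathcal{U}\) -- and then obtain multiplicativity and all three cases of (2\,of\,3) at once from a single pure short exact sequence \(C_f \map C \map C_g\), where \(C\) is the cokernel of the composite of the pure monomorphisms \(\begin{smallpmatrix} f & \ins^{C_X} \end{smallpmatrix}\colon X \map Y \directsum C_X\) and \(\begin{smallpmatrix} g & \ins^{C_Y} \end{smallpmatrix} \directsum 1_{C_X}\colon Y \directsum C_X \map Z \directsum C_Y \directsum C_X\); your identification of this composite with the Mayer--Vietoris monomorphism for the pushout of \(f g\) along the pure insertion \(X \map C_Y \directsum C_X\), hence of \(C\) with a formal cone of \(f g\), is correct, as is the use of (Seq) and of the Noether sequence of cokernels. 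The paper proceeds differently: it never uses cone-independence at this stage, but factors \(f\) and \(g\) via proposition~\ref{prop:formal_cone_factorisation} and chases a chain of pushouts over one formal cone \(C_0\) of the source, choosing \(C_0\) adapted to the hypothesis in each of the three cases; independence of the choice of cone appears only afterwards, as corollary~\ref{cor:equivalent_descriptions_for_quasi-isomorphisms_in_exact_categories_having_enough_formal_cones}, \emph{deduced from} the present proposition. Your order of deduction is thus reversed, which is legitimate only because you prove the key lemma from scratch rather than citing that corollary; in exchange you get a uniform treatment of all three cases and of closure under composition.

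The genuine soft spot is exactly the step you flag: the two pure short exact sequences \(C_X \map C_f'' \map C_f'\) and \(C_X' \map C_f'' \map C_f\). They are true, but ``the \(3 \times 3\) lemma applied to the Mayer--Vietoris sequences'' does not set up in the obvious way: the evident map \(Y \directsum C_X \map Y \directsum C_X \directsum C_X'\) does not commute with the Mayer--Vietoris monomorphisms (one would need \(u\colon Y \map C_X'\) and \(v\colon C_X \map C_X'\) with \(f u + \ins^{C_X} v = \ins^{C_X'}\), which need not exist). A short direct argument closes the gap: the pushout \(C_f''\) of \(f\) along \(\begin{smallpmatrix} \ins^{C_X} & \ins^{C_X'} \end{smallpmatrix}\) is canonically also the pushout of the pure monomorphism \(\begin{smallpmatrix} f & \ins^{C_X'} \end{smallpmatrix}\colon X \map Y \directsum C_X'\) along \(\ins^{C_X}\) (the two cocone conditions correspond after a sign change in one coordinate), so stability of pure monomorphisms under pushout together with~\cite[prop.~2.12]{buehler:2010:exact_categories} gives a pure monomorphism \(C_X \map C_f''\) with cokernel isomorphic to \(\Cokernel \begin{smallpmatrix} f & \ins^{C_X'} \end{smallpmatrix} \isomorphic C_f'\); the other sequence is symmetric. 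A second, smaller point: the combined insertion \(X \map C_X \directsum C_X'\) is indeed a pure monomorphism, but not merely because the diagonal is a split monomorphism -- split monomorphisms need not be pure without weak idempotent completeness, which is not assumed; instead note that \(\begin{smallpmatrix} 1 & \ins^{C_X'} \end{smallpmatrix}\colon X \map X \directsum C_X'\) is a coprojection composed with an automorphism, hence pure, and compose with the pure monomorphism \(\ins^{C_X} \directsum 1_{C_X'}\). With these repairs (and up to your matrix notation being transposed relative to the paper's conventions), your argument goes through.
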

\begin{proof}
For every object \(X\) in \(\mathcal{E}\), the identity \(1_X\) is a pure monomorphism~\cite[def.~2.1]{buehler:2010:exact_categories} with \(\Cokernel 1_X \isomorphic 0\). As \(0\) is in \(\mathcal{U}\) by remark~\ref{rem:subcategories_closed_under_pure_short_exact_sequences_contain_a_null_object_and_are_closed_under_isomorphisms}, it follows that \(1_X\) is a quasi-isomorphism for all \(X \in \Ob \mathcal{E}\) by proposition~\ref{prop:criterion_for_pure_monos_resp_pure_epis_being_quasi-isomorphism}\ref{prop:criterion_for_pure_monos_resp_pure_epis_being_quasi-isomorphism:pure_mono}.

We suppose given morphisms \(f\colon X_0 \map X_1\) and \(g\colon X_1 \map X_2\) in \(\mathcal{E}\). By proposition~\ref{prop:formal_cone_factorisation}, there exists for every formal cone \(C_f\) of \(f\) a pure monomorphism \(i\colon X_0 \map Y_0\) and a pure epimorphism \(p\colon Y_0 \map X_1\) with \(f = i p\) and such that \(\Cokernel i \isomorphic C_f\) and \(p\) is a quasi-isomorphism. Analogously, there exists for every formal cone \(C_g\) of \(g\) a pure monomorphism \(j\colon X_1 \map Y_1\) and a pure epimorphism \(q\colon Y_1 \map X_2\) with \(g = j q\) and such that \(\Cokernel j \isomorphic C_g\) and \(q\) is a quasi-isomorphism. Every formal cone \(C_0\) for \(X_0\) leads to a diagram as follows, where all quadrangles are pushout rectangles and hence also pullback rectangles~\cite[prop.~2.12]{buehler:2010:exact_categories}, and where \(D_0\) is a formal cone of \(i\) corresponding to \(C_0\), where \(C_1\) is a formal cone of \(i p = f\) corresponding to \(C_0\), where \(D_1\) is a formal cone of \(i p j\) corresponding to \(C_0\), and where \(C_2\) is a formal cone of \(i p j q = f g\) corresponding to \(C_0\).
\[\begin{tikzpicture}[baseline=(m-2-1.base)]
  \matrix (m) [diagram]{
    C_0 & D_0 & C_1 & D_1 & C_2 \\
    X_0 & Y_0 & X_1 & Y_1 & X_2 \\};
  \path[->, font=\scriptsize]
    (m-1-1) edge node[above] {\(i'\)} (m-1-2)
    (m-1-2) edge node[above] {\(p'\)} (m-1-3)
    (m-1-3) edge node[above] {\(j'\)} (m-1-4)
    (m-1-4) edge node[above] {\(q'\)} (m-1-5)
    (m-2-1) edge node[above] {\(i\)} (m-2-2)
            edge node[right] {\(\ins^{C_0}\)} (m-1-1)
    (m-2-2) edge node[above] {\(p\)} (m-2-3)
            edge node[right] {\(\ins^{D_0}\)} (m-1-2)
    (m-2-3) edge node[above] {\(j\)} (m-2-4)
            edge node[right] {\(\ins^{C_1}\)} (m-1-3)
    (m-2-4) edge node[above] {\(q\)} (m-2-5)
            edge node[right] {\(\ins^{D_1}\)} (m-1-4)
    (m-2-5) edge node[right] {\(\ins^{C_2}\)} (m-1-5);
\end{tikzpicture}\]
In particular, \(i'\) and \(j'\) are pure monomorphisms~\cite[def.~2.1]{buehler:2010:exact_categories} and \(p'\) and \(q'\) are pure epimorphisms~\cite[dual of prop.~2.15]{buehler:2010:exact_categories}.

So let us first suppose that \(f\) and \(g\) are quasi-isomorphisms. We choose formal cones \(C_f\) of \(f\) and \(C_g\) of \(g\) such that \(C_f\) and \(C_g\) are in \(\mathcal{U}\), and we choose an arbitrary formal cone \(C_0\) of \(X_0\). Then \(C_0\) is in \(\mathcal{U}\) and hence \(D_0\) is in \(\mathcal{U}\) since \(\Cokernel i' \isomorphic \Cokernel i \isomorphic C_f\) is in \(\mathcal{U}\) by~\cite[prop.~2.12]{buehler:2010:exact_categories} and \(\mathcal{U}\) is closed under pure short exact sequences. By proposition~\ref{prop:criterion_for_pure_monos_resp_pure_epis_being_quasi-isomorphism}\ref{prop:criterion_for_pure_monos_resp_pure_epis_being_quasi-isomorphism:pure_epi}, \(\Kernel p\) is in \(\mathcal{U}\) as \(p\) is a quasi-isomorphism. But then \(C_1\) is in \(\mathcal{U}\) since \(D_0\) and \(\Kernel p' \isomorphic \Kernel p\) are in \(\mathcal{U}\). Analogously, \(C_1\) in \(\mathcal{U}\) implies that \(D_1\) is in \(\mathcal{U}\), and this in turn implies that \(C_2\) is in \(\mathcal{U}\). But \(C_2\) is a formal cone of \(f g\) corresponding to \(C_0\), whence \(f g\) is a quasi-isomorphism.

Next, we suppose that \(f\) and \(f g\) are quasi-isomorphisms. We choose formal cones \(C_f\) of \(f\) and \(C_{f g}\) of \(f g\) such that \(C_f\) and \(C_{f g}\) are in \(\mathcal{U}\). Moreover, we choose the formal cone \(C_0\) of \(X_0\) such that \(C_{f g}\) is corresponding to \(C_0\). As shown above, \(C_0\) in \(\mathcal{U}\) implies that \(D_0\) is in \(\mathcal{U}\), and \(D_0\) in \(\mathcal{U}\) implies that \(C_1\) is in \(\mathcal{U}\). But then \(C_1\) is a formal cone of \(X_2\) and hence \(C_2\) is a formal cone of \(j q = g\) of corresponding to \(C_1\). Since \(C_2 \isomorphic C_{f g}\) is in \(\mathcal{U}\) by our choice of \(C_0\), this implies that \(g\) is a quasi-isomorphism.

Finally, let us suppose that \(g\) and \(f g\) are quasi-isomorphisms. We choose formal cones \(C_g\) of \(g\) and \(C_{f g}\) of \(f g\) such that \(C_g\) and \(C_{f g}\) are in \(\mathcal{U}\). Moreover, we choose the formal cone \(C_0\) of \(X_0\) such that \(C_{f g}\) is corresponding to \(C_0\). Then \(C_2 \isomorphic C_{f g}\) is in \(\mathcal{U}\) and \(\Kernel q' \isomorphic \Kernel q\) is in \(\mathcal{U}\), and therefore \(D_1\) is in \(\mathcal{U}\). This in turn implies that \(C_1\) is in \(\mathcal{U}\) since \(\Cokernel j' \isomorphic \Cokernel j \isomorphic C_g\) is in \(\mathcal{U}\). But \(C_1\) is a formal cone of \(f\) corresponding to \(C_0\), whence \(f\) is a quasi-isomorphism.

Altogether, the set of quasi-isomorphisms is a semi-saturated denominator set in \(\mathcal{E}\).
\end{proof}

\begin{corollary} \label{cor:equivalent_descriptions_for_quasi-isomorphisms_in_exact_categories_having_enough_formal_cones}
We suppose given a morphism \(f\) in \(\mathcal{E}\). The following conditions are equivalent.
\begin{enumerate}
\item \label{cor:equivalent_descriptions_for_quasi-isomorphisms_in_exact_categories_having_enough_formal_cones:quasi-isomorphism} The morphism \(f\) is a quasi-isomorphism with respect to \(\mathcal{U}\).
\item \label{cor:equivalent_descriptions_for_quasi-isomorphisms_in_exact_categories_having_enough_formal_cones:every_formal_cone_is_in_the_subcategory} Every formal \(\mathcal{U}\)-cone of \(f\) is in \(\mathcal{U}\).
\item \label{cor:equivalent_descriptions_for_quasi-isomorphisms_in_exact_categories_having_enough_formal_cones:factorisation_property} There exist a pure monomorphism \(i\) and a pure epimorphism \(p\) with \(f = i p\) and such that \(i\) and \(p\) are \(\mathcal{U}\)-quasi-isomorphisms.
\end{enumerate}
\end{corollary}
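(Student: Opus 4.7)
I intend to establish the chain \ref{cor:equivalent_descriptions_for_quasi-isomorphisms_in_exact_categories_having_enough_formal_cones:every_formal_cone_is_in_the_subcategory} $\Rightarrow$ \ref{cor:equivalent_descriptions_for_quasi-isomorphisms_in_exact_categories_having_enough_formal_cones:quasi-isomorphism} $\Rightarrow$ \ref{cor:equivalent_descriptions_for_quasi-isomorphisms_in_exact_categories_having_enough_formal_cones:factorisation_property} $\Rightarrow$ \ref{cor:equivalent_descriptions_for_quasi-isomorphisms_in_exact_categories_having_enough_formal_cones:every_formal_cone_is_in_the_subcategory}, with the substantive work in the last implication. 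The first two steps are immediate. For \ref{cor:equivalent_descriptions_for_quasi-isomorphisms_in_exact_categories_having_enough_formal_cones:every_formal_cone_is_in_the_subcategory} $\Rightarrow$ \ref{cor:equivalent_descriptions_for_quasi-isomorphisms_in_exact_categories_having_enough_formal_cones:quasi-isomorphism} I pick any formal cone $C_X$ of $X$ (available since $\mathcal{E}$ has enough formal $\mathcal{U}$-cones) and take the pushout of $\ins^{C_X}$ along $f$; this produces a pure monomorphism $Y \map C_f$, and by \ref{cor:equivalent_descriptions_for_quasi-isomorphisms_in_exact_categories_having_enough_formal_cones:every_formal_cone_is_in_the_subcategory} we have $C_f \in \Ob \mathcal{U}$, so $C_f$ is a formal cone of $f$ in $\mathcal{U}$, exhibiting \ref{cor:equivalent_descriptions_for_quasi-isomorphisms_in_exact_categories_having_enough_formal_cones:quasi-isomorphism}. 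For \ref{cor:equivalent_descriptions_for_quasi-isomorphisms_in_exact_categories_having_enough_formal_cones:quasi-isomorphism} $\Rightarrow$ \ref{cor:equivalent_descriptions_for_quasi-isomorphisms_in_exact_categories_having_enough_formal_cones:factorisation_property}, I apply proposition~\ref{prop:formal_cone_factorisation} to a formal cone $C_X$ of $X$ for which the corresponding formal cone $C_f$ of $f$ lies in $\mathcal{U}$: the factorisation $f = \begin{smallpmatrix} f & \ins \end{smallpmatrix} \begin{smallpmatrix} 1 \\ 0 \end{smallpmatrix}$ presents $f$ as a pure monomorphism into $Y \directsum C_X$ with cokernel isomorphic to $C_f \in \Ob \mathcal{U}$, which is a quasi-isomorphism by proposition~\ref{prop:criterion_for_pure_monos_resp_pure_epis_being_quasi-isomorphism}\ref{prop:criterion_for_pure_monos_resp_pure_epis_being_quasi-isomorphism:pure_mono}, followed by the split pure epimorphism $\begin{smallpmatrix} 1 \\ 0 \end{smallpmatrix}$, which is a quasi-isomorphism by the final assertion of proposition~\ref{prop:formal_cone_factorisation}.

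The bulk of the argument is \ref{cor:equivalent_descriptions_for_quasi-isomorphisms_in_exact_categories_having_enough_formal_cones:factorisation_property} $\Rightarrow$ \ref{cor:equivalent_descriptions_for_quasi-isomorphisms_in_exact_categories_having_enough_formal_cones:every_formal_cone_is_in_the_subcategory}. Write $f = i p$ with $i \colon X \map Z$ a pure monomorphism quasi-isomorphism and $p \colon Z \map Y$ a pure epimorphism quasi-isomorphism, and fix an arbitrary formal cone $C_X$ of $X$. By the pasting lemma for pushouts, the pushout of $\ins^{C_X}$ along $f$ can be realised as two successive pushouts, first along $i$ producing an intermediate pushout object $C_i$, then along $p$ producing the target $C_f$. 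In the first stage, the bicartesian-square property for pushouts of pure monomorphisms in exact categories (the form of B\"uhler's Prop.~2.12 repeatedly used in the article) implies that $C_X \map C_i$ is a pure monomorphism with cokernel isomorphic to $\Cokernel i$; since $C_X \in \Ob \mathcal{U}$ and $\Cokernel i \in \Ob \mathcal{U}$ by proposition~\ref{prop:criterion_for_pure_monos_resp_pure_epis_being_quasi-isomorphism}\ref{prop:criterion_for_pure_monos_resp_pure_epis_being_quasi-isomorphism:pure_mono} applied to $i$, closure of $\mathcal{U}$ under pure short exact sequences forces $C_i \in \Ob \mathcal{U}$. In the second stage, the pushout square has pure monomorphism $Z \map C_i$ and is therefore bicartesian (hence also a pullback); applying the dual of B\"uhler's Prop.~2.12 to this pullback along the pure epimorphism $p$ identifies the parallel arrow $C_i \map C_f$ as a pure epimorphism with kernel isomorphic to $\Kernel p$. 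Since $\Kernel p \in \Ob \mathcal{U}$ by proposition~\ref{prop:criterion_for_pure_monos_resp_pure_epis_being_quasi-isomorphism}\ref{prop:criterion_for_pure_monos_resp_pure_epis_being_quasi-isomorphism:pure_epi} applied to $p$ and $C_i \in \Ob \mathcal{U}$, closure under pure short exact sequences once more delivers $C_f \in \Ob \mathcal{U}$, as required.

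The principal obstacle is the bookkeeping of the two-stage pushout in the last implication: one must invoke both the pushout form of the bicartesian-square property (for the pure monomorphism $i$) and its pullback-style dual (for the pure epimorphism $p$), and then apply the closure hypothesis on $\mathcal{U}$ twice, once to a cokernel and once to a kernel. The remaining implications are essentially direct readings of propositions~\ref{prop:criterion_for_pure_monos_resp_pure_epis_being_quasi-isomorphism} and~\ref{prop:formal_cone_factorisation}.
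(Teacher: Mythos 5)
Your argument is correct, but it closes the equivalence along a different cycle from the paper. The paper deduces \ref{cor:equivalent_descriptions_for_quasi-isomorphisms_in_exact_categories_having_enough_formal_cones:every_formal_cone_is_in_the_subcategory} from \ref{cor:equivalent_descriptions_for_quasi-isomorphisms_in_exact_categories_having_enough_formal_cones:quasi-isomorphism} by applying proposition~\ref{prop:formal_cone_factorisation} to an \emph{arbitrary} formal cone \(C_f\) of \(f\), so that \(f = i p\) with \(\Cokernel i \isomorphic C_f\) and \(p\) a quasi-isomorphism, and then invoking the 2-out-of-3 property from proposition~\ref{prop:set_of_quasi-isomorphisms_in_an_exact_category_is_semi-saturated} to see that \(i\) is a quasi-isomorphism, whence \(C_f\) lies in \(\mathcal{U}\) by proposition~\ref{prop:criterion_for_pure_monos_resp_pure_epis_being_quasi-isomorphism}; the implications \ref{cor:equivalent_descriptions_for_quasi-isomorphisms_in_exact_categories_having_enough_formal_cones:every_formal_cone_is_in_the_subcategory}\(\Rightarrow\)\ref{cor:equivalent_descriptions_for_quasi-isomorphisms_in_exact_categories_having_enough_formal_cones:quasi-isomorphism} and \ref{cor:equivalent_descriptions_for_quasi-isomorphisms_in_exact_categories_having_enough_formal_cones:factorisation_property}\(\Rightarrow\)\ref{cor:equivalent_descriptions_for_quasi-isomorphisms_in_exact_categories_having_enough_formal_cones:quasi-isomorphism} are then immediate from the existence of enough formal cones and from multiplicativity. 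You instead prove \ref{cor:equivalent_descriptions_for_quasi-isomorphisms_in_exact_categories_having_enough_formal_cones:quasi-isomorphism}\(\Rightarrow\)\ref{cor:equivalent_descriptions_for_quasi-isomorphisms_in_exact_categories_having_enough_formal_cones:factorisation_property} directly (choosing the cone witnessing quasi-isomorphy, so no 2-out-of-3 argument is needed) and put the real work into \ref{cor:equivalent_descriptions_for_quasi-isomorphisms_in_exact_categories_having_enough_formal_cones:factorisation_property}\(\Rightarrow\)\ref{cor:equivalent_descriptions_for_quasi-isomorphisms_in_exact_categories_having_enough_formal_cones:every_formal_cone_is_in_the_subcategory}, pasting two pushouts along \(f = i p\) and applying proposition~\ref{prop:criterion_for_pure_monos_resp_pure_epis_being_quasi-isomorphism} and closure of \(\mathcal{U}\) under pure short exact sequences twice. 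This is sound, and it buys independence from proposition~\ref{prop:set_of_quasi-isomorphisms_in_an_exact_category_is_semi-saturated}, at the price of redoing by hand the bicartesian-square bookkeeping that the paper has already packaged in the proof of that proposition, which is an iterated-pushout argument of exactly this kind; the paper's route is correspondingly shorter. Two small points: in both of your nontrivial implications you silently identify ``the'' formal cone of \(f\) corresponding to \(C_X\) with your constructed pushout, which is legitimate because pushouts are unique up to isomorphism and \(\mathcal{U}\) is closed under isomorphisms by remark~\ref{rem:subcategories_closed_under_pure_short_exact_sequences_contain_a_null_object_and_are_closed_under_isomorphisms}, but it deserves a word; and in the second stage the assertion that the arrow \(C_i \map C_f\) parallel to \(p\) is a pure epimorphism rests on the dual of B{\"u}hler's proposition~2.15 (as cited in the proof of proposition~\ref{prop:criterion_for_pure_monos_resp_pure_epis_being_quasi-isomorphism}), the dual of proposition~2.12 then giving the kernel identification -- attributing both to the dual of~2.12 is a citation slip, not a gap.
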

\begin{proof}
First, we let \(f\) be a quasi-isomorphism and we suppose given an arbitrary formal cone \(C_f\) of \(f\). By proposition~\ref{prop:formal_cone_factorisation}, there exist a pure monomorphism \(i\) and a pure epimorphism \(p\) with \(f = i p\) and such that \(p\) is a quasi-isomorphism and \(\Cokernel i \isomorphic C_f\). But \(i\) is a quasi-isomorphism since \(f\) is a quasi-isomorphism and since the set of quasi-isomorphisms is semi-saturated by proposition~\ref{prop:set_of_quasi-isomorphisms_in_an_exact_category_is_semi-saturated}, so \(C_f \isomorphic \Cokernel i\) is in \(\mathcal{U}\) by proposition~\ref{prop:criterion_for_pure_monos_resp_pure_epis_being_quasi-isomorphism}\ref{prop:criterion_for_pure_monos_resp_pure_epis_being_quasi-isomorphism:pure_mono}.

If every formal cone of \(f\) is an object of \(\mathcal{U}\), then \(f\) is a \(\mathcal{U}\)-quasi-isomorphism since \(\mathcal{E}\) has enough formal \(\mathcal{U}\)-cones.

Finally, if there exist a pure monomorphism \(i\) and a pure epimorphism \(p\) with \(f = i p\) and such that \(i\) and \(p\) are quasi-isomorphisms, then \(f\) is a quasi-isomorphism since the set of quasi-isomorphisms is multiplicative by proposition~\ref{prop:set_of_quasi-isomorphisms_in_an_exact_category_is_semi-saturated}. \qedhere
\end{proof}

Corollary~\ref{cor:equivalent_descriptions_for_quasi-isomorphisms_in_exact_categories_having_enough_formal_cones}\ref{cor:equivalent_descriptions_for_quasi-isomorphisms_in_exact_categories_having_enough_formal_cones:factorisation_property} and proposition~\ref{prop:criterion_for_pure_monos_resp_pure_epis_being_quasi-isomorphism} show that the notion of a quasi-isomorphism is self-dual, provided \(\mathcal{E}\) fulfills also the dual of definition~\ref{def:exact_category_having_enough_formal_cones}.

\begin{proposition} \label{prop:for_closed_under_taking_summands_subcategories_the_set_of_quasi-isomorphisms_in_an_exact_category_is_weakly_saturated}
If \(\mathcal{U}\) is closed under taking summands, then the set of \(\mathcal{U}\)-quasi-isomorphisms in \(\mathcal{E}\) is a weakly saturated denominator set in \(\mathcal{E}\).
\end{proposition}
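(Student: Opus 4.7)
Plan:

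By Proposition~\ref{prop:set_of_quasi-isomorphisms_in_an_exact_category_is_semi-saturated}, the set of \(\mathcal{U}\)-quasi-isomorphisms is already semi-saturated, hence multiplicative and closed under the (2 of 3) axiom. To upgrade semi-saturation to weak saturation, I need only verify the (2 of 6) axiom. So I would suppose given composable morphisms \(f\colon A \to B\), \(g\colon B \to C\), \(h\colon C \to D\) in \(\mathcal{E}\) with \(fg\) and \(gh\) \(\mathcal{U}\)-quasi-isomorphisms, and must conclude that each of \(f\), \(g\), \(h\), and \(fgh\) is a \(\mathcal{U}\)-quasi-isomorphism. The reduction is immediate: once \(g\) is known to be a quasi-isomorphism, two applications of (2 of 3) yield the same for \(f\) (from the pair \(f,g\) composing to \(fg\)) and for \(h\) (from \(g,h\) composing to \(gh\)), and then \(fgh = f(gh)\) follows by multiplicativity. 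So the entire content is to prove that \(g\) is a \(\mathcal{U}\)-quasi-isomorphism.

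To show this I would choose a formal \(\mathcal{U}\)-cone \(C_B\) of \(B\) (which exists by Definition~\ref{def:exact_category_having_enough_formal_cones}) and apply the factorisation Proposition~\ref{prop:formal_cone_factorisation} to both \(g\) and \(gh\) with this same \(C_B\), obtaining pure short exact sequences
\[B \morphism C \oplus C_B \morphism C_g \quad\text{and}\quad B \morphism D \oplus C_B \morphism C_{gh},\]
where \(C_g\) is a formal cone of \(g\) corresponding to \(C_B\) and \(C_{gh}\in\mathcal{U}\) by hypothesis on \(gh\) together with Corollary~\ref{cor:equivalent_descriptions_for_quasi-isomorphisms_in_exact_categories_having_enough_formal_cones}. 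The square-filler \(\begin{smallpmatrix} h & 0 \\ 0 & 1\end{smallpmatrix}\colon C \oplus C_B \to D \oplus C_B\) intertwines the two pure monomorphisms out of \(B\) and thus induces a morphism \(\varphi\colon C_g \to C_{gh}\) of cokernels; the pasting of pushouts and the pullback property of pure short exact sequences identifies \(\varphi\) as itself a pure monomorphism whose cokernel is a formal cone of \(h\) corresponding to the formal cone \(C_g\) of \(C\). Dualising with a chosen formal \(\mathcal{U}\)-cone \(C_A\) of \(A\) and factorising \(f\) and \(fg\) produces a parallel diagram yielding a pure epimorphism \(C_{fg} \twoheadrightarrow C_g'\) where \(C_{fg}\in\mathcal{U}\) and \(C_g'\) is another formal cone of \(g\) (corresponding to the pushout cone of \(f\) along \(C_A\)).

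The main obstacle is to leverage these two bicartesian ``cone triangles'' to force \(C_g\in\mathcal{U}\), and this is where the hypothesis that \(\mathcal{U}\) is closed under summands enters. My plan is to splice the two diagrams along \(B \xrightarrow{g} C\) into a single bicartesian lattice indexed by \(A \to B \to C \to D\) together with the chosen formal cones, and then, mimicking the classical octahedron-plus-idempotent-splitting argument of Neeman (cf.~\cite[sec.~10, in particular prop.~10.11 and rem.~10.17]{buehler:2010:exact_categories} for the prototype in complexes over an idempotent splitting exact category), to extract a split monomorphism \(C_g \rightarrowtail M\) together with a retraction \(M \twoheadrightarrow C_g\) where \(M\) is explicitly assembled from \(C_{fg}\), \(C_{gh}\), and a formal cone of the identity on \(B\) (all of which lie in \(\mathcal{U}\) by hypothesis, Remark~\ref{rem:subcategories_closed_under_pure_short_exact_sequences_contain_a_null_object_and_are_closed_under_isomorphisms}, and Proposition~\ref{prop:criterion_for_pure_monos_resp_pure_epis_being_quasi-isomorphism}); then \(M\in\mathcal{U}\) since \(\mathcal{U}\) is closed under pure short exact sequences, and \(C_g\in\mathcal{U}\) since \(\mathcal{U}\) is closed under summands. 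By Corollary~\ref{cor:equivalent_descriptions_for_quasi-isomorphisms_in_exact_categories_having_enough_formal_cones}\ref{cor:equivalent_descriptions_for_quasi-isomorphisms_in_exact_categories_having_enough_formal_cones:every_formal_cone_is_in_the_subcategory}, this shows that \(g\) is a \(\mathcal{U}\)-quasi-isomorphism, finishing the proof. The delicate point — constructing the explicit retraction — is precisely the step that fails without idempotent completeness, which is why the summand-closure hypothesis is indispensable.
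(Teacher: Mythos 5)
Your reduction of (2 of 6) to the single claim that \(g\) is a \(\mathcal{U}\)-quasi-isomorphism is correct and is also how the paper concludes, but the argument you sketch for that claim has genuine gaps. First, the structural assertions you use to set it up are false: by pasting of pushouts, the induced morphism \(\varphi\colon C_g \map C_{gh}\) sits in a pushout square over \(h\) along the pure monomorphism \(\ins^{C_g}\), and a pushout of an \emph{arbitrary} morphism along a pure monomorphism need not be a pure monomorphism -- in \(\Com(\mathcal{A})\) this map is essentially \(1 \directsum h\colon \Cone(g) \map \Cone(gh)\), a monomorphism only when \(h\) is; moreover the formal cone of \(h\) corresponding to \(C_g\) is \(C_{gh}\) itself, not a cokernel of \(\varphi\). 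Likewise there is in general no pure epimorphism \(C_{fg} \map C_g'\): by the same pasting, the formal cone of \(g\) corresponding to \(C_f\) \emph{is} \(C_{fg}\), so that step produces nothing. Second, and decisively, the heart of your plan -- exhibiting \(C_g\) as a summand of an object \(M\) of \(\mathcal{U}\) via an explicit split monomorphism and retraction, ``mimicking'' the homotopy-category argument of Neeman/B\"uhler -- is only asserted, not carried out, and it is precisely the point at issue; note also that you conflate idempotent splitting of \(\mathcal{E}\) with closure of \(\mathcal{U}\) under summands, whereas the proposition uses only the latter and no idempotent is split anywhere.

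The paper obtains the needed summand situation without constructing any retraction, by adding a second layer of formal cones. Starting from a formal cone \(C_0\) of the source of \(f\), iterated pushouts give formal cones \(C_1\), \(C_2\), \(C_3\) of \(f\), \(fg\), \(fgh\) corresponding to \(C_0\), with induced morphisms \(f'\), \(g'\), \(h'\) between them; then one chooses a formal cone \(D_1\) of the \emph{object} \(C_1\) (which is automatically also a formal cone of the source of \(g\)) and pushes out along \(g'\) and \(h'\) to get \(D_2\) and \(D_3\), so that \(D_2\) is a formal cone of \(g\) and \(D_3\) one of \(gh\). The pushout square on \(h'\colon C_2 \map C_3\) and \(\ins^{D_2}\) yields, by~\cite[prop.~2.12]{buehler:2010:exact_categories}, a pure short exact sequence \(C_2 \map C_3 \directsum D_2 \map D_3\). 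Since \(fg\) and \(gh\) are quasi-isomorphisms, \(C_2\) and \(D_3\) lie in \(\mathcal{U}\) by corollary~\ref{cor:equivalent_descriptions_for_quasi-isomorphisms_in_exact_categories_having_enough_formal_cones}, so closure under pure short exact sequences gives \(C_3 \directsum D_2 \in \Ob \mathcal{U}\), and closure under summands gives \(D_2 \in \Ob \mathcal{U}\); hence \(g\) is a quasi-isomorphism, and your (2 of 3) bookkeeping finishes as you describe. Cones of \(f\), \(g\), \(h\), \(fg\), \(gh\) alone, as in your outline, do not visibly produce this split situation -- the cone \(C_3\) of the triple composite and the auxiliary cone \(D_1\) of \(C_1\) are the missing ingredients.
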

\begin{proof}
We suppose that \(\mathcal{U}\) is closed under taking summands, and we suppose given morphisms \(f\colon X_0 \map X_1\), \(g\colon X_1 \map X_2\), \(h\colon X_2 \map X_3\) in \(\mathcal{E}\) such that \(f g\) and \(g h\) are quasi-isomorphisms. We let \(C_0\) be a formal cone of \(X_0\) and construct iteratively pushouts as in the following diagram, so that \(C_1\) is a formal cone of \(f\) corresponding to \(C_0\), so that \(C_2\) is a formal cone of \(f g\) corresponding to \(C_0\), and so that \(C_3\) is a formal cone of \(f g h\) corresponding to \(C_0\).
\[\begin{tikzpicture}[baseline=(m-2-1.base)]
  \matrix (m) [diagram]{
    C_0 & C_1 & C_2 & C_3 \\
    X_0 & X_1 & X_2 & X_3 \\};
  \path[->, font=\scriptsize]
    (m-1-1) edge node[above] {\(f'\)} (m-1-2)
    (m-1-2) edge node[above] {\(g'\)} (m-1-3)
    (m-1-3) edge node[above] {\(h'\)} (m-1-4)
    (m-2-1) edge node[above] {\(f\)} (m-2-2)
            edge node[right] {\(\ins^{C_0}\)} (m-1-1)
    (m-2-2) edge node[above] {\(g\)} (m-2-3)
            edge node[right] {\(\ins^{C_1}\)} (m-1-2)
    (m-2-3) edge node[above] {\(h\)} (m-2-4)
            edge node[right] {\(\ins^{C_2}\)} (m-1-3)
    (m-2-4) edge node[right] {\(\ins^{C_3}\)} (m-1-4);
\end{tikzpicture}\]
Next, we let \(D_1\) be a formal cone of \(C_1\) and construct again iteratively pushouts as in the following diagram, so that \(D_2\) is a formal cone of \(g'\) corresponding to \(D_1\), and so that \(D_3\) is a formal cone of \(g' h'\) corresponding to~\(D_1\).
\[\begin{tikzpicture}[baseline=(m-3-1.base)]
  \matrix (m) [diagram]{
    & D_1 & D_2 & D_3 \\
    C_0 & C_1 & C_2 & C_3 \\
    X_0 & X_1 & X_2 & X_3 \\};
  \path[->, font=\scriptsize]
    (m-1-2) edge node[above] {\(g''\)} (m-1-3)
    (m-1-3) edge node[above] {\(h''\)} (m-1-4)
    (m-2-1) edge node[above] {\(f'\)} (m-2-2)
    (m-2-2) edge node[above] {\(g'\)} (m-2-3)
            edge node[right] {\(\ins^{D_1}\)} (m-1-2)
    (m-2-3) edge node[above] {\(h'\)} (m-2-4)
            edge node[right] {\(\ins^{D_2}\)} (m-1-3)
    (m-2-4) edge node[right] {\(\ins^{D_3}\)} (m-1-4)
    (m-3-1) edge node[above] {\(f\)} (m-3-2)
            edge node[right] {\(\ins^{C_0}\)} (m-2-1)
    (m-3-2) edge node[above] {\(g\)} (m-3-3)
            edge node[right] {\(\ins^{C_1}\)} (m-2-2)
    (m-3-3) edge node[above] {\(h\)} (m-3-4)
            edge node[right] {\(\ins^{C_2}\)} (m-2-3)
    (m-3-4) edge node[right] {\(\ins^{C_3}\)} (m-2-4);
\end{tikzpicture}\]
Then \(D_1\) is also a formal cone of \(X_1\) and therefore \(D_3\) is a formal cone of \(g h\) corresponding to \(D_1\). Since \(f g\) and \(g h\) are quasi-isomorphisms, \(C_2\) and \(D_3\) are in \(\mathcal{U}\) by corollary~\ref{cor:equivalent_descriptions_for_quasi-isomorphisms_in_exact_categories_having_enough_formal_cones}. But since we have the pure short exact sequence
\[C_2 \morphism[\begin{smallpmatrix} h' & \ins^{D_2} \end{smallpmatrix}] C_3 \directsum D_2 \morphism[\begin{smallpmatrix} \ins^{D_3} \\ - h'' \end{smallpmatrix}] D_3\]
by~\cite[prop.~2.12]{buehler:2010:exact_categories}, we conclude that \(C_3 \directsum D_2\) is in \(\mathcal{U}\) and therefore that \(D_2\) is in \(\mathcal{U}\) since \(\mathcal{U}\) is closed under pure short exact sequences and taking summands. Thus \(g\) is a quasi-isomorphism as \(D_2\) is a formal cone of \(g\) corresponding to \(D_1\), and hence also \(f\), \(h\), \(f g h\) are quasi-isomorphisms by proposition~\ref{prop:set_of_quasi-isomorphisms_in_an_exact_category_is_semi-saturated}.
\end{proof}


\bigskip

{\raggedleft Sebastian Thomas \\ Lehrstuhl D f{\"u}r Mathematik \\ RWTH Aachen University \\ Templergraben 64 \\ 52062 Aachen \\ Germany \\ sebastian.thomas@math.rwth-aachen.de \\ \url{http://www.math.rwth-aachen.de/~Sebastian.Thomas/} \\}

\end{document}